\documentclass[12pt]{amsart}
\usepackage[top=30truemm,bottom=30truemm,left=25truemm,right=25truemm]{geometry}
\usepackage{mathrsfs}
\usepackage{amsmath, amsthm, amssymb}
\usepackage{mathtools}
\usepackage{color}
\usepackage{bm}
\usepackage{amsfonts}
\usepackage{dsfont}
\usepackage{amscd}
\usepackage{extarrows}
\usepackage{mathrsfs}
\usepackage{enumerate}
\usepackage{amscd}   
\usepackage[all]{xy}
\usepackage{geometry}
\usepackage[colorlinks]{hyperref}
\usepackage[nameinlink, capitalize, noabbrev]{cleveref}
\usepackage[colorlinks]{hyperref}

\definecolor{darkblue}{RGB}{0, 102, 204}   

\hypersetup{
    pdfencoding=auto,
    colorlinks=true,    
    linkcolor=darkblue, 
    citecolor=darkblue, 
    urlcolor=darkblue,  
    filecolor=darkblue  
}
\newtheorem{theorem}{Theorem}[section]
\newtheorem{definition}{Definition}[section]
\newtheorem{lemma}{Lemma}[section]
\newtheorem{corollary}{Corollary}[section]
\newtheorem{proposition}{Proposition}[section]
\newtheorem{remark}{Remark}[section]
\newtheorem{example}{Example}[section]
\newtheorem{conjecture}{Conjecture}[section]

\newcommand{\be}{\begin{equation}}
	\newcommand{\ee}{\end{equation}}
\newcommand{\bea}{\begin{eqnarray}}
	\newcommand{\eea}{\end{eqnarray}}
\newcommand{\ben}{\begin{eqnarray*}}
	\newcommand{\een}{\end{eqnarray*}}
\newcommand{\bt}{\begin{split}}
	\newcommand{\et}{\end{split}}
\newcommand{\bet}{\begin{equation}}

	\numberwithin{equation}{section}
\begin{document}

\title[Gromov-Hausdorff limits of the Chern-Ricci flow]
{Gromov-Hausdorff limits of the Chern-Ricci flow on smooth Hermitian minimal models of general type}

\author[H. Sun]{Haoyuan Sun}
\address{Haoyuan Sun: School of Mathematical Sciences\\ Beijing Normal University\\ Beijing 100875\\ P. R. China}
\email{202531130037@mail.bnu.edu.cn}

\begin{abstract}
We establish uniform diameter estimates and volume non-collapsing estimates for the Chern-Ricci flow on smooth Hermitian minimal models of general type, assuming the initial metric is K\"ahler in a neighborhood of the null locus of the canonical bundle. This yields subsequential Gromov-Hausdorff convergence, partially resolving a conjecture of Tosatti and Weinkove. When the underlying manifold is K\"ahler, we further prove the uniqueness of the limit space. Analytically, we overcome the difficulties posed by non-K\"ahler torsion in the Green's formula by exploiting our local K\"ahler assumption, successfully adapting recent estimates of K\"ahler Green's function to the Hermitian setting. To prove the uniqueness of the limit, we introduce Perelman's reduced length to the Chern-Ricci flow. By establishing a uniform Chern scalar curvature bound and an almost monotonicity formula for the reduced volume, we deduce an almost-avoidance principle for the singular set, allowing us to effectively compare the flow distance with the canonical limit distance.
\end{abstract}

\subjclass[2020]{Primary 53E20, 32W20; Secondary 53C55, 32U05, 32U40}
\keywords{Diameter estimates, Volume non-collapsing estimate, Green functions, elliptic estimates, reduced length, Gromov-Hausdorff limits, the Chern-Ricci flow}
\maketitle
{
 \hypersetup{linkcolor=black} 
  \tableofcontents
}
\section{Introduction}
The K\"ahler-Ricci flow has proven to be a fundamental tool in the study of the geometry and topology of complex manifolds. The application of this flow traces back to the resolution of the Calabi conjecture by Yau \cite{Yau78}. A central theme in this field, particularly within the framework of the analytic Minimal Model Program initiated by Song and Tian in \cite{ST12, ST17}, is to understand the geometric behavior of the flow as it approaches finite time singularities or exists for infinite time. In this context, establishing uniform geometric bounds, such as diameter and scalar curvature bounds, is typically a key requirement for proving the convergence of the flow to a canonical metric limit. 

Among these geometric bounds, establishing uniform diameter estimates has been recognized as one of the most difficult and longstanding problems in complex geometry. In the K\"ahler setting, there is an extensive literature on this problem; see, for example, \cite{SW13, Song14, GSW16, TZ16, Guo17, Wang18, STZ19, GPSS23, GPSS24a, Vu26, GPSS24b, GT25, GGZ25} and references therein. Recently, Guo, Phong, Song, and Sturm \cite{GPSS24a, GPSS24b} developed a robust framework to establish uniform diameter estimates for a large class of K\"ahler metrics. A key aspect of their work is the reliance on an entropy bound, which avoids the pointwise Ricci lower bounds typically required in the classical Riemannian setting \cite{CL81}. Their approach, relying on novel estimates for Green’s functions and the Monge-Amp\`ere equations established in \cite{GPT23, GPTW24, GPS24}, successfully solved the long-standing problem of uniform diameter bounds for the K\"ahler-Ricci flow in a general setting. 

While the results in \cite{GPSS23, GPSS24a, GPSS24b} provide a comprehensive framework in the K\"ahler category, the situation for non-K\"ahler manifolds, which naturally arise in complex geometry and theoretical physics, remains far less understood. In this broader Hermitian setting, the Chern-Ricci flow, introduced by Tosatti and Weinkove in \cite{TW15}, serves as the natural evolution equation. The flow is given by 
\begin{equation}\label{main flow, introduction}
    \left\{
    \begin{aligned}
        &\frac{\partial}{\partial t}\omega(t)=-Ric^C
        (\omega(t)),\\
        &\omega(0)=\omega_0.
    \end{aligned}
    \right.
\end{equation}
where $\omega_0$ is an initial Hermitian metric and $Ric^C        
(\omega)$ is the Chern-Ricci form of $\omega$. The main focus of this paper is the following conjecture proposed by Tosatti and Weinkove in \cite{TW22}:
\begin{conjecture}\label{conjecture} \cite[Conjecture 4.1]{TW22}
    Let $X^n$ be a compact complex manifold with $K_X$ nef and big. Let $\omega(t)$ be the solution of the Chern-Ricci flow \eqref{main flow, introduction} starting at an arbitrary Hermitian metric $\omega_0$. Then we have
    $$
\operatorname{diam}\left(X,\frac{\omega(t)}{t}\right)\leq C,
    $$
    for all $t$ sufficiently large, and
    $$
\left(X,\frac{\omega(t)}{t}\right)\to (Z,d),\quad \operatorname{as}  t  \to\infty.
    $$
   in the Gromov-Hausdorff topology for some compact metric space $(Z,d)$. Furthermore, if we set $E:=\operatorname{Null}(K_X)$, then the compact metric space $(Z,d)$ can be identified with the metric completion of $(X\setminus E,\omega_{KE})$. Here, $\omega_{KE}$ is a closed positive current on $X$ which, moreover, restricts to a smooth K\"ahler-Einstein metric on $X\setminus E$.
\end{conjecture}

\cref{conjecture} was proved on K\"ahler surfaces independently by Guo-Song-Weinkove \cite{GSW16} and by Tian-Zhang \cite{TZ16}, the latter also proved the conjecture in the K\"ahler setting for dimension 3. Under the assumption that $\omega_0$ is K\"ahler and the flow admits a uniform Ricci lower bound, the conjecture was established by Guo \cite{Guo17}, and later completely solved by Wang \cite{Wang18} when $\omega_0$ is K\"ahler in arbitrary dimensions. In this paper, we generalize the above results by providing a proof of \cref{conjecture} under a specific local geometric assumption. Our first main result is:

\begin{theorem}\label{main theorem}
   In the setting of \cref{conjecture}, if $\omega_0$ is K\"ahler in an arbitrary neighborhood of $E=\operatorname{Null}(K_X)$, then there are uniform constants $c,C,\alpha,r_0$ such that
   \begin{enumerate}
       \item    For $t$ sufficiently large, we have
$$\operatorname{diam}\left(X,\frac{\omega(t)}{t}\right)\leq C$$
       \item   For any $x\in X$ and $r\in(0,r_0)$, we have
$$
\operatorname{Vol}_{\frac{\omega(t)}{t}}\left(B_{\frac{\omega(t)}{t}}(x,r)\right)\geq cr^{\alpha},
    $$
   where $B_{\omega}(x,r)$ is the geodesic ball in $(X,\omega)$ of radius $r$ centered at $x$.
    \item Given any sequence of times $t_i\to+\infty$, there exists a subsequence, still denoted by $\{t_i\}_i$, such that $\left(X,\frac{\omega(t_i)}{t_i}\right)\to (Z,d)$ in the Gromov-Hausdorff topology for some compact metric space $(Z,d)$.
   \end{enumerate}
\end{theorem}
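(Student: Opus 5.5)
We rescale time and work with $\hat\omega(s)=\omega(t)/t$, which for the Chern-Ricci flow on a manifold with $K_X$ nef and big satisfies a normalized parabolic complex Monge-Amp\`ere equation. By the results of Tosatti-Weinkove, the normalized metric can be written as $\hat\omega(t)=\hat\omega_t+\sqrt{-1}\partial\bar\partial\varphi(t)$. Here $\hat\omega_t=e^{-t}\omega_0+(1-e^{-t})\chi$, with $\chi\in c_1(K_X)$ a smooth semipositive $(1,1)$-form that is big and nef. Under this normalization we have $\hat\omega^n=e^{\dot\varphi+\varphi}\Omega$, up to lower order terms.

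The first step is to record the standard uniform estimates, which we take as known from \cite{TW15,TW22}:
\begin{itemize}
\item $|\varphi|+|\dot\varphi|\le C$;
\item local smooth convergence of $\hat\omega(t)$ to $\omega_{KE}$ on compact subsets of $X\setminus E$;
\item the volume form bound $\hat\omega(t)^n\le C\,\Omega$.
\end{itemize}
In particular the density $F_t=\hat\omega(t)^n/\Omega$ has a uniform $L^\infty$ bound, hence a uniform entropy (indeed $L^p$) bound. This is exactly the input required in the Guo-Phong-Song-Sturm framework \cite{GPSS24a,GPSS24b}.

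The second step, and the heart of the argument, is to establish uniform Green's function estimates for $\hat\omega(t)$. Specifically, we want a lower bound $G_t(x,\cdot)\ge -C$ and an integral bound $\int_X |G_t(x,\cdot)|^{1+\epsilon}\,\hat\omega(t)^n\le C$, with constants independent of $t$.

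We would follow the GPSS strategy. First, solve auxiliary complex Monge-Amp\`ere equations whose right-hand side is a truncation of $G_t$, using the $L^\infty$ estimates for degenerate Hermitian Monge-Amp\`ere equations with respect to the reference family $\hat\omega_t$. Then compare the auxiliary solutions with $G_t$ through the maximum principle.

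The main obstacle is that in Green's formula the Chern Laplacian is not self-adjoint on a non-K\"ahler manifold. One has $\int \Delta u\,\hat\omega^n\neq 0$ in general, and integration by parts produces torsion terms of the form $\partial\hat\omega^{n-1}$. Here the hypothesis enters. Since $\omega_0$ is K\"ahler near $E$, the torsion of $\hat\omega(t)$ is exactly $e^{-t}\partial\omega_0$ and is supported in a compact set $K\Subset X\setminus E$. This holds because $\partial\hat\omega(t)=e^{-t}\partial\omega_0$, as $\chi$ is closed. On $K$ the metrics converge smoothly to $\omega_{KE}$, and every geometric quantity there is uniformly controlled. So we would split each integration by parts into two regions. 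Near $E$ the computation is K\"ahler, and the GPSS arguments apply verbatim. On $K$ the error terms are bounded by $C\int_K(|\nabla u|+|u|)$, which is absorbed using the uniform local geometry. To avoid non-self-adjointness altogether, it may be cleaner to work with the Laplacian of the Gauduchon-type adjoint, or simply to use the $\hat\omega^n$-weighted Green's function of $\Delta$ and estimate its adjoint defect on $K$.

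The remaining steps are then routine. Given the Green's function bounds, the diameter bound in part (1) follows as in \cite{GPSS24a}. One applies Green's formula to a cut-off of the distance function $\rho=d_t(x,\cdot)$, which has $|\nabla\rho|\le1$, and shows that the volume of annuli decays. Part (2), volume non-collapsing with $\operatorname{Vol}(B(x,r))\ge cr^\alpha$, follows from the same Green's function integrability via the GPSS iteration. Part (3) is then Gromov's precompactness theorem. The uniform diameter bound together with the uniform lower volume bound for small balls, combined with the upper bound $\operatorname{Vol}(X,\hat\omega(t))\le C$, gives a uniform bound on the number of disjoint $r$-balls. Hence the family is uniformly totally bounded, and a subsequence converges in the Gromov-Hausdorff topology to a compact metric space $(Z,d)$.
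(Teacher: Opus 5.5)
Your outline has the same architecture as the paper: uniform bounds on $\varphi,\dot\varphi$ and $\omega_t^n$, GPSS-type Green's function estimates via auxiliary Monge--Amp\`ere equations, torsion $e^{-t}\partial\omega_0$ supported where the flow converges smoothly, Green's formula with distance functions, and Gromov precompactness. But the one step that is new in the Hermitian setting is only asserted, and as you formulate it, it does not go through.

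The torsion enters Green's formula through the coupling term $\int_X G_t(x,y)\langle\nabla v(y),\theta_t^\#(y)\rangle\,\omega_t^n(y)$. Controlling it requires $G_t(x,\cdot)$ to be bounded on $\operatorname{supp}\theta_t$, and for the lower bound also $\nabla_yG_t(x,\cdot)$. This fails when the pole $x$ lies in that support, so Green's function bounds ``uniform in $x\in X$'' are not what this mechanism delivers. The paper proves the lower bound, the $L^{1+\varepsilon}$ bound and the gradient bounds only for poles in a K\"ahler neighbourhood $U_2$ of $E$, where Harnack gives $G_t(x,\cdot)$ bounded on $X\setminus U_1$ (see the proof of \cref{lem:inf of green} and \cref{rmk:uni bound of G_t}). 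It then reduces the diameter and non-collapsing statements to poles in $U_2$, using the uniform equivalence of $\omega_t$ outside $U_3$. That reduction is missing from your plan. Also, the GPSS steps (maximum-principle comparison, Green's formula at the pole, the blow-up argument for $L^1$ bounds of Laplace solutions) are global. They cannot be applied ``verbatim near $E$''; only the torsion terms localize.

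Your claim that the error terms $C\int_K(|\nabla u|+|u|)$ are ``absorbed using the uniform local geometry'' is circular. The functions involved are the auxiliary potentials $\psi_{t,k}$ and the solution $u_{t,k}$ of $\Delta_{\omega_t}u_{t,k}=-H_{t,k}^{\varepsilon}+c'_{t,k}$, whose right-hand side is only bounded in $L^1$. Nothing a priori controls $u_{t,k}$ or its gradient on $K$; bounding $u_{t,k}$ is precisely the goal, and interior estimates on $K$ only trade $\nabla u$ for $\|u\|_{L^1}$ on a neighbourhood. The paper closes the loop in three steps:
\begin{enumerate}
\item a cutoff energy estimate for $\psi_{t,k}$ using the bounded mass property;
\item the localized bound $\int\rho^2|\nabla u_{t,k}|^2\omega_t^n\le C(\|u_{t,k}\|_{L^1}+\|u_{t,k}\|_{L^1}^2)+C$;
\item a global $L^1$ bound for $u_{t,k}$, obtained by integrating Green's formula in the pole and applying Fubini with the $L^1$ bound of $G_t$ valid for every pole (\cref{L^1 bound of G}). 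This closes only because the torsion carries the factor $e^{-t}$.
\end{enumerate}

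You also omit two Hermitian-specific normalizations that must be uniform in $t$.
\begin{itemize}
\item The twist constant: $\Delta_{\omega_t}u=h$ is solvable only modulo constants, via orthogonality to the Gauduchon density $\rho_t$. The paper bounds $\rho_t$ by the maximum principle in the K\"ahler region (\cref{twist constant 2}).
\item The normalizing constants of the auxiliary Hermitian Monge--Amp\`ere equations, which need the positive-volume and bounded-mass properties of $X$. These same properties give the uniform bound on $\varphi$ in \cref{lower bound of varphi(t)}; Gill's estimates alone only give $\varphi\ge\delta\psi-C_\delta$.
\end{itemize}
Without these ingredients the $L^{1+\varepsilon}$ and gradient bounds for $G_t$, and hence parts (1) and (2), are not established.
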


\begin{example}
In the case $n=2$ (or in arbitrary dimension provided that $X$ is K\"ahler), one can construct numerous examples of Hermitian metrics that are K\"ahler near the analytic subvariety $E$ via conformal changes. Namely, given a K\"ahler metric $\omega_X$ on $X$ and a smooth function $f\in C^\infty(X)$ such that $f|_{U}\equiv1$, where $U$ is a neighborhood of $E=\operatorname{Null}(K_X)$, the resulting metric $\omega=e^f\omega_X$ will then satisfy our assumption.

    Furthermore, one can construct a global metric $\omega_0$ by gluing $\omega$ to an arbitrary Hermitian metric $\omega_1$ using a partition of unity: $\omega_0 = \eta\cdot \omega + (1-\eta)\omega_1$, where $\eta$ is a cutoff function supported in $U$. This $\omega_0$ is strictly K\"ahler in a neighborhood of $E$ and Hermitian elsewhere. 
\end{example}
\begin{example}\label{ex:nonkahler-example}
We can also give a non-K\"ahler example satisfying the assumption in \cref{main theorem}.
By \cite[Theorem C(i)]{PRS14}, there exists a hypersurface $Y$ of degree $d\geq6$ such that
\[
Y\subset \mathbb{P}^4
\]
with a unique ordinary double point \(p\) and no other singularities. Moreover, by
\cite[Theorem 4.1]{PRS14} (see also \cite[Example 2.8]{PRS14}), \(Y\) is factorial, hence
\(\mathbb{Q}\)-factorial.

Let
\[
f:X\to Y
\]
be one of the analytic small resolutions of the node \(p\). By the standard local model of a
threefold ordinary double point (cf. \cite[Example 3.22]{CHNP13}), \(X\) is smooth and the exceptional locus is a single rational
curve
\[
C:=\operatorname{Exc}(f)\cong \mathbb{P}^1,
\]
with normal bundle \(N_{C/X}\cong \mathcal{O}_{\mathbb{P}^1}(-1)\oplus
\mathcal{O}_{\mathbb{P}^1}(-1)\). Furthermore, a threefold
ordinary double point is a Gorenstein terminal singularity by \cite[Example 3.22]{CHNP13} again.

Since \(f\) is small, it is crepant, so
\[
K_X=f^*K_Y.
\]
On the other hand, by the adjunction formula for hypersurfaces in \(\mathbb{P}^4\),
\[
K_Y=(K_{\mathbb{P}^4}+Y)|_Y \cong \mathcal{O}_Y(d-5),
\]
which is ample because $d\geq6$. Hence \(K_X=f^*\mathcal{O}_Y(d-5)\) is nef and big and $X$ is Moishezon. Moreover, \(X\) is not K\"ahler. Indeed, since \(Y\) is
\(\mathbb{Q}\)-factorial, it admits no nontrivial projective small birational morphism by
\cite[Remark 3.13]{CHNP13}. Since a Moishezon manifold is K\"ahler if and only if it is projective, it follows that \(X\) is indeed non-K\"ahler, and therefore \(X\) is a
smooth Hermitian minimal model of general type.

We claim that
\[
\operatorname{Null}(K_X)=C.
\]
Indeed, since \(f(C)=\{p\}\), we have
\[
K_X\cdot C=f^*K_Y\cdot C=0.
\]
Now let \(V\subset X\) be any irreducible positive-dimensional analytic subvariety with
\(V\neq C\). Since \(f\) is small, it has no exceptional divisors, and thus \(f(V)\) is birational to \(V\). Therefore,
\[
\int_V c_1(K_X)^{\dim V}
=
\int_{f(V)} c_1(K_Y)^{\dim V}>0,
\]
because \(K_Y\) is ample. This proves the claim.

Finally, by the explicit local analytic description of the small resolution of an ordinary
double point in \cite[Example 3.22]{CHNP13}, a neighborhood of \(C\) in \(X\) is realized as
an open subset of a smooth submanifold (a smooth complete intersection) of \(\mathbb{C}^4\times\mathbb{P}^1\). Since
\(\mathbb{C}^4\times\mathbb{P}^1\) is K\"ahler, it follows that \(C=\operatorname{Null}(K_X)\)
admits a K\"ahler neighborhood. Hence \(X\) provides a non-K\"ahler example satisfying the
assumption of \cref{main theorem}.
\end{example}
Following \cite{GPSS24a}, our proof of \cref{main theorem} relies primarily on the Green’s function estimates and $L^\infty$-estimates for complex Monge-Amp\`ere equations. For the $L^\infty$-estimates, we adopt the recent results established in \cite{PSWZ25}, replacing the standard estimates from \cite{GPTW24}, which sharply generalized \cite{Yau78}, \cite{Koł98}, and \cite{EGZ09} in the K\"ahler setting. The central analytic difficulty lies in establishing the $L^{1+\varepsilon}$-estimates for the Green’s functions. Unlike the K\"ahler case, the Green’s formula in our Hermitian setting contains non-trivial torsion terms. To close the estimates, one must control the coupling between the torsion form and the gradients of geometric quantities, specifically the gradients of solutions to the Laplacian along the Chern-Ricci flow and the gradients of the Monge-Amp\`ere potentials. 

This is where our local K\"ahler hypothesis plays a pivotal role. Since we assume that the initial metric $\omega_0$ is K\"ahler in a neighborhood $U$ of the null locus $E = \operatorname{Null}(K_X)$, the torsion forms vanish identically within $U$ along the flow. Consequently, the gradient-torsion coupling terms only need to be controlled strictly outside of $U$. In this region, the flow is known to converge smoothly by \cite{Gill13}, and the geometry is non-degenerate. To fully exploit this, we introduce cutoff functions supported exclusively in this non-degenerate region to perform a localized integration-by-parts argument. This step is crucial, as it allows us to bound the localized $L^2$-gradient norms of the auxiliary solutions strictly by their $L^1$-norms. By substituting these Sobolev-type controls back into the Green's representation formula and applying Fubini's theorem to swap the order of integration, we effectively decouple the gradient terms. This reduces the problem to the $L^1$-integrability of the Green's function, enabling us to absorb the torsion error terms and successfully close the $L^{1+\varepsilon}$-estimates.

Consequently, our strategy centers on establishing a priori estimates for the Green's function $G(x,y)$ specifically when the pole $x$ lies within the K\"ahler neighborhood $U$ of $E$. A key reduction in our analysis is that it suffices to establish uniform diameter and volume non-collapsing estimates restricted solely to this neighborhood $U$. Since the exterior region is strictly non-degenerate, securing these local bounds near the null locus naturally yields global geometric control, allowing us to complete the proof of the conjecture.

Our second main result is to establish the uniqueness of the sequential Gromov-Hausdorff limits in \cref{main theorem}:
\begin{theorem}\label{main thm:uniqueness}
    In the setting of \cref{conjecture}, assume that the underlying manifold $X$ is K\"ahler (this is automatically true when $n=2$) and that $\omega_0$ is K\"ahler in a neighborhood of $E$, then $\left(X,\frac{\omega(t)}{t}\right)$ converges in the Gromov-Hausdorff topology to $\overline{(X\setminus E,\omega_{KE})}$, which is homeomorphic to the unique canonical model $X_{can}$ of $X$. 
\end{theorem}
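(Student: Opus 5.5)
The plan is to follow the strategy outlined in the abstract: compare the rescaled flow distance $d_t$ of $\frac{\omega(t)}{t}$ with the distance $d_{KE}$ of the canonical limit $\overline{(X\setminus E,\omega_{KE})}$. By \cref{main theorem}, every sequence $t_i\to\infty$ has a subsequence with $(X,\frac{\omega(t_i)}{t_i})\to(Z,d)$. It therefore suffices to show that every such limit $(Z,d)$ is isometric to $\overline{(X\setminus E,\omega_{KE})}$. The latter is a fixed space, already known (Song, Wang, and others) to be homeomorphic to $X_{can}$ when $X$ is K\"ahler. Uniqueness of subsequential limits then gives convergence of the whole family.

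\textbf{Step 1: convergence away from $E$.} Since $X$ is K\"ahler, $K_X$ nef and big gives, by Gill and Tosatti--Weinkove, smooth local convergence $\frac{\omega(t)}{t}\to\omega_{KE}$ on compact subsets $K\Subset X\setminus E$. The flow has the usual form $\omega(t)=\hat\omega_t+i\partial\bar\partial\varphi$ with reference forms in $-\mathrm{Ric}(\Omega)$, and for the $C^\infty_{loc}$ convergence the torsion is harmless. Consequently the limit contains an open dense subset $Z^\circ$ which is locally isometric to $(X\setminus E,\omega_{KE})$. It also gives the inequality $d\le d_{KE}$ on $X\setminus E$: join two points by a near-minimal $\omega_{KE}$-curve, push it into a compact $K$ using the fact that $E$ has real codimension at least $2$ together with the volume control, and compare lengths.

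\textbf{Step 2: the reverse inequality $d_{KE}\le d+\epsilon$.} This is the main obstacle. Flow-minimizing geodesics could pass through the shrinking neighbourhood of $E$ and become much shorter than $\omega_{KE}$-paths. To rule this out I introduce Perelman's reduced length for the Chern--Ricci flow, in the backward time parameter adapted to the normalized flow. There are two inputs.
\begin{enumerate}
\item A uniform bound $|R^C(\omega(t)/t)|\le C$ for the Chern scalar curvature. The lower bound comes from the maximum principle, and the upper bound from Song--Tian-type parabolic Schwarz arguments. In both, the local K\"ahler hypothesis kills the torsion terms near $E$, where the metric is K\"ahler along the flow.
\item An almost monotonicity formula for the reduced volume. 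The torsion error terms are supported where the flow is non-degenerate, so they contribute only $o(1)$ in time.
\end{enumerate}
From these I would deduce an almost-avoidance principle: for fixed points $x,y\in X\setminus E$, the set of $\mathcal{L}$-minimizing curves (or flow geodesics) that enter an $\omega_0$-tubular neighbourhood $U_\delta$ of $E$ has small measure. The reason is that the reduced volume is almost monotone and bounded below on $X\setminus U_\delta$, while $\mathrm{Vol}(U_\delta)\to0$. A Cheeger--Colding segment-type inequality, using the Bishop-type volume bounds from \cref{main theorem}(2), then produces nearby points $x',y'$ joined by almost-minimal curves avoiding $U_\delta$. Along such curves the smooth convergence of Step 1 applies, giving $d_{KE}(x,y)\le d(x,y)+\epsilon(\delta)$.

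\textbf{Step 3: conclusion.} Steps 1 and 2 show that $(X\setminus E,d_{KE})$ embeds isometrically and densely in $(Z,d)$. The image is dense because the diameter bound and volume non-collapsing of \cref{main theorem} show that the $d$-neighbourhood of the image of $U_\delta$ has measure, hence radius, tending to $0$. Therefore $Z$ is isometric to the metric completion $\overline{(X\setminus E,\omega_{KE})}$. Since this space is independent of the subsequence, the full flow converges. The homeomorphism with $X_{can}$ I would quote from the K\"ahler theory (Song; Wang), which applies because $X$ is K\"ahler and $\omega_{KE}$ depends only on $K_X$.
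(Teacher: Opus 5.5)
Your Steps 1 and 3 are fine and agree with the paper's reduction: the easy inequality comes from smooth convergence, and density of the regular part comes from volume non-collapsing, as in \cref{LTZ lemma 4.1}. The two inputs you list for Step 2 are also the right ones. The gap is in how Step 2 turns them into $d_{can}\le d_T+\Psi$.

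First, there is no Cheeger--Colding segment inequality for $g(T)$. That inequality rests on a Jacobian comparison along geodesics, which comes from a Ricci lower bound, and the flow has none. The lower bound $\operatorname{Vol}(B(x,r))\ge cr^{\alpha}$ of \cref{volume non-collapsing estiamte} (with $\alpha>2n$) gives nothing of this kind. This is exactly why one works with the $\mathcal{L}$-exponential map instead, whose Jacobian is controlled by \cref{lem:almost monotonicity}. But $\mathcal{L}$-geodesics are not $d_T$-geodesics; your phrase ``$\mathcal{L}$-minimizing curves (or flow geodesics)'' blurs this. Information about $\mathcal{L}$-geodesics reaches $d_T$ only through the subsolution property $(\partial_{\bar\tau}+\Delta)\bar L\le C(\bar L+1)$ of \cref{lem:evolution of L bar}. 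The paper's route is:
\begin{enumerate}
\item Prove $L_T(q,\bar\tau)\ge\frac{1}{2\sqrt{\bar\tau}}d_{can}(f(p),f(q))^2-\Psi$ (\cref{prop:key estimate}). This is done by bounding the $d_{can}$-length of $f\circ\gamma$ for good $\mathcal{L}$-geodesics ending near $q$, with the local gradient bound \cref{lem:local_gradient_L} used to move the endpoint.
\item Integrate the differential inequality against $f^*\chi$ over $[0,\bar\tau]$. Here $\Delta f^*\chi$ is controlled by \cref{parabolic Schwarz} and $\partial_\tau\tilde\omega^n$ by the Chern scalar curvature bound.
\item Use $\bar L(\cdot,0)=d_T(p,\cdot)^2$ to conclude $d_{can}\le d_T+\Psi$.
\end{enumerate}
Your plan has no substitute for this bridge.

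Second, the avoidance you aim for, namely no entry into a fixed neighbourhood of $E$, is stronger than what the reduced-volume argument gives, and $\operatorname{Vol}(U_\delta)\to0$ is not enough. The volume bound available for $d_{can}$-neighbourhoods of the singular set is only of codimension two, $\operatorname{Vol}(U_\eta)\le C_\rho\eta^{2-\rho}$. Meanwhile a curve crossing from $\partial U_\eta$ into $U_{\eta/2}$ is only guaranteed to spend time $\gtrsim\bar\tau\eta^2$ in $U_\eta$ (Cauchy--Schwarz against the $\mathcal{L}$-length). The Fubini argument therefore bounds the measure of geodesics with even one excursion only by $Ce^{C/\bar\tau}\eta^{-\rho}$, which is not small.

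What can be proved (\cref{prop:almost-avoidance-D1}, \cref{prop:almost aviodable:final}) is weaker: most $\mathcal{L}$-geodesics make at most $\eta^{-\sigma}$ excursions into shrinking $d_{can}$-neighbourhoods of the singular set and spend little total time there. These excursions are not avoided but paid for. Each one is replaced by a curve of $d_{can}$-length at most $C\bigl(\int_{\tau_i^-}^{\tau_i^+}|\dot\gamma|_{\tilde g(\tau)}\,d\tau\bigr)^{\alpha}$, using the bi-H\"older bound $d_{can}\le Cd_Y^{\alpha}$ (\cref{Holder of d_can}) together with \cref{parabolic Schwarz}. A discrete H\"older inequality, with $\sigma$ tuned to $\alpha$, then makes the total small. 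This argument also needs:
\begin{itemize}
\item Minkowski bounds for both pieces of $D$: Cheeger--Naber for the metric singular part $D_1$, and $L^{p_0}$-integrability of $\omega_{can}^n/\omega_Y^n$ on $\mathcal{R}_{\varepsilon_1}$ for the rest;
\item the exit-time bound \cref{lem:L-geodesic-exit-time}, which is what keeps the torsion factor $e^{-C\int l\,d\tau}$ in \cref{lem:almost monotonicity} bounded (it is not $o(1)$).
\end{itemize}
None of these appear in your plan.
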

\begin{remark}
The homeomorphism between $\overline{(X\setminus E,\omega_{KE})}$ and $X_{can}$ was already established by Song in \cite{Song14}.
\end{remark}
It is worth emphasizing that the assumption of $X$ being K\"ahler is strictly necessary here to circumvent fundamental algebraic and analytic obstructions. For instance, constructing a holomorphic birational morphism from $X$ to $X_{can}$ via holomorphic sections relies heavily on Kawamata's basepoint-free theorem and Iitaka's theorem, as described in \cite{Wang18} and \cite{LTZ26}. Moreover, the subsequent arguments intrinsically depend on the existence of partial $C^0$-estimates. These foundational results remain largely open in the general Hermitian setting.

As for the full Gromov-Hausdorff convergence, \cref{main thm:uniqueness} was completely proved in the K\"ahler setting by Wang \cite{Wang18}. Very recently, two alternative proofs have emerged: Jian-Song \cite{JS26} provided a streamlined proof utilizing Bamler's recent compactness theory for Ricci flows, and Lee-Tosatti-Zhang \cite{LTZ26} developed a completely different approach based on Perelman's $\mathcal{L}$-length that applies to arbitrary Kodaira dimensions. Let us briefly outline these three approaches.

In \cite{Wang18}, the author applied the methods of Chen-Wang \cite{CW17, CW19, CW20} to establish a Cheeger-Gromov type convergence of the KRF to a compact metric space $\mathcal{R}\cup\mathcal{S}$ with $\mathcal{R}$ an open convex K\"ahler-Einstein manifold. The weak convexity essentially follows from the observation that reduced geodesics degenerate to standard geodesics in the Einstein limit, while the strong convexity follows by invoking the arguments of \cite[Proposition 2.52]{CW17} (which is itself based on \cite{CN12}) to establish the global Harnack inequality for the volume radius. Once this limit space is established, techniques extending from \cite{DS14, Guo17, Song14} can be adapted to show that $\mathcal{R}$ is isometric to the regular part of the canonical model $X_{can}$. Jian and Song \cite{JS26} approach the convergence by leveraging Bamler's $\mathbb{F}$-convergence \cite{Bam20a, Bam20b, Bam23} and recent complex compactness results to give an alternative proof of the regularity and convexity of the limit space.

Conversely, Lee-Tosatti-Zhang \cite{LTZ26} circumvent the need to extract an intermediate Cheeger-Gromov limit space. Instead, they directly utilize Perelman's $\mathcal{L}$-length and reduced volume to compare the flow distance $d_T$ with the limit distance $d_{can}$ on the canonical model. The core of their argument is an almost-avoidance'' principle, which demonstrates that most minimizing $\mathcal{L}$-geodesics do not spend much time in the highly degenerate region near the singularities.

In this paper, we will mainly follow the approach in \cite{LTZ26}. While we believe the methods in \cite{Wang18} could also be adapted to our case, the approach of \cite{LTZ26} proves to be particularly direct in the Hermitian setting. Specifically, it allows us to bypass the highly technical construction of a robust intermediate limit space in the presence of non-K\"ahler torsion. The primary analytic difficulty we face here is that in the Hermitian case, Perelman's Laplacian estimates of the reduced length $L(q,\bar{\tau})$ and the exact monotonicity of the reduced volume $\tilde{V}(\tau)$ both fail. However, in our locally K\"ahler case, the torsion terms given by the difference of the Chern connection and the Levi-Civita connection are uniformly bounded, so one expects to establish almost versions of Perelman's analogous results. Namely, for a space-time curve $\gamma:[\tau_1,\tau_2]\to X$, we define its $\mathcal{L}$-length as
\begin{equation}
    \mathcal{L}(\gamma):=\int_{\tau_1}^{\tau_2}\sqrt{\tau}\left(S(\gamma(\tau))+|\dot{\gamma}(\tau)|^2_{g(\tau)}\right)d\tau,
\end{equation}
where we use the Chern scalar curvature $S$ under the Chern-Ricci flow instead of the Riemannian scalar curvature $R$ in \cite{Per02}. We then check all kinds of variational properties of the $\mathcal{L}$-length, including the first and second variations, the gradient and Laplacian estimates, and derive the corresponding $\mathcal{L}$-geodesic equations and $\mathcal{L}$-Jacobi fields in the Hermitian setting. Among which the uniform Chern scalar curvature bound plays a key role:
\begin{theorem}\label{introduction:scalar curvature bound}
    Let $X$ be a compact K\"ahler minimal model of general type and let $E$ be the null locus of $K_X$. Let $\omega_0$ be a Hermitian metric on $X$, which is moreover K\"ahler in a small neighborhood of $E$. Then, the Chern scalar curvature $S_C(t)$ of $\frac{\omega(t)}{t}$ under the Chern-Ricci flow \eqref{main flow, introduction} starting from $\omega_0$ is uniformly bounded.
\end{theorem}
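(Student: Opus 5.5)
The plan is to reduce \cref{introduction:scalar curvature bound} to a parabolic complex Monge--Amp\`ere equation and to run Zhang's maximum-principle argument for the K\"ahler--Ricci flow on minimal models of general type. The point is that every torsion term is supported in the region where the flow is already known to be smooth and non-degenerate.

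\textbf{Set-up.} Set $s=\log t$ and $\tilde\omega(s)=\omega(t)/t$, so that $\partial_s\tilde\omega=-Ric^C(\tilde\omega)-\tilde\omega$. Since $X$ is K\"ahler and $K_X$ is nef and big, Kawamata's basepoint-free theorem makes $K_X$ semiample. This gives a holomorphic map $\Phi:X\to\mathbb{P}^N$ onto $X_{can}$, and a smooth semipositive representative $\chi=\frac1m\Phi^*\omega_{FS}$ of $-c_1^{BC}(X)$. Choose a volume form $\Omega$ with $Ric^C(\Omega)=-\chi$.

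Write $\tilde\omega=\hat\omega_s+i\partial\bar\partial\varphi$ with $\hat\omega_s=e^{-s}\omega_0+(1-e^{-s})\chi$. The flow becomes
\[
\partial_s\varphi=\log\frac{\tilde\omega^n}{\Omega}-\varphi .
\]
Put $u=\partial_s\varphi$. From \cite{TW15, Gill13} and the $L^\infty$-estimates used for \cref{main theorem}, I would take as known:
\begin{itemize}
\item the bounds $|\varphi|,|u|\le C$;
\item smooth convergence of $\tilde\omega$ on compact subsets of $X\setminus E$, with uniform equivalence to $\omega_0$ on $X\setminus U$.
\end{itemize}

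Two structural facts drive the argument.
\begin{enumerate}
\item $d\tilde\omega=e^{-s}d\omega_0$. Hence the torsion of $\tilde\omega$ vanishes identically on the K\"ahler neighborhood $U$ of $E$, and on $X\setminus U$ it is bounded in $\tilde\omega$-norm by the uniform equivalence above.
\item A direct computation gives
\[
S=-\operatorname{tr}_{\tilde\omega}\chi-\Delta u-n+\operatorname{tr}_{\tilde\omega}\hat\omega_s .
\]
Combining this with $\Delta\varphi=n-\operatorname{tr}_{\tilde\omega}\hat\omega_s$ (Chern Laplacian), $S$ is controlled by $\operatorname{tr}_{\tilde\omega}\chi$, $\Delta u$ and $\Delta\varphi$.
\end{enumerate}

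\textbf{Steps, in order.}
\begin{enumerate}
\item[(i)] \emph{Parabolic Schwarz lemma} $\operatorname{tr}_{\tilde\omega}\chi\le C$. Apply the maximum principle to $\log\operatorname{tr}_{\tilde\omega}\chi-Au$. If the maximum lies in $U$, the computation is the K\"ahler one of Song--Tian, using the bisectional curvature bound of $\omega_{FS}$. If it lies in $X\setminus U$, the quantity is bounded directly by the smooth convergence.
\item[(ii)] \emph{Lower bound $S\ge -C$.} Use the evolution $(\partial_s-\Delta)S=|Ric^C|^2+S+(\text{torsion terms})$. At a minimum point in $U$ the torsion terms vanish and the usual argument applies; at a minimum point outside $U$ we use the local bounds again.
\item[(iii)] \emph{Upper bound.} Following Zhang, set $F=\frac{|\nabla u|^2}{A-u}$ and $G=\frac{-\Delta u}{A-u}$, and bound $F$ and then $G+BF$ by the maximum principle. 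Step (i) and $|u|\le C$ control the reaction terms. The same dichotomy applies: at a maximum in $U$ all formulas are K\"ahler; at a maximum in $X\setminus U$ the value is bounded by \cite{Gill13}. Together with (i) and $\operatorname{tr}_{\tilde\omega}\hat\omega_s\ge0$, this bounds $-\Delta u$ and hence $S$ from above.
\end{enumerate}

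\textbf{Main obstacle.} I expect the difficulty to be in step (iii), specifically the term $\operatorname{tr}_{\tilde\omega}\hat\omega_s$. It contains $e^{-s}\operatorname{tr}_{\tilde\omega}\omega_0$, which is not a priori bounded near $E$. I would first try to absorb it through the identity $\operatorname{tr}_{\tilde\omega}\hat\omega_s=n-\Delta\varphi$ and run the maximum principle on $-\Delta(u+\varphi)$ instead of $-\Delta u$. The evolution of $u+\varphi$ is cleaner: $\partial_s(u+\varphi)=\log\frac{\tilde\omega^n}{\Omega}+u-\varphi$. Alternatively, I would add a multiple of $\operatorname{tr}_{\tilde\omega}\omega_0$ with weight $e^{-s}$ into the test function.

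A second, smaller concern is the use of global cutoff-free test functions. I must verify that the maximum principle only ever needs the local vanishing of torsion at the extremal point, and not any integration by parts over $X\setminus U$.
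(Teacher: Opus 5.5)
Your architecture is the paper's. It consists of a parabolic Schwarz lemma, a lower bound from the evolution of $S_C$, and then Zhang's gradient and Laplacian estimates. It uses the dichotomy that an extremal point lies either in $U$, where $\tilde\omega$ is K\"ahler on an open set so every K\"ahler identity holds, or in $X\setminus U$, where Gill's smooth convergence bounds the test function. This is exactly how \cref{Scalar curvature bound} is argued, and no integration by parts is involved, so your second concern is harmless.

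\textbf{Step (i) fails as written.} The genuine error is the auxiliary function $u=\partial_s\varphi$, and it already breaks step (i), which you did not flag. Differentiating the flow equation gives
\[
(\partial_s-\Delta)\,\partial_s\varphi=-e^{-s}\operatorname{tr}_{\tilde\omega}(\omega_0-\chi)-\partial_s\varphi .
\]
So in $(\partial_s-\Delta)(\log\operatorname{tr}_{\tilde\omega}\chi-Au)$ the term $-Au$ contributes $+Ae^{-s}\operatorname{tr}_{\tilde\omega}(\omega_0-\chi)+Au$.
\begin{itemize}
\item This has the wrong sign and is not a priori bounded near $E$, as you yourself note later.
\item It supplies no $-c\operatorname{tr}_{\tilde\omega}\chi$ to beat the $C\operatorname{tr}_{\tilde\omega}\chi$ coming from the bisectional curvature of $\omega_{FS}$, so the maximum principle yields nothing.
\end{itemize}
The paper (\cref{parabolic Schwarz}) uses $\log\operatorname{tr}_{\tilde\omega}\chi-A\varphi$ instead, because $(\partial_s-\Delta)\varphi=u-n+\operatorname{tr}_{\tilde\omega}\hat\omega_s\ge(1-e^{-s})\operatorname{tr}_{\tilde\omega}\chi-C$. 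Equivalently you may use $v:=u+\varphi=\log(\tilde\omega^n/\Omega)$. It satisfies $(\partial_s-\Delta)v=\operatorname{tr}_{\tilde\omega}\chi-n$ exactly, with the Chern Laplacian and no torsion terms.

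\textbf{Step (iii) needs the same fix.} Your fallback is precisely the paper's choice $v=\varphi+\dot\varphi$. As written, (iii) has two defects.
\begin{itemize}
\item The evolution of $\partial_s\varphi$ carries the uncontrolled term $e^{-s}\operatorname{tr}_{\tilde\omega}\omega_0$ into Zhang's reaction terms.
\item The concluding sentence has the sign backwards. In $S=-\Delta u-\operatorname{tr}_{\tilde\omega}\chi-n+\operatorname{tr}_{\tilde\omega}\hat\omega_s$ you need an upper bound on $\operatorname{tr}_{\tilde\omega}\hat\omega_s$; knowing $\operatorname{tr}_{\tilde\omega}\hat\omega_s\ge0$ does not help.
\end{itemize}
With $v$ both problems disappear: $S_C=-\Delta v-\operatorname{tr}_{\tilde\omega}\chi\le-\Delta v$, and every reaction term involves only $\operatorname{tr}_{\tilde\omega}\chi$, which is bounded by (i).

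\textbf{One ingredient you should not omit.} Zhang's gradient quantity is $\frac{|\nabla v|^2}{A-v}+\operatorname{tr}_{\tilde\omega}\chi$, not $\frac{|\nabla v|^2}{A-v}$ alone; the paper uses $\Psi+\phi$. The reason is that $(\partial_s-\Delta)|\nabla v|^2$ contains $2\operatorname{Re}\langle\nabla v,\nabla\operatorname{tr}_{\tilde\omega}\chi\rangle$. The bound $|\nabla\operatorname{tr}_{\tilde\omega}\chi|^2\le C\operatorname{tr}_{\tilde\omega}\chi\,|\nabla\partial\Phi|^2$ lets this be absorbed only by the good term $-|\nabla\partial\Phi|^2$ in the evolution of $\operatorname{tr}_{\tilde\omega}\chi$ from the Schwarz computation. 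Bounds on $\operatorname{tr}_{\tilde\omega}\chi$ and $|v|$ alone do not control it.

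\textbf{Minor point on step (ii).} There are no torsion terms at all. Since $\partial_s Ric^C=\sqrt{-1}\partial\bar\partial S_C$, one has $(\partial_s-\Delta)S_C=|Ric^C|^2+S_C$ globally, which is \cref{lem:boundedness of partial_t varphi}.
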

When $\omega_0$ is K\"ahler, \cref{introduction:scalar curvature bound} was proved by Zhang \cite{Zhang09}. By proving a parabolic Schwarz lemma under our locally K\"ahler assumption, we demonstrate that the calculations in \cite{Zhang09} can be successfully adapted to our setting.
\begin{remark}
  \cref{introduction:scalar curvature bound} may hold When $X$ is non-K\"ahler as well; however, the current version suffices for our purposes.
\end{remark}
With the uniform scalar curvature bound and the variational properties at hand, we can rigorously define Perelman's reduced distance and reduced volume in our setting. For a fixed basepoint $p \in X$, the reduced distance $L(q, \bar{\tau})$ is defined as the infimum of the $\mathcal{L}$-length among all piecewise smooth curves connecting $p$ to $q$ in time $\bar{\tau}$ (see Section \ref{section Per}). We then denote the reduced length by $l(q,\bar{\tau}) := \frac{1}{2\sqrt{\bar{\tau}}}L(q,\bar{\tau})$ and set $\bar{L}(q,\bar{\tau}) := 2\sqrt{\bar{\tau}}L(q,\bar{\tau})$. Furthermore, by solving the corresponding $\mathcal{L}$-geodesic equation, we naturally define the $\mathcal{L}$-exponential map $\mathcal{L}\exp_\tau: T_p X \to X$ (see \cref{def:L exp map}). This allows us to parameterize the manifold using initial tangent vectors and formulate the reduced volume via its Jacobian $\mathcal{J}(v,\tau)$.

To effectively compare the flow distance with the canonical limit distance, a key analytical step is to establish an evolution estimate for the modified reduced length $\bar{L}$. Unlike the standard Ricci flow, where the corresponding Laplacian bound is strictly $4n$ (cf. \cite[section 7]{Per02}), the presence of non-K\"ahler torsion in our Hermitian setting weakens this to a linear growth bound. Specifically, we show that there is a uniform constant $C$ such that
$$
\left(\frac{\partial}{\partial\bar{\tau}}+\Delta_{g(\bar{\tau})}\right)\bar{L}(\cdot,\bar{\tau})\leq C(\bar{L}(\cdot,\bar{\tau})+1),
$$
where $\Delta_{g(\tau)}$ is the Riemannian Laplacian (see \cref{lem:evolution of L bar} below). Nevertheless, this estimate proves completely adequate for our analysis. Since our subsequent arguments evaluate this quantity strictly away from the degenerate locus $E$, we are able to establish an \textit{a priori} upper bound on $\bar{L}$, effectively absorbing the discrepancy caused by the torsion.

Although the exact monotonicity of the reduced volume established by Perelman \cite{Per02} fails under the Chern-Ricci flow due to the presence of non-K\"ahler torsion, our local K\"ahler assumption ensures that the torsion terms remain uniformly bounded globally. By carefully controlling the torsion terms arising from the difference between the Chern-Ricci curvature and the Riemannian Ricci curvature, and the coupling between the torsion forms and the gradients of the Chern scalar curvature, we are able to establish an \textit{almost} monotonicity formula for the reduced volume. Specifically, for the integrand of the reduced volume, we obtain the following estimate:
$$
\mathcal{J}(v,\tau_1)\geq\mathcal{J}(v,\tau_2)\left(\frac{\tau_1}{\tau_2}\right)^{-n}e^{l(\gamma(\tau_1),\tau_1)-l(\gamma(\tau_2),\tau_2)}\cdot e^{ -C \int_{\tau_1}^{\tau_2} l(\gamma(\tau), \tau) d\tau -C(\tau_2-\tau_1)},
$$
where $l$ is the reduced length, $\mathcal{J}(v,\tau)$ is the Jacobian of the differential of the $\mathcal{L}$-exponential map, $v\in T_pM$, $0\leq\tau_1<\tau_2\leq\bar{\tau}$, and $C$ is a uniform constant (for more details, see \cref{section Per}).

Under the standard Ricci flow, exact monotonicity holds, and the exponential error factor $e^{ -C \int_{\tau_1}^{\tau_2} l(\gamma(\tau), \tau) d\tau -C(\tau_2-\tau_1)}$ on the right-hand side of the inequality is absent. In our setting, this discrepancy is directly induced by the torsion. However, by leveraging a crucial observation from \cite{LT23, LTZ26} (see \cref{lem:L-geodesic-exit-time}), we can guarantee that $\tau_1 \geq C^{-1}\bar{\tau}$ for the relevant geodesics in our arguments. Coupled with the upper bound $l(\cdot,\tau) \leq \frac{C}{\tau}$, this effectively controls the time integral. Consequently, we can uniformly bound the exponential error factor, thereby recovering a robust almost-monotonicity principle for the Chern-Ricci flow.

With this almost monotonicity in hand, we can then adapt the almost-avoidance principle due to \cite{LTZ26} to our setting. A further analytical difficulty arises when establishing the spatial gradient bound for the reduced length $L$, which is crucial for the final distance comparison. Under the standard Ricci flow on manifolds with non-negative curvature operators, this bound is typically derived from Hamilton's global matrix Harnack inequality \cite{Ham95} by Perelman \cite{Per02}. However, this global condition fails in our setting due to the unboundedness of curvatures and the presence of non-K\"ahler torsion. To overcome this gap, we exploit the local uniform bounds of the geometry away from the singular set and develop a backward exit-time argument for the ODE of $\mathcal{L}$-geodesics (see \cref{lem:local_gradient_L}). By doing so, we establish a local spatial gradient estimate for the reduced length, effectively bypassing the need for a global Harnack inequality. 

Based on these estimates, by analyzing the time-integrated volume of the tubular neighborhood of the singular set $D\subset X_{can}$ (which is known to have Minkowski dimension at most $2n-2$), we show that the vast majority of minimizing $\mathcal{L}$-geodesics do not spend much time in the highly degenerate region near the singularities. This crucial avoidance estimate allows us to effectively compare the space-time flow distance $d_T$ with the canonical limit distance $d_{can}$. 

\begin{remark}
We conclude by noting that our techniques may also be applicable to the collapsing case. Specifically, when $X$ is K\"ahler and $K_X$ is semiample, Iitaka's theorem provides a holomorphic fibration $f: X \to Y$ onto a normal projective variety $Y$ with $\dim Y = \kappa(X) = m$. This map restricts to a proper holomorphic submersion over $X \setminus f^{-1}(D)$, where $D \subset Y$ is the discriminant locus of $f$, with $(n-m)$-dimensional Calabi-Yau fibers. Let $\omega_0$ be an initial Hermitian metric. If it can be shown that $\omega(t)$ converges to $f^*\omega_{can}$ in $C^\infty_{loc}(X \setminus f^{-1}(D))$, extending the K\"ahler case results of \cite{TWY18, HLT25}, then the arguments herein may be adapted to establish the Gromov-Hausdorff convergence of $(X, \omega(t))$ to $Y$ when $\omega_0$ is K\"ahler in a neighborhood of $f^{-1}(D)$. The general Hermitian case remains largely open.
\end{remark}

The paper is organized as follows. In Section 2, we collect some preliminary results and fundamental estimates for the Chern-Ricci flow, including the recent $L^\infty$-estimates for complex Monge-Amp\`ere equations. Section 3 sets up the linear elliptic tools, explicitly relating the real and complex Laplacians via torsion forms. In Section 4, we establish uniform bounds for solutions to a family of Laplacian equations along the flow. Section 5 is devoted to the core analysis of this paper, where we prove uniform $L^1$, $L^p$, and gradient estimates for the Green's functions. In Section 6, we combine these Green's function estimates to prove the uniform diameter bounds and volume non-collapsing estimates (\cref{main theorem}). Section 7 derives a parabolic Schwarz lemma and the uniform Chern scalar curvature bound. Section 8 introduces Perelman's $\mathcal{L}$-length and reduced volume in the Chern-Ricci flow setting, establishing the almost monotonicity. Section 9 reduces the the Gromov-Hausdorff convergence to a key distance estimate. Section 10 proves the almost avoidance principle for the singular set. Finally, in Section 11, we synthesize these ingredients to complete the proof of the key distance estimate and \cref{main thm:uniqueness}.

\subsection*{Acknowledgements}
The author expresses sincere gratitude to his advisor, Professor Zhiwei Wang, for his continuous support and encouragement. The author would also like to thank Professors Jian Song, Valentino Tosatti, Bing Wang, and Kewei Zhang for their interest in this work, as well as their helpful comments and suggestions. 

\section{Preliminary estimates for the Chern-Ricci flow}
In this section, we first recall some known estimates for the immortal Chern-Ricci flow established in \cite{TW15} and \cite{Gill13} and improve them using our new a priori estimates of complex Monge-Amp\`ere equations. Let $(X,\omega_X)$ be a smooth Hermitian minimal model of general type, i.e., $(X,\omega_X)$ is a compact Hermitian manifold and the canonical line bundle $K_X$ is nef and big. In this case, $X$ is a Moishezon manifold and hence lies in the Fujiki class $\mathcal{C}$. Let $\omega_0$ be an arbitrary Hermitian metric on $X$, we consider the following Chern-Ricci flow starting from $\omega_0$:
\begin{equation}\label{main flow}
\left\{
    \begin{aligned}
        &\frac{\partial}{\partial t}\omega(t)=-Ric^C
        (\omega(t)),\\
        &\omega(0)=\omega_0.
    \end{aligned}
    \right.
\end{equation}
Here $Ric^C(\omega):=-\sqrt{-1}\partial\bar{\partial}\operatorname{log}\omega^n$ denotes the Chern-Ricci curvature of $\omega$. Since $K_X$ is nef, the flow \eqref{main flow} admits a smooth long-time solution $\omega(t)$ on $X$ by \cite[Theorem 2.1]{TWY15}. In this setting, it is more convenient to make a change of variable $\omega(t):=\frac{\tilde{\omega}(s)}{s+1}$ with the new parameter $t:=\operatorname{log}(1+s)$. Note that if $\tilde{\omega}(s)$ solves \eqref{main flow}, then $\omega(t)$ solves the following normalized Chern-Ricci flow:
\begin{equation}\label{normalized main flow}
\left\{
\begin{aligned}        
&\frac{\partial}{\partial t}\omega(t)=-Ric^C        
(\omega(t))-\omega(t),\\        
&\omega(0)=\omega_0.    
\end{aligned}
\right.
\end{equation}
We recall the definition of null locus:
\begin{definition}\label{def: Null locus}
    The null locus $\operatorname{Null}(K_X)$ of the big line bundle $K_X$ is defined to be the union of all positive-dimensional irreducible analytic subvarieties $V\subset X$ such that if $\operatorname{dim}_{\mathbb{C}}V=k$, then
    $$
\int_V(c_1^{BC}(K_X))^k>0.
    $$
We remark that when $X$ is K\"ahler or lies in the Fujiki class $\mathcal{C}$ (our case), it was shown by Collins-Tosatti \cite[Theorem 1.1]{CT15} that $\operatorname{Null}(K_X)$ coincides with the non-K\"ahler locus of $K_X$ and hence is itself a proper analytic subvariety of $X$. When $X$ is a general Hermitian manifold, some progress towards this issue was made by Dang in \cite{Dang24}.   
\end{definition}

The following convergence result was shown by Gill:
\begin{theorem}\cite[Theorem 1.1]{Gill13} \label{smooth convergence}
    Let $(X,\omega_0)$ be a smooth Hermitian minimal model of general type. Then, the normalized Chern-Ricci flow \eqref{normalized main flow} has a smooth solution $\omega(t)$ for all time and there exists a closed positive current $\omega_{KE}$ on $X$ that restricts to a smooth K\"ahler-Einstein metric on $X\setminus E$ with the following properties:
    \begin{itemize}
        \item $Ric^C(\omega_{KE})=-\omega_{KE}$ on $X\setminus E$,
        \item   $\omega(t)$ converges to $\omega_{KE}$ as positive currents on $X$,
        \item $\omega(t)\xrightarrow{C^\infty_{loc}(X\setminus{E})}\omega_{KE}$.
    \end{itemize}
\end{theorem}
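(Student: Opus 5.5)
Gill's theorem (\cref{smooth convergence}) is stated here as a recalled result. To justify it in our notation, I would reduce the normalized flow \eqref{normalized main flow} to a parabolic complex Monge--Amp\`ere equation and then run the Tsuji--Tian--Zhang scheme. The main new input is the Hermitian $C^0$ estimate.

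\textbf{Reduction.} Fix a smooth volume form $\Omega$ and set $\chi:=-Ric^C(\Omega)=\sqrt{-1}\partial\bar\partial\log\Omega$, a representative of $c_1^{BC}(K_X)$. Put $\hat\omega_t:=e^{-t}\omega_0+(1-e^{-t})\chi$. Then $\omega(t)=\hat\omega_t+\sqrt{-1}\partial\bar\partial\varphi$, where
\[
\partial_t\varphi=\log\frac{(\hat\omega_t+\sqrt{-1}\partial\bar\partial\varphi)^n}{\Omega}-\varphi,\qquad \varphi(0)=0.
\]
Long-time existence follows from \cite[Theorem 2.1]{TWY15}, since $K_X$ is nef.

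\textbf{Estimates away from $E$.} Because $K_X$ is big and $X$ is in Fujiki class $\mathcal C$, Collins--Tosatti \cite{CT15} give a quasi-psh function $\psi$ with analytic singularities exactly along $E$, normalized by $\sup\psi=0$, such that $\chi+\sqrt{-1}\partial\bar\partial\psi\ge\varepsilon\omega_X$.

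The key estimates are the following.
\begin{itemize}
\item A uniform bound $|\varphi|+|\dot\varphi|\le C$. The upper bound comes from the maximum principle. The lower bound comes from a uniform $L^\infty$ estimate for the elliptic degenerate Monge--Amp\`ere equation, in the form valid for Hermitian background metrics (Tosatti--Weinkove, Nguyen).
\item A lower bound $\varphi\ge\delta\psi-C$, obtained from the maximum principle applied to $\varphi-\delta\psi$, using $\hat\omega_t\ge\chi$ up to a decaying error.
\item A second-order estimate $\operatorname{tr}_{\omega_X}\omega(t)\le Ce^{-A\psi}$. This comes from a parabolic Chern--Lu / Aubin--Yau computation for $\log\operatorname{tr}_{\omega_X}\omega-A(\varphi-\delta\psi)$. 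The torsion terms of $\omega_X$ and $\omega_0$ are handled as in \cite{TW15}, by exploiting $\partial\bar\partial$-closedness of the correction: the term $\partial_t\hat\omega_t$ decays, and the torsion of $\omega(t)$ equals that of $\hat\omega_t$.
\end{itemize}
Evans--Krylov--Safonov (Hermitian version) and Schauder estimates then give uniform $C^k_{loc}(X\setminus E)$ bounds for $\omega(t)$.

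\textbf{Convergence.} I expect this to be the main obstacle: the torsion of $\omega(t)$, namely $e^{-t}T(\omega_0)$, only decays rather than vanishes. I would show that $\dot\varphi$ and $\varphi$ converge by combining three facts.
\begin{itemize}
\item Monotonicity: $\partial_t(\varphi+\dot\varphi)=-\operatorname{tr}_\omega(e^{-t}\omega_0)\le0$.
\item The decay of $\dot\varphi$ up to $Cte^{-t}$.
\item The uniform bounds above.
\end{itemize}
Together these give $\varphi(t)\to\varphi_\infty$ in $C^\infty_{loc}(X\setminus E)$ and globally as bounded potentials. The limit $\omega_{KE}=\chi+\sqrt{-1}\partial\bar\partial\varphi_\infty$ is then a closed positive current solving $(\chi+\sqrt{-1}\partial\bar\partial\varphi_\infty)^n=e^{\varphi_\infty}\Omega$. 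This equation is exactly $Ric^C(\omega_{KE})=-\omega_{KE}$ on $X\setminus E$, and $\omega_{KE}$ is K\"ahler there since $\chi$ is closed. Convergence as currents follows from the $L^1$ convergence of bounded potentials.
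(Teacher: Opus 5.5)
The paper does not prove this statement; it quotes it from \cite{Gill13}. Your outline follows the standard route that Gill's proof takes: reduction to the parabolic Monge--Amp\`ere equation, Tsuji-type weighted bounds, a $C^2$ bound with weight $e^{-A\psi}$, local higher-order estimates, and convergence of potentials. Several steps, however, fail as written.

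\textbf{The monotonicity step.} The identity $\partial_t(\varphi+\dot\varphi)=-\operatorname{tr}_\omega(e^{-t}\omega_0)\le0$ is false. In fact $\partial_t(\varphi+\dot\varphi)=\partial_t\log(\omega^n/\Omega)=-S_C-n=\Delta_\omega\dot\varphi-e^{-t}\operatorname{tr}_\omega(\omega_0-\chi)$, which has no sign. What actually drives convergence is the maximum principle applied to
\[
\Big(\frac{\partial}{\partial t}-\Delta_{\omega}\Big)\big((e^t-1)\dot\varphi-\varphi-nt\big)=-\operatorname{tr}_{\omega}\omega_0<0.
\]
This gives $\dot\varphi\le C(1+t)e^{-t}$, so $\varphi(t)+C(1+t)e^{-t}$ is nonincreasing for $t\ge1$ after enlarging $C$. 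The computation uses only the maximum principle for the Chern Laplacian, so the decaying torsion plays no role. Contrary to your expectation, convergence is not where the Hermitian difficulty lies.

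\textbf{Where the difficulty actually is.} It lies in the $C^2$ estimate. At a maximum of $\log\operatorname{tr}_{\omega_X}\omega-A(\varphi-\delta\psi)$, the first-order torsion term $\frac{2}{(\operatorname{tr}_{\omega_X}\omega)^2}\operatorname{Re}\big(g^{k\bar q}T^i_{ik}\partial_{\bar q}\operatorname{tr}_{\omega_X}\omega\big)$, with $T$ the torsion of $\omega_X$, becomes a positive multiple of $\operatorname{Re}\big(g^{k\bar q}T^i_{ik}\partial_{\bar q}(\varphi-\delta\psi)\big)$. This has no sign and must be absorbed despite the singular weight $\psi$. Writing ``handled as in \cite{TW15}'' defers exactly the step that needs work in the non-K\"ahler setting.

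\textbf{The global bound $|\varphi|+|\dot\varphi|\le C$.} This is not justified by what you cite, for two reasons.
\begin{enumerate}
\item For $\varphi$: the $L^\infty$ estimates of Tosatti--Weinkove and Nguyen are for a fixed Hermitian background metric, with constants depending on it. They give nothing uniform along $\hat\omega_t\to\chi$ when $\chi$ is merely nef (or semipositive) and big. Uniformity needs the degenerate Hermitian theory: positivity of $\underline{\operatorname{Vol}}(\chi)$ from \cite{GL22}, plus \cite{PSWZ25} or \cite{GL23}. That is how the paper proves it afterwards in \cref{lower bound of varphi(t)} and \cref{rmk:semi positive rep}, and it takes Gill's weighted bounds as input.
\item For $\dot\varphi$: no elliptic $L^\infty$ estimate yields a lower bound. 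That requires a separate parabolic argument, as in \cref{lem:boundedness of partial_t varphi}, combining $\partial_t(\varphi+\dot\varphi)=-S_C-n\le C$ (from the lower bound on $S_C$) with $|\varphi|\le C$.
\end{enumerate}

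\textbf{The fix.} Drop the global bounds; they are not needed.
\begin{itemize}
\item Use the weighted bounds of \cref{estimates of varphi} (Gill's Lemmas 3.1 and 3.3): $\varphi,\dot\varphi\ge\delta\psi-C_\delta$ and $\omega(t)^n\ge C_\delta^{-1}e^{\delta\psi}\Omega$. Together with $\operatorname{tr}_{\omega_X}\omega\le Ce^{-A\psi}$, they give uniform equivalence of $\omega(t)$ and $\omega_X$ on compact subsets of $X\setminus E$, hence the local higher-order estimates.
\item For convergence as currents: $\varphi(t)+C(1+t)e^{-t}$ decreases to $\varphi_\infty\ge\delta\psi-C_\delta\in L^1$. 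Monotone convergence then gives $\varphi(t)\to\varphi_\infty$ in $L^1$, and therefore $\omega(t)\to\chi+\sqrt{-1}\partial\bar\partial\varphi_\infty$ as currents, with no global lower bound on the potentials.
\end{itemize}
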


Let $\chi\in K_X$ be a nef Bott-Chern $(1,1)$-form represent $-c_1^{BC}(X)$, we can find a smooth volume form $\Omega$ on $X$ such that $\chi=\sqrt{-1}\partial\bar{\partial}\operatorname{log}\Omega$. For our purpose of diameter estimates, there is no loss of generality in assuming that $\omega_0>\chi$, up to a rescaling of $\omega_0$. Indeed, for the unnormalized flow \eqref{main flow}, if we set $\tilde{\omega}_0=\lambda\omega_0$ for some $\lambda>0$, then the solution becomes $\tilde{\omega}(t)=\lambda\omega\left(\frac{t}{\lambda}\right)$. This implies that $\frac{\tilde{\omega}(t)}{t}=\omega(\frac{t}{\lambda})/\frac{t}{\lambda}$, which clearly does not affect the diameter estimate and the volume non-collapsing estimate. For the normalized flow \eqref{normalized main flow}, the initial metric does not change, so the diameter estimate and the volume non-collapsing estimate are also unaffected.

Set $\hat{\omega}_t:=\chi+e^{-t}(\omega_0-\chi)$, it is classical that (cf, \cite{Tos18}) the normalized Chern-Ricci flow can be written as the following parabolic Monge-Amp\`ere equation:
\begin{equation}\label{parabolic MA}
\left\{
    \begin{aligned}        
&\frac{\partial\varphi}{\partial t}=\operatorname{log}\frac{\hat{\omega}_t+\sqrt{-1}\partial\bar{\partial}\varphi}{\Omega}-\varphi,\\        
&\varphi|_{t=0}=0.    
\end{aligned}
\right.
\end{equation}
By \cref{smooth convergence}, there is a $\chi$-plurisubharmonic function $\varphi_{\infty}$, smooth on $X\setminus E$, such that $\varphi(t)\xrightarrow{C^\infty_{loc}(X\setminus{E})}\varphi_{\infty}$. We have the following estimates for $\varphi(t)$ and the volume form $\omega(t)^n$:
\begin{lemma}\cite[Lemma 3.1, Lemma 3.3]{Gill13} \label{estimates of varphi}
Suppose that $\psi$ is a $\chi$-plurisubharmonic function which is smooth away from $E=\operatorname{Null}(K_X)$ such that $\chi+\sqrt{-1}\partial\bar{\partial}\psi\geq\delta\omega_0$ for some $\delta>0$. Then, there is a uniform constant $C_\delta$ such that
    \begin{enumerate}
        \item $\varphi\leq C$.
        \item $\partial_t\varphi\leq C$.
        \item $\varphi\geq\delta\psi-C_\delta$.
        \item $\partial_t\varphi\geq\delta\psi-C_\delta$.
        \item $\frac{1}{C_\delta}e^{\delta\psi}\Omega\leq\omega(t)^n\leq C\Omega$.
    \end{enumerate}
\end{lemma}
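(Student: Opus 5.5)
We plan to prove the five estimates of \cref{estimates of varphi} by the maximum principle applied to the parabolic Monge-Amp\`ere equation \eqref{parabolic MA}. Throughout we use $\dot\varphi:=\partial_t\varphi$, $\omega=\omega(t)=\hat\omega_t+\sqrt{-1}\partial\bar\partial\varphi$, and $\Delta=\operatorname{tr}_\omega\sqrt{-1}\partial\bar\partial$, the complex Laplacian. Since only second-order complex derivatives of scalar functions occur, torsion never enters; this is why the K\"ahler argument carries over verbatim.

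\textbf{Upper bounds (1), (2), (5 upper).} For (1) we look at a point where $\varphi$ attains its maximum on $X\times[0,T]$ with $t>0$. There $\sqrt{-1}\partial\bar\partial\varphi\le0$, and $\hat\omega_t\le\chi+\omega_0\le C\omega_X$. Hence $\dot\varphi\le\log(\hat\omega_t^n/\Omega)-\varphi\le C-\varphi$, and the maximum principle gives $\varphi\le C$.

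For (2) we differentiate the equation:
\[(\partial_t-\Delta)\dot\varphi=-e^{-t}\operatorname{tr}_\omega(\omega_0-\chi)-\dot\varphi.\]
Then $Q:=(e^t-1)\dot\varphi-\varphi-nt$ satisfies
\[(\partial_t-\Delta)Q=-\operatorname{tr}_\omega(\omega_0-\chi)-\dot\varphi+\dot\varphi+\Delta\varphi-n=-\operatorname{tr}_\omega\hat\omega_0+\cdots.\]
The exact form is the standard one: $(\partial_t-\Delta)Q=-\operatorname{tr}_\omega\omega_0<0$. This yields $(e^t-1)\dot\varphi\le C+nt+\varphi$, so $\dot\varphi\le C$ for $t\ge1$. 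For $t\le1$ we instead use $(\partial_t-\Delta)\dot\varphi\le C\operatorname{tr}_\omega\omega_X-\dot\varphi$ together with the bound $\dot\varphi-A\varphi$. The upper bound in (5) then follows from $\omega^n=e^{\dot\varphi+\varphi}\Omega$ and (1), (2).

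\textbf{Lower bounds (3), (4), (5 lower).} These use the barrier $\psi$. For (3) we set $u:=\varphi-\delta\psi$ on $X\setminus E$. Since $\psi\to-\infty$ near $E$ (it is smooth only off $E$, and we may assume it tends to $-\infty$ there, as for the Kodaira-lemma potential), $u$ attains its minimum in $X\setminus E$. At a minimum point,
\[\omega=\hat\omega_t+\sqrt{-1}\partial\bar\partial\varphi\ge\hat\omega_t+\delta\sqrt{-1}\partial\bar\partial\psi\ge(1-\delta)\hat\omega_t\ldots\]
More cleanly, $\hat\omega_t+\delta\sqrt{-1}\partial\bar\partial\psi\ge(1-\delta)\chi+e^{-t}(\omega_0-\chi)+\delta(\chi+\sqrt{-1}\partial\bar\partial\psi)\ge\delta^2\omega_0$, using $\omega_0>\chi$ and $\chi$ nef-semipositive as a form. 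So $\omega^n\ge c_\delta\Omega$, giving $\partial_t u\ge\log c_\delta-\varphi$. Hence $u\ge-C_\delta$.

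For (4) we apply the same trick to $Q':=\dot\varphi+\varphi-\delta'\psi+At$-type combinations, or more directly to $\dot\varphi+A\varphi-\delta\psi$. Using
\[(\partial_t-\Delta)(\dot\varphi+\varphi)=-\operatorname{tr}_\omega\chi+n-\ldots\]
together with $\Delta\psi\ge\operatorname{tr}_\omega(\delta\omega_0-\chi)/\ldots$, the $\operatorname{tr}_\omega\omega_0$ term produced by $\psi$ dominates. At the minimum, the inequality $\operatorname{tr}_\omega\omega_0\ge n(\omega_0^n/\omega^n)^{1/n}\ge ce^{-(\dot\varphi+\varphi)/n}$ bounds $\dot\varphi$ from below in terms of $\delta\psi$. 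The lower bound in (5) then comes from $\omega^n=e^{\dot\varphi+\varphi}\Omega$ combined with (3) and (4), after replacing $\delta$ by $\delta/2$.

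The main obstacle is (4): the minimum-principle computation has to pick the constants so that the good term $\operatorname{tr}_\omega\omega_0$ absorbs both the $\operatorname{tr}_\omega\chi$ term and the $e^{-t}\operatorname{tr}_\omega(\omega_0-\chi)$ term, uniformly in $t$. We expect this to work by taking the combination $(e^t-1)\dot\varphi$-type weights for small $t$ and $\dot\varphi+\varphi-\delta\psi$ for large $t$, following Gill.
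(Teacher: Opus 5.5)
Your strategy is the standard K\"ahler-style argument behind Gill's lemmas, which the paper quotes without proof: the maximum principle for the parabolic Monge--Amp\`ere equation, with $\psi$ as a barrier and torsion never entering. Your arguments for (1), (2), (3) and the upper bound in (5) are fine as sketched. In (2) you do not even need $Q$, since $\omega_0>\chi$ gives $(\partial_t-\Delta)\dot\varphi\le-\dot\varphi$ directly.

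Two hypotheses you use silently should be stated:
\begin{itemize}
\item Putting a coefficient $\delta<1$ in front of $\psi$ needs $\chi\ge0$ pointwise. Nefness does not imply this, but it is available via the semipositive representative of $K_X$ that the paper mentions.
\item The statement does not assume $\psi\to-\infty$ along $E$. You can arrange it by replacing $\psi$ with $(1-s)\psi+s\psi_K$ for a Kodaira-type potential $\psi_K$. All constants are uniform in $s$, so you can then let $s\to0$.
\end{itemize}

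The genuine gap is (4), which you leave as an expected argument resting on a wrong formula. You write $(\partial_t-\Delta)(\dot\varphi+\varphi)=-\operatorname{tr}_\omega\chi+n-\ldots$. But $\dot\varphi+\varphi=\log(\omega^n/\Omega)$, $\partial_t\log\omega^n=-S_C-n$ and $\Delta\log(\omega^n/\Omega)=-S_C-\operatorname{tr}_\omega\chi$, so the exact identity is
\[
(\partial_t-\Delta)(\dot\varphi+\varphi)=\operatorname{tr}_\omega\chi-n .
\]
The terms $\mp e^{-t}\operatorname{tr}_\omega(\omega_0-\chi)$ coming from $\dot\varphi$ and from $\operatorname{tr}_\omega\hat\omega_t$ cancel identically. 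With your sign, $\operatorname{tr}_\omega\chi$ looks like a bad term and $+n$ has the wrong sign for a minimum principle; that is exactly the ``obstacle'' you describe. With the correct sign nothing has to be absorbed and no splitting in time is needed.

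Indeed, $Q:=\dot\varphi+\varphi-\delta\psi$ satisfies
\[
(\partial_t-\Delta)Q=\operatorname{tr}_\omega\bigl((1-\delta)\chi+\delta(\chi+\sqrt{-1}\partial\bar\partial\psi)\bigr)-n\ge\delta^2\operatorname{tr}_\omega\omega_0-n .
\]
At a minimum point in $(X\setminus E)\times(0,T]$ you get $\operatorname{tr}_\omega\omega_0\le n\delta^{-2}$. By the AM--GM inequality you quote, $\omega^n\ge\delta^{2n}\omega_0^n$ there, i.e.\ $\dot\varphi+\varphi\ge-C_\delta$. Since $Q|_{t=0}=\log(\omega_0^n/\Omega)-\delta\psi$, this yields $Q\ge-C_\delta$ for all $t$. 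That is already the lower bound in (5), with no passage to $\delta/2$ needed, and together with (1) it gives (4).
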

In order to state the $L^\infty$ estimates for nef classes on Hermitian manifolds, we first recall the definition of upper and lower volumes introduced in \cite{GL22},\cite{BGL25}.
\begin{definition}
    Let $(X,\omega_X)$ be a compact Hermitian manifold and let $\chi$ be a nef Bott-Chern class, then we define the lower volume $\underline{\operatorname{Vol}}(\omega_X)$ of $\omega_X$ by
    $$
\underline{\operatorname{Vol}}(\omega_X):=\underset{u\in C^\infty(X)\cap \operatorname{PSH^+}(X,\omega_X)}{\inf}\int_X(\omega_X+dd^cu)^n=\underset{u\in L^\infty(X)\cap \operatorname{PSH}(X,\omega_X)}{\inf}\int_X(\omega_X+dd^cu)^n,
    $$
    where $\operatorname{PSH}^+(X,\omega)$ is the set of strictly $\omega$-plurisubharmonic functions and the second equality is a simple consequence of Demailly's regularization theorem \cite{Dem92} and Bedford-Taylor's weak convergence theorem \cite{BT82}. The lower volume of the nef class $\{\chi\}$ is defined as the following decreasing limit:
    $$
\underline{\operatorname{Vol}}(\{\chi\}):=\underset{\varepsilon\rightarrow0}{\lim}\underline{\operatorname{Vol}}(\chi+\varepsilon\omega_X).
    $$
    Similarly, the upper volume of $\omega_X$ is defined to be
     $$
\overline{\operatorname{Vol}}(\omega_X):=\underset{u\in C^\infty(X)\cap \operatorname{PSH^+}(X,\omega_X)}{\sup}\int_X(\omega_X+dd^cu)^n=\underset{u\in L^\infty(X)\cap \operatorname{PSH}(X,\omega_X)}{\sup}\int_X(\omega_X+dd^cu)^n.
$$
We say that the manifold $(X,\omega_X)$ has the bounded mass property if $\overline{\operatorname{Vol}}(\omega_X)<+\infty$. Correspondingly, $(X,\omega_X)$ has the positive volume property if $\underline{\operatorname{Vol}}(\omega_X)>0$.
\end{definition}

Using the recent $L^\infty$-estimates of complex Monge-Amp\`ere equations established in \cite{PSWZ25}, we can get much more precise estimates of the lower bound of $\varphi(t)$ along the Chern-Ricci flow.
\begin{theorem}\cite[Theorem 15.4]{PSWZ25} \label{a priori in nef class for MA}
    Let $\{\beta\}\in BC^{1,1}(X)$ be a nef Bott-Chern class satisfying $\underline{\operatorname{Vol}}(\{\beta\})>0$. Assume also $\varphi_t\in \operatorname{PSH}(X,\chi+t\omega_X)\cap C^\infty(X)$ satisfying
$$
(\beta+t\omega_X+dd^c\varphi_t)^n= c_te^{F_t}\omega_X^n,\quad\sup_X\varphi_t=0.
$$
Here $F_t\in C^\infty(X)$. We also fix a constant $p>n$. Then there exists a uniform constant $C$ depending on $\chi,\omega_X,n,p$, the upper bound of $\int_{X}e^{F_t(z)}[\log(1+e^{F_t(z)})]^p\omega_X^n$, the lower bound of $\int_Xe^{\frac{F_t}{n}}\omega_X^n$ and the lower bound of  $\underline{\operatorname{Vol}}(\{\beta\})$ such that
$$
0\leq-\varphi_t+\mathcal{V}_t\leq C.
$$
Here $\mathcal{V}_t:=\sup\{u\;|\;u\in \operatorname{PSH}(X,\beta+t\omega_X),u\leq0\}$ is the largest non-positive $(\beta+t\omega_X)$-plurisubharmonic function.
\end{theorem}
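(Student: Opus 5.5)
The plan is to follow the auxiliary Monge--Amp\`ere equation method of Guo--Phong--Tong, adapted to Hermitian background forms as in the $L^\infty$-theory of Guedj--Lu. The estimate is taken relative to the envelope $\mathcal{V}_t$, since $\mathcal{V}_t$ itself may degenerate as $t\to0$. Write $\omega_t:=\beta+t\omega_X$.

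\textbf{Normalizing constant.} The first step is to control $c_t$ from both sides.
\begin{itemize}
\item For the lower bound, I would use the mixed Monge--Amp\`ere inequality $(\omega_t+dd^c\varphi_t)^n\geq$ (pointwise) $\big(c_te^{F_t}\big)\omega_X^n$ tested against the lower volume. Concretely, integrate $c_t^{1/n}e^{F_t/n}\omega_X^n\leq C\int_X(\omega_t+dd^c\varphi_t)\wedge\omega_X^{n-1}$, or compare with $\underline{\operatorname{Vol}}(\{\beta\})\leq\int_X(\omega_t+dd^c\varphi_t)^n=c_t\int_Xe^{F_t}\omega_X^n$.
\item For the upper bound, I would combine the lower bound on $\int_Xe^{F_t/n}\omega_X^n$ with the bounded mass property of $\omega_X+\beta$. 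This property holds because $\beta$ is a smooth form dominated by a multiple of $\omega_X$ on a manifold in class $\mathcal{C}$.
\end{itemize}
Together these give $C^{-1}\leq c_t\leq C$, with dependence only on the quantities listed in \cref{a priori in nef class for MA}.

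\textbf{Reduction to a smooth envelope.} Since $\mathcal{V}_t$ is only bounded and quasi-psh, I would replace it by a smooth strictly $\omega_t$-psh approximant $v$ with $v\leq\mathcal{V}_t\leq v+\delta$, via Demailly regularization and a small loss of positivity absorbed into $t$. For $s>0$ let $\Omega_s:=\{\varphi_t-v<-s\}$. Following Guo--Phong--Tong, solve the auxiliary equation
$$
(\omega_t+dd^c\psi_{s,k})^n=\frac{\tau_k(-\varphi_t+v-s)}{A_{s,k}}\,c_te^{F_t}\omega_X^n,\qquad \sup_X\psi_{s,k}=0.
$$
Here $\tau_k$ is a smooth approximation of $x\mapsto x^+$, and the right-hand side is normalized by a constant $b_t$ so that the equation is solvable in the Hermitian setting (Tosatti--Weinkove). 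One then applies the maximum principle to
$$
\Phi=-\varepsilon\,(-\psi_{s,k}+\Lambda)^{\frac{n}{n+1}}-(\varphi_t-v-s)
$$
and shows $\Phi\leq0$. This yields the pointwise bound $(-\varphi_t+v-s)^{\frac{n+1}{n}}\lesssim -\psi+\Lambda$ on $\Omega_s$. The ingredients are uniform $\alpha$-invariant (Skoda) integrability for $\omega_t$-psh functions normalized by $\sup=0$, which holds uniformly in $t\in(0,1]$, and the Orlicz bound $\int e^{F_t}\log^p(1+e^{F_t})\leq C$ with $p>n$. Young's inequality then gives a decay inequality for the energy
$$
A_s=\int_{\Omega_s}(-\varphi_t+v-s)\,e^{F_t}\omega_X^n,
$$
namely $rA_{s+r}\leq C A_s^{1+\delta_0}$. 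A De Giorgi-type iteration then shows that $\Omega_s$ is empty once $s\geq C$, which gives $-\varphi_t+\mathcal{V}_t\leq C+\delta$.

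\textbf{Lower inequality.} The bound $-\varphi_t+\mathcal{V}_t\geq0$ is immediate from the definition of $\mathcal{V}_t$, since $\varphi_t$ is a competitor.

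\textbf{Main obstacle.} I expect the hardest step to be closing the maximum principle argument for $\Phi$ in the non-K\"ahler setting. The equation for $\psi$ is not normalized by the cohomological volume, and the constants $A_{s,k}$ and $b_t$ must stay comparable uniformly as $t\to0$. Here I would use the lower volume bound $\underline{\operatorname{Vol}}(\{\beta\})>0$ together with the bounded mass property, exactly to bound $b_t$ from both sides. If that fails, my fallback is the Guedj--Lu volume--capacity comparison: the Orlicz condition on $e^{F_t}$ gives $\mu(K)\leq C\,\mathrm{Cap}_{\omega_t}(K)^{1+\delta_0}$, and Ko\l odziej's iteration then yields the same bound relative to $\mathcal{V}_t$.
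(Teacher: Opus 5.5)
The paper gives no proof of this statement; it is quoted from \cite[Theorem 15.4]{PSWZ25}. The natural benchmark is therefore the Guo--Phong--Tong scheme, which the paper itself adapts in \cref{laplacian estimate 1}. Your architecture is the right one: a smooth $v$ with $v\le\mathcal{V}_t\le v+\delta$, an auxiliary equation, uniform Skoda integrability, Young/Orlicz, and De Giorgi iteration. However, the maximum principle step does not close for the function you chose.

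Write $\omega_\varphi:=\omega_t+dd^c\varphi_t$, $\omega_\psi:=\omega_t+dd^c\psi_{s,k}$, $\omega_v:=\omega_t+dd^cv\ge0$ and $a:=\frac{n}{n+1}(-\psi_{s,k}+\Lambda)^{-\frac{1}{n+1}}$. We have $dd^c\psi_{s,k}=\omega_\psi-\omega_t$ and $-dd^c(\varphi_t-v)=\omega_v-\omega_\varphi$. So at a maximum point of your $\Phi$ you only get
\[
0\ \ge\ \operatorname{tr}_{\omega_\varphi}dd^c\Phi\ \ge\ \varepsilon a\operatorname{tr}_{\omega_\varphi}\omega_\psi-n+\operatorname{tr}_{\omega_\varphi}\big(\omega_v-\varepsilon a\,\omega_t\big).
\]
In Guo--Phong--Tong one has $v=0$, and the last term is $(1-\varepsilon a)\operatorname{tr}_{\omega_\varphi}\omega_t\ge0$. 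Your $\Phi$ is fine when $\mathcal{V}_t\equiv0$, e.g.\ when $\beta\ge0$.

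For a genuine envelope, nothing forces $\omega_v\ge\varepsilon a\,\omega_t$. Off the contact set one has $(\omega_t+dd^c\mathcal{V}_t)^n=0$, so $\omega_v$ is (close to) degenerate in directions where $\omega_t$ need not be. There $\operatorname{tr}_{\omega_\varphi}(\omega_v-\varepsilon a\,\omega_t)$ is unbounded below as $\omega_\varphi$ degenerates, and $\Phi\le0$ does not follow.

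The repair is to put $v$ into the base. For $\Phi=-\varepsilon(-\psi_{s,k}+v+\Lambda)^{\frac{n}{n+1}}-(\varphi_t-v-s)$ the reference form cancels, and one gets
\[
\operatorname{tr}_{\omega_\varphi}dd^c\Phi\ \ge\ \varepsilon a'\operatorname{tr}_{\omega_\varphi}\omega_\psi-n+(1-\varepsilon a')\operatorname{tr}_{\omega_\varphi}\omega_v,
\]
with $a'$ the analogous coefficient. The base is $\ge1$ for $\Lambda=1+\delta$ because $\psi_{s,k}\le\mathcal{V}_t\le v+\delta$. This is where maximality of the envelope is really used; compare the base $-\psi+\varphi_t+\Lambda$ in \cref{laplacian estimate 1}. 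Since $v\le0$, you still have $-\psi_{s,k}+v+\Lambda\le-\psi_{s,k}+\Lambda$, so the Skoda step is unchanged.

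There is a second, smaller problem in the normalizing constants. You have the two bounds on $c_t$ the wrong way round, and one of them uses a hypothesis the theorem does not have.
\begin{itemize}
\item Integrate the AM--GM inequality $n\,c_t^{1/n}e^{F_t/n}\le\operatorname{tr}_{\omega_X}(\omega_t+dd^c\varphi_t)$ against $\omega_X^n$. Control $\int_Xdd^c\varphi_t\wedge\omega_X^{n-1}=\int_X\varphi_t\,dd^c\omega_X^{n-1}$ by the uniform $L^1$ bound for sup-normalized quasi-psh functions. This gives the \emph{upper} bound $c_t\le C$, and it is exactly where the lower bound on $\int_Xe^{F_t/n}\omega_X^n$ enters. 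The paper says as much in the proof of \cref{L^p bound of green functions}.
\item The lower bound on $c_t$ comes from $\underline{\operatorname{Vol}}(\{\beta\})\le c_t\int_Xe^{F_t}\omega_X^n$, together with the bound on $\int_Xe^{F_t}\omega_X^n$ implied by the Orlicz hypothesis.
\end{itemize}
The bounded mass property and membership of $X$ in class $\mathcal{C}$ are not hypotheses of the statement, and they are not needed.

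Likewise, for $b_t$ only a lower bound is used, to make $\varepsilon^{n+1}\sim A_{s,k}/b_t$ small. It follows from $\int_X\omega_\psi^n\ge\underline{\operatorname{Vol}}(\{\beta\})$, together with $c_t\le C$ depending on how $A_{s,k}$ is normalized. So the step you flagged as the main obstacle is routine; the uncancelled trace term above is the real one.
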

We remark that we could replace $\underline{\operatorname{Vol}}(\{\beta\})$ with the more precise quantity $SL_{\omega_X,a}(\{\beta\})$ in \cref{a priori in nef class for MA}, where $SL_{\omega_X,a}(\{\beta\})$ is the sup-slope first introduced in \cite{GS24}, and then generalized to nef classes in \cite{PSWZ25}. In this article, positivity of the lower volume is enough for our application.
\begin{proposition}\label{lower bound of varphi(t)}
        If $(X,\omega_X)$ has the positive volume property $\underline{\operatorname{Vol}}(\omega_X)>0$, then there is a uniform constant $C$ such that the solution $\varphi(t)$ of the Chern-Ricci flow satisfies 
        $$
\varphi(t)\geq \mathcal{V}_t-C.
        $$
\end{proposition}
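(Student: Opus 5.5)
We plan to bound $\varphi$ from below by comparing it with a solution of an elliptic Monge--Amp\`ere equation to which \cref{a priori in nef class for MA} applies. The comparison is done with the parabolic maximum principle, working only with quantities controlled by \cref{estimates of varphi}. In what follows, $\mathcal{V}_t$ denotes the envelope for the class $\hat\omega_t$, or equivalently for $\chi+e^{-t}\omega_X$ up to uniformly comparable reference forms.

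\textbf{Step 1: set up the Monge--Amp\`ere equation.} We rewrite \eqref{parabolic MA} as
\begin{equation*}
(\hat\omega_t+\sqrt{-1}\partial\bar\partial\varphi)^n=e^{\dot\varphi+\varphi}\Omega .
\end{equation*}
We then read this, for each fixed $t$, as an equation of the form in \cref{a priori in nef class for MA}, with $\beta=\chi$, with $t$ replaced by $e^{-t}$, and with reference form $\omega_X$ (after absorbing $\omega_0-\chi\ge 0$ by a uniform comparison). The density is $F_t=\dot\varphi+\varphi+\log(\Omega/\omega_X^n)$. Since $\sup_X\varphi$ is not zero, we normalize by setting $\tilde\varphi=\varphi-\sup_X\varphi$, which changes $F_t$ only by the constant absorbed into $c_t$.

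\textbf{Step 2: check the hypotheses.} We need three inputs.
\begin{itemize}
\item[(a)] By \cref{estimates of varphi}(1),(2), $F_t\le C$. Hence $\int e^{F_t}[\log(1+e^{F_t})]^p\omega_X^n\le C$.
\item[(b)] We need a uniform lower bound for $\int_X e^{F_t/n}\omega_X^n$. By (3),(4) with a fixed $\delta$ and a fixed $\psi$ (one exists since $K_X$ is big and $X$ is in class $\mathcal C$, via Collins--Tosatti), we get $F_t\ge 2\delta\psi-C$. Since $\psi$ is quasi-psh, $e^{2\delta\psi/n}$ has integral bounded below. Alternatively, one can use $\omega(t)^n\ge C^{-1}e^{\delta\psi}\Omega$ directly.
\item[(c)] $\underline{\operatorname{Vol}}(\{\chi\})>0$ is exactly where the positive volume hypothesis enters. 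Here $\underline{\operatorname{Vol}}(\chi+\varepsilon\omega_X)\ge \underline{\operatorname{Vol}}(\varepsilon\omega_X)$ is useless as $\varepsilon\to0$, so instead we use bigness. We would argue that $\chi\ge \delta\omega_X-\sqrt{-1}\partial\bar\partial\psi$ gives $\underline{\operatorname{Vol}}(\chi+\varepsilon\omega_X)\ge\delta^n\underline{\operatorname{Vol}}(\omega_X)$ via a change of potential by $\psi$. The unbounded $\psi$ is handled with the $L^\infty$ formulation of the lower volume after truncation $\max(\psi,-k)$. Bedford--Taylor continuity should make this work, but this is delicate.
\end{itemize}

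\textbf{Step 3: conclude.} \cref{a priori in nef class for MA} then yields $\tilde\varphi\ge\mathcal V_t-C$. It remains to bound $\sup_X\varphi$ from below. This holds because $\varphi\ge\delta\psi-C$ and $\psi$ is bounded below somewhere (e.g.\ on a fixed compact set away from $E$), so $\sup\varphi\ge -C$. Therefore $\varphi\ge\mathcal V_t-C$.

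\textbf{Main obstacle.} The main obstacle is Step 2(c): extracting a uniform positive lower bound on $\underline{\operatorname{Vol}}(\{\chi\})$ in the Hermitian setting, where the volume is not cohomological. If the truncation argument fails, we would try a second route. The idea is to note that the time-dependent family $\hat\omega_t$ dominates $\chi+e^{-t}\omega_X/C$, and to apply the theorem directly to that family, using only that each such form is big with potential $\psi$. The lower volume bound would then be proved via a comparison of Monge--Amp\`ere masses on $X\setminus E$.
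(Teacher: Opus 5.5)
Your Steps 1, 2(a), 2(b) and 3 are the paper's proof. You read \eqref{parabolic MA} at each fixed time as a Monge--Amp\`ere equation. You use \cref{estimates of varphi} to get $2\delta\psi-C\le F_t\le C$, which controls both the entropy and $\int_X e^{F_t/n}\omega_X^n$. You apply \cref{a priori in nef class for MA} to $\varphi-\sup_X\varphi$, and undo the normalization with $\sup_X\varphi\ge\delta\sup_X\psi-C_\delta$.

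Two small corrections. First, no parabolic maximum principle enters, despite your opening sentence. Second, instead of absorbing $\omega_0-\chi$ ``by a uniform comparison'', take $\omega_X:=\omega_0-\chi$. The comparison neither puts the equation literally in the form required by \cref{a priori in nef class for MA} nor identifies the envelopes. By contrast, $\omega_0-\chi$ is a Hermitian metric since $\omega_0>\chi$, so $\hat\omega_t=\chi+e^{-t}\omega_X$ exactly. The positive volume property does not depend on which Hermitian metric is used.

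The real gap is Step 2(c): the argument you sketch there does not work.
\begin{itemize}
\item The substitution $u\mapsto u-\psi$ identifies bounded $\chi$-psh functions with bounded $(\chi+\sqrt{-1}\partial\bar\partial\psi)$-psh functions, with the same Monge--Amp\`ere masses, only when $\psi$ is bounded.
\item Here $\psi$ is necessarily unbounded near $E$. Indeed, $\int_V\chi^{\dim V}=0$ for the subvarieties $V$ making up $E$. By Stokes, this is incompatible with $\chi+\sqrt{-1}\partial\bar\partial\psi\ge\delta\omega_X$ for a bounded $\psi$.
\item After truncation, $\chi+\sqrt{-1}\partial\bar\partial\max(\psi,-k)$ equals $\chi$ on the open set $\{\psi<-k\}\supset\{\psi=-\infty\}$. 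So the pointwise domination by $\delta\omega_X$ that monotonicity of lower volumes needs fails exactly where $\chi$ degenerates, for every $k$.
\item On a non-K\"ahler manifold the mass $\int_X(\chi+\varepsilon\omega_X+\sqrt{-1}\partial\bar\partial u)^n$ is not fixed by the class. Nothing in your sketch controls the contribution of these neighborhoods uniformly over all competitors $u$ in the infimum defining $\underline{\operatorname{Vol}}$, and appealing to Bedford--Taylor continuity does not supply this uniformity.
\item Your fallback route is the same statement in disguise. A uniform-in-$t$ lower bound for $\underline{\operatorname{Vol}}(\chi+e^{-t}\omega_X/C)$ is equivalent to $\underline{\operatorname{Vol}}(\{\chi\})>0$.
\end{itemize}

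The missing input is a genuine theorem of Guedj--Lu: if $\chi$ is nef and big and $(X,\omega_X)$ has the positive volume property, then $\underline{\operatorname{Vol}}(\chi)>0$ \cite[Theorem 4.6]{GL22} (see also \cite[Theorem 3.20]{BGL25}). This is precisely what the paper cites. Combined with monotonicity of lower volumes, it gives $\underline{\operatorname{Vol}}(\{\chi\})\ge\underline{\operatorname{Vol}}(\chi)>0$. With this citation in place of your Step 2(c), the rest of your argument goes through.
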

\begin{proof}
Since $\chi$ is nef and big and $(X,\omega_X)$ has the positive volume property, it follows from \cite[Theorem 4.6]{GL22} (see also \cite[Theorem 3.20]{BGL25}) that $\underline{\operatorname{Vol}}(\chi)>0$. We can rewrite the flow \eqref{parabolic MA} as a family of complex Monge-Amp\`ere equations
$$
(\chi+e^{-t}(\omega_0-\chi)+\sqrt{-1}\partial\bar{\partial}\varphi(t))^n=e^{\varphi+\partial_t\varphi}\Omega.
$$
It follows from \cref{estimates of varphi} that $e^{2\delta\psi-2C_\delta}\leq e^{\varphi+\partial_t\varphi}\leq C$. Taking $c_t=1$ in \cref{a priori in nef class for MA}, it is then easy to check that all the requirements are met in \cref{a priori in nef class for MA}. We can then conclude the proof by invoking the uniform boundedness of $\sup_X\varphi(t)$ from \cref{estimates of varphi}.
\end{proof}

\begin{remark}\label{rmk:semi positive rep}
   Actually ,as noted in \cite{Gill13}, a result due to \cite{KMM87} implies that $\chi$ can be chosen as a semi-positive and big $(1,1)$-form, hence we can conclude from the proof of \cref{lower bound of varphi(t)} that $\varphi(t)$ is indeed uniformly bounded. Moreover, the a priori estimates in \cite{GL23} can also be applied to derive the uniform boundedness of $\varphi(t)$.
\end{remark}

Having the uniform boundedness of $\varphi(t)$, we are able to improve the estimates of $\partial_t\varphi$ in \cref{estimates of varphi}, the proof is more or less standard:
\begin{lemma}\label{lem:boundedness of partial_t varphi}
    There is a uniform constant $C$ such that $\frac{\partial^2\varphi}{\partial t^2}+\frac{\partial\varphi}{\partial t}\leq C$, and hence
    $$
\frac{\partial\varphi}{\partial t}\geq-C.
    $$
\end{lemma}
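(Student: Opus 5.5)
We will argue with the maximum principle on the quantity $Q:=\dot\varphi+\varphi=\partial_t\varphi+\varphi$, where $\dot\varphi:=\partial_t\varphi$. Everything rests on the evolution equation of $\dot\varphi$, which is torsion-free because the flow \eqref{parabolic MA} involves only the trace operator $\Delta_{\omega(t)}f:=\operatorname{tr}_{\omega(t)}\sqrt{-1}\partial\bar\partial f$.

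First, differentiate \eqref{parabolic MA} in $t$. Using $\partial_t\hat\omega_t=-e^{-t}(\omega_0-\chi)$, we obtain
$$
\partial_t\dot\varphi=\Delta_{\omega(t)}\dot\varphi-e^{-t}\operatorname{tr}_{\omega(t)}(\omega_0-\chi)-\dot\varphi .
$$
Consequently
$$
\Big(\partial_t-\Delta_{\omega(t)}\Big)Q=\partial_t\dot\varphi+\dot\varphi-\Delta_{\omega(t)}\dot\varphi-\Delta_{\omega(t)}\varphi=-e^{-t}\operatorname{tr}_{\omega(t)}(\omega_0-\chi)-\Delta_{\omega(t)}\varphi .
$$
Since $\omega(t)=\hat\omega_t+\sqrt{-1}\partial\bar\partial\varphi$, we have $\Delta_{\omega(t)}\varphi=n-\operatorname{tr}_{\omega(t)}\hat\omega_t$. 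Substituting this and $\hat\omega_t=\chi+e^{-t}(\omega_0-\chi)$, the $e^{-t}$ terms cancel and
$$
\Big(\partial_t-\Delta_{\omega(t)}\Big)Q=\operatorname{tr}_{\omega(t)}\chi-n\le -n+\operatorname{tr}_{\omega(t)}\chi .
$$
This alone does not close, because $\operatorname{tr}_{\omega(t)}\chi$ is not bounded a priori.

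The standard fix is to use the combination $\partial_t\big((e^t-1)\dot\varphi\big)$, or equivalently to work with $\ddot\varphi+\dot\varphi$ directly. A cleaner route is to set $H:=(e^t-1)\dot\varphi-\varphi-nt$. Then
$$
\Big(\partial_t-\Delta_{\omega(t)}\Big)\big((e^t-1)\dot\varphi\big)=e^t\dot\varphi+(e^t-1)\big(-e^{-t}\operatorname{tr}_{\omega(t)}(\omega_0-\chi)-\dot\varphi\big)=\dot\varphi-(1-e^{-t})\operatorname{tr}_{\omega(t)}(\omega_0-\chi).
$$
Subtracting $(\partial_t-\Delta_{\omega(t)})(\varphi+nt)=\dot\varphi-n+\operatorname{tr}_{\omega(t)}\hat\omega_t+n$ gives
$$
\Big(\partial_t-\Delta_{\omega(t)}\Big)H=-(1-e^{-t})\operatorname{tr}_{\omega(t)}(\omega_0-\chi)-\operatorname{tr}_{\omega(t)}\chi-e^{-t}\operatorname{tr}_{\omega(t)}(\omega_0-\chi)=-\operatorname{tr}_{\omega(t)}\omega_0<0 .
$$
By the maximum principle, $H\le\sup_X H(0)=0$, so $(e^t-1)\dot\varphi\le\varphi+nt\le C+nt$. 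This yields $\dot\varphi\le C(1+t)e^{-t}$ for $t\ge1$; it is an upper bound and not yet what is needed.

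For the stated inequality, we turn to $\ddot\varphi+\dot\varphi$ itself. Note that $\ddot\varphi+\dot\varphi=\partial_t(\log\omega(t)^n/\Omega)=-S_C(\omega(t))-n$, where $S_C$ is the Chern scalar curvature (possibly up to sign conventions). So the first claim $\ddot\varphi+\dot\varphi\le C$ is exactly a uniform lower bound on the Chern scalar curvature along the normalized flow. We will prove it by the maximum principle applied to $u:=\ddot\varphi+\dot\varphi$. Differentiating the equation for $\dot\varphi$ gives
$$
(\partial_t-\Delta)u=-|\partial\bar\partial\dot\varphi+\ldots|^2_{\omega}+\text{lower order}.
$$
Concretely, $\partial_t\omega=\sqrt{-1}\partial\bar\partial u-e^{-t}\ldots$, and $u=\operatorname{tr}_\omega\partial_t\omega$. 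The identity $(\partial_t-\Delta)\operatorname{tr}_\omega\partial_t\omega=-|\partial_t\omega|^2_\omega+\ldots$ holds for the Chern trace without torsion, since only $\sqrt{-1}\partial\bar\partial$ of functions enters. Hence $(\partial_t-\Delta)u\le-\frac1n u^2+Cu+C$, and the maximum principle gives $u\le C$ as long as $t\ge1$. For $t\le 1$ the bound follows from smoothness of the flow on compact time intervals.

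Given $\ddot\varphi+\dot\varphi\le C$, the stated lower bound $\dot\varphi\ge -C$ follows quickly. We have $\partial_t(\varphi+\dot\varphi)=\dot\varphi+\ddot\varphi\le C$, which does not directly help, so we combine it with the uniform bound $|\varphi|\le C$ from \cref{rmk:semi positive rep}. Integrate $\ddot\varphi\le C-\dot\varphi$ backward on $[t,t+1]$. If $\dot\varphi(x,t)\le -A$ with $A$ large, then along $s\in[t,t+1]$ the ODE inequality $\partial_s\dot\varphi\le C-\dot\varphi$ keeps $\dot\varphi(x,s)\le -A/e+C$. Therefore $\varphi(x,t+1)-\varphi(x,t)\le -A/e+C$. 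This contradicts $|\varphi|\le C$ once $A$ is large, which proves $\dot\varphi\ge -C$.

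The main obstacle is the middle step: verifying that the evolution inequality for $u$ contains a good $-u^2/n$ term despite the extra $e^{-t}(\omega_0-\chi)$ terms, and checking that no torsion terms enter through the trace of $\partial_t\omega$.
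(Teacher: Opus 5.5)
Your proposal is correct and follows essentially the paper's route. The identity $\ddot\varphi+\dot\varphi=-S_C-n$ reduces the first claim to a uniform lower bound on the Chern scalar curvature, obtained by the maximum principle from an evolution equation with a good quadratic term: the paper uses $(\partial_t-\Delta_{\omega(t)})S_C=|Ric^C|^2+S_C\geq \frac{1}{n}S_C^2+S_C$, which is equivalent to yours since $u=-S_C-n$. The bound $\dot\varphi\geq -C$ then follows from the same ODE argument using $|\varphi|\leq C$.

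To close the step you flag as the main obstacle, note that $\partial_t\omega=\sqrt{-1}\partial\bar\partial\dot\varphi-e^{-t}(\omega_0-\chi)$, not $\sqrt{-1}\partial\bar\partial u$. A direct computation then gives exactly $(\partial_t-\Delta_{\omega(t)})u=-|\partial_t\omega|^2_{\omega(t)}-u\leq-\frac{1}{n}u^2-u$, with the $e^{-t}$ terms cancelling. So no torsion enters, and the auxiliary function $H$ is unnecessary.
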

\begin{proof}
    Set $u:=\frac{\partial\varphi}{\partial t}+\varphi$. Then we have $\omega(t)^n=e^u\Omega$. Therefore, we can write
    \begin{align*}
\Delta_{\omega(t)}u&=\Delta_{\omega(t)}\log\frac{\omega(t)^n}{\Omega}=tr_{\omega(t)}\left(\sqrt{-1}\partial\bar{\partial}\log\omega(t)^n-\sqrt{-1}\partial\bar{\partial}\log\Omega\right)\\
&=tr_{\omega(t)}(-Ric^C(\omega(t)))-\chi=-S_C-tr_{\omega(t)}(\chi)\leq C-tr_{\omega(t)}(\chi),
    \end{align*}
    where we have used the fact that the Chern scalar curvature $S_C$ is uniformly bounded from below. Actually, it is easy to see that the evolution of the Chern scalar curvature is 
   \begin{align*}
\left(\frac{\partial}{\partial t} - \Delta_{\omega(t)}\right)S_C(t)&=|Ric^C|^2+S_C(t)\ge \frac{S_C^2}{n} + S_C(t)\\
&=\frac{1}{n}(S_C(t)+n)^2-(S_C(t)+n).
   \end{align*}
 This implies that
\begin{align*}
    \left(\frac{\partial}{\partial t} - \Delta_{\omega(t)}\right)(e^{t}(S_C+n))\geq0,
\end{align*}
standard parabolic maximum principle then yields that
$$
S_C(t)+n\geq\left(\inf_XS_C(0)+n\right)e^{-t},
$$
whence the uniform lower bound for $S_C(t)$.

Consequently, we can write
   \begin{align*}
       \frac{\partial u}{\partial t}=\frac{\partial^2\varphi}{\partial t^2}+\frac{\partial\varphi}{\partial t}=&tr_{\omega(t)}\left(-e^{-t}(\omega_0-\chi)+\sqrt{-1}\partial\bar{\partial}\dot{\varphi}(t)\right)\\
       =&\Delta_{\omega(t)}u-n+tr_{\omega(t)}(\chi)\leq C.
   \end{align*}
   Now, basic calculus (see e.g. \cite[Lemma 9.6]{GPSS24a}) shows that $\frac{\partial\varphi}{\partial t}$ is uniformly bounded from below.
\end{proof}
As an immediate corollary, we obtain that the volume forms $\omega(t)^n$ are uniformly equivalent:
\begin{corollary}\label{cor:uniform equivalence of volume forms}
    There is a uniform constant $C$ such that
    $$
C^{-1}\omega_0^n\leq\omega(t)^n\leq C\omega_0^n.
    $$
\end{corollary}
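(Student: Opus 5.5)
The plan is to read the two-sided bound directly off the Monge–Ampère form of the flow. As in the proof of \cref{lower bound of varphi(t)}, the normalized flow \eqref{normalized main flow} gives
$$
\omega(t)^n=\left(\hat{\omega}_t+\sqrt{-1}\partial\bar{\partial}\varphi\right)^n=e^{\varphi+\partial_t\varphi}\,\Omega .
$$
Here $\Omega$ is a fixed smooth positive volume form on the compact manifold $X$. Hence there is a constant $C_0$, depending only on $\Omega$ and $\omega_0$, with $C_0^{-1}\omega_0^n\le\Omega\le C_0\omega_0^n$. It therefore suffices to bound $u=\varphi+\partial_t\varphi$ uniformly from above and below.

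For the upper bound I use \cref{estimates of varphi}(1) and (2), which give $\varphi\le C$ and $\partial_t\varphi\le C$. Thus $u\le 2C$, and so $\omega(t)^n\le e^{2C}C_0\,\omega_0^n$. This is the same as item (5) of \cref{estimates of varphi}.

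For the lower bound I use two improved estimates. The first is the uniform lower bound $\varphi\ge -C$. This follows from \cref{lower bound of varphi(t)} together with \cref{rmk:semi positive rep}: $\chi$ may be taken semi-positive, so $\mathcal{V}_t$ is bounded below uniformly, since $0$ is a competitor in the envelope. The second is the bound $\partial_t\varphi\ge -C$ from \cref{lem:boundedness of partial_t varphi}. Adding them gives $u\ge -2C$, hence $\omega(t)^n\ge e^{-2C}C_0^{-1}\omega_0^n$.

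There is no real obstacle, since the corollary is a direct combination of these earlier bounds. The only point needing care is that the lower bound on $\varphi$ must be uniform and not depend on $\psi$. For this I rely on \cref{rmk:semi positive rep}, or equivalently on the boundedness of $\mathcal{V}_t$ from below when $\chi\ge0$, rather than on the $\delta\psi$-type estimates in \cref{estimates of varphi}, which degenerate near $E$.
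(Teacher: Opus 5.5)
Your proposal is correct and is the paper's proof. The corollary is read off from $\omega(t)^n=e^{\varphi+\partial_t\varphi}\Omega$ using uniform two-sided bounds on $\varphi$ (from \cref{estimates of varphi}, \cref{lower bound of varphi(t)} and \cref{rmk:semi positive rep}) and on $\partial_t\varphi$ (from \cref{estimates of varphi} and \cref{lem:boundedness of partial_t varphi}); your point that the lower bound on $\varphi$ must come from the semi-positive representative, not the $\delta\psi$-estimates, is exactly what the paper leaves implicit.
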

\begin{proof}
    This follows immediately from the flow equation $\omega(t)^n=e^{\varphi+\partial_t\varphi}\Omega$ and the fact that $\varphi,\partial_t\varphi$ are all uniformly bounded.
\end{proof}
\begin{remark}
    We remark that the estimates in this section is valid on an arbitrary Hermitian minimal model of general type, the K\"ahler assumption of $\omega_0$ near $E$ is not required.
\end{remark}
\section{Elliptic tools}
Two technical lemmas of linear elliptic operators established in the classical book \cite{GT01} will be frequently used in the sequel.

\begin{theorem}\cite[Theorem 9.11]{GT01}\label{GT 9.11}
    Let $U$ be an open subset in $\mathbb{R}^n$ and let $u\in W^{2,p}(U)\cap L^p(U)$ be a strong solution of the elliptic equation
    $$
Lu=f \quad \operatorname{in}\,U.
    $$
    Here $Lu:=a^{ij}(x)D_{ij}u+b^i(x)D_iu+c(x)u$ is a second-order linear elliptic operator. If there are uniform constants $\lambda,\Lambda$ such that
    \begin{align*}
       & a^{ij}\in C^0(U), \quad b^i,c\in L^\infty(U),\quad f\in L^p(U);\\
        &a^{ij}\xi_i\xi_j\geq\lambda|\xi|^2,\quad\forall\xi\in\mathbb{R}^n;\\
        &|a^{ij}|,|b^i|,|c|\leq\Lambda.
    \end{align*}
    Then, for any smaller domain $U_1\Subset U$, there is a uniform constant $C=C\left(n,p,\lambda,\Lambda,U_1,U,\|a^{ij}\|_{C^0(U_1)}\right)$ such that
    $$
\|u\|_{W^{2,p}(U_1)}\leq C\left(\|u\|_{L^p(U)}+\|f\|_{L^p(U)}\right).
    $$
\end{theorem}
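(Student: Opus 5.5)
This is the classical interior $W^{2,p}$ estimate of Gilbarg--Trudinger, and I would prove it in three stages: constant coefficients, freezing coefficients, and covering with interpolation.

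\textbf{Constant coefficients.} First treat the model operator $L_0=a^{ij}_0D_{ij}$ with $a^{ij}_0$ constant and $\lambda$-elliptic. A linear change of variables reduces $L_0$ to the flat Laplacian. For $u\in W^{2,p}_0$ we then have $D_{ij}u=D_{ij}\Gamma*\Delta u$, where $\Gamma$ is the Newtonian potential. The Calder\'on--Zygmund theorem gives
\[
\|D^2u\|_{L^p}\le C(n,p,\lambda,\Lambda)\,\|L_0u\|_{L^p}.
\]

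\textbf{Freezing coefficients.} Next, fix $x_0\in\overline{U_1}$ and let $R$ be small, to be chosen. For $v\in W^{2,p}_0(B_R(x_0))$ write
\[
a^{ij}_0D_{ij}v=Lv+\bigl(a^{ij}(x_0)-a^{ij}(x)\bigr)D_{ij}v-b^iD_iv-cv .
\]
Applying the constant-coefficient estimate gives
\[
\|D^2v\|_{L^p}\le C\bigl(\|Lv\|_{L^p}+\operatorname{osc}_{B_R}a\,\|D^2v\|_{L^p}+\Lambda\|v\|_{W^{1,p}}\bigr).
\]
Uniform continuity of $a^{ij}$ on a compact neighborhood of $U_1$ lets us choose $R$ so that $C\operatorname{osc}_{B_R}a\le\tfrac12$. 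The middle term is then absorbed into the left side. This choice of $R$ is exactly why the constant depends on the modulus of continuity of $a^{ij}$.

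\textbf{Cutoffs, covering, interpolation.} Apply the previous step to $v=\eta u$, where $\eta$ is a cutoff supported in $B_R(x_0)$ with $\eta\equiv1$ on $B_{\sigma R}(x_0)$ and $|D^k\eta|\lesssim((1-\sigma)R)^{-k}$. The commutator terms involve $Du$ and $u$ multiplied by derivatives of $\eta$. This yields
\[
\|D^2u\|_{L^p(B_{\sigma R})}\le C\bigl(\|f\|_{L^p}+((1-\sigma)R)^{-1}\|Du\|_{L^p(B_R)}+((1-\sigma)R)^{-2}\|u\|_{L^p(B_R)}\bigr).
\]
To remove the gradient term, use the weighted seminorms $\Phi_k=\sup_\sigma((1-\sigma)R)^k\|D^ku\|_{L^p(B_{\sigma R})}$ together with the interpolation inequality $\|Du\|\le\varepsilon\|D^2u\|+C\varepsilon^{-1}\|u\|$. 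Taking $\varepsilon$ small absorbs $\Phi_1$ into $\Phi_2$, giving
\[
\|u\|_{W^{2,p}(B_{R/2})}\le C\bigl(\|u\|_{L^p}+\|f\|_{L^p}\bigr).
\]
Finally, cover $\overline{U_1}$ by finitely many balls $B_{R/2}$ with $B_R\subset U$ and sum the local estimates.

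\textbf{Main obstacle.} The delicate step is the absorption in the cutoff argument: the scaling of the weighted seminorms must be arranged so that the interpolation constant does not blow up as $\sigma\to1$. The Calder\'on--Zygmund input itself I would simply cite.
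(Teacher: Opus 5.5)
Your proposal is correct and is the standard Gilbarg--Trudinger argument: Calder\'on--Zygmund for the frozen operator, absorption of the oscillation of $a^{ij}$ on small balls, cutoffs with weighted seminorms plus interpolation, then a covering; the paper gives no proof of its own and simply cites \cite[Theorem 9.11]{GT01}. Three points to tighten: support the cutoff in $B_{(1+\sigma)R/2}(x_0)$ rather than $B_R(x_0)$, so the gradient term sits on an intermediate ball and is bounded by $2\Phi_1$ (with $\|Du\|_{L^p(B_R)}$ on the right, as you wrote it, the absorption does not close); the argument needs $1<p<\infty$; and the constant depends on the modulus of continuity of $a^{ij}$ near $\overline{U_1}$, as you correctly observed, not just on $\|a^{ij}\|_{C^0(U_1)}$.
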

\begin{theorem}\cite[Theorem 9.20]{GT01}\label{GT 9.20}
    Notations as in \cref{GT 9.11}, assume $u\in W^{2,n}(U)$ and $Lu\geq f$ for some $f\in L^{n}(U)$. Denote this time $\lambda,\Lambda$ the smallest and largest eigenvalues of $a^{ij}(x)$ and assume moreover that there are uniform constants $\gamma,\nu>0$ such that
    $$
\frac{\Lambda}{\lambda}\leq\gamma,\quad\frac{|b|}{\lambda},\frac{|c|}{\lambda}\leq\nu.
    $$
    Then for each ball $B_{2R}(y)\subset U$, we have the following Harnack type inequality:
    $$
\sup_{B_R(y)}u\leq C\left[\int_{B_{2R}(y)}|u|dV+\frac{R}{\lambda}\|f\|_{L^n(B_{2R}(y))}\right],
    $$
    where $C$ depends on $n,\gamma,\nu R^2,U$.
\end{theorem}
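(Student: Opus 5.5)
Since this is the classical local maximum principle of Gilbarg--Trudinger, I would follow the Aleksandrov--Bakelman--Pucci route. First I would rescale $x\mapsto y+Rx$ to reduce to $R=1$. Under this rescaling $\gamma$ is unchanged, $\nu$ becomes $\nu R$ for $b$ and $\nu R^2$ for $c$, and the term $\frac{R}{\lambda}\|f\|_{L^n}$ is scale invariant. This accounts for the dependence of $C$ on $\nu R^2$.

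The key tool is the ABP estimate (GT Theorem 9.1). If $v\in W^{2,n}(\Omega)$ vanishes on $\partial\Omega$, then
$$\sup_\Omega v\le C\,\|(Lv)^-/\mathcal{D}^*\|_{L^n(\Gamma^+)},$$
where $\mathcal{D}^*=(\det a^{ij})^{1/n}\ge\lambda$ and $\Gamma^+$ is the upper contact set of $v^+$.

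I would apply this on $B_2$ to $v=\eta u^+$ (or to $\eta u$ on the set where it is positive), with cutoff $\eta=(1-|x|^2/4)^\beta$ for a large $\beta$. Expanding, one gets
$$L v=\eta\, Lu+2a^{ij}D_i\eta D_ju+u\,(a^{ij}D_{ij}\eta+b^iD_i\eta)-c\,\eta u\ \ge\ \eta f+(\text{error terms}).$$
The error terms are bounded by $C\lambda\big(\eta^{1-2/\beta}u^++|Du|\,|D\eta|\big)$, using $|D\eta|\le C\eta^{1-1/\beta}$, $|D^2\eta|\le C\eta^{1-2/\beta}$, and the bounds $\Lambda/\lambda\le\gamma$, $|b|/\lambda,|c|/\lambda\le\nu$.

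The main obstacle is the gradient term $|Du|$, which is not controlled by $\int|u|$. It is handled on the contact set $\Gamma^+$. There $v$ lies below its concave envelope, so
$$|Dv|\le \frac{v}{2-|x|},$$
and hence $\eta|Du|\le C\eta^{1-1/\beta}u^+$. After this substitution every error term on $\Gamma^+$ is bounded by $C\lambda\,\eta^{1-2/\beta}u^+$. ABP then yields
$$\sup v\le C\Big(\tfrac{1}{\lambda}\|f\|_{L^n}+\|\eta^{1-2/\beta}u^+\|_{L^n}\Big).$$

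Next I would interpolate:
$$\eta^{1-2/\beta}u^+ = v^{1-2/\beta}(u^+)^{2/\beta}\le (\sup v)^{1-2/\beta}(u^+)^{2/\beta}.$$
Choosing $\beta$ so that $2n/\beta\le1$, i.e. $\beta\ge 2n$, gives
$$\|\eta^{1-2/\beta}u^+\|_{L^n}\le(\sup v)^{1-2/\beta}\Big(\int (u^+)^{2n/\beta}\Big)^{1/n}.$$
With exactly $\beta=2n$ this factor is $\big(\int u^+\big)^{1/n}$, and Young's inequality absorbs $(\sup v)^{1-1/n}$ into the left-hand side. This gives
$$\sup v\le C\Big(\int_{B_2}|u|\,dV+\tfrac1\lambda\|f\|_{L^n}\Big).$$

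Finally, $\eta\ge c>0$ on $B_1$, so $\sup_{B_1}u\le C\sup v$. Scaling back to radius $R$ then gives the stated inequality, with $C=C(n,\gamma,\nu R^2)$; the dependence on $U$ enters only through normalization of the integral.
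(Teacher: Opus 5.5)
Your argument is correct and is exactly the proof of Gilbarg--Trudinger's Theorem 9.20: ABP applied to $\eta u$ with $\eta=(1-|x|^2/4)^{2n}$, the contact-set bound $|Dv|\le v/(2-|x|)$, interpolation, and Young's inequality. The paper does not reprove the result but only cites it, so this is the intended argument.

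There are two bookkeeping corrections.

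First, the identity you display with the extra term $-c\eta u$ is the formula for $L_0(\eta u)$, where $L_0:=a^{ij}D_{ij}+b^iD_i$. It is not the formula for $L(\eta u)$. $L_0$ is in fact the operator you must feed to ABP, since ABP requires $c\le 0$ and here $c$ has no sign; say so explicitly.

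Second, and more substantively, your closing remark on scaling is wrong. Undoing the dilation gives
\[
\sup_{B_R(y)}u\le C\Big(R^{-n}\int_{B_{2R}(y)}|u|\,dV+\frac{R}{\lambda}\|f\|_{L^n(B_{2R}(y))}\Big).
\]
The factor $R^{-n}$ cannot be absorbed into a constant depending only on $n,\gamma,\nu R^2,U$, because $U$ bounds $R$ only from above. For a counterexample, take $L=\Delta$, $u\equiv 1$, $f=0$, $\nu=R^{-2}$, and let $R\to 0$.

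The drift has the same problem. Under the hypothesis $|b|/\lambda\le\nu$ as stated, the rescaled drift is bounded by $\nu R$, which is not a function of $\nu R^2$; you noted this yourself before asserting dependence on $\nu R^2$ alone. Gilbarg--Trudinger assume $(|b|/\lambda)^2\le\nu$, and that is what produces $\nu R^2$.

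So your proof establishes the correctly normalized inequality. The unnormalized form in the statement holds only with $C$ depending in addition on a lower bound for $R$. This is harmless for the paper, since it applies the estimate only on balls of a fixed radius.
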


We will mainly use  these two lemmas for the real and complex Laplacian. Let $g$ be the Riemannian metric associated to a Hermitian metric $\omega$ on $X$, i.e., if $J$ denotes the complex structure of $X$, then $g(JX,Y)=\omega(X,Y)$ for each real vector fields $X,Y$. The real Laplacian of $g$ acting on functions is defined to be
$$
\Delta_gf:=d^*df=\frac{1}{\sqrt{G}}\partial_A\left(\sqrt{G}g^{AB}\partial_Bf\right),
$$
Here $G$ denotes the determinant of the real matrix $(g_{AB})_{1\leq A,B\leq 2n}$. While the complex Laplacian is defined to be
$$
\Delta_\omega f:=\operatorname{tr}_\omega(\sqrt{-1}\partial\bar{\partial}f)=g^{j\bar{k}}\partial_j\partial_{\bar{k}}f.
$$
In order to illustrate the connection between real and complex Laplacians, we introduce the notion of torsion forms:
\begin{definition}
    Let $(X,\omega)$ be a compact Hermitian manifold, let $L_\omega$ be the Lefschetz operator associated to the metric $\omega$, i.e., for each form $\alpha\in\Lambda^k(TX\otimes\mathbb{C})^*$, $L_\omega\alpha:=\omega\wedge\alpha$. We denote $\Lambda_\omega$ its dual operator, that is, $\langle L_\omega\alpha,\beta\rangle=\langle\alpha,\Lambda_\omega\beta\rangle$. As in \cite[Chapter VI, Theorem (6.8)]{Dem}, we define a $(1,0)$-form $\tau$ to be $\tau:=[\Lambda_\omega,\partial\omega]$ and set $\theta:=\tau+\bar{\tau}$. Here, $[\cdot,\cdot]$ denotes the Lie bracket of two complex differential forms.
\end{definition}
\begin{lemma}\label{lemma:torison}
    The $(1,0)$-form $\tau$ satisfies $\tau\wedge\omega^{n-1}=\partial\omega^{n-1}$, and is given by
    $$
\tau=\Lambda_\omega(\partial\omega)=\left(g^{j\bar{k}}\partial_lg_{j\bar{k}}-g^{j\bar{k}}\partial_jg_{l\bar{k}}\right)dz^l
    $$
    in local coordinates. Here we are only concerned about the action of $\tau$ on functions, so $\tau=\Lambda_\omega(\partial\omega)$ since the operator $\Lambda_\omega$ has bidegree $(-1,-1)$.
\end{lemma}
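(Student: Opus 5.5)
Two claims need proving. First, $\tau\wedge\omega^{n-1}=\partial\omega^{n-1}$. Second, the explicit coordinate formula for $\tau$, and with it the identification $\tau=\Lambda_\omega(\partial\omega)$ when acting on functions. The plan is to verify both by a direct pointwise computation in local holomorphic coordinates. Both sides are tensorial, so at a fixed point $x_0$ we may choose coordinates with $g_{j\bar k}(x_0)=\delta_{jk}$. The derivatives of $g$ are not normalized, since we are not assuming the Kähler condition.

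\textbf{Reduction.} On a function $f$ (a $0$-form) we have $\Lambda_\omega f=0$, because $\Lambda_\omega$ has bidegree $(-1,-1)$. So $[\Lambda_\omega,\partial\omega]f=\Lambda_\omega(\partial\omega\wedge f)-\partial\omega\wedge\Lambda_\omega f=f\,\Lambda_\omega(\partial\omega)$. Hence, as an operator on functions, $\tau$ is multiplication by the $(1,0)$-form $\Lambda_\omega(\partial\omega)$. It remains to compute this form.

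\textbf{The formula.} Write $\omega=\sqrt{-1}g_{j\bar k}dz^j\wedge d\bar z^k$. Then
\[
\partial\omega=\sqrt{-1}\,\partial_l g_{j\bar k}\,dz^l\wedge dz^j\wedge d\bar z^k .
\]
Antisymmetrizing in $l,j$ gives
\[
\partial\omega=\tfrac{\sqrt{-1}}{2}\bigl(\partial_l g_{j\bar k}-\partial_j g_{l\bar k}\bigr)dz^l\wedge dz^j\wedge d\bar z^k .
\]
Contracting with $\Lambda_\omega$ pairs one $dz^\bullet$ with the $d\bar z^k$ through $g^{\bullet\bar k}$. With the sign convention fixed by $\Lambda_\omega\omega=n$, this yields $\bigl(g^{j\bar k}\partial_l g_{j\bar k}-g^{j\bar k}\partial_j g_{l\bar k}\bigr)dz^l$. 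At $x_0$ the computation is just the flat contraction, so there is no ambiguity beyond the normalization of $\Lambda$.

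\textbf{The identity $\tau\wedge\omega^{n-1}=\partial\omega^{n-1}$.} I will check this independently from the coordinate formula rather than through Kähler-identity manipulations. Write $\omega^{n-1}=(n-1)!\,\star\omega$, or equivalently use
\[
\omega^{n-1}/(n-1)!=\sum_{j}\Bigl(\prod_{i\neq j}\sqrt{-1}\,dz^i\wedge d\bar z^i\Bigr)
\]
at $x_0$. Then
\[
\partial\omega^{n-1}=(n-1)\,\partial\omega\wedge\omega^{n-2}.
\]
Wedging the expression for $\partial\omega$ with $\omega^{n-2}$ kills every term except those whose $d\bar z^k$ matches one of $dz^l,dz^j$. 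This leaves the trace-type contraction $\sum_k(\partial_l g_{k\bar k}-\partial_k g_{l\bar k})\,dz^l$, wedged with the $(n-1,n-1)$ volume piece. That is exactly $\tau\wedge\omega^{n-1}$ with the formula above. Equivalently, this is the standard fact that $\Lambda$ is an isomorphism from $(2,1)$-forms of the relevant primitive-component type onto $(1,0)$-forms, and that $L^{n-1}$ inverts it up to the constant $\frac{1}{(n-1)!}\cdot(n-1)!$.

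\textbf{Main obstacle.} The only real difficulty is bookkeeping of combinatorial constants and signs: $\sqrt{-1}$ factors, the ordering $dz^l\wedge dz^j\wedge d\bar z^k$ versus $d\bar z^k$ first, and the normalization of $\Lambda_\omega$. I will fix these by testing on the model case where $g_{j\bar k}=\delta_{jk}+h\,\delta_{1j}\delta_{1k}$ varies in one coordinate direction only, and matching the constant so that both identities hold. Since the paper cites \cite[Chapter VI, Theorem (6.8)]{Dem}, I will take Demailly's convention for $\Lambda$ and $[\cdot,\cdot]$ and only verify consistency.
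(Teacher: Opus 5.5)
Your proposal is correct, but it reaches the identity by a different route from the paper.

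\textbf{The paper's route.} The paper does no coordinate expansion. It applies Demailly's commutation relation $[L^r,\Lambda]u=r(k-n+r-1)L^{r-1}u$ on $k$-forms with $r=n-1$, $k=3$, $u=\partial\omega$. This gives $[L_\omega^{n-1},\Lambda_\omega](\partial\omega)=(n-1)\,\partial\omega\wedge\omega^{n-2}=\partial\omega^{n-1}$. It then uses, implicitly, that $\Lambda_\omega L_\omega^{n-1}(\partial\omega)=0$, because $\omega^{n-1}\wedge\partial\omega$ has bidegree $(n+1,n)$. So the commutator is just $L_\omega^{n-1}\Lambda_\omega(\partial\omega)=\tau\wedge\omega^{n-1}$. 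The coordinate formula is left as a ``direct computation''.

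\textbf{Your route.} You normalize $g_{j\bar k}(x_0)=\delta_{jk}$. This is legitimate because the formula is $g^{j\bar k}$ contracted against the torsion tensor $\partial_lg_{j\bar k}-\partial_jg_{l\bar k}$. You then expand both sides. In $(n-1)\,\partial\omega\wedge\omega^{n-2}$ only the terms with $k\in\{l,j\}$ survive. They produce $(n-1)\cdot(n-2)!=(n-1)!$ times $\sum_j\partial_mg_{j\bar j}-\sum_l\partial_lg_{m\bar l}$ in front of $dz^m\wedge\prod_{i\neq m}\sqrt{-1}\,dz^i\wedge d\bar z^i$. That is exactly the coefficient in $\tau\wedge\omega^{n-1}$.

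The paper's argument buys brevity and no sign or factorial bookkeeping. Yours is elementary and also verifies the coordinate expression, which the paper never writes out.

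\textbf{Points to tighten.}
\begin{enumerate}
\item There is no constant to ``match''. Once $\Lambda_\omega$ is normalized by $\Lambda_\omega\omega=n$, both sides are determined, and the general expansion genuinely checks the identity.
\item Your test metric $\delta_{jk}+h\,\delta_{1j}\delta_{1k}$ only exercises the trace term. The minus sign on $g^{j\bar k}\partial_jg_{l\bar k}$ comes from the transposition $dz^l\wedge dz^j\wedge d\bar z^l=-dz^j\wedge dz^l\wedge d\bar z^l$ in the $k=l$ terms. Checking it needs an off-diagonal variation, or simply the general expansion.
\item Your ``equivalently'' remark about $L^{n-1}$ inverting $\Lambda$ is garbled as written. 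The clean version: write $\partial\omega=\alpha_0+\omega\wedge\beta$ with $\alpha_0$ primitive. Then $\Lambda_\omega\partial\omega=(n-1)\beta$ and $\omega^{n-2}\wedge\alpha_0=0$. This is the paper's argument in another guise.
\end{enumerate}
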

\begin{proof}
    We only prove the first statement, the second follows from direct computations. Since $\partial\omega\in\Lambda^3(TX\otimes\mathbb{C})^*$, we can use \cite[Chapter VI, (5.10)]{Dem} to write
    \begin{align*}
        [L_\omega^{n-1},\Lambda_\omega](\partial\omega)=(n-1)(3-n+n-2)L_\omega^{n-2}(\partial\omega)=(n-1)\partial\omega\wedge\omega^{n-2}=\partial\omega^{n-1}.
    \end{align*}
    On the other hand, 
    $$
[L_\omega^{n-1},\Lambda_\omega](\partial\omega)=L_\omega^{n-1}\Lambda_\omega(\partial\omega)=L_\omega^{n-1}\tau=\tau\wedge\omega^{n-1}.
    $$
    The proof is therefore concluded.
\end{proof}
Now standard computations using local coordinates yield the following relationship between real and complex Laplacians:
\begin{lemma}\label{real and complex Laplacian}
    For each $f\in C^2(X)$, we have
    $$
\Delta_gf=2\Delta_\omega f+2\langle df,\theta\rangle_{\omega}=2\Delta_\omega f+2\langle\nabla f,\theta^{\#}\rangle_{\omega},
    $$
    where we raise the index of the $1$-form $\theta$ to obtain the vector field $\theta^\#$. 
\end{lemma}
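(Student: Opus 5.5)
The plan is to prove the identity in divergence form, using the Hodge star and \cref{lemma:torison}, rather than by computing Levi-Civita Christoffel symbols in complex coordinates. Both sides are tensorial and linear in $f$, so it suffices to compare them against the volume form $dV_g=\omega^n/n!$. I write $d^c:=\sqrt{-1}(\bar{\partial}-\partial)$, so that $dd^c=2\sqrt{-1}\partial\bar{\partial}$.

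\textbf{Step 1 (pointwise linear algebra).} At a fixed point choose coordinates in which $g_{j\bar k}=\delta_{jk}$. For a real $1$-form $\alpha$, check directly that
$$
*\alpha=\pm\, J\alpha\wedge\frac{\omega^{n-1}}{(n-1)!}.
$$
Specialised to $\alpha=df$, this reads $*df=\pm\, d^cf\wedge\frac{\omega^{n-1}}{(n-1)!}$, with the sign fixed by the orientation convention. This step uses no integrability of $\omega$, so it holds for every Hermitian metric.

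\textbf{Step 2 (differentiate).} With the convention $\Delta_g f\,dV_g=d(*df)$, which is the divergence form given in the paper, the Leibniz rule gives
$$
d(*df)=\pm\left(dd^cf\wedge\frac{\omega^{n-1}}{(n-1)!}-d^cf\wedge\frac{d\omega^{n-1}}{(n-1)!}\right).
$$

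\textbf{Step 3 (identify the two terms).} For the first term, $dd^cf\wedge\omega^{n-1}/(n-1)!=2\sqrt{-1}\partial\bar\partial f\wedge\omega^{n-1}/(n-1)!=2\Delta_\omega f\,\omega^n/n!$; this produces the $2\Delta_\omega f$.

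For the second term, \cref{lemma:torison} and its conjugate give $\partial\omega^{n-1}=\tau\wedge\omega^{n-1}$ and $\bar\partial\omega^{n-1}=\bar\tau\wedge\omega^{n-1}$. Hence $d\omega^{n-1}=\theta\wedge\omega^{n-1}$, and the second term equals $-d^cf\wedge\theta\wedge\omega^{n-1}/(n-1)!$.

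Another pointwise computation in orthonormal coordinates then gives, for real $1$-forms $\alpha,\beta$,
$$
J\alpha\wedge\beta\wedge\frac{\omega^{n-1}}{(n-1)!}=c\,\langle\alpha,\beta\rangle_g\,\frac{\omega^n}{n!}
$$
for a universal constant $c$. Concretely, only the terms pairing $dz^j$ with $d\bar z^j$ survive the wedge with $\omega^{n-1}$, and $J$ converts $\alpha\wedge\beta$ into its $(1,1)$-part, whose trace is the inner product. Applying this with $\alpha=df$, $\beta=\theta$, and adjusting normalisations (for example $\langle\cdot,\cdot\rangle_\omega$ versus $g$), produces the term $2\langle df,\theta\rangle_\omega$.

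\textbf{Main obstacle.} The only genuine difficulty is bookkeeping of signs and factors of $2$. These enter through the orientation, through the choice $d^c=\sqrt{-1}(\bar\partial-\partial)$ versus $\frac{\sqrt{-1}}{2}(\bar\partial-\partial)$, through the normalisation of $\langle\cdot,\cdot\rangle_\omega$, and through the sign of $\Delta_g$. I will fix all of these by testing the formula on two model cases.

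First, on flat $\mathbb{C}^n$ with $f=|z^1|^2$: here $\theta=0$, $\Delta_gf=4$ and $\Delta_\omega f=1$, which confirms the factor $2$ in front of $\Delta_\omega f$.

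Second, for a conformally flat metric $\omega=e^{u}\omega_{\mathrm{Euc}}$ with $n\ge2$: here $\tau=(n-1)\partial u$, and the Riemannian Laplacian is known explicitly as $e^{-u}\big(\Delta_{\mathrm{Euc}}f+(n-1)\langle du,df\rangle_{\mathrm{Euc}}\big)$. Comparing with the formula pins down the sign and constant of the torsion term.

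As a cross-check, one can instead compute in local coordinates $\Delta_gf=2g^{j\bar k}\partial_j\partial_{\bar k}f-2g^{j\bar k}\Gamma^{LC,l}_{j\bar k}\partial_lf+\text{c.c.}$. Then verify that $-2g^{j\bar k}\Gamma^{LC,l}_{j\bar k}$ equals $g^{l\bar m}\bar\tau_{\bar m}$, using the explicit formula for $\tau$ in \cref{lemma:torison}.
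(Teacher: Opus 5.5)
Your proof is correct, and it takes a different route from the paper. The paper gives no derivation beyond ``standard computations using local coordinates'', which is exactly your cross-check. In that route you expand $\Delta_g f=\operatorname{tr}_g\nabla^2 f$ in complex coordinates. Since $g_{\bar k\bar m}=0$, one has $\Gamma^l_{j\bar k}=\frac12 g^{l\bar m}(\partial_{\bar k}g_{j\bar m}-\partial_{\bar m}g_{j\bar k})$. Tracing gives $-2g^{j\bar k}\Gamma^l_{j\bar k}=g^{l\bar m}\overline{\tau_m}$, which is the conjugate of the coordinate formula in \cref{lemma:torison}.

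Your divergence-form argument avoids Christoffel symbols entirely. It needs only $*df=d^cf\wedge\omega^{n-1}/(n-1)!$ and the coordinate-free content of \cref{lemma:torison}, namely $d\omega^{n-1}=\theta\wedge\omega^{n-1}$. It also explains why precisely $\theta$ appears. The coordinate route is more mechanical, but it outputs $2\operatorname{Re}\big(g^{l\bar m}\partial_l f\,\overline{\tau_m}\big)$ directly, so the normalization of the torsion term is visible at once.

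You should pin the constants down rather than leave them to ``adjusting normalisations''. Take $J\alpha:=-\alpha\circ J$, so that $d^cf=Jdf$. Then the sign in Step 1 is $+$ for the orientation $\omega^n>0$, and the constant in your last pointwise identity is $c=-1$. So Steps 1--3 prove
\[
\Delta_g f=2\Delta_\omega f+\langle df,\theta\rangle_g .
\]
This is the stated formula because the paper's $\langle\cdot,\cdot\rangle_\omega$ is the complex pairing $\operatorname{Re}\,g^{j\bar k}\alpha_j\overline{\beta_k}$ of $(1,0)$-components. On real $1$-forms that pairing is half of $g$; compare $|\nabla\psi|^2_{\omega}\,\omega^n=n\sqrt{-1}\partial\psi\wedge\bar\partial\psi\wedge\omega^{n-1}$ in the proof of \cref{L^p bound of green functions}. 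If $\langle\cdot,\cdot\rangle_\omega$ meant $g$ itself, the coefficient would be $1$, not $2$. Your conformal test shows exactly this: it gives $\Delta_gf-2\Delta_\omega f=(n-1)\langle du,df\rangle_g=\langle\theta,df\rangle_g$. So state the convention explicitly.

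Your flat test is miscomputed. The pair $\Delta_g f=4$, $\Delta_\omega f=1$ mixes the Euclidean $g$ with the normalization $g_{j\bar k}=\delta_{jk}$, and it would give ratio $4$, not $2$. Done consistently you get $(4,2)$ or $(2,1)$. In any case the factor $2$ in front of $\Delta_\omega f$ is already forced in Step 3 by $dd^c=2\sqrt{-1}\partial\bar\partial$ and $dV_g=\omega^n/n!$.
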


\section{Estimates for families of Laplacian equations}
In this section, we establish estimates of solutions to a family of Laplacian equations along the Chern-Ricci flow, following the approach in \cite{GPSS24a} and \cite{GPS24}. 

Return to the background in \cref{conjecture}, since $K_X$ is nef and $\kappa(X)=n$, we automatically deduce that $X$ is a K\"ahler manifold when $n=2$. For $n\geq3$, $X$ is a Moishezon manifold, namely, it can be birationally embedded into a projective space. Consequently, it automatically lies in the Fujiki class $\mathcal{C}$ \cite{Fuj78}, i.e., it is bimeromorphic to a compact K\"ahler manifold. Since the bounded mass property and the positive volume property are both bimeromorphic invariants (see e.g. \cite[Theorem A]{GL22}), we obtain that $X$ satisfies both properties. 

Recall that we have chosen a nef representative $\chi$ of $K_X$ and set $\hat{\omega}_t:=\chi+e^{-t}(\omega_0-\chi)$, where $\omega_0$ is the initial metric for the normalized Chern-Ricci flow such that $\omega_0>\chi$. As in the proof of \cref{lower bound of varphi(t)}, we may use \cite[Theorem 4.6]{GL22} to obtain that $\underline{\operatorname{Vol}}(\chi)>0$. Consequently, we easily have that the volumes $V_{\omega_t}:=\int_X\omega_t^n$ along the flow is uniformly bounded away from zero.

For simplicity of notations, we also write $\omega_t=\hat{\omega}_t+\sqrt{-1}\partial\bar{\partial}\varphi_t$ as the solution of the normalized Chern-Ricci flow. Since the volumes $V_{\omega_t}$ are uniformly bounded away from zero, we will define the $p$-Nash Entropy of $\omega_t$ by
$$
\operatorname{Ent}_p(\omega_t):=\int_X\left|\log\left(\frac{\omega_t^n}{\omega_X^n}\right)\right|^p\omega_t^n=\int_Xe^{F_t}|F_t|^p\omega_X^n,
$$
where $e^{F_t}:=\frac{\omega_t^n}{\omega_X^n}$. It then follows easily from \cref{estimates of varphi} and the integrability of quasi-plurisubharmonic functions that $\operatorname{Ent}_p(\omega_t)$ remains uniformly bounded along the flow \eqref{main flow}.

\begin{lemma}\label{laplacian estimate 1}
    Suppose that $v\in L^1(X,\omega_t^n)$ is a function which satisfies $\int_Xv\omega_t^n=0$ and 
    $$
v\in C^2(\overline{\Omega_0}),\quad\Delta_{\omega_t}v\geq-a\quad \operatorname{on}\Omega_0,
    $$
    where $\Omega_s:=\{v>s\}$ for each $s\geq0$ and $a>0$ is a given constant. Then, there is a constant $C>0$ depending on $n,p,\chi,\underline{\operatorname{Vol}}(\chi),\omega_X,\operatorname{Ent}_p(\omega_t)$ such that
    $$
\sup_Xv\leq C(a+\|v\|_{L^1(X,\omega_t^n)}).
    $$
\end{lemma}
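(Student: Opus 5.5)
We follow the comparison argument of \cite{GPSS24a, GPTW24}, with the Kähler auxiliary Monge-Ampère equation replaced by its Hermitian analogue and its $L^\infty$-estimate replaced by \cref{a priori in nef class for MA}. Fix $s\geq0$ and let $\Omega_s=\{v>s\}$. Since $\int_Xv\omega_t^n=0$ and the volumes $V_{\omega_t}$ are uniformly bounded below, the set $\Omega_s$ has measure controlled by $\|v\|_{L^1}/s$. For smooth nonnegative approximations $\eta_k$ of $(v-s)_+$, consider the auxiliary equation
\begin{equation*}
(\hat{\omega}_t+\sqrt{-1}\partial\bar{\partial}\psi_{s,k})^n=\frac{\eta_k}{A_{s,k}}e^{F_t}\omega_X^n,\qquad \sup_X\psi_{s,k}=0,
\end{equation*}
where $A_{s,k}=\int_X\eta_ke^{F_t}\omega_X^n/V$ is the normalizing constant, adjusted in the Hermitian setting by the constant $c_t$ allowed in \cref{a priori in nef class for MA}. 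Since $\hat{\omega}_t\geq\chi+e^{-t}(\omega_0-\chi)$ lies in a nef class of positive lower volume, uniformly in $t$, we apply \cref{a priori in nef class for MA}. The entropy of the right-hand side is bounded in terms of $\operatorname{Ent}_p(\omega_t)$ together with Young's inequality, exactly as in \cite{GPTW24}. This gives $-\psi_{s,k}+\mathcal{V}_t\leq C$. By \cref{rmk:semi positive rep}, $\mathcal{V}_t$ is uniformly bounded, so $-\psi_{s,k}\leq C$.

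Next we compare $v$ with $\psi_{s,k}$ via the test function
\begin{equation*}
\Phi=-\varepsilon\bigl(-\psi_{s,k}+\Lambda\bigr)^{\frac{n}{n+1}}+(v-s),\qquad \varepsilon=\Bigl(\tfrac{n+1}{n}\Bigr)^{\frac{n}{n+1}}A_{s,k}^{\frac{1}{n+1}},
\end{equation*}
and claim $\Phi\leq0$. Suppose instead that $\Phi$ attains a positive maximum at $x_0\in\Omega_s$. There we have $\sqrt{-1}\partial\bar\partial\Phi\leq0$, and we apply $\Delta_{\omega_t}$. This is the complex Laplacian $\operatorname{tr}_{\omega_t}\sqrt{-1}\partial\bar\partial$, with no torsion terms because we only use the second-order condition at a maximum point.

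Write $\omega_\psi=\hat\omega_t+\sqrt{-1}\partial\bar\partial\psi$. The identity $\Delta_{\omega_t}\psi=\operatorname{tr}_{\omega_t}\omega_\psi-\operatorname{tr}_{\omega_t}\hat{\omega}_t$ and the arithmetic–geometric mean inequality $\operatorname{tr}_{\omega_t}\omega_\psi\geq n(\omega_\psi^n/\omega_t^n)^{1/n}=n(\eta_k/A_{s,k})^{1/n}$ give
\begin{equation*}
0\geq\Delta_{\omega_t}\Phi\geq -a-C\varepsilon\operatorname{tr}_{\omega_t}\hat\omega_t+c\,\varepsilon(-\psi+\Lambda)^{-\frac{1}{n+1}}(\eta_k/A_{s,k})^{1/n}.
\end{equation*}
The term $-\operatorname{tr}_{\omega_t}\hat\omega_t$ carries the wrong sign, which is the main obstacle. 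We handle it as in \cite{GPSS24a}: use the concave function $-(-\psi+\Lambda)^{n/(n+1)}$ and absorb $\hat\omega_t$ by replacing it with $(1-\delta)\hat\omega_t$, i.e. solving the auxiliary equation in the class of $\hat\omega_t$ but writing $\hat\omega_t\leq C\omega_X$. If necessary we add the harmless term $-a\,\cdot$(bounded potential) to cancel both $a$ and the trace term. Rearranging, $(v-s)$ is dominated at $x_0$, contradicting $\Phi(x_0)>0$.

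Letting $k\to\infty$ yields
\begin{equation*}
(v-s)^{\frac{n+1}{n}}\leq C\Bigl(\int_{\Omega_s}(v-s)\,\omega_t^n+a^{\frac{n+1}{n}}\Bigr)\bigl(-\psi+\Lambda\bigr)\quad\text{on }\Omega_s.
\end{equation*}
Integrating over $\Omega_s$ and using $\psi\geq-C$ gives a De Giorgi type decay inequality $\phi(s')\,(s'-s)\leq C\phi(s)^{1+\delta_0}$ for $\phi(s)=\operatorname{Vol}_{\omega_t}(\Omega_s)$ and $s'>s$. Here $\delta_0>0$ comes from Hölder's inequality combined with the $L^{1+\epsilon}$ integrability provided by the entropy bound. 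The standard iteration lemma (De Giorgi, as in \cite{GPTW24}), started at $s_0=C\|v\|_{L^1}$ so that $\phi(s_0)$ is small, then shows $\phi(s)=0$ for $s\geq s_0+C(a+\|v\|_{L^1})$, which is the claimed bound. The step most likely to require care is controlling $-\varepsilon\operatorname{tr}_{\omega_t}\hat\omega_t$, since $\omega_t$ degenerates near $E$.
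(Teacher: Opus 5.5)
\textbf{There is a genuine gap, and it is at the step you yourself flag.}

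\textbf{First gap: the trace term.} With $\Phi=-\varepsilon(-\psi_{s,k}+\Lambda)^{\frac{n}{n+1}}+(v-s)$, the maximum principle leaves the term $-\frac{\varepsilon n}{n+1}(-\psi_{s,k}+\Lambda)^{-\frac{1}{n+1}}\operatorname{tr}_{\omega_t}\hat\omega_t$. None of the allowed data $n,p,\chi,\underline{\operatorname{Vol}}(\chi),\omega_X,\operatorname{Ent}_p(\omega_t)$ controls $\operatorname{tr}_{\omega_t}\hat\omega_t$. Writing $\hat\omega_t\le C\omega_X$ only trades it for $\operatorname{tr}_{\omega_t}\omega_X$, which is in general unbounded near $E$, where $\omega_t$ degenerates. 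Neither a factor $(1-\delta)$ nor adding $a$ times a bounded function can cancel an unbounded term.

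The missing idea is to put the potential of $\omega_t$ itself into the concave part, as the paper does:
\[
\Phi=-\varepsilon\bigl(-\psi_{s,k}+\varphi_t+\Lambda\bigr)^{\frac{n}{n+1}}+(v-s),\qquad \Lambda=C_0+1 .
\]
Since $\omega_t=\hat\omega_t+\sqrt{-1}\partial\bar\partial\varphi_t$, one has $\Delta_{\omega_t}(\psi_{s,k}-\varphi_t)=\operatorname{tr}_{\omega_t}(\hat\omega_t+\sqrt{-1}\partial\bar\partial\psi_{s,k})-n$. So the $\hat\omega_t$ terms cancel exactly, and only a contribution bounded below by $-n\varepsilon$ survives. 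After normalizing $a=1$ and $\|v\|_{L^1}\le 1$, this is absorbed by choosing $\varepsilon^{n+1}=c^{-1}\bigl(\frac{n+1}{n^2}\bigr)^n(1+n\varepsilon)^nA_{s,k}$. Here $c$ is the normalizing constant of the auxiliary equation, bounded below through $\underline{\operatorname{Vol}}(\chi)>0$.

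This is exactly where \cref{lower bound of varphi(t)} and $\varphi_t\le C_0$ enter. Together with $\psi_{s,k}\le\mathcal{V}_t$ they give $-\psi_{s,k}+\varphi_t+\Lambda\ge 1$, so $\Phi<0$ off $\Omega_s$. Afterwards they give $-\psi_{s,k}+\varphi_t+\Lambda\le -\psi_{s,k}+2C_0+1$.

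\textbf{Second gap: the uniform bound $-\psi_{s,k}\le C$.} To apply \cref{a priori in nef class for MA} you need a uniform bound on $\int_X g\,[\log(1+g)]^p\omega_X^n$ for the density $g=\eta_k e^{F_t}/A_{s,k}$. But $v$ is only known to lie in $L^1$, and $A_{s,k}$ can be arbitrarily small. So this entropy is controlled only through $\sup_X v$, which is the quantity you are trying to bound. (In \cref{L^p bound of green functions} the density is a small power $H^{n\varepsilon}$ of an $L^1$ function, which is why the theorem does apply there.)

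The paper never bounds $\psi_{s,k}$. It uses only Skoda's uniform integrability for $C\omega_X$-plurisubharmonic functions normalized by $\sup_X\psi=0$. The steps are:
\begin{enumerate}
\item On $\Omega_s$ one has the pointwise bound $(v-s)^{\frac{n+1}{n}}A_{s,k}^{-\frac1n}\le C(-\psi_{s,k}+C)$. Note the power $A^{1/n}$, not $A$ as in your final display.
\item Skoda then gives $\int_{\Omega_s}\exp\bigl(\alpha(v-s)^{\frac{n+1}{n}}A_{s,k}^{-\frac1n}\bigr)\omega_X^n\le C$.
\item Young's inequality for $x\mapsto[\log(1+x)]^p$ and its inverse, paired with $e^{F_t}$, yields $\int_{\Omega_s}(v-s)^{\frac{p(n+1)}{n}}\omega_t^n\le C A_s^{p/n}$, where $A_s=\int_{\Omega_s}(v-s)\,\omega_t^n$. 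This is where $\operatorname{Ent}_p(\omega_t)$ and $p>n$ are used.
\item H\"older gives $A_s\le C\phi(s)^{1+\delta_0}$ with $1+\delta_0=\frac{p(n+1)-n}{pn}$, and De Giorgi's lemma applied to $\phi(s)=\int_{\Omega_s}\omega_t^n$ concludes.
\end{enumerate}
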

\begin{proof}
 We first claim that we can assume that $\|v\|_{L^1(X,\omega_t^n)}\leq 1$. If $v$ itself satisfies $\|v\|_{L^1(X,\omega_t^n)}\leq 1$, we are done. Otherwise if $\|v\|_{L^1(X,\omega_t^n)}\geq1$, replace $v$ by $\hat{v}:=\frac{v}{\|v\|_{L^1(X,\omega_t^n)}}$, we get $\Delta_{\omega_t}\hat{v}\geq-\frac{a}{\|v\|_{L^1(X,\omega_t^n)}}$ on $\Omega_0$, then the conclusion $\sup_X\hat{v}\leq C\left(\frac{a}{\|v\|_{L^1(X,\omega_t^n)}}+1\right)$ will imply the corresponding estimate of $v$. Also by considering $\frac{v}{a}$ we can assume that $a=1$. It thus remains to prove that $\sup_Xv\leq C$.
 
    \textbf{Step 1.} Choosing a sequence of smooth functions $\tau_k:\mathbb{R}\rightarrow\mathbb{R}^+$ such that $\tau_k(x)$ decreases to the function $x\cdot\mathds{1}_{\mathbb{R}^+}(x)$ as $k\rightarrow+\infty$ and $\tau_k(x)\equiv\frac{1}{k}$ for $x\leq-\frac{1}{2}$. These choice ensure that $\tau_k(v-s)$ belongs to $C^2(X)$ when $s>\frac{1}{2}$.
    For each $s>\frac{1}{2}$ and $k\in\mathbb{Z}^+$, we apply the main theorem of \cite{TW10} to solve the following auxiliary Monge-Amp\`ere equations
    \begin{equation}\label{auxiliary MA 1}
        (\hat{\omega}_t+\sqrt{-1}\partial\bar{\partial}\psi_{t,k,s})^n=c_{t,k,s}\frac{\tau_k(v-s)}{A_{t,k,s}}\omega_t^n,\quad\sup_X\psi_{t,k,s}=0,
    \end{equation}
    where 
    \begin{align*}
        A_{t,k,s}:=\int_X\tau_k(v-s)\omega_t^n\to A_{t,s}:=\int_{\Omega_s}(v-s)\omega_t^n\,\,\,\operatorname{as}k\to\infty.
    \end{align*}
    We shall assume that $\Omega_s$ is non-empty so that $A_{t,s}>0$, for otherwise we are already done. Now integrating both sides of \eqref{auxiliary MA}, we get that
    $$
c_{t,k,s}=\int_X(\hat{\omega}_t+\sqrt{-1}\partial\bar{\partial}\psi_{t,k,s})^n.
    $$
    It follows that $c_{t,k,s}\geq\underline{\operatorname{Vol}}(\hat{\omega}_t)$. As in the proof of \cref{lower bound of varphi(t)}, invoking that $X$ satisfies the positive volume property, we may use \cite[Theorem 4.6]{GL22} to obtain that $\underline{\operatorname{Vol}}(\chi)>0$. The monotonicity of lower volumes (see \cite[Proposition 3.7]{BGL25}) then yields that $\underline{\operatorname{Vol}}(\hat{\omega}_t)\geq\underline{\operatorname{Vol}}(\chi)>0$ since we have assumed that $\omega_0>\chi$. Similarly, the bounded mass property of $X$ ensures that $c_{t,k,s}$ is uniformly bounded from above. Consequently, we conclude that $c_{t,k,s}$ is uniformly bounded away from zero.

    \textbf{Step 2.} As in \cite[Lemma 2]{GPS24}, we apply a priori estimates \cref{lower bound of varphi(t)} and \cref{estimates of varphi} to choose $\Lambda:=C_0+1$ for a uniform constant $C_0$ satisfying $|-\varphi_t+\mathcal{V}_t|\leq C_0,\varphi_t\leq C_0$ and set
    $$
\Phi_{t,k,s}:=-\varepsilon_{t,k,s}(-\psi_{t,k,s}+\varphi_t+\Lambda)^{\frac{n}{n+1}}+(v-s)
    $$
    for some constant $\varepsilon_{t,k,s}$ to be assigned. By the assumption on $v$ we have that $\Phi_{t,k,s}\in C^2(\overline{\Omega_0})$. Our goal is to choose appropriate $\varepsilon_{t,k,s}$ such that $\Phi_{t,k,s}\leq0$ on $X$. By the choice of $\Lambda$, we automatically have that $-\psi_{t,k,s}+\varphi_t+\Lambda=(\mathcal{V}_t-\psi_{t,k,s})+(\varphi_t-\mathcal{V}_t+C_0)+1\geq1$ and hence $\Phi_{t,k,s}|_{X\setminus\Omega_s}<0$, so we may assume that $\Phi_{t,k,s}$ achieves maximum at some point $x_0\in\Omega_s$. Invoking the maximum principle, we can write
    \begin{align*}
        0&\geq\Delta_{\omega_t}\Phi_{t,k,s}(x_0)\\
        &\geq-\frac{\varepsilon_{t,k,s} n}{n+1}(-\psi_{t,k,s}+\varphi_t+\Lambda)^{-\frac{1}{n+1}}(\operatorname{tr}_{\omega_t}(\sqrt{-1}\partial\bar{\partial}(\varphi_t-\psi_{t,k,s})))+\Delta_{\omega_t}v\\
        &=\frac{\varepsilon_{t,k,s} n}{n+1}(-\psi_{t,k,s}+\varphi_t+\Lambda)^{-\frac{1}{n+1}}\operatorname{tr}_{\omega_t}(\hat{\omega_t}+\sqrt{-1}\partial\bar{\partial}\psi_{t,k,s})-\frac{\varepsilon_{t,k,s}n^2}{n+1}+\Delta_{\omega_t}v\\
        &\geq\frac{\varepsilon_{t,k,s} n^2}{n+1}(-\psi_{t,k,s}+\varphi_t+\Lambda)^{-\frac{1}{n+1}}\left(\frac{(\hat{\omega_t}+\sqrt{-1}\partial\bar{\partial}\psi_{t,k,s})^n}{\omega_t^n}\right)^{\frac{1}{n}}-\frac{\varepsilon_{t,k,s}n^2}{n+1}+\Delta_{\omega_t}v\\
        &\geq c_{t,k,s}^{\frac{1}{n}}\frac{(v-s)^{\frac{1}{n}}}{A_{t,k,s}^{\frac{1}{n}}}\frac{\varepsilon_{t,k,s} n^2}{n+1}(-\psi_{t,k,s}+\varphi_t+\Lambda)^{-\frac{1}{n+1}}-\varepsilon_{t,k,s}n-1.
    \end{align*}
    It follows that 
    $$
-\varepsilon_{t,k,s}(-\psi_{t,k,s}+\varphi_t+\Lambda)^{\frac{n}{n+1}}\leq-\frac{c_{t,k,s}\varepsilon_{t,k,s}^{n+1}}{(1+\varepsilon_{t,k,s}n)^n}\left(\frac{n^2}{n+1}\right)^n\frac{v-s}{A_{t,k,s}}.
    $$
    Now, we choose 
    $$
\varepsilon_{t,k,s}^{n+1}:=\frac{1}{c_{t,k,s}}\left(\frac{n+1}{n^2}\right)^n(1+\varepsilon_{t,k,s}n)^nA_{t,k,s}
    $$
    to conclude that $\Phi_{t,k,s}\leq0$ on $\Omega_s$.

    \textbf{Step 3.} The assumption $\|v\|_{L^1(X,\omega_t^n)}\leq 1$ we made implies that $A_{t,s}=\int_{\Omega_s}(v-s)\omega_t^n\leq 1$ for each $s>\frac{1}{2}$, hence $A_{t,k,s}\leq 2$ for fixed $t,s$ and large $k$. This combined with the uniform lower bound of $c_{t,k,s}$ implies that 
    $$
\varepsilon_{t,k,s}\leq C(\underline{\operatorname{Vol}}(\chi),n)A_{t,k,s}^{\frac{1}{n+1}}.
    $$
    This estimate combined with $\Phi_{t,k,s}\leq0$ yields that
    $$
(v-s)A_{t,k,s}^{-\frac{1}{n+1}}\leq C(\underline{\operatorname{Vol}}(\chi),n)(-\psi_{t,k,s}+\varphi_t+\Lambda)^{\frac{n}{n+1}}.
    $$
    On $\Omega_s=\{v>s\}$, we can take a power to write
    $$
\frac{(v-s)^{\frac{n+1}{n}}}{A_{t,k,s}^{\frac{1}{n}}}\leq C(\underline{\operatorname{Vol}}(\chi),n)^{\frac{n+1}{n}}(-\psi_{t,k,s}+\varphi_t+\Lambda)\leq C(\underline{\operatorname{Vol}}(\chi),n)^{\frac{n+1}{n}}(-\psi_{t,k,s}+C_0+\Lambda).
    $$
    By Skoda's uniform integrability theorem, we can find a small uniform constant $\alpha=\alpha(\chi,\omega_0)$ such that
    \begin{equation}\label{exp lemma}
        \int_{\Omega_s}\operatorname{exp}\left(\alpha\frac{(v-s)^{\frac{n+1}{n}}}{A_{t,k,s}^{\frac{1}{n}}}\right)\omega_X^n\leq C_2\int_X \operatorname{exp}\left(-\alpha C(\underline{\operatorname{Vol}}(\chi),n)^{\frac{n+1}{n}}\psi_{t,k,s}\right)\omega_X^n\leq C_3
    \end{equation}
    for some uniform constant $C_2,C_3$ depending on the given data.

    \textbf{Step 4.} Given \eqref{exp lemma}, the argument is now a direct adaptation from \cite{GPS24, GPT23}, we give the details for the reader's convenience. Fix $p>n$ and define $\eta:\mathbb{R}^+\rightarrow\mathbb{R}^+$ by $\eta(x):=[\log(1+x)]^p$. Observe that $\eta$ is an increasing function with $\eta(0)=0$ and let $\eta^{-1}(y)=\exp(y^{\frac{1}{p}})-1$ be its inverse function. Recall that Young's inequality yields for any numbers $a,b\geq0$,
    \begin{align}\label{eq 21}
        ab&\leq\int_0^a\eta(x)dx+\int_0^b\eta^{-1}(y)dy=\int_0^a\eta(x)dx+\int_0^{\eta^{-1}(b)}x\eta^{\prime}(x)dx\\
        &\leq a\cdot\eta(u)+b\cdot\eta^{-1}(b)=a[\log(1+a)]^p+b(\exp(b^{\frac{1}{p}})-1).
    \end{align}
Set
$$
w:=\frac{\alpha}{2}\frac{ (v-s)^{\frac{n+1}{n}}}{A_{t,k,s}^{\frac{1}{n}}}
$$
We apply \eqref{eq 21} with $a=e^{F_t}$ and $w^p$ to derive
\begin{align}
    w(z)^pe^{F_t(z)}\leq e^{F_t(z)}[\log(1+e^{F_t(z)})]^p+w(z)^p(e^{w(z)}-1)\leq e^{F_t(z)}\left(1+|F_t(z)|\right)^p+C_pe^{2w(z)}.
\end{align}
Plugging the value of $w$ and integrating both sides over $\Omega_s$ we obtain
\begin{align}
   & \left(\frac{\alpha}{2}\right)^p\int_{\Omega_s}\frac{ (v-s)^{\frac{p(n+1)}{n}}}{A_{t,k,s}^{\frac{p}{n}}}e^{F_t}\omega_X^n\\
    \leq&\int_{\Omega_s}e^{F_t(z)}(1+|F_t(z)|)^p\omega_X^n+C_p\int_{\Omega_s} \exp\left({\alpha\frac{ (v-s)^{\frac{n+1}{n}}}{A_{t,k,s}^{\frac{1}{n}}} }\right)\omega_X^n\leq C.\\
\end{align}
    Where the last inequality follows from \eqref{exp lemma} and $C$ is a uniform constant depending on the given data. As a consequence, we can write
    \begin{equation}\label{core inequality}
        \int_{\Omega_s}(v-s)^{\frac{p(n+1)}{n}}e^{F_t}\omega_X^n\leq CA_{t,k,s}^{\frac{p}{n}}\to CA_{t,s}^{\frac{p}{n}}\quad\operatorname{as}k\to\infty.
    \end{equation}
   On the other hand, applying H\"older's inequality and invoking the definition of $A_{t,s}$ to write
    \begin{align}
        A_{t,s}=&\int_{\Omega_s}(v-s)e^{F_t}\omega_X^n\leq\left(\int_{\Omega_s}(v-s)^{\frac{p(n+1)}{n}}e^{F_t}\omega_X^n\right)^{\frac{n}{p(n+1)}}\cdot\left(\int_{\Omega_s}e^{F_t}\omega_X^n\right)^{\frac{1}{q}}\\
        \leq&CA_{t,s}^{\frac{1}{n+1}}\cdot\left(\int_{\Omega_s}e^{F_t}\omega^n\right)^{\frac{1}{q}}.
    \end{align}
    Where $q:=\frac{p(n+1)}{p(n+1)-n}$. The above inequality can be written by
    $$
A_{t,s}\leq C\left(\int_{\Omega_s}e^{F_t}\omega_X^n\right)^{\frac{n+1}{nq}}.
    $$
    Observe that $\frac{n+1}{nq}=\frac{p(n+1)-n}{pn}:=1+\delta_0>1$. Define $\phi:\mathbb{R}\rightarrow\mathbb{R}$ by $\phi(s):=\int_{\Omega_s}e^{F_t}\omega_X^n$. By the definition of $A_{t,s}$ it is easy to check that
    $$
r\phi(s+r)=\int_{\Omega_{s+r}}re^{F_t}\omega_X^n\leq\int_{\Omega_{s}}(v-s)e^{F_t}\omega_X^n=A_{t,s}\leq C\phi(s)^{1+\delta_0},
    $$
    for any $0\leq r\leq1$. The result then follows from a classic lemma due to De Giorgi, see \cite[Lemma 2]{GPT23} and \cite[Lemma 2, Step 4]{GPS24}. The proof is therefore concluded.
\end{proof}

The following corollary is an immediate consequence of \cref{laplacian estimate 1}:
\begin{corollary}\label{coro:laplacian estimate 1}
  Assume that $d\omega_0=0$ in a neighborhood $U$ of $E=\operatorname{Null}(K_X)$. Suppose that $v\in C^2(X)$ satisfies 
    $$
|\Delta_{\omega_t}v|\leq 1,\quad \int_Xv\omega_t^n=0.
    $$
    Then, there exists a uniform constant $C=C(n,p,\chi,\underline{\operatorname{Vol}}(\chi),\omega_X,\operatorname{Ent}_p(\omega_t))$ such that
    $$
\sup_X|v|\leq C(1+\|v\|_{L^1(X,\omega_t^n)}).
    $$
\end{corollary}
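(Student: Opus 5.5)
Apply \cref{laplacian estimate 1} twice, to $v$ and to $-v$. The hypothesis $d\omega_0=0$ near $E$ is not needed for this step; it only fixes the setting for later use. The statement is symmetric in $v\mapsto -v$, and both sign conditions hold on all of $X$ rather than on a superlevel set, so nothing new has to be proved.

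\textbf{Upper bound.} Since $v\in C^2(X)$, we have $v\in C^2(\overline{\Omega_0})$ and $v\in L^1(X,\omega_t^n)$, where $\Omega_0=\{v>0\}$. The normalization $\int_X v\,\omega_t^n=0$ is assumed. From $|\Delta_{\omega_t}v|\le 1$ we get $\Delta_{\omega_t}v\ge -1$ on all of $X$, and in particular on $\Omega_0$. So \cref{laplacian estimate 1} with $a=1$ gives
$$
\sup_X v\le C\bigl(1+\|v\|_{L^1(X,\omega_t^n)}\bigr),
$$
with $C$ depending only on $n,p,\chi,\underline{\operatorname{Vol}}(\chi),\omega_X$ and $\operatorname{Ent}_p(\omega_t)$.

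\textbf{Lower bound.} Set $w:=-v$. Then $w\in C^2(X)$, $\int_X w\,\omega_t^n=0$, $\|w\|_{L^1(X,\omega_t^n)}=\|v\|_{L^1(X,\omega_t^n)}$, and $\Delta_{\omega_t}w=-\Delta_{\omega_t}v\ge -1$ on $X$, in particular on $\{w>0\}$. Applying \cref{laplacian estimate 1} again yields $\sup_X(-v)\le C(1+\|v\|_{L^1(X,\omega_t^n)})$.

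Combining the two bounds gives $\sup_X|v|\le C(1+\|v\|_{L^1(X,\omega_t^n)})$. The constant is uniform in $t$ because $\operatorname{Ent}_p(\omega_t)$ is uniformly bounded along the flow, as noted before \cref{laplacian estimate 1}.

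The main point to check is that the constant in \cref{laplacian estimate 1} does not depend on the sign or on $v$ itself. It depends only on the listed data, since the auxiliary Monge-Amp\`ere construction there involves $v$ only through $\tau_k(v-s)$ and the normalization $\|v\|_{L^1}\le 1$. Hence the same $C$ serves for both $v$ and $-v$.
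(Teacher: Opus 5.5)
Your proposal is correct and is exactly what the paper intends: it calls the corollary an immediate consequence of \cref{laplacian estimate 1} and gives no proof, and applying that lemma with $a=1$ to both $v$ and $-v$ is that consequence. You are also right that the hypothesis $d\omega_0=0$ near $E$ plays no role here; the paper only uses it later, in \cref{Laplacian estimate 2}, to handle the torsion term.
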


\begin{lemma}\label{Laplacian estimate 2}
    Suppose that $v\in C^2(X)$ satisfies 
    $$
|\Delta_{\omega_t}v|\leq 1,\quad \int_Xv\omega_t^n=0.
    $$
    Then, there exists a uniform constant $C=C(n,p,\chi,\underline{\operatorname{Vol}}(\chi),\omega_X,\operatorname{Ent}_p(\omega_t))$ such that
$$
\int_X|v|\omega_t^n\leq C.
$$
\end{lemma}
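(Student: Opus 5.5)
The plan is to argue by contradiction. The idea is to normalize $v$ so that its $L^1$ norm equals one, use \cref{laplacian estimate 1} to get a uniform $C^0$ bound, and then show via an energy identity that the normalized functions converge to a constant. That constant must vanish because of the mean-zero condition, which contradicts the $L^1$ normalization. I would carry this out under the same local Kähler assumption on $\omega_0$ near $E$ as in \cref{coro:laplacian estimate 1}, since the torsion term is controlled only through it.

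\textbf{Setup.} If the lemma fails, there are times $t_j$ and functions $v_j$ with $|\Delta_{\omega_{t_j}}v_j|\le 1$, $\int_X v_j\omega_{t_j}^n=0$, and $N_j:=\|v_j\|_{L^1(X,\omega_{t_j}^n)}\to\infty$. Set $u_j:=v_j/N_j$. Then $|\Delta_{\omega_{t_j}}u_j|\le 1/N_j$, $\int_X u_j\omega_{t_j}^n=0$ and $\|u_j\|_{L^1}=1$. Applying \cref{laplacian estimate 1} to $u_j$ and to $-u_j$ (the hypothesis $\Delta v\ge -a$ is needed only on the superlevel set, and $\operatorname{Ent}_p$ is uniformly bounded) gives $\sup_X|u_j|\le C$.

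If $t_j$ stays bounded, the metrics $\omega_{t_j}$ are uniformly equivalent to $\omega_0$ and smooth. Standard elliptic theory on a fixed compact family then bounds $\|v\|_{L^1}$ directly. So I may assume $t_j\to\infty$.

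\textbf{Energy estimate.} Multiply by $u_j$ and integrate:
$$
\int_X u_j\,\sqrt{-1}\partial\bar\partial u_j\wedge\omega_{t_j}^{n-1}=-\int_X\sqrt{-1}\partial u_j\wedge\bar\partial u_j\wedge\omega_{t_j}^{n-1}+\tfrac12\int_X u_j^2\,\sqrt{-1}\partial\bar\partial\omega_{t_j}^{n-1},
$$
where the last term comes from writing $u\,\bar\partial u=\tfrac12\bar\partial(u^2)$ and integrating by parts twice.

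The left side is at most $C\sup|u_j|/N_j\le C/N_j$. For the torsion term, $\omega_t=\hat\omega_t+\sqrt{-1}\partial\bar\partial\varphi_t$ and $\chi=\sqrt{-1}\partial\bar\partial\log\Omega$, so $\partial\omega_t=e^{-t}\partial\omega_0$ and $\partial\bar\partial\omega_t=e^{-t}\partial\bar\partial\omega_0$. Hence $\partial\bar\partial\omega_t^{n-1}$ is a sum of terms carrying a factor $e^{-t}$ and supported in $X\setminus U$, because $d\omega_0=0$ on $U$. On $X\setminus U$ the metrics $\omega_t$ are uniformly smooth by \cref{smooth convergence}. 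Together with $\sup|u_j|\le C$, this gives
$$
\int_X|\partial u_j|^2_{\omega_{t_j}}\omega_{t_j}^n\le C/N_j+Ce^{-t_j}\to0.
$$
Controlling this torsion term is the step I expect to be the main obstacle. The trick of converting it into $u^2\,\partial\bar\partial\omega^{n-1}$, rather than estimating it by $\sup|u|\int|\partial u|$, is what makes it decay instead of merely staying bounded.

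\textbf{Passing to the limit.} On each compact $K\Subset X\setminus E$, $\omega_{t_j}\to\omega_{KE}$ smoothly. So $u_j$ is bounded in $W^{1,2}(K)$ with gradient tending to zero, and the Laplacian bound together with \cref{GT 9.11} gives local $W^{2,p}$ bounds. A diagonal subsequence therefore converges locally uniformly on $X\setminus E$ to a locally constant function, which is a constant $c$ since $X\setminus E$ is connected.

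The functions $u_j$ are uniformly bounded, $E$ has measure zero, and $\omega_{t_j}^n$ is uniformly equivalent to $\omega_0^n$ (\cref{cor:uniform equivalence of volume forms}) and converges to $\omega_{KE}^n$ on $X\setminus E$. Dominated convergence then gives
$$
0=\int_X u_j\omega_{t_j}^n\to c\int_X\omega_{KE}^n,\qquad 1=\int_X|u_j|\omega_{t_j}^n\to|c|\int_X\omega_{KE}^n.
$$
Since $\int_X\omega_{KE}^n>0$, the first limit forces $c=0$, contradicting the second.
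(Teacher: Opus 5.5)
Your argument is correct and is essentially the paper's proof: blow-up normalization, a $C^0$ bound from \cref{laplacian estimate 1}, vanishing of the Dirichlet energy, convergence to a constant on $X\setminus E$, and a contradiction with $\int_X u_j\omega_{t_j}^n=0$. The paper differs from you in two places: it handles the torsion term by Cauchy--Schwarz against $|\tau_t|_{\omega_t}=O(e^{-t})$ on $X\setminus U$ rather than through $\int_X u\,\bar\partial u\wedge\partial\omega^{n-1}=\frac12\int_X u^2\,\partial\bar\partial\omega^{n-1}$, and it finishes with a sign analysis on $X\setminus U_{2\varepsilon}$ rather than dominated convergence on all of $X$.

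One correction to your commentary: the direct estimate of the torsion term already decays, which is exactly what the paper uses. The real merit of your identity is that it needs only an integral bound on $\sqrt{-1}\partial\bar\partial\omega_t^{n-1}$. That bound holds even without $d\omega_0=0$ near $E$, via $\omega_0^k\wedge\omega_t^{n-k}\le C(2\omega_0+\sqrt{-1}\partial\bar\partial\varphi_t)^n$ and the bounded mass property.
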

\begin{proof}
    We will mainly follow the blow-up arguments in \cite[Lemma 5.3]{GPSS24a}. Suppose that there is a sequence of metrics $\omega_{t_j}$ along the CRF (with $t_j\to+\infty$) and $v_j\in C^2(X)$ such that 
    $$
\Delta_{\omega_{t_j}}v_j=h_j,\quad\int_Xv_j\omega_{t_j}^n=0,
    $$
    where $h_j$ are continuous functions satisfying $|h_j|\leq1$, and we have
    $$
\int_X|v_j|\omega_{t_j}^n:=N_j\to+\infty,
    $$
    as $j\to+\infty$. Set $\hat{v}_j:=\frac{v_j}{N_j}$, then we have
    $$
|\Delta_{\omega_{t_j}}\hat{v}_j|=\frac{|h_j|}{N_j}\to0,\quad\int_X|\hat{v}_j|\omega_{t_j}^n=1.
    $$
\cref{coro:laplacian estimate 1} then yields a uniform constant $C$ such that
\begin{equation}\label{eq:sup vj}
\sup_X|\hat{v}_j|\leq C.
\end{equation}
Let $\tau_j:=\Lambda_{\omega_{t_j}}(\partial{\omega_{t_j}})=e^{-{t_j}}\Lambda_{\omega_{t_j}}(\partial{\omega_{0}})$ be the torsion $(1,0)$-form of $\omega_{t_j}$. Since we have assumed that $d\omega_0=0$ in a neighborhood $U$ of $\operatorname{Null}(K_X)$ and that $\omega_{t_j}$ are uniformly equivalent outside $U$, we can write
\begin{align*}
    \int_X|\nabla\hat{v}_j|^2_{\omega_{t_j}}\omega_{t_j}^n&\leq C\int_X\sqrt{-1}\partial\hat{v}_j\wedge\bar{\partial}\hat{v}_j\wedge\omega_{t_j}^{n-1}\\
    &=\frac{C}{n}\int_X(-\hat{v}_j)(\Delta_{\omega_{t_j}}\hat{v}_j)\omega_{t_j}^n+C\int_X(-\hat{v}_j)\bar{\partial}\hat{v}_j\wedge\partial\omega_{t_j}^{n-1}\\
    &\leq C\int_X|\hat{v}_j||\frac{h_j}{N_j}|\omega_{t_j}^n+C\int_X(-\hat{v}_j)\bar{\partial}\hat{v}_j\wedge\tau_j\wedge\omega_{t_j}^{n-1}\\
    &\leq C\int_X|\hat{v}_j||\frac{h_j}{N_j}|\omega_{t_j}^n+C\int_X|\nabla\hat{v}_j|_{\omega_{t_j}}|\tau_j|_{\omega_{t_j}}\omega_{t_j}^n\\
    &\leq C\int_X|\hat{v}_j||\frac{h_j}{N_j}|\omega_{t_j}^n+C\left(\int_X|\nabla\hat{v}_j|^2_{\omega_{t_j}}\omega_{t_j}^n\right)^{\frac{1}{2}}\left(\int_X|\tau_j|_{\omega_{t_j}}^2\omega_{t_j}^n\right)^\frac{1}{2},
\end{align*}
where in the second inequality we have used \cref{lemma:torison} and in the third inequality we have used \eqref{eq:sup vj}. Since $|\tau_j|_{\omega_{t_j}}=o(e^{-t_j})$ and $|\frac{h_j}{N_k}|\to0$ uniformly, we conclude that 
\begin{equation}\label{eq:L2 of nabla v_j}
\underset{j\to\infty}{\lim}  \int_X|\nabla\hat{v}_j|^2_{\omega_{t_j}}\omega_{t_j}^n\to0.
\end{equation}
Since $X$ is a connected complex manifold and $E=\operatorname{Null}(K_X)$ is a proper analytic subvariety, it is well-known that $E$ is automatically thin in $X$ and hence $X\setminus E$ is connected. For each $\varepsilon>0$, we can thus find small connected neighborhoods $U_\varepsilon\Subset U_{2\varepsilon}\Subset U$ with smooth boundaries such that $X\setminus U_\varepsilon,X\setminus U_{2\varepsilon}$ are connected and that $|U_\varepsilon|<\varepsilon,|U_{2\varepsilon}|<2\varepsilon$ (here we use $|\cdot|$ to denote the volume of a set with respect to $\omega_X^n$).

Now Since $\omega_{t_j}$ are uniformly equivalent outside $U_\varepsilon$, we conclude from \eqref{eq:L2 of nabla v_j} that
\begin{equation}\label{eq:L2 limit of vj 2}
    \underset{j\to\infty}{\lim}\int_{X\setminus U_{\varepsilon}}|\nabla\hat{v}_j|^2_{\omega_X}\omega_X^n=0.
\end{equation}
It follows that the sequence $\hat{v}_j$ is uniformly bounded in the sobolev space $W^{1,1}(X\setminus U_{\varepsilon},\omega_X^n)$ and hence we can use sobolev's embedding theorem to extract a converging subsequence (still denoted by $\hat{v}_j$) such that $\hat{v}_j\to v_\infty$ in $L^q(X\setminus U_{2\varepsilon},\omega_X^n)$ for some $q>1$. By \eqref{eq:sup vj} we have that $\hat{v}_j$ is uniformly bounded, thus so does $v_\infty$. Recall that we have smooth convergence $\omega_{t_j}\to\omega_{KE}$ on $X\setminus U_{2\varepsilon}$, this clearly yields uniform convergence of the coefficients of the formal adjoint operators $\Delta_{\omega_{t_j}}^*\to\Delta_{\omega_{KE}}^*$ on $X\setminus U_{2\varepsilon}$. Consequently, we easily have the convergence 
$$\frac{h_j}{N_j}=\Delta_{\omega_{t_j}}\hat{v}_j\to\Delta_{\omega_{KE}}v_\infty$$
on $X\setminus U_{2\varepsilon}$ in the distributional sense. Therefore, we have $\Delta_{\omega_{KE}}v_\infty=0$ on $X\setminus U_{2\varepsilon}$ and hence $v_\infty$ is smooth on $X\setminus U_{2\varepsilon}$ by standard elliptic regularity theory (see for example \cite[Chapter IV, Theorem 4.9]{Wells08}).

Now we claim that $v_\infty$ is constant in $X\setminus U_{2\varepsilon}$. For each smooth vector field $Y\in C_c^{\infty}(X\setminus U_{2\varepsilon})$, we use the divergence theorem for the underlying Riemannian manifold to write
\begin{align*}
    \int_X\langle\nabla v_\infty,Y\rangle_{\omega_X}\omega_X^n&=-\int_Xv_\infty\cdot\operatorname{div}_{\omega_X}Y\omega_X^n=-\underset{j\to\infty}{\lim}\int_X\hat{v}_j\operatorname{div}_{\omega_X}Y\omega_X^n\\
  &  =\underset{j\to\infty}{\lim} \int_X\langle\nabla \hat{v}_j,Y\rangle_{\omega_X}\omega_X^n\leq C\underset{j\to\infty}{\lim} \int_{X\setminus U_{\varepsilon}}|\nabla\hat{v}_j|_{\omega_X}\omega_X^n=0.
\end{align*}
Here, we have used \eqref{eq:L2 limit of vj 2} in the last equality. This immediately yields that $\nabla v_\infty\equiv0$ on $X\setminus U_{2\varepsilon}$, whence the claim follows.

It remains to establish a contradiction. Write $\hat{v}_j=\hat{v}_j^+-\hat{v}_j^-$ and set $c=v_\infty$ on $X\setminus U_{2\varepsilon}$.

\textbf{Case 1.} Suppose first $c\geq0$. Since $\int_X\hat{v}_j\omega_{t_j}^n=0$, we have $\int_X\hat{v}_j^+\omega_{t_j}^n=\int_X\hat{v}_j^-\omega_{t_j}^n$. By \eqref{eq:sup vj} and \cref{estimates of varphi} we have
$$
\int_{U_{2\varepsilon}}\hat{v}_j^{-}\omega_{t_j}^n\leq C|U_{2\varepsilon}|\leq 2C\varepsilon.
$$
Since $c\geq0$, $\hat{v}_j^{-}:=-\min(\hat{v}_j,0)\to0$ on $X\setminus U_{2\varepsilon}$. The dominated convergence theorem then yields that
$$
\underset{j\to\infty}{\lim}\int_{X\setminus U_{2\varepsilon}}\hat{v}_j^{-}\omega_{t_j}^n=0.
$$
We now choose $\varepsilon$ so small such that $2C\varepsilon<\frac{1}{4}$, then for $j$ large
$$
1=\int_X|\hat{v}_j|\omega_{t_j}^n=\int_X(\hat{v}_j^-+\hat{v}_j^+)\omega_{t_j}^n=2\int_X\hat{v}_j^-\omega_{t_j}^n=2\int_{U_{2\varepsilon}}\hat{v}_j^-\omega_{t_j}^n+2\int_{{X\setminus U_{2\varepsilon}}}\hat{v}_j^-\omega_{t_j}^n<\frac{1}{2},
$$
this gives a contradiction.

\textbf{Case 2.} For the case $c<0$, the argument is exactly the same as in \textbf{Case 1} by considering $\hat{v}_j^+$, the proof is therefore concluded.
\end{proof}

Unlike the K\"ahler case, for any Hermitian metric $\omega$ on $X$, the formal adjoint $\Delta_{\omega}^*$ does not necessarily coincide with $\Delta_{\omega}$ itself on a Hermitian manifold and we also do not have 
$$
\int_X(\Delta_{\omega}u)\omega^n=0.
$$
 In this setting, basic elliptic PDE theory (see e.g. \cite[Chapter VI, Corollary (2.4)]{Dem}) yields an orthonormal decomposition in $L^2(X,\omega^n)$:
$$
C^\infty(X)=\Delta_{\omega}(C^\infty(X))\oplus\operatorname{ker}\Delta_{\omega}^*.
$$
Now, given any $h\in C^\infty(X)$, $h$ may not lie in the image of $\Delta_{\omega}$, and it follows from the above decomposition that $h\in\Delta_{\omega}(C^\infty(X))$ if and only if $h\perp\operatorname{ker}\Delta_{\omega}^*$. 

On the other hand, the standard maximum principle yields that harmonic functions are constant, namely, $\dim_{\mathbb{C}}\operatorname{Ker}\Delta_{\omega}=1$. Since the Riemannian Laplacian $\Delta_{g}$ is self-adjoint and it has the same second-order term with $\Delta_{\omega}$, the famous index theorem yields that the index $\dim_{\mathbb{C}}\operatorname{Ker}\Delta_{\omega}-\dim_{\mathbb{C}}\operatorname{Ker}\Delta_{\omega}^*=0$. Consequently, $\dim_{\mathbb{C}}\operatorname{Ker}\Delta_{\omega}^*=1$. Now, it is well-known from Gauduchon's theorem (see \cite{Gau77}) that we can choose a positive generator $0<\rho\in\operatorname{Ker}\Delta_{\omega}^*$ (namely, $\partial\bar{\partial}(\rho\omega^{n-1})=0$) such that $\int_X\rho\omega^n=1$. For an arbitrary $h\in C^\infty(X)$, choose $c=-\int_Xh\rho\omega^n$, then
$$
\int_X(h+c)\rho\omega^n=\int_Xh\rho\omega^n+c=0.
$$
This implies that $h+c\in\Delta_{\omega}(C^\infty(X))$. We thus arrive at the following:

\begin{lemma}
\label{solve Laplacian}
    Let $(X,\omega)$ be a compact Hermitian manifold, for each $h\in C^\infty(X)$ there exists a constant $c$ and a function $u\in C^\infty(X)$ such that
    $$
\Delta_\omega u=h+c.
    $$
\end{lemma}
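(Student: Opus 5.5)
The plan is to run the Fredholm alternative for $\Delta_\omega$ and use Gauduchon's theorem to describe the obstruction space. First, $\Delta_\omega=g^{j\bar k}\partial_j\partial_{\bar k}$ is a second-order elliptic operator with smooth coefficients on the compact manifold $X$, with principal symbol half that of the Riemannian Laplacian by \cref{real and complex Laplacian}. Standard elliptic theory (Fredholm property on Sobolev spaces together with elliptic regularity) then gives the $L^2(X,\omega^n)$-orthogonal decomposition
$$
C^\infty(X)=\Delta_\omega(C^\infty(X))\oplus\ker\Delta_\omega^*,
$$
and $\ker\Delta_\omega^*$ consists of smooth functions. Hence $h+c$ lies in the image exactly when it is $L^2$-orthogonal to $\ker\Delta_\omega^*$.

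Next I would identify $\ker\Delta_\omega^*$. By the strong maximum principle, $\ker\Delta_\omega$ consists only of constants: a function with $\Delta_\omega u=0$ attains an interior maximum, so it is constant. For the index I would use the homotopy $L_s=(1-s)\Delta_\omega+s\cdot\tfrac12\Delta_g$, which by \cref{real and complex Laplacian} differs from $\Delta_\omega$ only in first-order terms, so each $L_s$ is elliptic. The index is constant along this path and equals $0$ at $s=1$ because $\Delta_g$ is self-adjoint. Thus $\dim\ker\Delta_\omega^*=\dim\ker\Delta_\omega=1$.

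I then choose a generator $\rho$ of $\ker\Delta_\omega^*$. Computing the adjoint shows $\Delta_\omega^*\rho\,\omega^n=n\sqrt{-1}\partial\bar\partial(\rho\,\omega^{n-1})$, and Gauduchon's theorem provides a positive $\rho$ with $\partial\bar\partial(\rho\,\omega^{n-1})=0$. I normalize it so that $\int_X\rho\,\omega^n=1$; in fact positivity is not needed, only $\int_X\rho\,\omega^n\neq0$. Setting $c:=-\int_X h\rho\,\omega^n$ gives $\int_X (h+c)\rho\,\omega^n=0$. Since $\rho$ spans $\ker\Delta_\omega^*$, the function $h+c$ is orthogonal to the cokernel, so $h+c=\Delta_\omega u$ for some $u\in W^{2,2}$, and elliptic regularity makes $u$ smooth.

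The only delicate step is the index computation, and the homotopy argument above handles it. If I wanted to avoid the index theorem, I would argue with Gauduchon's theorem directly. Any $\rho\in\ker\Delta_\omega^*$ makes $\rho\,\omega^{n-1}$ satisfy $\partial\bar\partial(\rho\,\omega^{n-1})=0$. Gauduchon's uniqueness (up to scaling) of such metrics in a conformal class, applied after adding a large multiple of the positive generator, then shows the kernel is one-dimensional.
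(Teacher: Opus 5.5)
Your proposal is correct and follows the paper's argument essentially step for step. Both use the Fredholm decomposition $C^\infty(X)=\Delta_\omega(C^\infty(X))\oplus\ker\Delta_\omega^*$, the maximum principle to get $\dim\ker\Delta_\omega=1$, and index zero by comparison with the self-adjoint Riemannian Laplacian (which you make explicit through the homotopy $L_s$) to get $\dim\ker\Delta_\omega^*=1$; Gauduchon's theorem then supplies a positive normalized generator $\rho$, and one sets $c=-\int_X h\rho\,\omega^n$, with your adjoint computation and the Gauduchon-uniqueness alternative being sound additions that do not change the route.
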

The next two lemmas give some uniform estimates of the twist constant $c$ along the Chern-Ricci flow:
\begin{lemma}\label{twist constant 1}
    Let $h\in C^\infty(X)$ be such that $|h|\leq2$. Let $c$ be a constant and $v\in C^\infty(X)$ be the solution of the Laplacian equation
    $$
\Delta_{\omega_t}v=h+c,\quad\int_Xv\omega_t^n=0.
    $$
    Then, we have
    $$
|c|\leq\sup_X|h|\leq2.
    $$
\end{lemma}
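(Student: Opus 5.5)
Since $\Delta_{\omega_t}v=h+c$ is solvable (\cref{solve Laplacian}), $h+c$ must lie in the image of $\Delta_{\omega_t}$. By the $L^2$ decomposition recalled before \cref{solve Laplacian}, this means $h+c$ is orthogonal to $\ker\Delta_{\omega_t}^*$. The plan is to use the Gauduchon generator of that kernel as a probability weight, which pins $c$ down as minus a weighted average of $h$.

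First I fix the generator $\rho_t>0$ of $\ker\Delta_{\omega_t}^*$, normalized by $\int_X\rho_t\omega_t^n=1$. This is the positive function given by Gauduchon's theorem and used in the paragraph before \cref{solve Laplacian}.

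Next I multiply the equation by $\rho_t$ and integrate against $\omega_t^n$. The left side is
\[
\int_X(\Delta_{\omega_t}v)\rho_t\,\omega_t^n=\int_X v\,\Delta_{\omega_t}^*\rho_t\,\omega_t^n=0,
\]
since $v$ is smooth and $\Delta_{\omega_t}^*\rho_t=0$. Equivalently, $\sqrt{-1}\partial\bar\partial(\rho_t\omega_t^{n-1})=0$ and one integrates by parts twice on the compact manifold, with no boundary terms. This yields
\[
0=\int_X h\rho_t\,\omega_t^n+c\int_X\rho_t\,\omega_t^n,\qquad\text{so}\qquad c=-\int_X h\rho_t\,\omega_t^n .
\]

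Finally, because $\rho_t\omega_t^n$ is a positive measure of total mass one, $|c|\le\int_X|h|\rho_t\,\omega_t^n\le\sup_X|h|\le2$.

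The normalization $\int_Xv\,\omega_t^n=0$ plays no role in bounding $c$; it only fixes the additive constant in $v$. The only point needing care is the positivity of $\rho_t$, which is exactly what Gauduchon's theorem supplies (a strong maximum principle for $\Delta_{\omega_t}^*$ applied to a generator of its one-dimensional kernel). If one did not want to rely on that, a maximum principle argument works instead. At a maximum point of $v$ one has $\Delta_{\omega_t}v\le0$, so $h+c\le0$ there and $c\le-h(x_{\max})\le\sup_X|h|$. At a minimum point one has $\Delta_{\omega_t}v\ge0$, so $c\ge-\sup_X|h|$. This second argument is even simpler, and I would present it as the main proof, keeping the Gauduchon computation as the explanation of why $c$ is forced.
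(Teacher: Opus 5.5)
Your Gauduchon computation is exactly the paper's proof: fix the positive generator $\rho_t$ of $\ker\Delta_{\omega_t}^*$ with $\int_X\rho_t\,\omega_t^n=1$, pair it with the equation to get $c=-\int_X h\rho_t\,\omega_t^n$, and bound this weighted average by $\sup_X|h|$.

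The maximum-principle argument you would make the main proof is a genuinely different route, and it is correct. At a maximum of $v$ the complex Hessian is negative semidefinite, so $\Delta_{\omega_t}v\le 0$ there; symmetrically $\Delta_{\omega_t}v\ge0$ at a minimum. This argument needs neither Gauduchon's theorem nor the index-theoretic fact $\dim\ker\Delta_{\omega_t}^*=1$, and it gives $-\sup_X h\le c\le-\inf_X h$ directly.

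What the paper's route buys is the exact representation $c=-\int_X h\rho_t\,\omega_t^n$, which is the tool reused in \cref{twist constant 2}. There $h$ (e.g.\ $-H_{t,k}^{\varepsilon}$, unbounded near the pole of the Green's function) is only controlled in $L^1(X,\omega_t^n)$, so a pointwise maximum-principle bound is useless. The paper instead bounds $\sup_X\rho_t$ uniformly, using that $\Delta^*_{\omega_t}=\Delta_{\omega_t}$ on the K\"ahler region $U$ together with the Harnack inequality outside $U_1$. So for this lemma alone your maximum-principle proof is the cleaner one, while the weight $\rho_t$ is what the section needs in general.

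One small correction to your parenthetical. The zeroth-order term of $\Delta_{\omega_t}^*$ has no definite sign in general, so positivity of $\rho_t$ does not follow from a strong maximum principle alone. One first shows that $\rho_t$ cannot change sign. Otherwise some smooth $h>0$ would satisfy $\int_X h\rho_t\,\omega_t^n=0$, hence $h=\Delta_{\omega_t}u$ for some $u$, which is impossible at a maximum of $u$ by exactly your observation. Only then does one apply the strong minimum principle to the nonnegative solution.
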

\begin{proof}
    Fix a smooth positive function $\rho_t\in\operatorname{Ker}(\Delta_{\omega_t}^*)$ such that $\int_X\rho_t\omega_{t}^n=1$, it follows from the above discussion that 
    $$
|c|=\left|\int_X\rho_t h\omega_t^n\right|\leq\sup_X|h|\int_X\rho_t\omega_t^n=\sup_X|h|\leq2.
    $$
\end{proof}

We will also need the following more general version:

\begin{lemma}\label{twist constant 2}    
Let $U$ be a neighborhood of $E=\operatorname{Null}(K_X)$ where $d\omega_0=0$ and $U_1$ be a smaller neighborhood of $E$ such that $E\subset U_1\Subset U$. Let $h\in C^\infty(X)$ be such that $\int_X|h|\omega_t^n\leq b$ for some uniform (in $t$) constant $b$. Let $c$ be a constant and $v\in C^\infty(X)$ be the solution of the Laplacian equation    
$$
\Delta_{\omega_t}v=h+c,\quad\int_Xv\omega_t^n=0.    
$$    
Then, there exists a uniform constant $C=C(X,U,U_1,b,\chi,\omega_0,n)$ such that
$$
|c|\leq C.    
$$
\end{lemma}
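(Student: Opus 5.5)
The plan is to bypass the Gauduchon factor $\rho_t$, which is hard to control uniformly in $t$. Instead I will pair the equation with the volume form $\omega_t^n$ and exploit that $\omega_t$ is K\"ahler on $U$. Write $\omega_t=\hat\omega_t+\sqrt{-1}\partial\bar\partial\varphi_t$ with $\hat\omega_t=\chi+e^{-t}(\omega_0-\chi)$. Since $\chi=\sqrt{-1}\partial\bar\partial\log\Omega$ is closed and $d\omega_0=0$ on $U$, we have $d\omega_t=e^{-t}d\omega_0$. In particular $\partial\omega_t^{n-1}$ and $\sqrt{-1}\partial\bar\partial\omega_t^{n-1}$ vanish on $U$.

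\textbf{Step 1 (integral identity).} Integrate $\Delta_{\omega_t}v=h+c$ against $\omega_t^n$ and integrate by parts twice. This gives
\begin{equation*}
cV_{\omega_t}=-\int_Xh\,\omega_t^n+n\int_{X\setminus U}v\,\sqrt{-1}\partial\bar\partial\omega_t^{n-1}.
\end{equation*}
By \cref{smooth convergence}, the metrics $\omega_t$ are uniformly equivalent to $\omega_0$ on $X\setminus U_1$, with uniform $C^k$ bounds there. Expanding $\partial\bar\partial\omega_t^{n-1}$ with $d\omega_t=e^{-t}d\omega_0$, every term carries at least one factor $e^{-t}$. Hence
\begin{equation*}
\bigl|\sqrt{-1}\partial\bar\partial\omega_t^{n-1}\bigr|_{\omega_0}\leq Ce^{-t}\quad\text{on }X\setminus U.
\end{equation*}
Since $V_{\omega_t}$ is uniformly bounded below (Section 4), this yields
\begin{equation*}
|c|\leq C\Bigl(b+e^{-t}\int_{X\setminus U}|v|\,\omega_0^n\Bigr).
\end{equation*}
For $t$ in any bounded interval, the metrics $\omega_t$ are uniformly equivalent to $\omega_0$ on all of $X$. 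There the Gauduchon factors $\rho_t$ vary continuously and are uniformly bounded, so $|c|=|\int\rho_th\,\omega_t^n|\leq Cb$ directly. It therefore suffices to treat $t\geq T_0$ for a large $T_0$.

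\textbf{Step 2 (absorption).} The key intermediate bound to establish is
\begin{equation*}
\int_{X\setminus U}|v|\,\omega_0^n\leq C_1\bigl(b+|c|\bigr)
\end{equation*}
with $C_1$ independent of $t$. Granting it, Step 1 gives $|c|\leq Cb+CC_1e^{-t}(b+|c|)$. Choosing $T_0$ with $CC_1e^{-T_0}\leq\frac12$ absorbs the $|c|$ term and gives the lemma.

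\textbf{Step 3 (main obstacle).} The obstacle is Step 2, because the right-hand side $h+c$ is only bounded in $L^1$, so \cref{Laplacian estimate 2} does not apply directly. I would argue by duality. Take any test function $f$ with $|f|\leq1$, supported in $X\setminus U$, and normalize it to have $\omega_t^n$-mean zero. Solve the adjoint problem $\Delta_{\omega_t}^*u=f$, $\int_Xu\,\omega_t^n=0$; this is solvable since $f\perp\ker\Delta_{\omega_t}=\{\text{constants}\}$. Then
\begin{equation*}
\int_X vf\,\omega_t^n=\int_X(h+c)\,u\,\omega_t^n,
\end{equation*}
so Step 2 reduces to a uniform bound $\sup_X|u|\leq C$.

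On $U$ the operators $\Delta^*_{\omega_t}$ and $\Delta_{\omega_t}$ coincide, because $\omega_t$ is K\"ahler there. Off $U$ they differ by first- and zeroth-order terms with coefficients $O(e^{-t})$ in a uniformly non-degenerate geometry. So I would rerun the proof of \cref{laplacian estimate 1} (the Monge--Amp\`ere comparison with $\Phi_{t,k,s}$) together with the blow-up argument of \cref{Laplacian estimate 2} for $\Delta^*_{\omega_t}$. The torsion terms live only on $X\setminus U$, where I would control them by the local $W^{2,p}$ estimate \cref{GT 9.11} and the Harnack-type bound \cref{GT 9.20}, using the smooth convergence on $X\setminus U_1$. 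This bounds $u$ on $X\setminus U_1$ in terms of its $L^1$ norm, and that $L^1$ norm is controlled by the blow-up contradiction argument.

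If this adjoint route proves too delicate, the fallback is to bound $\rho_t$ directly. Write $\rho_t=V_{\omega_t}^{-1}+\eta_t$, so that $\Delta^*_{\omega_t}\eta_t=-\Delta^*_{\omega_t}(V_{\omega_t}^{-1})$, whose right-hand side is $O(e^{-t})$ and supported in $X\setminus U$. The same sup estimate for $\Delta^*_{\omega_t}$ would then give $\sup_X|\rho_t|\leq C$, and hence $|c|=|\int_X\rho_th\,\omega_t^n|\leq Cb$ at once.
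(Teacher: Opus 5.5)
Your argument is correct in outline, but it takes a much longer route than the paper. Its starting premise, that the Gauduchon factor $\rho_t$ is hard to control uniformly in $t$, is mistaken.

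\textbf{The paper's argument.} The paper bounds $\rho_t$ directly.
\begin{itemize}
\item Since $\omega_t$ is K\"ahler on $U$, we have $\Delta^*_{\omega_t}=\Delta_{\omega_t}$ there. So $\rho_t$ is $\Delta_{\omega_t}$-harmonic on $U$, and the maximum principle gives $\sup_X\rho_t=\sup_{X\setminus U}\rho_t$.
\item Cover $X\setminus U$ by balls in $X\setminus U_1$, where $\Delta^*_{\omega_t}$ has uniformly controlled coefficients. The local maximum principle \cref{GT 9.20} then bounds $\rho_t$ by its $L^1$ norm.
\item That $L^1$ norm is exactly $\int_X\rho_t\,\omega_t^n=1$, because $\rho_t>0$. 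Hence $|c|=|\int_X\rho_t h\,\omega_t^n|\le Cb$, for all $t$ at once.
\end{itemize}
Positivity plus normalization supplies the $L^1$ control for free. So no integral identity, duality, bounded/large-$t$ split, absorption, Monge--Amp\`ere comparison or blow-up is needed. In fact your bounded-$t$ case already relies on boundedness of $\rho_t$ under controlled geometry. The only new feature for large $t$ is degeneration inside $U$, and that is exactly where $\rho_t$ is harmonic.

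\textbf{Your route does go through.}
\begin{itemize}
\item The identity $cV_{\omega_t}=-\int_Xh\,\omega_t^n+n\int_{X\setminus U}v\,\sqrt{-1}\partial\bar\partial\omega_t^{n-1}$ and the $O(e^{-t})$ bound on $X\setminus U$ are right.
\item The duality reduction is sound. Subtracting the mean from $f$ is harmless since $\int_Xv\,\omega_t^n=0$.
\item For $\Delta^*_{\omega_t}u=F$, interior $W^{2,p}$ and Harnack estimates on $X\setminus U_1$ give $\sup_{X\setminus U}(|u|+|\nabla u|)\le C(1+\|u\|_{L^1})$. This makes the lower bound required in \cref{laplacian estimate 1} linear in $\|u\|_{L^1}$.
\item In the energy identity for the blow-up argument of \cref{Laplacian estimate 2}, the only extra term involves $\bar\partial\omega_t^{n-1}$. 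It is supported off $U$ and of size $O(e^{-t})$.
\end{itemize}

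\textbf{Comparison.} Your approach yields a genuinely stronger statement: a uniform $L^\infty$ estimate for the adjoint equation with bounded right-hand side. For the lemma at hand, though, it duplicates all of Section 4 to obtain what, for the positive homogeneous solution $\rho_t$, is a two-line maximum-principle argument. As written, your Step 3 is a sketch rather than a proof.
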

\begin{proof}
    Since we have assumed that $\omega_0$ is K\"ahler in $U$, so does $\omega_t$ for all $t>0$. It follows that $\Delta_{\omega_t}^*=\Delta_{\omega_t}$ in $U$. Let $0<\rho_t\in\operatorname{Ker}\Delta_{\omega_t}^*$ be such that $\int_X\rho_t\omega_t^n=1$. On $U$, we have $\Delta_{\omega_t}\rho_t=0$. The elliptic maximum principle yields that $\sup_U\rho_t\leq\sup_{\partial U}\rho_t$ and hence $\sup_X\rho_t=\sup_{X\setminus U}\rho_t$. Choosing a finite cover of the compact set $X\setminus U$ by small balls contained in $X\setminus U_1$ and invoking the uniform equivalence of $\omega_t$ outside $U_1$, we deduce from \cref{GT 9.20} (take $L=\Delta_{\omega_t}^*$ here) that
    $$
\sup_X\rho_t=\sup_{X\setminus U}\rho_t\leq C,
    $$
    for a uniform $C=C(U,U_1,X,\chi,\omega_0,n)$. Consequently, we can further write
    \begin{align*}
        |c|&=\left|\int_X\rho_t h\omega_t^n\right|
        \leq\sup_X\rho_t\int_X|h|\omega_t^n\leq C\cdot b.
    \end{align*}
    The proof is therefore concluded.
\end{proof}

\section{Estimates of Green's functions}
In this section we shall build on various uniform estimates of Green's functions along the Chern-Ricci flow. First recall the definition of Green functions: let $(X,\omega)$ be a compact Hermitian manifold and let $g$ be the Riemannian metric associated to $\omega$, the Green's function $G(x,y)$ is defined to be the unique function satisfying 
\begin{equation}\label{eq:def of green}
\Delta_gG(x,\cdot)dV_g=-\delta_{x}+\frac{1}{\operatorname{Vol}_g(X)}dV_g,\quad\int_XG(x,y)dV_g(y)=0.
\end{equation}
where $\Delta_g$ is the Riemannian Laplacian of $g$. On a Hermitian manifold, we know $dV_g=\frac{\omega^n}{n!}$ and $\operatorname{Vol}_g(X)=\frac{1}{n!}V_\omega:=\frac{1}{n!}\int_X\omega^n$, so $\frac{1}{\operatorname{Vol}_g(X)}dV_g=\frac{1}{V_\omega}\omega^n$ and hence \eqref{eq:def of green} can be rewritten as 
$$
\Delta_gG(x,\cdot)\frac{\omega^n}{n!}=-\delta_x+\frac{\omega^n}{V_\omega}.
$$
Set now $\tilde{G}:=\frac{1}{n!}G$, we finally have
\begin{equation}\label{complex green formula}
\Delta_g\tilde{G}(x,\cdot)\omega^n=-\delta_x+\frac{\omega^n}{V_\omega}.
\end{equation}
 In this language, the Green's formula can be written as
\begin{align*}
    u(x)&=\frac{1}{V_g}\int_Xu(y)dV_g(y)-\int_XG(x,y)\Delta_gv(y)dV_g(y)\\
    &=\frac{1}{V_\omega}\int_Xu(y)\omega^n(y)-\int_X\tilde{G}(x,y)\Delta_gv(y)\omega^n(y),
\end{align*}
for any $v\in C^2(X)$. By abuse of notations, we still denote $G$ instead of $\tilde{G}$ in \eqref{complex green formula} in the sequel.

Throughout the subsequent lemmas, we shall fix a family of relatively compact neighborhoods of $E$ such that $E\subset U_3\Subset U_2\Subset U_1\Subset U$, where we have assumed that $\omega_0$ is K\"ahler in $U$. Since $X\setminus E$ is connected, we can also arrange that $X\setminus U_i$ are all connected for $i=1,2,3$.
\begin{lemma}\label{L^1 bound of G}
    Let $G_t$ be the Green's function associated with $\omega_t$, as defined in \eqref{complex green formula}. Then along the Chern-Ricci flow, there exists a uniform constant $C=C(X,U,U_1,\lambda[\omega_t|_{X\setminus U_1}],n,p,\chi,\allowbreak \omega_X,\operatorname{Ent}_p(\omega_t))$ such that for any $x\in X$, we have
    $$
\int_X|G_t(x,y)|\omega_t^n\leq C. 
    $$
    Where $\lambda[\omega_t|_{X\setminus U_1}]$ denotes the eigenvalue vectors of $\omega_t$ on $X\setminus U_1$.
\end{lemma}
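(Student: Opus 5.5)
We argue by duality, following the $L^1$ bound for Green's functions in \cite{GPSS24a}, and reduce the statement to the Laplacian estimates of Section 4. Fix $x\in X$ and set $h:=\operatorname{sgn}(G_t(x,\cdot))$, smoothed to a function $h_k$ with $|h_k|\le 1$ and $h_k\to\operatorname{sgn}G_t(x,\cdot)$ a.e. By \cref{solve Laplacian} we solve
$$
\Delta_{\omega_t}v=h_k+c,\qquad \int_Xv\,\omega_t^n=0,
$$
and \cref{twist constant 1} gives $|c|\le 1$, so $|\Delta_{\omega_t}v|\le 2$. Combining \cref{Laplacian estimate 2} with \cref{coro:laplacian estimate 1} (applied to $v/2$) yields the uniform bound $\sup_X|v|\le C$. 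Since $\int_XG_t(x,y)\,\omega_t^n(y)=0$, the constant $c$ contributes nothing, so
$$
\int_X G_t(x,y)h_k(y)\,\omega_t^n=\int_XG_t(x,y)\bigl(\Delta_{\omega_t}v-c\bigr)\omega_t^n=\int_XG_t(x,\cdot)\Delta_{\omega_t}v\,\omega_t^n .
$$

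To bring in the Green's formula we use \cref{real and complex Laplacian}, which gives $2\Delta_{\omega_t}v=\Delta_{g_t}v-2\langle dv,\theta_t\rangle$. Hence
$$
\int_XG_t(x,\cdot)h_k\,\omega_t^n=\tfrac12\Bigl(\tfrac{1}{V_{\omega_t}}\int_Xv\,\omega_t^n-v(x)\Bigr)-\int_XG_t(x,\cdot)\langle dv,\theta_t\rangle\,\omega_t^n .
$$
The first term is bounded by $C$ because $\sup|v|\le C$. The torsion term is supported in $X\setminus U$, since $\omega_t$ is K\"ahler on $U$. On $X\setminus U_1$ the metrics $\omega_t$ are uniformly equivalent to $\omega_X$, and $|\theta_t|\le Ce^{-t}$ there. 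We bound $|dv|$ on $X\setminus U$ using \cref{GT 9.11} on balls inside $X\setminus U_1$: $\Delta_{\omega_t}$ is uniformly elliptic there, and $\|v\|_{L^\infty}+\|\Delta v\|_{L^\infty}\le C$, so $\|v\|_{W^{2,p}}\le C$ for $p>2n$, and Sobolev embedding gives $\sup_{X\setminus U}|dv|\le C$. The torsion term is therefore at most $Ce^{-t}\int_{X\setminus U}|G_t(x,y)|\,\omega_t^n$.

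The main obstacle is closing this estimate, because the right-hand side again contains $\int|G_t|$. Since the coefficient is $Ce^{-t}$, it can be absorbed for $t\ge T_0$: letting $k\to\infty$ gives $\int_X|G_t(x,\cdot)|\omega_t^n\le C+Ce^{-t}\int_X|G_t(x,\cdot)|\omega_t^n$. This absorption is only legitimate if the integral is a priori finite, which holds because $G_t(x,\cdot)\in L^1$ for each fixed $t$ by standard elliptic theory. For $t\in[0,T_0]$ the metrics $\omega_t$ are uniformly equivalent to $\omega_0$ on all of $X$, and the classical Green's function bounds for a compact family of smooth metrics give a uniform bound. If one wants to avoid relying on the decay $e^{-t}$, there is an alternative: bound the exterior part $\int_{X\setminus U}|G_t(x,\cdot)|$ directly, using the localized cutoff integration by parts on the non-degenerate region described in the introduction, where the geometry of $X\setminus U_1$ is uniformly controlled.
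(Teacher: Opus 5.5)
Your proposal is correct and follows essentially the same route as the paper: duality against a solution of $\Delta_{\omega_t}v=h_k+c$ with $h_k$ approximating a sign-type function, the uniform sup bound on $v$ from Section 4, a $W^{2,p}$ bound on $|dv|$ away from $U$, and absorption of the $O(e^{-t})$ torsion term for large $t$. The differences are cosmetic. You test against $\operatorname{sgn}G_t$, whereas the paper tests against $-\mathds{1}_{\{G_t\ge 0\}}$ and uses $\int_X G_t\,\omega_t^n=0$. You also treat explicitly the a priori finiteness of $\int_X|G_t|\,\omega_t^n$ and the bounded-time range $t\in[0,T_0]$, which the paper leaves implicit.
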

\begin{proof}
    Fix $x\in X$, take a sequence of functions $h_k\in C^\infty(X)$ such that $h_k\to-\mathds{1}_{\{G_t(x,\cdot)\geq0\}}$ in $L^q(X,\omega_t^n)$ for some large $q>1$. We can assume without loss of generality that $\sup_X|h_k|\leq2$. It follows from \cref{solve Laplacian} and \cref{twist constant 1} that we can find $v_k\in C^\infty(X)$ and constants $c_k$ with $|c_k|\leq2$ such that
    $$
\Delta_{\omega_t}v_k=h_k+c_k,\quad\int_Xv_k\omega_t^n=0.
    $$
   It follows from \cref{Laplacian estimate 2} that $v_k$ is uniformly bounded. Now, Green's formula yields that
\begin{equation}\label{eq:green of vk}
\begin{aligned}
    v_k(x)&=\int_XG_t(x,y)(-\Delta_{g_t}v_k(y))\omega_t^n(y)\\
    &=\int_XG_t(x,y)(-2\Delta_{\omega_t}v_k(y))\omega_t^n(y)-2\int_XG_t(x,y)\langle\nabla v_k(y),\theta_t^\#(y)\rangle_{\omega_t(y)}\omega_t^n(y)\\
    &=\int_XG_t(x,y)(-2h_k)\omega_t^n(y)-2\int_XG_t(x,y)\langle\nabla v_k(y),\theta_t^\#(y)\rangle_{\omega_t(y)}\omega_t^n(y),
\end{aligned}
\end{equation}
where $\theta_t^\#$ is the vector field associated to the torsion $1$-form of $\omega_t$, which vanishes on $U$. Invoking $h_k\to -\mathds{1}_{\{G_t(x,\cdot)\geq0\}}$, the first term in the right-hand side of \eqref{eq:green of vk} tends to $2\int_{\{G_t(x,\cdot)\geq0\}}G_t(x,y)\omega_t^n(y)=\int_X|G_t(x,y)|\omega_t^n(y)$. For the second term, we can write
\begin{equation}\label{term 2}
    \int_XG_t(x,y)\langle\nabla v_k(y),\theta_t^\#(y)\rangle_{\omega_t(y)}\omega_t^n(y)\leq\int_X|G_t(x,y)||\nabla v_k(y)|_{\omega_t}|\theta_t^\#(y)|_{\omega_t}\omega_t^n(y).
\end{equation}
We claim that $|\nabla v_k|_{\omega_t}$ is uniformly bounded outside $U$. Choose a relatively compact subdomain $U_1\Subset U$, since $\omega_t$ is uniformly equivalent outside $U_1$, \cref{GT 9.11} yields that there is a uniform constant $C_1=C_1(X,U,U_1,\lambda[\omega_t|_{X\setminus U_1}],p)$ such that  
$$
\|v_k\|_{W^{2,p}(X\setminus U)}\leq C_1\left(\|v_k\|_{L^p(X\setminus U_1)}+\|h_k+c_k\|_{L^p(X\setminus U_1)}\right)\leq C_1C:=C_2,
$$
where the second inequality is due to the uniform boundedness of $v_k,h_k$ and $c_k$. Choose $p>2n$, Sobolev's embedding theorem then yields that there exists a positive constant $\alpha>0$ such that the embedding $W^{2,p}\hookrightarrow C^{1,\alpha}$ is a compact operator. Consequently, we derive the uniform bound 
$$
\|\nabla v_k\|_{C^0(X\setminus U)}\leq C_3,
$$
for some uniform constant $C_3$. Now, since $\theta_t^\#$ vanishes on $U$ and $|\theta_t^\#|_{\omega_t}=o(e^{-t})$, we continue with \eqref{term 2} to write
\begin{align*}
     \int_XG_t(x,y)\langle\nabla v_k(y),\theta_t^\#(y)\rangle_{\omega_t(y)}\omega_t^n(y)\leq C_4e^{-t}\int_X|G_t(x,y)|\omega_t^n\leq \frac{1}{4}\int_X|G_t(x,y)|\omega_t^n,
\end{align*}
for $t> \log C_4+\log 4$. Putting everything into \eqref{eq:green of vk}, we get
$$
\int_X|G_t(x,y)|\omega_t^n(y)\leq C+\frac{1}{2}\int_X|G_t(x,y)|\omega_t^n(y),
$$
the proof is therefore concluded.
\end{proof}

\begin{lemma}\label{lem:inf of green}
For each $x\in U_2$, there exists a uniform constant $C$ depending on $X, U, U_1, U_2, \allowbreak \lambda[\omega_t|_{X\setminus U_2}], n, \chi, \omega_0, \operatorname{Ent}_p(\omega_t)$ such that
    $$
\underset{y\in X}{\inf}G_t(x,y)\geq-C.
    $$
\end{lemma}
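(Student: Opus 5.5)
The approach follows the blow-up-free argument of \cite[Lemma 5.4]{GPSS24a}: use the symmetry-type structure of the Green's function together with the $L^1$ bound of \cref{L^1 bound of G}. The difference is that in the Hermitian setting $G_t(x,\cdot)$ is not symmetric in $(x,y)$. I would therefore work with the function $u:=G_t(x,\cdot)$ for the fixed pole $x\in U_2$ and apply a maximum-principle argument to $-u$ directly. Near its minimum the function $u$ satisfies an equation involving $\Delta_{g_t}$.

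By \eqref{complex green formula} and \cref{real and complex Laplacian}, on $X\setminus\{x\}$ we have
\[
2\Delta_{\omega_t}u+2\langle\nabla u,\theta_t^\#\rangle=\frac{1}{V_{\omega_t}}.
\]
Inside $U$ the torsion vanishes, so $\Delta_{\omega_t}(-u)=-\frac{1}{2V_{\omega_t}}\geq -a$ there.

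\textbf{Step 1 (exterior control).} I would bound $u$ on $X\setminus U_1$. Since $x\in U_2$ lies at a definite $\omega_X$-distance from $X\setminus U_1$ and $\omega_t$ is uniformly equivalent on $X\setminus U_2$, \cref{GT 9.20}, applied to $-u$ on small balls in $X\setminus \overline{U_2}$ that avoid the pole, gives
\[
\sup_{X\setminus U_1}(-u)\leq C\Bigl(\int_X|u|\,\omega_t^n+1\Bigr)\leq C',
\]
using \cref{L^1 bound of G}. The operator here is $L=\Delta_{g_t}$, whose coefficients are uniformly controlled outside $U_2$.

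\textbf{Step 2 (interior control).} I would apply \cref{laplacian estimate 1} to $v:=-u-c_0$, where $c_0$ is a normalising constant. The required inequality $\Delta_{\omega_t}v\geq -a$ only has to hold on the superlevel set $\Omega_0=\{v>0\}$. The idea is to take a level $s_0=C'$ from Step 1 so that $\{-u>s_0\}\subset U_1$, where the equation is purely K\"ahler. Away from the pole $u$ is smooth. Near the pole $u\to+\infty$, so $-u\to-\infty$ and the pole never lies in $\{-u>s_0\}$; this is the sign that makes the argument work. The $C^2$ requirement on $\overline{\Omega_0}$ therefore holds.

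The mean-zero normalisation in \cref{laplacian estimate 1} costs only the bounded constant $\int_X u\,\omega_t^n=0$, shifted by $s_0$. Concretely, I would run the proof of \cref{laplacian estimate 1} with $\Omega_s$ replaced by $\{-u>s\}$ for $s\geq s_0$, since that proof only uses the inequality on these sets together with the $L^1$ norm. The result is
\[
\sup_X(-u)\leq C\bigl(1+s_0+\|u\|_{L^1(\omega_t^n)}\bigr)\leq C.
\]

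\textbf{Main obstacle.} The delicate point is the regularity and truncation issue in \cref{laplacian estimate 1}. One needs $\tau_k(v-s)$ smooth, whereas $u$ is singular at $x$ and the auxiliary Monge--Amp\`ere right-hand side must be smooth. I would handle this by replacing $G_t(x,\cdot)$ with $\min(G_t(x,\cdot),N)$ smoothed near the pole, or by approximating $\delta_x$ with smooth bumps $\rho_\epsilon$ supported in $U_2$. This yields smooth solutions $u_\epsilon$ of the same equation with $\rho_\epsilon$ in place of $\delta_x$, and these satisfy $\Delta_{\omega_t}(-u_\epsilon)\geq-a$ on all of $U$, since the source contributes with a favourable sign. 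The bounds in Steps 1 and 2 are uniform in $\epsilon$, and letting $\epsilon\to0$ gives $\inf_y G_t(x,y)\geq -C$.
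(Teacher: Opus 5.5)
Your proof is correct. It departs from the paper's proof at exactly the point where torsion enters.

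The paper also takes $v=-G_t(x,\cdot)$, and it also uses \cref{GT 9.20} with the $L^1$ bound of \cref{L^1 bound of G} to control $v$ on $X\setminus U_1$. It then feeds this into the $W^{2,p}$ estimate of \cref{GT 9.11} and Sobolev embedding to get $|\nabla G_t(x,\cdot)|_{\omega_t}\le C$ on $X\setminus U$. That bounds the drift term $\langle\nabla v,\theta_t^\#\rangle$ on all of $\{v\ge0\}$, giving $\Delta_{\omega_t}v\ge -a-b$ there. \cref{laplacian estimate 1} is then applied to the mean-zero function $v$ itself.

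You use the one-sided Harnack bound only to localize. Since $-u\le s_0$ on $X\setminus U_1$, the superlevel set $\{-u>s_0\}$ lies in $U_1$. There $\theta_t\equiv0$ and $\Delta_{\omega_t}(-u)=-\frac{1}{2V_{\omega_t}}$ exactly, so no gradient information on $G_t$ is needed. The cost is that $-u-s_0$ is not mean-zero, but this is harmless. The proof of \cref{laplacian estimate 1} never uses $\int_X v\,\omega_t^n=0$; it uses only $\|v\|_{L^1(X,\omega_t^n)}$ (e.g. to get $A_{t,s}\le 1$), and $\|u+s_0\|_{L^1(X,\omega_t^n)}\le\|u\|_{L^1(X,\omega_t^n)}+s_0V_{\omega_t}$. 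So your route is more economical for this lemma, with no \cref{GT 9.11} and no $C^{1,\alpha}$ embedding. The paper's route yields a $C^1$ bound on $G_t(x,\cdot)$ away from $U$ as a by-product.

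Two minor corrections.

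First, the regularity issue you flag as the main obstacle does not arise. We have $\tau_k\equiv\frac1k$ on $(-\infty,-\frac12]$ and $-u\to-\infty$ at the pole, so $\overline{\{v>0\}}$ avoids $x$ and $\tau_k(v-s)$ is automatically smooth on $X$. This is how the paper proceeds. Your $\rho_\epsilon$-regularization is therefore unnecessary, though it also works: with $\rho_\epsilon$ normalized so that $\int_X\rho_\epsilon\,\omega_t^n=1$, one has $u_\epsilon=\int_X G_t(z,\cdot)\rho_\epsilon(z)\,\omega_t^n(z)$, whose $L^1$ norm is controlled by \cref{L^1 bound of G}.

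Second, $G_t$ is the Green's function of the Riemannian Laplacian $\Delta_{g_t}$, which is self-adjoint with respect to $\omega_t^n$. So $G_t$ is in fact symmetric; non-symmetry would only occur for the Chern Laplacian $\Delta_{\omega_t}$. Nothing in your argument depends on this.
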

\begin{proof}
    Fix $x\in U_2$ and set $v(y):=-G_t(x,y)$, which is smooth on $X\setminus U_2$. On $\{v\geq0\}=\{G_t(x,\cdot)\leq0\}$ (where $v$ is also smooth and the region is away from $\{x\}$), we compute
    \begin{equation}\label{eq:inf Delta v}
\Delta_{\omega_t}v=\frac{1}{2}\Delta_{g_t}v-\langle\nabla v,\theta_t^\#\rangle=\frac{1}{2}\left(\delta_x-\frac{1}{V_{\omega_t}}\right)-\langle\nabla v,\theta_t^\#\rangle\geq-a-\langle\nabla v,\theta_t^\#\rangle,
    \end{equation}
    because $\delta_x\geq0$ and $V_{\omega_t}$ is uniformly bounded away from zero. To use \cref{laplacian estimate 1}, we have to show that $|\langle\nabla v,\theta_t^\#\rangle|$ is uniformly bounded from above on $\{v\geq0\}$. Since $\theta_t^\#$ vanishes on $U$, it suffices to show that $|\nabla v|_{\omega_t}=|\nabla G_t(x,\cdot)|_{\omega_t}$ is uniformly bounded on $X\setminus U$. Similar to that in \cref{L^1 bound of G}, the sobolev's embedding theorem yields that it is enough to give a uniform bound of $\|v\|_{W^{2,p}(X\setminus U,\omega_t^n)}$ for some $p>2n$. 

   Since the metrics $\omega_t$ are uniformly equivalent outside $U_1$, by \cref{GT 9.11} (take $L=\Delta_{g_t}$ here), there is a uniform constant $C=C(X,U,U_1,n,p,\omega_t|_{X\setminus U_1})$ such that
    \begin{equation}\label{eq: v W2,p}
    \begin{aligned}
\|v\|_{W^{2,p}(X\setminus U)}&\leq C_1\left(\|v\|_{L^p(X\setminus U_1)}+\|\Delta_{g_t}v\|_{L^p(X\setminus U_1)}\right)\\
&= C_1\left(\|v\|_{L^p(X\setminus U_1)}+\|\frac{1}{V_{\omega_t}}\|_{L^p(X\setminus U_1)}\right),
    \end{aligned}
    \end{equation}
    because $x\in U_2$ implies that $\delta_x=0$ on $X\setminus U_1$. It thus remains to establish a uniform bound of $v$ on $X\setminus U_1$. Observe that the family of metrics $\omega_t$ are uniformly equivalent on $X\setminus U_2$, the Harnack type inequality \cref{GT 9.20} (applied with $L=\Delta_{g_t}$) yields that
    \begin{align*}
        \sup_{X\setminus U_1}|v|\leq C_2\left(\|v\|_{L^1(X,\omega_t^n)}+\|V_{\omega_t}^{-1}\|_{L^{2n}(X,\omega_t^n)}\right)\leq C_3,
    \end{align*}
    where $C_2=C_2(X,U_1,U^{\prime\prime
    },\lambda[\omega_t|_{X\setminus U_2}],\chi,\omega_0,n)$ and\\ $C_3=C_3(X,U,U_1,U_2,\lambda[\omega_t|_{X\setminus U_2}],n,\chi,\omega_0,\operatorname{Ent}_p(\omega_t))$. We remark that we applied \cref{GT 9.20} by choosing a finite covering of the compact set $X\setminus U_1$ by small Euclidean balls of fixed radius (depending on the distance between $\partial U_1$ and $\partial U_2$) contained in $X\setminus U_2$ and using \cref{GT 9.20} on all the balls. The last inequality is thus a consequence of \cref{L^1 bound of G}.

    Now we have shown that $|\langle\nabla v,\theta_t^\#\rangle|$ is uniformly bounded from above on $\{v\geq0\}$; hence, we conclude from \eqref{eq:inf Delta v} that 
    $$
\Delta_{\omega_t}v\geq-a-b,
    $$
   on $\{v\geq0\}$ for some uniform constants $a,b$. It thus follows from \cref{laplacian estimate 1} that 
   \begin{align*}
       -\underset{y\in X}{\inf}G_t(x,y)=\sup_Xv(y)\leq C(a+b+\|v\|_{L^1(X,\omega_t^n)})\leq C,
   \end{align*}
   where the last inequality is due to \cref{L^1 bound of G}. The proof is now finished by reversing the signs on both sides.
\end{proof}

\begin{remark}\label{rmk:uni bound of G_t}
    From the proof of \cref{lem:inf of green}, we can extract a useful fact: for each pair of neighborhoods $E\subset U_1\Subset U_2$ and each $x\in U_1$, the Green's function $G_t(x,\cdot)$ is uniformly bounded on $X\setminus U_2$ by a uniform constant $C$ depending on $U_1$ and $U_2$ and the given uniform data.
\end{remark}

As in \cite{GPSS24a}, for each $x\in U_2$, we set 
$$
\mathcal{G}_t(x,\cdot)=G_t(x,\cdot)-\underset{y\in X}{\inf}G_t(x,y)+V_{\omega_t}^{-1}>0.
$$
Integrating both sides, we get from \cref{lem:inf of green} that $\int_X\mathcal{G}_t(x,\cdot)\omega_t^n\leq C$ by some constant $C$ depending on $U_2$ and the data in \cref{lem:inf of green}.
\begin{lemma}\label{L^p bound of green functions}
There exist constants $C=C(X,U,U_1,U_2,\lambda[\omega_t|_{X\setminus U_2}],n,p,\chi,\omega_X,\operatorname{Ent}_p(\omega_t))$ and $\varepsilon=\varepsilon(n,p)$ such that for any $x\in U_2$, we have
$$
\int_X\mathcal{G}_t(x,\cdot)^{1+\varepsilon}\omega_t^n\leq C.
$$
\end{lemma}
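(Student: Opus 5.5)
We argue by duality, as in \cite[Lemma 5.4]{GPSS24a}, and test $\mathcal{G}_t(x,\cdot)$ against solutions of Laplacian equations. Fix $x\in U_2$ and a nonnegative smooth approximation $h_k$ of $\mathcal{G}_t(x,\cdot)^{\varepsilon}$, normalized so that
$$
\int_X h_k^{\frac{1+\varepsilon}{\varepsilon}}\omega_t^n\le\int_X\mathcal{G}_t^{1+\varepsilon}\omega_t^n=:N .
$$
By \cref{solve Laplacian} we solve
$$
\Delta_{\omega_t}v_k=h_k+c_k,\qquad \int_X v_k\omega_t^n=0 .
$$
Since $\varepsilon$ is small, $\int_X h_k\omega_t^n\le C\big(\int_X\mathcal{G}_t\,\omega_t^n\big)^{\varepsilon}\le C$ by H\"older and the uniform $L^1$ bound on $\mathcal{G}_t$ following \cref{lem:inf of green}. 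Hence \cref{twist constant 2} gives $|c_k|\le C$.

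Green's formula together with \cref{real and complex Laplacian} then yields
$$
\int_X\mathcal{G}_t\,h_k\,\omega_t^n= -\tfrac12 v_k(x)+C'\!\int_X v_k\,\omega_t^n+(\text{constant terms})-\int_X G_t(x,y)\langle\nabla v_k,\theta_t^\#\rangle\omega_t^n .
$$
The left side tends to $N$. We will bound the right side by $C(1+N^{\gamma})$ for some $\gamma<1$, which closes the estimate.

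\textbf{Bounding $\sup|v_k|$.} We adapt \cref{laplacian estimate 1} with right-hand side $h_k\in L^{q}$, $q=\frac{1+\varepsilon}{\varepsilon}$ large. The auxiliary Monge--Amp\`ere equation has density $\tau_k(v-s)^{\cdot}\,h_k$ instead of $\tau_k(v-s)$, as in \cite{GPSS24a,GPS24}. The De Giorgi iteration then gives
$$
\sup_X|v_k|\le C\big(1+\|h_k\|_{L^q(\omega_t^n)}+\|v_k\|_{L^1}\big)\le C(1+N^{\frac{\varepsilon}{1+\varepsilon}}) .
$$
This requires $\varepsilon=\varepsilon(n,p)$ small enough that $h_k e^{F_t}$ still lies in the Orlicz class controlled by $\operatorname{Ent}_p$. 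The $L^1$ bound on $v_k$ comes from the blow-up argument of \cref{Laplacian estimate 2}, run with $\|h_k\|_{L^q}$-normalized right-hand sides, so it also scales like $N^{\frac{\varepsilon}{1+\varepsilon}}$.

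\textbf{Torsion term.} This is the main obstacle, because $h_k$ is no longer bounded and the $W^{2,p}$ argument of \cref{L^1 bound of G} does not apply directly. The term is supported in $X\setminus U$, where $\theta_t^\#=O(e^{-t})$. There we have:
\begin{itemize}
\item $G_t(x,\cdot)$ is uniformly bounded on $X\setminus U_1$ by \cref{rmk:uni bound of G_t}.
\item $\omega_t$ is uniformly equivalent to $\omega_X$ on $X\setminus U_1$.
\end{itemize}
It therefore suffices to bound $\int_{X\setminus U}|\nabla v_k|\,\omega_t^n$. Take a cutoff $\eta$ supported in $X\setminus U_1$ with $\eta\equiv1$ on $X\setminus U$, and integrate $\eta^2|\nabla v_k|^2$ by parts against $\omega_t^{n-1}$. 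The torsion error $\partial\omega_t^{n-1}=\tau_t\wedge\omega_t^{n-1}$ is absorbed by Cauchy--Schwarz, giving
$$
\int\eta^2|\nabla v_k|^2\le C\Big(\sup|v_k|\,\|h_k+c_k\|_{L^1}+\sup|v_k|^2\Big).
$$
Combined with the previous step, the torsion term is therefore $O(e^{-t}(1+N^{\frac{\varepsilon}{1+\varepsilon}}))$.

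\textbf{Conclusion.} Collecting the estimates gives
$$
N\le C\big(1+N^{\frac{\varepsilon}{1+\varepsilon}}\big),
$$
so $N\le C$, provided $N<\infty$ for each fixed $t$. This finiteness holds because, for fixed $t$, the Green's function has a singularity of order $d^{2-2n}$, which lies in $L^{1+\varepsilon}$ when $\varepsilon<\frac1{n-1}$. The delicate point throughout is keeping every constant uniform. This requires the localized $W^{2,p}$ and Harnack estimates on $X\setminus U_1$, which depend only on $\lambda[\omega_t|_{X\setminus U_2}]$, and it forces the pole to remain in $U_2$.
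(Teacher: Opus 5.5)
Your route is genuinely different from the paper's and can be made to work. However, almost all of its content sits in the one step you only sketch: a uniform $L^q\to L^\infty$ version of \cref{laplacian estimate 1}. That is, $\Delta_{\omega_t}w\ge -a$ with $a\ge 0$, $a\in L^q(\omega_t^n)$, should imply $\sup_X w\le C(\|a\|_{L^q(\omega_t^n)}+\|w\|_{L^1(\omega_t^n)})$. You need it for $w=-v_k$, both at the pole and inside the blow-up $L^1$ argument; the bound on $\sup v_k$ only needs the constant-$a$ lemma. This estimate is not in the paper, and your sketch of it is off in two places.

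\textbf{The auxiliary density.} It has to be $\tau_k(v-s)(1+a)^n/A_s$, with an $n$-th power. At the maximum point only $(\text{density})^{1/n}$ enters, through $\operatorname{tr}_{\omega_t}\ge n(\det)^{1/n}$, so a density linear in $h_k$ cannot absorb the term $-h_k$.

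\textbf{What fixes $\varepsilon$.} No Orlicz or entropy condition on $h_ke^{F_t}$ is involved, since $\psi_{t,k,s}$ is used only through Skoda, exactly as in Step 3. The real constraint is in the De Giorgi step. After H\"older's inequality with the extra factor $(1+a)^n\in L^{q/n}$, the exponent must still exceed $1$. This holds only for $q$ above a threshold $q_0(n,p)$; in the straightforward implementation the condition is $q>np(n+1)/(p-n)$. That threshold is what fixes $\varepsilon=\varepsilon(n,p)$.

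\textbf{The starting level.} $A_s=\int_{\Omega_s}(v-s)(1+a)^n\omega_t^n$ is no longer bounded by $\|v\|_{L^1}$. So Step 3 must be run for arbitrary $A_s$, and the iteration must start at a level $s_0\approx C\|v\|_{L^1}$ beyond which the small-$A_s$ regime holds.

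Once this lemma is proved, the rest of your argument goes through:
\begin{enumerate}
\item the blow-up $L^1$ bound, rerun with $L^q$ data;
\item a single cutoff energy estimate for $v_k$ on $X\setminus U_1$, which kills the torsion term using the boundedness of $G_t(x,\cdot)$ there (\cref{rmk:uni bound of G_t});
\item the self-improving inequality $N\le C(1+N^{1/q})$, with $N<\infty$ for each fixed $t$.
\end{enumerate}

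For comparison, the paper never uses an $L^q$ right-hand side. It keeps the Laplacian data $-H_{t,k}^\varepsilon+c'_{t,k}$ merely $L^1$-bounded. The unbounded part is moved into an auxiliary Monge--Amp\`ere equation with density proportional to $H_{t,k}^{n\varepsilon}+1$, whose potential is pinned to $\varphi_t$ by the $L^\infty$ estimate \cref{a priori in nef class for MA}. The test function $(\psi_{t,k}-\varphi_t)+\varepsilon_1u_{t,k}$ then has $\Delta_{\omega_t}\ge -C$, so only a one-sided bound for $u_{t,k}$ at the pole is needed. The price is that the torsion coupling must be controlled for three functions, $\varphi_t$, $\psi_{t,k}$ and $u_{t,k}$, the last through the Fubini argument with \cref{L^1 bound of G}.

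Your approach swaps the Monge--Amp\`ere comparison for a stronger elliptic estimate. Through the duality you use, that estimate already carries the full content of the lemma. In exchange, the torsion term involves only $\nabla v_k$ and is disposed of by one energy estimate. Both approaches use the restriction $x\in U_2$ in the same way.
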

\begin{proof}
    The idea is inspired by \cite[Lemma 5.5]{GPSS24a}, in our setting we have to overcome the difficulty brought by the gradient of various solutions. In the sequel, we will use $C$ to denote various uniform constants depending on the given data.
    
    As in \cite{GPSS24a}, we fix $x\in U_2$ and choose smooth positive functions $H_k$, which is a smoothing of $\min\{\mathcal{G}_t(x,\cdot),k\}$, such that $H_{t,k}$ converges increasingly to $\mathcal{G}_t(x,\cdot)$. Therefore, we obtain
    \begin{equation}\label{L^1 of H_k}
0<\int_XH_{t,k}\omega_t^n\leq\int_X\mathcal{G}_t(x,\cdot)\omega_t^n\leq C.
    \end{equation}
    Use \cref{solve Laplacian}, we can find $u_k\in C^\infty(X)$ and constants $c_k$ solving 
    \begin{equation}
        \begin{cases}
            \Delta_{\omega_t}u_{t,k}=-H_{t,k}^\varepsilon+c^\prime_{t,k},\\
            \int_Xu_{t,k}\omega_t^n=0.
        \end{cases}
    \end{equation}
    Since $\varepsilon<1$, $H_{t,k}^{\varepsilon}$ is uniformly bounded in $L^1(X,\omega_t^n)$ by \eqref{L^1 of H_k}. It thus follows from \cref{twist constant 2} that 
    $$
|c^\prime_{t,k}|\leq C
    $$
    is uniformly bounded. Now that $H_{t,k}$ is not uniformly bounded, to show the uniformly boundedness of $u_k$, we use the main theorem of \cite{TW10} to solve the following auxiliary complex Monge-Amp\`ere equations
    \begin{equation}\label{auxiliary MA}
        (\hat{\omega}_t+\sqrt{-1}\partial\bar{\partial}\psi_{t,k})=c_{t,k}\frac{H_{t,k}^{n\varepsilon}+1}{\int_X(H_{t,k}^{n\varepsilon}+1)\omega_t^n}\omega_t^n=c_{t,k}\frac{H_{t,k}^{n\varepsilon}+1}{B_k}e^{F_t}\omega_X^n,
    \end{equation}
    with
    $$
c_{t,k}>0,\quad\sup_X\psi_{t,k}=0,\quad B_k=\int_X(H_{t,k}^{n\varepsilon }+1)\omega_t^n,\quad e^{F_t}=\frac{\omega_t^n}{\omega_X^n}.
    $$
    Integrating both sides of \eqref{auxiliary MA}, we obtain that
    $$
0<\underline{\operatorname{Vol}}(\chi)\leq\underline{\operatorname{Vol}}(\hat{\omega}_t)\leq c_{t,k}\leq\overline{\operatorname{Vol}}(\hat{\omega}_t).
    $$
    Recall that $\hat{\omega}_t:=\chi+e^{-t}(\omega_0-\chi)$, the bounded mass property and the positive volume property yields that $c_{t,k}$ is uniformly bounded away from zero (see the proof in \cref{laplacian estimate 1}). To use the $L^\infty$-estimate \cref{a priori in nef class for MA}, we have to check that the quantity 
    $$
\int_X\frac{H_{t,k}^{n\varepsilon}+1}{B_k}e^{F_t}\log\left(1+\frac{H_{t,k}^{n\varepsilon}+1}{B_k}e^{F_t}\right)^p\omega_X^n
    $$ 
    is uniformly bounded from above. Here, we remark that there's no need to check the uniform lower bound of $\int_X\operatorname{exp}\left(\frac{(H_{t,k}^{n\varepsilon}+1)e^{F_t}}{nB_k}\right)\omega_X^n$ since this quantity was just used to obtain the uniform upper bound of $c_{t,k}$ in the proof of \cite[Theorem 15.4]{PSWZ25}, which is automatically satisfied here thanks to the bounded mass property of $X$.

    Now, we use the basic inequality $\log(1+x)\leq x$ for $x>0$ to write
    \begin{equation}\label{entropy bound}
\begin{aligned}
    &\int_X\frac{H_{t,k}^{n\varepsilon}+1}{B_k}e^{F_t}\log\left(1+\frac{H_{t,k}^{n\varepsilon}+1}{B_k}e^{F_t}\right)^p\omega_X^n\\
    \leq&p\int_X\frac{H_{t,k}^{n\varepsilon}+1}{B_k}e^{F_t}\frac{H_{t,k}^{n\varepsilon}+1}{B_k}e^{F_t}\omega_X^n\\
    =&\frac{p}{B_k^2}\int_X(H_{t,k}^{2n\varepsilon}+2H_{t,k}^{n\varepsilon}+1)e^{2F_t}\omega_X^n.
\end{aligned}
\end{equation}
To show the uniform boundedness of the right-hand side of \eqref{entropy bound}, observe that
$$
\int_X\omega_t^n<B_k=\int_XH_{t,k}^{n\varepsilon}\omega_t^n+\int_X\omega_t^n\leq\left(\int_XH_{t,k}\omega_t^n\right)^{n\varepsilon}V_{\omega_t}^{1-\frac{1}{n\varepsilon}}+V_{\omega_t}\leq C\overline{\operatorname{Vol}}(\hat{\omega}_t)\leq C,
$$
if we choose $\varepsilon<\frac{1}{n}$ and invoking \eqref{L^1 of H_k}. Hence, $B_k$ is uniformly bounded away from zero. Similarly, by choosing $\varepsilon<\frac{1}{2n}$ and using H\"older's inequality, we can easily show the uniform boundedness of the integral
$$
\int_X(H_{t,k}^{2n\varepsilon}+2H_{t,k}^{n\varepsilon}+1)\omega_X^n.
$$
Finally, note that 
$$
e^{F_t}=\frac{\omega_t^n}{\omega_X^n}=e^{\varphi_t+\partial_t\varphi_t}\frac{\Omega}{\omega_X^n}\leq C,
$$
thanks to \cref{estimates of varphi}. Putting everything into \eqref{entropy bound} we get the uniform bound of the quantity $\int_X\frac{H_{t,k}^{n\varepsilon}+1}{B_k}e^{F_t}\log\left(1+\frac{H_{t,k}^{n\varepsilon}+1}{B_k}e^{F_t}\right)^p\omega_X^n$.

It follows from \cref{a priori in nef class for MA} that $|\psi_{t,k}-\mathcal{V}_{\hat{\omega}_t}|\leq C$ for a uniform constant $C$ depending on the given data. By \cref{lower bound of varphi(t)}, we also have $|\varphi_t-\mathcal{V}_{\hat{\omega}_t}|\leq C$; hence, we finally derive
\begin{equation}\label{L infty of psi - varphi}
    \sup_X|\psi_{t,k}-\varphi_t|\leq C.
\end{equation}
As in \cite{GPSS24a}, we consider the auxiliary function
\begin{equation}\label{def of v_{t,k}}
    v_{t,k}:=(\psi_{t,k}-\varphi_t)-\frac{1}{V_{\omega_t}}\int_X(\psi_{t,k}-\varphi_t)\omega_t^n+\varepsilon_1u_{t,k},
\end{equation}
where $\varepsilon_1>0$ is a small positive constant to be assigned. By definition $v_{t,k}\in C^\infty(X)$ and $\int_Xv_{t,k}\omega_t^n=0$. Now we compute
\begin{align*}
    \Delta_{\omega_t}v_{t,k}=&\operatorname{tr}_{\omega_t}(\hat{\omega}_t+\sqrt{-1}\partial\bar{\partial}\psi_{t,k})-n+\varepsilon_1\Delta_{\omega_t}u_{t,k}\\
    \geq&n\left(\frac{c_{t,k}\frac{H_{t,k}^{n\varepsilon}+1}{B_k}\omega_t^n}{\omega_t^n}\right)^{\frac{1}{n}}-n-\varepsilon_1H_{t,k}^\varepsilon+\varepsilon_1c_{t,k}^\prime\\
    =&n\left(\frac{c_{t,k}}{B_k}\right)^{\frac{1}{n}}(H_{t,k}^{n\varepsilon}+1)^{\frac{1}{n}}-n-\varepsilon_1H_{t,k}^\varepsilon+\varepsilon_1c_{t,k}^\prime.
\end{align*}
Thanks to the uniform boundedness (away from zero) of $c_{t,k}$, we can choose a uniform constant $\varepsilon_1<n\left(\frac{c_{t,k}}{B_k}\right)^{\frac{1}{n}}$, it then follows from the uniform boundedness of $c_{t,k}^\prime$ that 
\begin{equation}\label{Delta v_{t,k}}
    \Delta_{\omega_t}v_{t,k}\geq-n+\varepsilon_1c_{t,k}^\prime\geq-C.
\end{equation}
The Green's formula then yields that
\begin{equation}\label{v green}
    \begin{aligned}
        v_{t,k}(x)=&\int_X\mathcal{G}_t(x,y)(-\Delta_{g_t}v_{t,k})\omega_t^n\\
        =&\int_X\mathcal{G}_t(x,y)(-2\Delta_{\omega_t}v_{t,k})\omega_t^n-2\int_X\mathcal{G}_t(x,y)\langle\nabla v_{t,k},\theta_t^\#\rangle_{\omega_t}\omega_t^n\\
        \leq&C+2\int_X|\mathcal{G}_t(x,y)||\nabla v_{t,k}|_{\omega_t}|\theta_t^\#|_{\omega_t}\omega_t^n,
    \end{aligned}
\end{equation}
where the last inequality follows from \eqref{Delta v_{t,k}} and the fact that $\mathcal{G}_t(x,\cdot)$ is uniformly bounded in $L^1(X,\omega_t^n)$. 

Next, we claim that 
$$
\int_X|\nabla v_{t,k}||\theta_t^\#|\omega_t^n\leq\int_X|\nabla \psi_{t,k}||\theta_t^\#|\omega_t^n+\int_X|\nabla \varphi_t||\theta_t^\#|\omega_t^n+\varepsilon_1\int_X|\nabla u_{t,k}||\theta_t^\#|\omega_t^n
$$
is uniformly bounded. Here, all the norms are computed with respect to $\omega_t$, so we omit the subscripts for simplicity of notations.

\textbf{Claim 1.} $|\nabla\varphi_t|$ is uniformly bounded on $\operatorname{supp}\theta_t^\#$. This is clear since $\theta_t^\#$ vanishes on $U$ and the gradient estimate of $\varphi_t$ has already been established by Gill outside $U$.

\textbf{Claim 2.} $\int_X|\nabla\psi_{t,k}||\theta_t^\#|\omega_t^n$ is uniformly bounded. To prove this, select a cutoff function $\rho$ such that $0\leq\rho\leq1$, $\rho\equiv1$ on $\operatorname{supp}\theta_t$ and $\operatorname{Supp}\rho\subset X\setminus U_1$. Then we compute
\begin{equation}\label{L2 gradient estimate of psi}
\begin{aligned}
\int_X\rho^2|\nabla\psi_{t,k}|^2\omega_t^n=&n\int_X\rho^2\sqrt{-1}\partial\psi_{t,k}\wedge\bar{\partial}\psi_{t,k}\wedge\omega_t^{n-1}\\
    =&n\int_X(-\psi_{t,k})2\rho\sqrt{-1}\partial\rho\wedge\bar{\partial}\psi_{t,k}\wedge\omega_t^{n-1}+n\int_X(-\psi_{t,k})\rho^2\sqrt{-1}\partial\bar{\partial}\psi_{t,k}\wedge\omega_t^{n-1}\\
    &+n\int_X\rho^2(-\psi_{t,k})\tau_t\wedge\bar{\partial}\psi_{t,k}\wedge\omega_t^{n-1}\\
\leq&C\int_X\rho|\nabla\rho||\nabla\psi_{t,k}|\omega_t^n+C\int_X\rho^2(\hat{\omega}_t+\sqrt{-1}\partial\bar{\partial}\psi_{t,k})\wedge\omega_t^{n-1}\\
&+C\int_X\rho^2\hat{\omega}_t\wedge\omega_t^{n-1}+C\int_X\rho^2|\nabla\psi_{t,k}||\tau_t|\omega_t^n.
\end{aligned}
\end{equation}
For the first term on the right-hand side of \eqref{L2 gradient estimate of psi}, we can write
\begin{align*}
    C\int_X\rho|\nabla\rho||\nabla\psi_{t,k}|\omega_t^n&\leq C\int_X\left(\frac{1}{4C}\rho^2|\nabla\psi_{t,k}|^2+4C|\nabla\rho|^2\right)\omega_t^n\\
 &   \leq\frac{1}{4}\int_X\rho^2|\nabla\psi_{t,k}|^2\omega_t^n+C\int_X|\nabla\rho|^2\omega_t^n.
    \end{align*}
    For the second and the third terms, observe that
    \begin{align*}
\int_X(\hat{\omega}_t+\sqrt{-1}\partial\bar{\partial}\psi_{t,k})\wedge\omega_t^{n-1}&=\int_X(\hat{\omega}_t+\sqrt{-1}\partial\bar{\partial}\psi_{t,k})\wedge(\hat{\omega}_t+\sqrt{-1}\partial\bar{\partial}\varphi_t)^{n-1}\\
&\leq\int_X(2\hat{\omega}_t+\sqrt{-1}\partial\bar{\partial}(\varphi_t+\psi_{t,k}))^n\\
&\leq\overline{\operatorname{Vol}}(2\hat{\omega}_t)\leq C,
    \end{align*}
    thanks to the bounded mass property of the Fujiki manifold $X$.

    The fourth term in \eqref{L2 gradient estimate of psi} is handled similarly as the first term:
    \begin{align*}
       C\int_X\rho^2|\nabla\psi_{t,k}||\tau_t|\omega_t^n\leq\frac{1}{4}\int_X\rho^2|\nabla\psi_{t,k}|^2\omega_t^n+C\int_X\rho^2|\tau_t|^2\omega_t^n\leq C+\frac{1}{4}\int_X\rho^2|\nabla\psi_{t,k}|^2\omega_t^n.
    \end{align*}
    Putting everything into \eqref{L2 gradient estimate of psi}, we obtain
    $$
\int_X\rho^2|\nabla\psi_{t,k}|^2\omega_t^n\leq\frac{1}{2}\int_X\rho^2|\nabla\psi_{t,k}|^2\omega_t^n+C.
    $$
 Now since $\theta_t$ vanishes on $U$ and $|\theta_t^\#|=o(e^{-t})$, the claim is an easy consequence of the Cauchy-Schwarz inequality.
    
\textbf{Claim 3.} $\int_X|\nabla u_{t,k}||\theta_t^\#|\omega_t^n$ is uniformly bounded. Select a cutoff function $\rho$ as above, we compute as in Claim 2:
\begin{equation}\label{L2 estimate of gradient of u}
    \begin{aligned}
        \int_X\rho^2|\nabla u_{t,k}|^2\omega_t^n   =&n\int_X(-u_{t,k})2\rho\sqrt{-1}\partial\rho\wedge\bar{\partial}u_{t,k}\wedge\omega_t^{n-1}+n\int_X(-u_{t,k})\rho^2\sqrt{-1}\partial\bar{\partial}u_{t,k}\wedge\omega_t^{n-1}\\
           &+n\int_X\rho^2(-u_{t,k})\tau_t\wedge\bar{\partial}u_{t,k}\wedge\omega_t^{n-1}\\
\leq&C\int_X\rho|\nabla\rho||u_{t,k}||\nabla u_{t,k}|\omega_t^n+n\int_X\rho^2(-u_{t,k})(\Delta_{\omega_t}u_{t,k})\omega_t^{n}\\
&+C\int_X\rho^2|u_{t,k}||\nabla u_{t,k}||\tau_t|\omega_t^n.
    \end{aligned}
\end{equation}
Similar to the arguments in Claim 2, the first and the third term on the right-hand side of \eqref{L2 estimate of gradient of u} can be controlled by
\begin{align*}
    C\int_X\rho|\nabla\rho||u_{t,k}||\nabla u_{t,k}|\omega_t^n\leq\frac{1}{4}\int_X\rho^2|\nabla u_{t,k}|^2\omega_t^n+C\int_X|\nabla\rho|^2|u_{t,k}|^2\omega_t^n, 
\end{align*}
and
\begin{align*}
    C\int_X\rho^2|u_{t,k}||\nabla u_{t,k}||\tau_t|\omega_t^n\leq\frac{1}{4}\int_X\rho^2|\nabla u_{t,k}|^2\omega_t^n+C\int_X\rho^2|u_{t,k}|^2|\tau_t|^2\omega_t^n.
\end{align*}
While for the second term, we have
\begin{align*}
 \int_X\rho^2(-u_{t,k})(\Delta_{\omega_t}u_{t,k})\omega_t^{n}=\int_X\rho^2(-u_{t,k})(-H_{t,k}^{\varepsilon}+c_{t,k}^\prime)\omega_t^n\leq C\int_X\rho^2|u_{t,k}|\omega_t^n,
\end{align*}
where the last inequality follows since $c^\prime_{t,k}$ is uniformly bounded and $H_{t,k}^\varepsilon$ is uniformly bounded on $\operatorname{Supp}\rho\subset X\setminus U_1$ thanks to \cref{rmk:uni bound of G_t}.

Putting everything into \eqref{L2 estimate of gradient of u}, we get
\begin{equation}\label{final for nabla u}
    \int_X\rho^2|\nabla u_{t,k}|^2\omega_t^n\leq C\int_{X\setminus U_1}|u_{t,k}|^2\omega_t^n.
\end{equation}
Now, choosing an open neighborhood $V$ of $E$ such that $U_2\Subset V\Subset U_1$ and using \eqref{rmk:uni bound of G_t} again, we deduce from \cref{GT 9.20} that
$$
\sup_{X\setminus U_1}|u_{t,k}|\leq C\|u_{t,k}\|_{L^1(X\setminus V,\omega_t^n)}+C.
$$
This combined with \eqref{final for nabla u} yields that
\begin{equation}\label{control nabla u by L1 of u}
     \int_X\rho^2|\nabla u_{t,k}|^2\omega_t^n\leq C\left(\|u_{t,k}\|_{L^1(X\setminus V,\omega_t^n)}+\|u_{t,k}\|_{L^1(X\setminus V,\omega_t^n)}^2\right)+C.
\end{equation}
On the other hand, we can invoke Green's formula to write
\begin{align*}
    u_{t,k}(x)=\int_XG_t(x,y)(-2\Delta_{\omega_t}u_{t,k}(y))\omega_t^n(y)-2\int_XG_t(x,y)\langle\nabla u_{t,k}(y),\theta_t^\#(y)\rangle\omega_t^n.
\end{align*}
Taking absolute values and integrating both sides, we have
\begin{equation}\label{L1 of u_k}
\begin{aligned}
    \int_X|u_{t,k}(x)|\omega_t^n(x)\leq&\int_X\left(\int_X|G_t(x,y)||-H_{t,k}^\varepsilon(y)+c_{t,k}^\prime|\omega_t^n(y)\right)\omega_t^n(x)\\
    &+\int_X\left(\int_X|G_t(x,y)||\nabla u_{t,k}(y)||\theta_t^\#(y)|\omega_t^n(y)\right)\omega_t^n(x)\\
    \leq&\int_X|-H_{t,k}^\varepsilon(y)+c_{t,k}^\prime|\left(\int_X|G_t(x,y)|\omega_t^n(x)\right)\omega_t^n(y)\\
    &+\int_X|\nabla u_{t,k}(y)||\theta_t^\#(y)|\left(\int_X|G_t(x,y)|\omega_t^n(x)\right)\omega_t^n(y)\\
    \leq&C+C\int_X|\nabla u_{t,k}||\theta_t^\#|\omega_t^n\\
    \leq&C+Ce^{-t}\int_{\operatorname{Supp}\theta_t}|\nabla u_{t,k}|\omega_t^n ,
\end{aligned}
\end{equation}
where in the second inequality we have used Fubini's theorem and in the third inequality we have used \cref{L^1 bound of G}, \eqref{L^1 of H_k} and the uniform boundedness of $c_{t,k}^\prime$.

Combining \eqref{L1 of u_k} and \eqref{control nabla u by L1 of u}, and noting that $\rho|_{\operatorname{Supp}\theta_t}\equiv1$, we obtain a chain of inequalities
\begin{align*}
     \int_X|u_{t,k}(x)|\omega_t^n(x)\leq& C+Ce^{-t}\int_{\operatorname{Supp}\theta_t}|\nabla u_{t,k}|\omega_t^n\leq C+Ce^{-t}\int_{X}\rho^2|\nabla u_{t,k}|\omega_t^n\\
     \leq&C+Ce^{-t}\left(\int_X\rho^2|\nabla u_{t,k}|^2\omega_t^n\right)^{\frac{1}{2}}\left(\int_X\rho^2\omega_t^n\right)^{\frac{1}{2}}\\
     \leq&C+Ce^{-t}\left(\int_X\rho^2|\nabla u_{t,k}|^2\omega_t^n\right)^{\frac{1}{2}}\\
     \leq&C+Ce^{-t}\left(C\|u_{t,k}\|_{L^1(X\setminus V,\omega_t^n)}+C\|u_{t,k}\|_{L^1(X\setminus V,\omega_t^n)}^2+C\right)^{\frac{1}{2}}.
\end{align*}
This immediately yields the uniform boundedness of the integral $\int_X|u_{t,k}|\omega_t^n$ for $t$ large enough, whence the claim follows by \eqref{control nabla u by L1 of u} again.

The three claims above immediately yield the uniform boundedness of $\int_X|\nabla v_{t,k}||\theta_t^\#|\omega_t^n$. Return to \eqref{v green}, since $x\in U_2$, we can use \cref{lem:inf of green} and \cref{rmk:uni bound of G_t} to see that $\mathcal{G}_t(x,\cdot)$ is uniformly bounded on $\operatorname{Supp}\theta_t\subset X\setminus U$. This implies that
\begin{align*}
    v_{t,k}(x)\leq C+2\int_X|\mathcal{G}_t(x,y)||\nabla v_{t,k}|_{\omega_t}|\theta_t^\#|_{\omega_t}\omega_t^n\leq C.
\end{align*}
Therefore, by the definition of $v_{t,k}$ \eqref{def of v_{t,k}} and the estimate \eqref{L infty of psi - varphi}, we derive the uniform upper bound of $u_{t,k}$:
$$
\sup_Xu_{t,k}\leq C.
$$
At this stage, we apply the Green's formula for $u_{t,k}$
\begin{align*}
    u_{t,k}(x)=&\int_X\mathcal{G}_t(x,y)(-\Delta_{g_t}u_{t,k})\omega_t^n(y)\\        
=&\int_X\mathcal{G}_t(x,y)(2H_{t,k}^\varepsilon(y)-2c_{t,k}^\prime)\omega_t^n(y)-2\int_X\mathcal{G}_t(x,y)\langle\nabla u_{t,k},\theta_t^\#\rangle_{\omega_t}\omega_t^n(y).
\end{align*}
Rearranging the terms, we obtain:
\begin{equation}\label{finial Lp}
    2\int_X\mathcal{G}_t(x,\cdot)H_{t,k}^\varepsilon\omega_t^n\leq u_{t,k}(x)+2c_{t,k}^\prime\int_X\mathcal{G}_t(x,\cdot)\omega_t^n+2\int_X\mathcal{G}_t(x,\cdot)|\nabla u_{t,k}||\theta_t^\#|\omega_t^n\leq C.
\end{equation}
Here, all the terms are uniformly bounded from above by our estimates. Since $H_{t,k}^\varepsilon$ increases to $\mathcal{G}_t^\varepsilon$, taking the limit with respect to $k$ in \eqref{finial Lp}, we obtain the desired estimate and thereby conclude our proof.  

\end{proof}

Having the $L^{1+\varepsilon}$-estimate of the Green's function in hand, we can now follow the strategy of \cite{GPS24} and \cite{GPSS24a} to derive the gradient bounds of $\mathcal{G}_t(x,\cdot)$:

\begin{lemma}\label{gradient of G 1}
    For any $x\in U_2$ and any $\beta>0$, we have
$$
\sup_{x\in U_2}\int_X\frac{|\nabla_y\mathcal{G}_t(x,y)|^2_{\omega_t(y)}}{\mathcal{G}_t(x,y)^{1+\beta}}\omega_t^n(y)\leq C\cdot\beta^{-1}.
$$
Where $C$ is a uniform constant depending on $X,\omega_0,\chi,\beta$.
\end{lemma}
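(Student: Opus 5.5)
Following \cite[Lemma 5.6]{GPSS24a} and \cite{GPS24}, I would test the Green's equation against a negative power of $\mathcal{G}_t$. Fix $x\in U_2$ and write $\mathcal{G}=\mathcal{G}_t(x,\cdot)>0$. For technical reasons I would work with $\mathcal{G}_\delta:=\mathcal{G}+\delta$, or use the truncations $H_{t,k}$, and let $\delta\to0$ at the end. Since $\mathcal{G}$ differs from $G_t(x,\cdot)$ by a constant, \eqref{complex green formula} gives, for every $\phi\in C^\infty(X)$,
\[
\int_X\langle d\mathcal{G},d\phi\rangle_{g_t}\omega_t^n=\phi(x)-\frac{1}{V_{\omega_t}}\int_X\phi\,\omega_t^n .
\]
The real Laplacian is used here precisely so that no torsion enters the weak formulation. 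I would take $\phi=-\frac{1}{\beta}\mathcal{G}_\delta^{-\beta}$, which is bounded above by $0$ and bounded below by $-\beta^{-1}\delta^{-\beta}$. Then $d\phi=\mathcal{G}_\delta^{-1-\beta}d\mathcal{G}$, and the left-hand side becomes $\int_X|\nabla\mathcal{G}|^2\mathcal{G}_\delta^{-1-\beta}\omega_t^n$, with $|\nabla\cdot|_{g_t}$ comparable to $|\nabla\cdot|_{\omega_t}$ up to a factor $2$.

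Since $\phi\le0$, the right-hand side is at most
\[
-\frac{1}{V_{\omega_t}}\int_X\phi\,\omega_t^n=\frac{1}{\beta V_{\omega_t}}\int_X\mathcal{G}_\delta^{-\beta}\omega_t^n .
\]
By construction $\mathcal{G}\ge V_{\omega_t}^{-1}$, so $\mathcal{G}_\delta^{-\beta}\le V_{\omega_t}^{\beta}$. The volumes $V_{\omega_t}$ are bounded above and below uniformly (bounded mass property together with $\underline{\operatorname{Vol}}(\chi)>0$), so this term is at most $C\beta^{-1}$ with $C$ independent of $x\in U_2$, $t$ and $\delta$. Letting $\delta\to0$ by monotone convergence yields the claim. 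Notably, \cref{L^p bound of green functions} and \cref{lem:inf of green} enter only through the positivity of $\mathcal{G}$ and its lower bound $V_{\omega_t}^{-1}$.

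The main obstacle is justifying the test function, since $\mathcal{G}$ has a singularity at $y=x$. Because $\phi$ is bounded and $\phi(x)$ is interpreted as $0$ (as $\mathcal{G}_\delta^{-\beta}\to0$ at the pole), I would approximate $\mathcal{G}$ by the mollified Green's function for a smoothed Dirac mass $\delta_x^\eta$, or equivalently apply the Green's formula to $\phi$ and pass to the limit. In this limit the term $\int\phi\,\delta_x^\eta$ is nonpositive and can be discarded. The smoothness of $G_t(x,\cdot)$ away from $x$ for a fixed smooth metric makes this routine. If one instead insisted on working with the complex Laplacian $\Delta_{\omega_t}$, a torsion term $\int\langle\nabla\mathcal{G},\theta^\#\rangle\mathcal{G}^{-1-\beta}$ would appear. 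It would be supported in $X\setminus U$, where \cref{rmk:uni bound of G_t} and the $W^{2,p}$ bounds used in the proof of \cref{lem:inf of green} bound $\mathcal{G}$ and $|\nabla\mathcal{G}|$ uniformly, so it would contribute only $O(e^{-t})$. That term could therefore be absorbed as well, but using $\Delta_{g_t}$ avoids it entirely.
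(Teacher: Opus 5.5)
Your proposal is correct and is essentially the paper's argument: the paper applies Green's formula for $\Delta_{g_t}$ to $u_t=\mathcal{G}_t(x,\cdot)^{-\beta}$ (your $\phi$ up to the factor $-\beta^{-1}$), integrates by parts, and bounds $\frac{1}{V_{\omega_t}}\int_X u_t\,\omega_t^n$ using $u_t\le V_{\omega_t}^{\beta}\le C$, which follows from $\mathcal{G}_t\ge V_{\omega_t}^{-1}$. The only difference is how the pole is handled: the paper excises a small geodesic ball and uses $\mathcal{G}_t\sim r^{2-2n}$ to show the boundary term vanishes, whereas you use a limiting argument; your $\delta$-shift is harmless but unnecessary, since $\mathcal{G}_t$ is already bounded below by $V_{\omega_t}^{-1}$.
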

\begin{proof}
    Fix a point $x\in U_2$ and a number $\beta>0$. Set $u_t(y):=\mathcal{G}_t(x,y)^{-\beta}$. By the properties of $G_t$ we have that $u\in C^\infty(X\setminus\{x\})\cap C^0(X)$ with $u(x)=0$. By the definition of $\mathcal{ G}_t(x,\cdot)$, we have a uniform upper bound of $u_t$:
    \begin{equation}\label{upper bound of u}
        0\leq u_t(y)\leq V_{\omega_t}^{\beta}\leq C.
    \end{equation}
    Using Green's formula for the Riemannian Laplacian we can write
    \begin{equation}\label{integrate by parts green}
    \begin{aligned}
        0=u_t(x)&=\frac{1}{V_{\omega_t}}\int_Xu_t\omega_t^n+\int_X\mathcal{G}_t(x,\cdot)(-\Delta_{g_t}u_t)\omega_t^n\\
        &=\frac{1}{V_{\omega_t}}\int_Xu_t\omega_t^n-\beta\int_X\frac{|\nabla\mathcal{G}_t(x,\cdot)|^2_{g_t}}{\mathcal{G}_t(x,\cdot)^{1+\beta}}\omega_t^n,
    \end{aligned}
    \end{equation}
 where in the last equality we have used integration by parts. The result is then an easy consequence of \eqref{upper bound of u}. 
 
 Let us check the validity of the integration by parts in \eqref{integrate by parts green} in detail, the usage of Green's formula for $u_t$ can be checked similarly. Consider the $\delta$-geodesic ball $B(x,\delta)$ in $X$, where all the functions are smooth. The classical Green's formula yields that
 \begin{align*}
     \int_{X\setminus B(x,\delta)}\mathcal{G}_t(x,\cdot)(-\Delta_{g_t}u_t)\omega_t^n&=\int_{X\setminus B(x,\delta)}\langle\nabla\mathcal{G}_t(x,\cdot),\nabla u_t\rangle_{g_t}\omega_t^n-\int_{\partial B(x,\delta)}\mathcal{G}_t(x,\cdot)\frac{\partial u_t}{\partial\nu}d\sigma_t\\
     &=-\beta\int_{X\setminus B(x,\delta)}\frac{|\nabla\mathcal{G}_t(x,\cdot)|^2_{g_t}}{\mathcal{G}_t(x,\cdot)^{1+\beta}}\omega_t^n-\int_{\partial B(x,\delta)}\mathcal{G}_t(x,\cdot)\frac{\partial u_t}{\partial\nu}d\sigma_t,
 \end{align*}
 where $\nu$ is the outer normal direction of $B(x,\delta)$ and $d\sigma_t$ is the surface measure on $\partial B(x,\delta)$ induced by $\omega_t^n$. It remains to check that
 $$
\int_{\partial B(x,\delta)}\mathcal{G}_t(x,\cdot)\frac{\partial u_t}{\partial\nu}d\sigma_t\to0\quad\operatorname{as}\delta\to0.
 $$
 Since $n\geq2$, the asymptotic behavior of $\mathcal{G}_t(x,\cdot)$ near $x$ looks like
 $$
\mathcal{G}_t(x,\cdot)\sim r^{2-2n},\quad\operatorname{for} r=d(x,\cdot).
 $$
 It follows that $u\sim r^{2\beta(n-1)}$ and hence $\frac{\partial u}{\partial\nu}\sim|\nabla u|\sim r^{2\beta(n-1)-1}$, while the surface area $\int_{\partial B(x,\delta)}d\sigma_t\sim \delta^{2n-1}$. Taking into account all of these we see that
 $$
\int_{\partial B(x,\delta)}\mathcal{G}_t(x,\cdot)\frac{\partial u_t}{\partial\nu}d\sigma_t\sim \delta^{2-2n+2\beta(n-1)-1+2n-1}=\delta^{2\beta(n-1)}\to0\quad\operatorname{as}\delta\to0,
 $$
 where the last limit is because $n\geq2$ and $\beta>0$. 
\end{proof}

\begin{lemma}\label{gradient of G 2}
    Let $\varepsilon$ be the constant in \cref{L^p bound of green functions} and let $x\in U_2$ be a fixed point. Then, for any $s\in\left[1,\frac{2+2\varepsilon}{2+\varepsilon}\right)$, where $\varepsilon$ is the constant in \cref{L^p bound of green functions}, there is a uniform constant $C=C(s,X,U,U_1,U_2,\lambda[\omega_t|_{X\setminus U_2}],n,p,\chi,\omega_X,\operatorname{Ent}_p(\omega_t))$ such that
    $$
\int_X|\nabla G_t(x,\cdot)|_{g_t}^s\omega_t^n\leq C.
    $$
\end{lemma}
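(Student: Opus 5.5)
Since $\nabla_y G_t(x,y)=\nabla_y\mathcal{G}_t(x,y)$, the constant shift in $\mathcal{G}_t$ does not matter, and we work with $\mathcal{G}_t(x,\cdot)>0$. The plan is the standard Hölder interpolation from \cite{GPS24, GPSS24a}: we combine the weighted gradient bound of \cref{gradient of G 1} with the $L^{1+\varepsilon}$ bound of \cref{L^p bound of green functions}. For $s\in[1,2)$ and $\beta>0$ to be chosen, we split pointwise
$$
|\nabla\mathcal{G}_t|^s=\left(\frac{|\nabla\mathcal{G}_t|^2}{\mathcal{G}_t^{1+\beta}}\right)^{\frac{s}{2}}\mathcal{G}_t^{\frac{s(1+\beta)}{2}} .
$$
Hölder's inequality with exponents $\frac{2}{s}$ and $\frac{2}{2-s}$ then gives
$$
\int_X|\nabla\mathcal{G}_t|^s\omega_t^n\leq\left(\int_X\frac{|\nabla\mathcal{G}_t|^2}{\mathcal{G}_t^{1+\beta}}\omega_t^n\right)^{\frac{s}{2}}\left(\int_X\mathcal{G}_t^{\frac{s(1+\beta)}{2-s}}\omega_t^n\right)^{\frac{2-s}{2}}.
$$
Here all norms are taken with respect to $g_t$. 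Since $g_t$ and $\omega_t$ give comparable norms (up to a universal factor), \cref{gradient of G 1} applies to the first factor.

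The first factor is bounded by $(C\beta^{-1})^{s/2}$ for any fixed $\beta>0$, using \cref{gradient of G 1} with $x\in U_2$. For the second factor we want $\frac{s(1+\beta)}{2-s}\leq 1+\varepsilon$, so that \cref{L^p bound of green functions} applies. If necessary we also use the uniform volume bound $V_{\omega_t}\leq C$ and Hölder's inequality to pass from exponent $1+\varepsilon$ down to a smaller one; this step needs $\mathcal{G}_t>0$, which holds by construction. At $\beta=0$ the condition $\frac{s}{2-s}<1+\varepsilon$ reads $s<\frac{2+2\varepsilon}{2+\varepsilon}$, which is exactly the range in the statement. Therefore, for each such $s$ we can fix a small $\beta=\beta(s,\varepsilon)>0$ with $\frac{s(1+\beta)}{2-s}\leq 1+\varepsilon$. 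The constant $C\beta^{-1}$ then depends only on $s$ and the listed data. Combining the two factors gives the uniform bound.

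The main point to check is the admissibility of the exponents and the uniformity in $x\in U_2$. The constants in \cref{gradient of G 1} and \cref{L^p bound of green functions} are uniform in $x\in U_2$ and in $t$, and they depend on $s$ only through the choice of $\beta$. There is also a minor technical issue: the integrals must be well defined near the pole $x$. This follows from the local asymptotics $|\nabla\mathcal{G}_t|\sim r^{1-2n}$, which is $L^s$-integrable for $s<\frac{2n}{2n-1}$. Alternatively, one can simply note that the Hölder bound already controls the integral over $X\setminus B(x,\delta)$ uniformly in $\delta$ and then let $\delta\to0$.
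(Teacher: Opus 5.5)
Your proposal is correct and is essentially the paper's proof. The paper uses the same H\"older splitting with exponents $\frac{2}{s}$ and $\frac{2}{2-s}$, choosing $1+\beta=(1+\varepsilon)\frac{2-s}{s}$ so that the second factor is exactly the $L^{1+\varepsilon}$ integral controlled by \cref{L^p bound of green functions}; this $\beta$ is positive precisely when $s<\frac{2+2\varepsilon}{2+\varepsilon}$, and the first factor is then bounded by \cref{gradient of G 1}.
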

\begin{proof}
    Applying H\"older's inequality, we have
    \begin{align*}
\int_X|\nabla\mathcal{G}_t(x,\cdot)|_{g_t}^s\omega_t^n\leq\left(\int_X\frac{|\nabla\mathcal{G}_t(x,\cdot)|_{g_t}^2}{\mathcal{G}_t(x,\cdot)^{1+\beta}}\omega_t^n\right)^{\frac{s}{2}}\left(\int_X\mathcal{G}_t(x,\cdot)^{1+\varepsilon}\omega_t^n\right)^{\frac{2-s}{2}}\leq C,
    \end{align*}
    by \cref{gradient of G 1} and \cref{L^p bound of green functions} if we choose $1+\beta=(1+\varepsilon)\frac{2-s}{s}$.
\end{proof}

\section{Diameter and volume non-collapsing estimates}

In this section, we use our estimates for Green's functions for $x\in U_2$ to derive the uniform diameter and volume non-collapsing estimates along the Chern-Ricci flow, thereby completing the proof of \cref{main theorem}.

\begin{theorem}\label{diameter estimates}
    In the setting of \cref{main theorem}, $\left(X,\frac{\omega(t)}{t}\right)$ has uniformly bounded diameter, i.e., there exists a uniform constant $C=C\left(X,U,U_1,U_2,U_3,\lambda\left[\frac{\omega(t)}{t}|_{X\setminus U_3}\right],n,p,\chi,\omega_X,\operatorname{Ent}_p(\frac{\omega(t)}{t})\right)$ such that
    $$
\operatorname{diam}\left(X,\frac{\omega(t)}{t}\right)\leq C.
    $$
\end{theorem}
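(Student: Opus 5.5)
We work throughout with the normalized flow $\omega_t$. Under the change of variables $t=\log(1+s)$ we have $\omega_t=\tilde\omega(s)/(s+1)$, and $\tilde\omega(s)/s$ is uniformly equivalent to $\omega_t$ for $s\ge1$. So it suffices to bound $\operatorname{diam}(X,\omega_t)$ uniformly for large $t$.

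We split $X=(X\setminus U_3)\cup U_3$. On $X\setminus U_3$ the metrics $\omega_t$ are uniformly equivalent to $\omega_X$ by \cref{smooth convergence}, since they converge smoothly to $\omega_{KE}$ on compact subsets of $X\setminus E$. Because $X\setminus U_3$ is connected, compact and has smooth boundary, any two of its points are joined by a path inside $X\setminus U_3$ of uniformly bounded $\omega_X$-length. This gives a uniform bound on the $\omega_t$-diameter of $X\setminus U_3$, measured intrinsically and hence also in $X$. It therefore remains to show that every $x\in U_3$ lies within uniformly bounded $\omega_t$-distance of the compact set $K:=\overline{U_2}\setminus U_3$, which is part of $X\setminus U_3$. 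For this we adapt the Green's function argument of \cite{GPSS24a} using the estimates of Section 5, all of which are available because the pole lies in $U_3\subset U_2$.

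Fix $x\in U_3$ and set $\rho(y):=d_{\omega_t}(x,y)$. Suppose, toward a contradiction, that $\rho\ge R$ on $K$ for some large $R$. Then every path from $x$ to $X\setminus U_2$ has length at least $R$. Let $B_r:=B_{\omega_t}(x,r)$. For $r<R$ we have $B_r\subset U_2$, and the sets $B_r$ are nested. We use two inputs:
\begin{itemize}
\item \textbf{Volume lower bound.} $\operatorname{Vol}_{\omega_t}(X\setminus B_r)\ge\operatorname{Vol}_{\omega_t}(X\setminus U_2)\ge c_0>0$, by \cref{cor:uniform equivalence of volume forms}.
\item \textbf{Green's identity.} Take a Lipschitz test function $f=\eta(\rho)$ with $\eta$ decreasing, $\eta=1$ on $[0,r]$, $\eta=0$ on $[r+\delta,\infty)$, and $|\eta'|\le 1/\delta$. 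By the Green's formula for $\Delta_{g_t}$ (valid for Lipschitz $f$ after integration by parts, as in \cref{gradient of G 1}),
\[
1-\frac{1}{V_{\omega_t}}\int_X f\,\omega_t^n=\int_X\langle\nabla G_t(x,\cdot),\nabla f\rangle\,\omega_t^n\le\frac1\delta\int_{B_{r+\delta}\setminus B_r}|\nabla G_t(x,\cdot)|\,\omega_t^n.
\]
\end{itemize}
The left-hand side is at least $c_0/V_{\omega_t}\ge c_1>0$ uniformly. This yields the annulus inequality
\[
\int_{B_{r+\delta}\setminus B_r}|\nabla G_t(x,\cdot)|\,\omega_t^n\ge c_1\delta\qquad\text{for all }0<r<r+\delta<R.
\]
Combining the annulus inequality with the gradient bound of \cref{gradient of G 2} is the main obstacle. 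The bound is uniform in $t$ and $x\in U_2$: $\int_X|\nabla G_t|^s\le C$ for some $s>1$. First pick $\delta=1$, partition $[0,R)$ into unit intervals, and sum the annulus inequality over them to get $c_1R\le\int_X|\nabla G_t|\,\omega_t^n\le C$ by H\"older. This already bounds $R$, and the global gradient bound makes the argument much shorter than in \cite{GPSS24a}. If we also need the $L^s$ integrability with $s>1$ to obtain sharper decay, apply H\"older on each annulus:
\[
c_1\delta\le C\operatorname{Vol}(B_{r+\delta}\setminus B_r)^{1-1/s}.
\]
Summing then gives control on volumes of balls as well, which is what will be reused for part (2) of \cref{main theorem}.

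Either way, $\rho$ restricted to $K$ is bounded by some $R_0$ independent of $t$ and $x$. Hence
\[
\operatorname{diam}(X,\omega_t)\le 2R_0+\operatorname{diam}(X\setminus U_3,\omega_t)\le C,
\]
and rescaling back to $\omega(t)/t$ proves the theorem.
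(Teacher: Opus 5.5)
Your proof is correct. It uses the same key input as the paper: the uniform bound on $\int_X|\nabla G_t(x,\cdot)|_{g_t}\,\omega_t^n$ for poles $x\in U_2$ from \cref{gradient of G 2}, fed into Green's formula with a distance-type Lipschitz test function, together with uniform equivalence of $\omega_t$ away from $E$. You deploy it differently, though.

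The paper bounds $\operatorname{diam}(U_2,\omega_t)$ directly. It applies Green's formula to $d_t=d_{g_t}(x_0,\cdot)$ twice: first with pole $x_0$, to bound the average $\frac{1}{V_{\omega_t}}\int_X d_t\,\omega_t^n$, then with pole $y_0$, to bound $d_t(y_0)$. So both endpoints must lie in $U_2$.

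You use a single pole $x\in U_3$ and a cutoff $\eta(\rho)$ of the distance. In place of the first Green's formula, you observe that the average of the cutoff is at most $1-\operatorname{Vol}_{\omega_t}(X\setminus U_2)/V_{\omega_t}$. This is where the uniform volume-form equivalence enters, together with the topological fact that any path from $x$ to $X\setminus U_2$ must cross $\partial U_2\subset K$. Your output is a bound on $\operatorname{dist}_{\omega_t}(x,K)$ rather than on $\operatorname{diam}(U_2)$.

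The annulus summation is not actually needed. With $R:=\min_K\rho$, the single cutoff $f=(1-\rho/R)_+$ gives $c_0/V_{\omega_t}\le R^{-1}\int_X|\nabla G_t(x,\cdot)|\,\omega_t^n$ in one line.

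As for what each buys: your version needs the gradient estimate at only one pole and is structurally the same as the paper's proof of \cref{volume non-collapsing estiamte}. The paper's version is shorter and needs neither the cutoff nor the lower volume bound on $X\setminus U_2$.

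Two small corrections. First, what you show is $\min_K\rho\le R_0$, not that $\rho|_K$ is bounded; the minimum is all your final inequality uses, so this is harmless. Second, the Green identity for Lipschitz test functions is justified by mollification combined with $\nabla G_t(x,\cdot)\in L^s$ for some $s>1$, as in the paper's proof. It is not justified by the boundary-term computation in \cref{gradient of G 1}.
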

\begin{proof}
    We consider the normalized flow \eqref{normalized main flow}. In this setting it suffices to prove 
    $$
\operatorname{diam}\left(X,\omega(t)\right)\leq C.
    $$
   We have shown all the estimates of Green's functions for $x\in U_2$, we now choose a smaller neighborhood of $E$ such that $E\subset U_3\Subset U_2$. Since the metrics $\omega(t)$ are uniformly equivalent outside $U_3$ and we can also arrange that both $U_3$ and $X\setminus U_3$ are connected, it suffices to show that
   \begin{equation}\label{diam U2}
       \operatorname{diam}(U_2,\omega(t))\leq C.
   \end{equation}
  Indeed, let $d_{g_t}(\cdot,\cdot)$ be the distance function with respect to $g_t$. For each $x_0,y_0\in X$, if both points lie in $X\setminus U_3$, we choose $x_1,y_1\in\partial U_3$ and write
    $$
d_{g_t}(x_0,y_0)\leq d_{g_t}(x_0,x_1)+ d_{g_t}(x_1,y_1)+ d_{g_t}(y_1,y_0)\leq C
    $$
    because $\operatorname{diam}(U_2,\omega(t))\leq C$ and the metrics $g_t$ are uniformly equivalent outside $U_3$. If $x_0\in U_3$ and $y_0\in X\setminus U_3$, we similarly choose $x_1\in\partial U_3$ and write
$$
d_{g_t}(x_0,y_0)\leq d_{g_t}(x_0,x_1)+ d_{g_t}(x_1,y_0)\leq C.
$$
Now, we turn to prove \eqref{diam U2}. Fix $x_0,y_0\in U_2$ and set $d_t(y):=d_{g_t}(x_0,y)$ be the $1$-Lipschitz distance function on $X$. Green's formula at $x_0$ then yields that
\begin{equation}\label{Green 1}
    0=d_t(x_0)=\frac{1}{V_{\omega_t}}\int_Xd_t(y)\omega_t^n(y)+\int_X\langle\nabla G_t(x_0,\cdot),\nabla d_t\rangle_{g_t}\omega_t^n.
\end{equation}
To justify this formula, observe that $d_t\in C^0(X)$ and $|\nabla d_t|\leq 1$ almost everywhere (specifically, $d_t$ is smooth and $1$-Lipschitz outside the cut locus of $(X,\omega(t))$ and away from the point $x_0$). So, we can use convolutions to find $d_k\in C^\infty(X)$ such that 
$$
d_k\xrightarrow{C^0(X)}d_t\quad\operatorname{and}\quad\nabla d_k\xrightarrow{L^q(X)}\nabla d_t,
$$
for an arbitrary large $q$. Since $d_k$ is smooth, \eqref{Green 1} is valid for $d_k$ in place of $d_t$. Note that $|\nabla G_t|^s$ is integrable for some $s>1$, we can then choose $q>1$ large enough and take the limit with respect to $k$ and to obtain \eqref{Green 1}.

\eqref{Green 1} implies that 
$$
\frac{1}{V_{\omega_t}}\int_Xd_t(y)\omega_t^n(y)=-\int_X\langle\nabla G_t(x_0,\cdot),\nabla d\rangle_{g_t}\omega_t^n\leq\int_X|\nabla G_t(x_0,\cdot)|_{g_t}\omega_t^n\leq C,
$$
where the first inequality follows since $|\nabla d_t|_{g_t}\leq1$ and the second is due to \cref{gradient of G 2} because $x_0\in U_2$. Finally, we use Green's formula again to write
\begin{align*}
    d_{g_t}(x_0,y_0)&=d_t(y_0)=\frac{1}{V_{\omega_t}}\int_Xd_t(y)\omega_t^n(y)+\int_X\langle\nabla G_t(y_0,\cdot),\nabla d\rangle_{g_t}\omega_t^n\\
    &\leq C+\int_X|\nabla G_t(y_0,\cdot)|_{g_t}\omega_t^n\leq C.
\end{align*}
The proof is concluded by taking supremum with respect to $x_0,y_0\in U_2$.
\end{proof}

\begin{theorem}\label{volume non-collapsing estiamte}
    In the setting of \cref{main theorem}, there exists uniform constants $\alpha=\alpha(n,p)$, $r_0=r_0\left(X,U_2,U_3,U_4,\lambda\left[\frac{\omega(t)}{t}|_{X\setminus U_4}\right]\right)$ and $c=c\left(X,U,U_1,U_2,U_3,\lambda\left[\frac{\omega(t)}{t}|_{X\setminus U_3}\right],n,p,\chi,\omega_X,\operatorname{Ent}_p(\frac{\omega(t)}{t})\right)$ such that
    $$
\operatorname{Vol}_{\frac{\omega(t)}{t}}\left(B_{\frac{\omega(t)}{t}}(x,r)\right)\geq cr^{\alpha},
    $$
    for any $x\in X$ and $r\in(0,r_0)$. Here, $B_{\omega}(x,r)$ is the geodesic ball in $(X,\omega)$ of radius $r$ centered at $x$.
\end{theorem}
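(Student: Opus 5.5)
We work with the normalized flow, where $\omega_t$ is the Hermitian metric in the class $[\omega(t)/t]$ up to the harmless reparametrization. It suffices to prove $\operatorname{Vol}_{\omega_t}(B_{\omega_t}(x,r))\geq cr^\alpha$. We split into two cases according to the position of the center $x$, using the same nested neighborhoods $E\subset U_4\Subset U_3\Subset U_2$ as in \cref{diameter estimates}.

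\textbf{Case 1: $x\in X\setminus U_3$.} There the metrics $\omega_t$ are uniformly equivalent to $\omega_X$ on $X\setminus U_4$. Choose $r_0$ smaller than the uniform distance (in every $\omega_t$) from $\partial U_3$ to $\partial U_4$. Then for $r<r_0$ the ball $B_{\omega_t}(x,r)$ stays in $X\setminus U_4$ and contains a Euclidean-type ball of radius comparable to $r$. This gives $\operatorname{Vol}\geq cr^{2n}$, which is stronger than needed as soon as $\alpha\geq 2n$ and $r_0\leq 1$.

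\textbf{Case 2: $x\in U_3\subset U_2$.} Here all the Green's function estimates of Section 5 are available with pole $x$. We follow the Green's-formula argument of \cite{GPSS24a}. Fix $r<r_0$ and a cutoff $\eta=\eta(d_t(x,\cdot)/r)$, where $\eta\equiv1$ on $[0,1/2]$, $\eta\equiv0$ on $[1,\infty)$ and $|\eta'|\leq 4$. Then $f:=\eta\circ(d_t/r)$ is Lipschitz with $|\nabla f|\leq 4/r$ supported in $B:=B_{\omega_t}(x,r)$. Applying Green's formula at $x$, justified by the same approximation as in the proof of \cref{diameter estimates}, gives
$$
1=f(x)=\frac{1}{V_{\omega_t}}\int_Xf\,\omega_t^n+\int_X\langle\nabla G_t(x,\cdot),\nabla f\rangle\omega_t^n\leq \frac{\operatorname{Vol}(B)}{V_{\omega_t}}+\frac{4}{r}\int_{B}|\nabla G_t(x,\cdot)|\omega_t^n.
$$
Next we apply Hölder's inequality with the exponent $s\in(1,\frac{2+2\varepsilon}{2+\varepsilon})$ from \cref{gradient of G 2}:
$$
\int_B|\nabla G_t|\leq \|\nabla G_t\|_{L^s}\operatorname{Vol}(B)^{1-1/s}\leq C\operatorname{Vol}(B)^{1-1/s}.
$$
Hence $1\leq C\operatorname{Vol}(B)+Cr^{-1}\operatorname{Vol}(B)^{(s-1)/s}$. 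If the first term dominates, $\operatorname{Vol}(B)\geq c\geq cr^\alpha$. Otherwise $\operatorname{Vol}(B)\geq cr^{s/(s-1)}$. So $\alpha=s/(s-1)$, which depends only on $\varepsilon=\varepsilon(n,p)$, works. We also take $\alpha\geq 2n$ so that Case 1 is covered.

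The delicate point is the uniformity of the gradient bound of \cref{gradient of G 2} in $x$. This bound was proved only for poles in $U_2$, which is why the case split is needed and why the constant $c$ depends on $U_3$. Since $\operatorname{Vol}$ and distances rescale consistently between $\omega(t)/t$ and the normalized metrics, the dependence of $r_0$ and $c$ stated in the theorem follows.
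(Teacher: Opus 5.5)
Your argument is correct and is essentially the paper's: the same case split at $U_3$, Green's formula applied to a Lipschitz cutoff built from $d_t$, and H\"older's inequality against the $L^s$ bound of \cref{gradient of G 2}, yielding $\alpha=s/(s-1)$. The only difference is bookkeeping. You evaluate Green's formula once, at the center $x$, with $\eta(d_t/r)$, and dispose of the mean-value term by a dichotomy. The paper instead uses $d_t\cdot\eta$ evaluated at two auxiliary poles, $z\in U_2\setminus B_{\omega_t}(x,r)$ and $\hat z\in\partial B_{\omega_t}(x,r/2)$, to cancel that term. Your version has the mild advantage of requiring only the pole $x$ itself to lie in $U_2$.
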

\begin{proof}
  We consider again the normalized Chern-Ricci flow \eqref{normalized main flow} and denote $\omega_t$ the corresponding solution. As in \cref{diameter estimates}, we choose $E\subset U_4\Subset U_3\Subset U_2$ and set 
    $$
    r_0:=\frac{1}{2}\inf_{t>0}\min\{\operatorname{dist}_{g_t}(\partial U_3,\partial U_4),\operatorname{dist}_{g_t}(\partial U_2,\partial U_3)\}>0,
    $$
the minimal of distances between the boundaries of $U_2$, $U_3$ and $U_4$, which can be arranged positive. Since the metrics $\omega(t)$ are uniformly equivalent outside $U_4$, basic Riemannian geometry yields that the volume non-collapsing estimates automatically hold for $x\in X\setminus U_3$ and $r\in(0,r_0)$ (which means that $B_{\omega_t}(x,r)\subset X\setminus U_4$).

    It suffices to prove it for $x\in U_3$ and $r\in(0,r_0)$. In this stage, $B_{\omega_t}(x,r)\Subset U_2$. Choose a cutoff function $\eta$ with compact support in $B_{\omega_t}(x,r)$ such that
    $$
\eta|_{B_{\omega_t}(x,\frac{r}{2})}\equiv1,\quad\sup_X|\nabla\eta|_{g_t}\leq\frac{4}{r}.
    $$
   Let $d(y):=d_{g_t}(x,y)$ be the distance function with respect to $x$. Select a point $z\in U_2\setminus B_{\omega_t}(x,r)$, then $d(z)\eta(z)=0$ and we apply Green's formula to the Lipschitz function $d\cdot\eta$ to get
    \begin{equation}
        0=\frac{1}{V_{\omega_t}}\int_Xd(y)\eta(y)\omega_t^n(y)+\int_X\langle\nabla G_t(z,\cdot),\eta\nabla d+d\nabla\eta\rangle_{g_t}\omega_t^n.
    \end{equation}
    This implies that
    \begin{align*}
       \frac{1}{V_{\omega_t}}\int_Xd(y)\eta(y)\omega_t^n(y) &\leq\int_X|\nabla G_t(z,\cdot)|_{g_t}(\eta+d|\nabla\eta|_{g_t})\omega_t^n\leq\int_X|\nabla G_t(z,\cdot)|_{g_t}\left(1+r\cdot\frac{4}{r}\right)\cdot\mathds{1}_{\operatorname{Supp}\eta}\omega_t^n\\
       &=5\int_{\operatorname{Supp}\eta}|\nabla G_t(z,\cdot)|_{g_t}\omega_t^n\leq5\left(\int_X|\nabla G_t(z,\cdot)|_{g_t}^s\omega_t^n\right)^\frac{1}{s}\cdot\operatorname{Vol}_{\omega_t}(B_{\omega_t}(x,r))^{\frac{s-1}{s}}\\
       &\leq C\operatorname{Vol}_{\omega_t}(B_{\omega_t}(x,r))^{\frac{s-1}{s}},
    \end{align*}
for $s:=\frac{2+1.5\varepsilon}{2+\varepsilon}$ and $\varepsilon$ is the constant in \cref{L^p bound of green functions}.  Next, choose a point $\hat{z}\in\partial B_{\omega_t}(x,\frac{r}{2})\subset U_2$, where $d(\hat{z})\eta(\hat{z})=\frac{r}{2}$, and apply Green's formula again to write
\begin{align*}
    \frac{r}{2}=d(\hat{z})\eta(\hat{z})&=\frac{1}{V_{\omega_t}}\int_Xd(y)\eta(y)\omega_t^n(y)+\int_X\langle\nabla G_t(\hat{z},\cdot),\eta\nabla d+d\nabla\eta\rangle_{g_t}\omega_t^n\\
    &\leq C\operatorname{Vol}_{\omega_t}(B_{\omega_t}(x,r))^{\frac{s-1}{s}}+\int_X|\nabla G_t(\hat{z},\cdot)|_{g_t}(\eta+d|\nabla\eta|_{g_t})\omega_t^n\\
    &\leq C\operatorname{Vol}_{\omega_t}(B_{\omega_t}(x,r))^{\frac{s-1}{s}}.
\end{align*}
    This gives our desired estimates
    $$
\operatorname{Vol}_{\omega_t}(B_{\omega_t}(x,r))\geq cr^{\frac{s}{s-1}}.
    $$
\end{proof}

\begin{remark}
    The above volume non-collapsing estimate is not optimal because the exponent $\alpha$ is strictly larger than $2n$, as opposed to \cite{Wang18}, where in the K\"ahler case we have precisely $\alpha=2n$. We believe that $\alpha$ could be improved to $2n$ by modifying the techniques developed in \cite{Wang18}. The strict inequality $\alpha>2n$ cannot guarantee the existence of tangent cones of the Gromov-Hausdorff limit space at a singular point, nor that it is isometric to a metric cone, as illustrated in \cite[Proposition 3.18]{CW20}. Luckily, we can bypass this difficulty by directly estimating Perelman's reduced length below, where we do not have to establish an intermediate Cheeger-Gromov limit space first as in \cite{Guo17, TZ16, Wang18, CW19}.
\end{remark}

\begin{proof}[End of proof of Theorem 1.1]
    As illustrated in \cite{GPSS24a}, the diameter estimate and the volume non-collapsing estimate combined with Gromov's precompactness theorem easily yields that for any sequence $0<t_i\to\infty$, there exists a subsequence (still denoted by $t_i$) such that $(X,\omega(t_i))$ converges in the Gromov-Hausdorff topology to a compact metric space $(Z,d)$. 
\end{proof}

\section{Bounding the Chern scalar curvature along the flow}
We next move to the second part of our paper, namely, the proof of \cref{main thm:uniqueness}. We will mainly follow the approach in \cite{LTZ26}. In \cite[section 2]{LTZ26}, the authors listed several properties of the immortal K\"ahler-Ricci flow, among which the parabolic Schwarz estimate and the uniform Chern scalar curvature bound are unknown in our case now, we shall check them in this section, following \cite{Zhang09}.

As in the introduction, we will assume that $X$ is a compact K\"ahler manifold from now on. Then it follows from Kawamata's base point free theorem and Iitaka's theorem in algebraic geometry that $K_X$ is semi-ample and there is a birational holomorphic map $f:X\to X_{can}$ to a normal projective variety $X_{can}$ induced by the sections of $mK_X$ for some large integer $m$ such that $f:X\setminus f^{-1}(D)\to X_{can}\setminus D$ is a biholomorphism, where $D$ is an analytic subset of $X_{can}$. Clearly, we have $E=Null(K_X)=f^{-1}(D)$ and $X_{can}^{sing}\subset D$. For simplicity of notations, we write $Y$ for $X_{can}$ and let $\omega_Y$ be the restriction of the Fubini-Study metric on $Y$ in the sequel.

We first prove a parabolic Schwarz lemma in our case:
\begin{lemma}\label{parabolic Schwarz}
Let $X$ be a compact K\"ahler minimal model of general type and let $E$ be the null locus of $K_X$. Let $\omega_0$ be a Hermitian metric on $X$, which is moreover K\"ahler in a small neighborhood of $E$. there is a uniform constant $C$ such that
    $$
\omega(t)\geq C^{-1}f^*\omega_Y,\quad\forall t\in[0,\infty).
    $$
\end{lemma}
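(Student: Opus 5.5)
We follow the parabolic Schwarz lemma argument of Song--Tian and Zhang \cite{Zhang09}, adapted to the Chern-Ricci flow as in Tosatti--Weinkove \cite{TW15}. We work with the normalized flow \eqref{normalized main flow}; the unnormalized statement differs only by the time reparametrization. The quantity to control is $u:=\log\operatorname{tr}_{\omega(t)}f^*\omega_Y$. Since $f$ is holomorphic, $f^*\omega_Y$ is a smooth closed semipositive $(1,1)$-form on $X$, and its bisectional curvature is bounded above wherever it is nondegenerate. The standard way to make this precise is to embed $Y\subset\mathbb{P}^N$ and compute with $\omega_{FS}$ along the holomorphic map $f:X\to\mathbb{P}^N$.

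\textbf{Step 1: the differential inequality.} We compute $(\partial_t-\Delta_{\omega(t)})u$ along the flow. In the K\"ahler case the Chern-Lu/Yau Schwarz computation gives
\[
(\partial_t-\Delta_{\omega})\operatorname{tr}_\omega f^*\omega_Y\le \operatorname{tr}_\omega f^*\omega_Y + C(\operatorname{tr}_\omega f^*\omega_Y)^2 - \frac{|\nabla \operatorname{tr}_\omega f^*\omega_Y|^2}{\operatorname{tr}_\omega f^*\omega_Y}.
\]
This yields $(\partial_t-\Delta)u\le C\operatorname{tr}_\omega f^*\omega_Y+1$. In the Hermitian case Tosatti--Weinkove's computation (cf.\ \cite{TW15}, Gill \cite{Gill13}) produces extra torsion terms of the form $T*\nabla u$, $|T|^2\operatorname{tr}_\omega f^*\omega_Y$, and $\nabla\bar T$ contracted against $f^*\omega_Y$, where $T$ is the torsion of $\omega(t)$. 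Since $\omega(t)=\hat\omega_t+\sqrt{-1}\partial\bar\partial\varphi$, we have $T(\omega(t))=e^{-t}T(\omega_0)$. By the local K\"ahler assumption these terms vanish on $U$. On $X\setminus U_1$, Gill's smooth convergence (\cref{smooth convergence}) makes $\omega(t)$ uniformly equivalent to a fixed metric with bounded derivatives, so $u$ and all torsion terms are uniformly bounded there.

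\textbf{Step 2: the maximum principle.} Consider $H:=u-A\varphi$, or $u-A(\varphi+\dot\varphi)$ as in Zhang, for large $A$. We have
\[
(\partial_t-\Delta)\varphi=\dot\varphi-n+\operatorname{tr}_\omega\hat\omega_t .
\]
Using $\hat\omega_t\ge\chi\ge c\,f^*\omega_Y$, where $\chi$ may be taken as $f^*$ of a multiple of $\omega_Y$ by \cref{rmk:semi positive rep}, this gives
\[
(\partial_t-\Delta)H\le C\operatorname{tr}_\omega f^*\omega_Y - Ac\operatorname{tr}_\omega f^*\omega_Y + C(A),
\]
wherever the torsion terms vanish, using that $\varphi$ and $\dot\varphi$ are uniformly bounded (\cref{rmk:semi positive rep}, \cref{lem:boundedness of partial_t varphi}). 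At a maximum point of $H$ on $X\times[0,T]$ there are two cases. If the point lies in $U$, the computation is purely K\"ahler, so $\operatorname{tr}_\omega f^*\omega_Y\le C$ and hence $H\le C$. If it lies in $X\setminus U$, then $u$ is bounded by the uniform equivalence of Step 1 (taking $U_1\Subset U$), and $H\le C$ again. In both cases $\varphi$ is bounded, so $u\le C$, that is, $\operatorname{tr}_{\omega}f^*\omega_Y\le C$. This gives $\omega\ge C^{-1}f^*\omega_Y$.

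\textbf{Main obstacle.} The delicate point is making the case split rigorous. The maximum could sit near $\partial U$, where the torsion terms are nonzero but $\omega(t)$ may not yet be uniformly controlled; the fix is to choose $U_1\Subset U$ so that $\partial U$ lies in the region of Gill's convergence. There are also finite-time issues: for $t\le T_0$ everything is smooth and bounded on a compact time interval, and Gill's $C^\infty_{loc}$ convergence takes over for large $t$. Combined with $T(\omega(t))=O(e^{-t})$, this makes the torsion terms harmless outside $U$. A cruder alternative is to absorb the torsion terms directly using $|T|\le Ce^{-t}$ and the Cauchy--Schwarz gradient term, but that requires $\operatorname{tr}_\omega\omega_0$ bounds near $E$, which we lack. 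This is why we localize.
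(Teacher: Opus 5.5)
Your argument is correct, and its skeleton is the paper's: the Chern--Lu/Yau inequality for $\operatorname{tr}_{\omega(t)}f^*\omega_Y$, the test function $\log\operatorname{tr}_{\omega(t)}f^*\omega_Y-A\varphi$ with $\chi$ a multiple of $f^*\omega_Y$, and the maximum principle. The difference is in how torsion is handled.

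The paper carries out the full Hermitian computation. It uses the commutation and Bianchi identities of Tosatti--Weinkove to rewrite $g^{\bar l k}R_{k\bar l m\bar j}$ as $R^C_{m\bar j}$ plus $\nabla T$ and $\nabla\bar T$ terms. It bounds those terms by a constant on all of $X$: they vanish on $U$ and are controlled outside $U$ by Gill's convergence. It then runs a single global maximum principle, at the price of an extra $C/\phi$ term that it removes by assuming $\phi\ge 1$ at the maximum.

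You instead localize the maximum point. If it lies in the open set $U$, the torsion vanishes identically near it and the K\"ahler inequality applies verbatim. If it lies in $X\setminus U\subset X\setminus U_1$, the trace is bounded outright by uniform equivalence.

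Your route is shorter and needs no Hermitian curvature identities. The paper's route yields a global evolution inequality for $\phi$, which it quotes again in \cref{Scalar curvature bound}. There too the relevant maximum is localized to $U$, so your version would suffice. Your passage to the holomorphic map $X\to\mathbb{P}^N$ to control the target curvature term is also the right way to handle the singular $Y$; the paper phrases this via the curvature of $(Y,\omega_Y)$.

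Two minor points. First, the difficulty you anticipate near $\partial U$ is already resolved by $U_1\Subset U$, since your two cases cover all of $X$. Second, $T(\omega(t))=e^{-t}T(\omega_0)$ holds only for the lowered-index torsion, i.e.\ $\partial\omega(t)=e^{-t}\partial\omega_0$, not for $T^k_{ij}$, which carries $g(t)^{-1}$. This is harmless, since you only use its vanishing on $U$.
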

\begin{proof}
    It suffices to establish an upper bound for $tr_{\omega(t)}f^*\omega_Y$. We let $z^i$ be coordinates on $X$ and $w^\alpha$ be coordinates on $Y$ and write $g_{i\bar{j}}$ for the entries of $\omega(t)$. We also write $h_{\alpha\bar{\beta}}$ for the entries of $\omega_Y$ and $f^\alpha$ for those of $f$. It follows that 
\begin{equation}\label{partial t}
\phi := \mathrm{tr}_g(f^*\omega_Y) = g^{i\bar{j}} f_i^\alpha \overline{f_j^\beta} h_{\alpha\bar{\beta}},
\end{equation}
and thus
\begin{align*}
    \frac{\partial}{\partial t}\phi=R^{C,i\bar{j}} f_i^\alpha \overline{f_j^\beta} h_{\alpha\bar{\beta}}+\phi,
\end{align*}
where $R^{C,i\bar{j}}:=g^{i\bar{l}}g^{k\bar{j}}R^C_{i\bar{j}}$ and $R^C_{i\bar{j}}$ are components of the Chern-Ricci $Ric^C$. The Laplacian of $\phi$ is
$$
\Delta_{\omega(t)} \phi = g^{l\bar{k}}\nabla_k \nabla_{\bar{l}} (g^{j\bar{i}} f_i^\alpha \overline{f_j^\beta} h_{\alpha\bar{\beta}}),
$$
where $\nabla:=\nabla^X\otimes Id+Id\otimes f^*\nabla^Y$ is a tensor product of the Chern connections on $X$ and the pullback bundle $f^*TY$. Since $f$ is holomorphic, we have $\nabla_{\bar{l}} f_i^\alpha = 0$, hence
\begin{equation}
    \nabla_{\bar{l}} (g^{j\bar{i}}f_i^\alpha \overline{f_j^\beta} h_{\alpha\bar{\beta}}) = g^{j\bar{i}}f_i^\alpha \overline{(\nabla_l f_j^\beta)} h_{\alpha\bar{\beta}}.
\end{equation}
Act once more $\nabla_k$ we can write
\begin{equation}
    \nabla_k \nabla_{\bar{l}} (g^{j\bar{i}}f_i^\alpha \overline{f_j^\beta} h_{\alpha\bar{\beta}}) = g^{j\bar{i}}(\nabla_k f_i^\alpha)\overline{(\nabla_l f_j^\beta)}h_{\alpha\bar{\beta}} + g^{j\bar{i}}f_i^\alpha \overline{(\nabla_{\bar{k}}\nabla_l f_j^\beta)}h_{\alpha\bar{\beta}}.
\end{equation}
Since $\nabla_{\bar{k}} f_j^\beta = 0$, we have $\nabla_{\bar{k}} \nabla_l f_j^\beta=-[\nabla_l, \nabla_{\bar{k}}] f_j^\beta$. View $f_j^\beta$ as a section of the bundle $T^*X \otimes f^*TY$, we can use the definition of tensor products of connections to write  $$[\nabla_l, \nabla_{\bar{k}}] f_j^\beta = \big( [\nabla_l, \nabla_{\bar{k}}]^X f_j \big)^\beta + \big( [\nabla_l, \nabla_{\bar{k}}]^{f^*Y} f^\beta \big)_j.$$
Therefore, we have
\begin{equation}
    \nabla_{\bar{k}} \nabla_l f_j^\beta = R_{l\bar{k}j}^p f_p^\beta - R^{f^*h,\beta}_{l\bar{k}\gamma}f_j^\gamma= R_{l\bar{k}j}^p f_p^\beta - R^{h,\beta}_{\mu\bar{\nu}\gamma} f_l^\mu \overline{f_k^\nu} f_j^\gamma.
\end{equation}
Where $R^h$ is the curvature of $(Y,\omega_Y)$ and we are using the notations in \cite[Page 131-132]{TW15} to do calculations.

Taking complex conjugate and using $\overline{R_{l\bar{k}j}^p} = \overline{g^{\bar{m}p} R_{l\bar{k}j\bar{m}}} = g^{\bar{p}m} R_{k\bar{l}m\bar{j}}$, we obtain
\begin{equation}
    \overline{\nabla_{\bar{k}} \nabla_l f_j^\beta} = R_{k\bar{l}m\bar{j}} g^{\bar{p}m} \overline{f_p^\beta} - \overline{R^{h,\beta}_{\mu\bar{\nu}\gamma}} \overline{f_l^\mu} f_k^\nu \overline{f_j^\gamma}.
\end{equation}
Utilizing $h_{\alpha\bar{\beta}} \overline{R^{h,\beta}_{\mu\bar{\nu}\gamma}} = R^h_{\nu\bar{\mu}\alpha\bar{\gamma}}$ and multiplying both sides by $h_{\alpha\bar{\beta}}$ we derive the expression
\begin{equation}\label{Lap 2}
    \Delta_{\omega(t)} \phi = |\nabla f|^2 + g^{\bar{l}k} g^{\bar{j}i} g^{\bar{p}m} R_{k\bar{l}m\bar{j}} f_i^\alpha \overline{f_p^\beta} h_{\alpha\bar{\beta}} - g^{\bar{l}k} g^{\bar{j}i} R^h_{\nu\bar{\mu}\alpha\bar{\gamma}} f_i^\alpha \overline{f_l^\mu} f_k^\nu \overline{f_j^\gamma}.
\end{equation}
As in \cite[Lemma 5.12]{Tos18}, the last term can be bounded by
$$
g^{\bar{l}k} g^{\bar{j}i} R^h_{\nu\bar{\mu}\alpha\bar{\gamma}} f_i^\alpha \overline{f_l^\mu} f_k^\nu \overline{f_j^\gamma}\leq C(tr_{\omega(t)}(f^*\omega_Y))^2=C\phi^2,
$$
where $C$ is a constant depending on the bisectional curvature of $(Y,\omega_Y)$.

For the second term in \eqref{Lap 2}, we use the commutation formulas and Bianchi identities from \cite[Page 132, 136]{TW15} to commute the indices. First, by \cite[Equation (3.6)]{TW15}, we have
\begin{equation}
    R_{k\bar{l}m\bar{j}} = R_{m\bar{l}k\bar{j}} + g_{s\bar{j}} \nabla_{\bar{l}} T_{mk}^s.
\end{equation}
Next, taking the complex conjugate of this identity yields $R_{m\bar{l}k\bar{j}} = R_{m\bar{j}k\bar{l}} - g_{k\bar{s}} \nabla_m \overline{T_{lj}^s}$. 
Taking the trace with $g^{\bar{l}k}$ and taking into account the definition of the Chern-Ricci curvature $R^C_{m\bar{j}} = g^{\bar{l}k} R_{m\bar{j}k\bar{l}}$, we obtain
\begin{equation}
\begin{split}
    g^{\bar{l}k} R_{k\bar{l}m\bar{j}} &= g^{\bar{l}k} (R_{m\bar{j}k\bar{l}} - g_{k\bar{s}} \nabla_m \overline{T_{lj}^s} + g_{s\bar{j}} \nabla_{\bar{l}} T_{mk}^s) \\
    &= R^C_{m\bar{j}} + g^{\bar{l}k} g_{s\bar{j}} \nabla_{\bar{l}} T_{mk}^s - \nabla_m \overline{T_{lj}^l}.
\end{split}
\end{equation}
Putting everything into \eqref{Lap 2} and using $R^{C, i\bar{p}} = g^{i\bar{j}}g^{m\bar{p}}R^C_{m\bar{j}}$, we arrive at
\begin{equation}\label{Lap 3}
\begin{split}
     \Delta_{\omega(t)} \phi = &|\nabla f|^2 + R^{C,i\bar{p}} f_i^\alpha \overline{f_p^\beta} h_{\alpha\bar{\beta}} - g^{\bar{l}k} g^{\bar{j}i} R^h_{\nu\bar{\mu}\alpha\bar{\gamma}} f_i^\alpha \overline{f_l^\mu} f_k^\nu \overline{f_j^\gamma} \\
     &+ g^{\bar{l}k} g^{\bar{p}m} (\nabla_{\bar{l}} T_{mk}^i) f_i^\alpha \overline{f_p^\beta} h_{\alpha\bar{\beta}} - g^{\bar{j}i} g^{\bar{p}m} (\nabla_m \overline{T_{lj}^l}) f_i^\alpha \overline{f_p^\beta} h_{\alpha\bar{\beta}}.
\end{split}
\end{equation}
Combining the evolution equation of $\phi$ with \eqref{Lap 3}, we have
\begin{equation}\label{final of phi}
\begin{split}
    \left(\frac{\partial}{\partial t} - \Delta_{\omega(t)}\right) \phi =& \phi - |\nabla f|^2 + g^{\bar{l}k} g^{\bar{j}i} R^h_{\alpha\bar{\beta}\mu\bar{\nu}} f_k^\alpha \overline{f_l^\beta} f_i^\mu \overline{f_j^\nu} \\
    &- g^{\bar{l}k} g^{\bar{p}m} (\nabla_{\bar{l}} T_{mk}^i) f_i^\alpha \overline{f_p^\beta} h_{\alpha\bar{\beta}} + g^{\bar{j}i} g^{\bar{p}m} (\nabla_m \overline{T_{lj}^l}) f_i^\alpha \overline{f_p^\beta} h_{\alpha\bar{\beta}}.
\end{split}
\end{equation}
Since in our situation all the torsions and their covariant derivatives vanish in the neighborhood $U$ of $Null(K_X)$ and remain uniformly bounded globally on $X$, the last two torsion terms in \eqref{final of phi} can be bounded by a uniform constant $C$, and thus we obtain
\begin{equation}\label{final of phi 2}
     \left(\frac{\partial}{\partial t} - \Delta_{\omega(t)}\right) \phi \le \phi - |\nabla f|^2 + C\phi^2+C.
\end{equation}
We next estimate the evolution of $\log\phi$:
\begin{equation}
    \left(\frac{\partial}{\partial t} - \Delta_{\omega(t)}\right) \log \phi = \frac{1}{\phi} \left(\frac{\partial}{\partial t} - \Delta_{\omega(t)}\right) \phi + \frac{|\nabla \phi|^2}{\phi^2}.
\end{equation}
To control the gradient term, we apply the Cauchy-Schwarz inequality. Recall that $\phi = g^{i\bar{j}} f_i^\alpha \overline{f_j^\beta} h_{\alpha\bar{\beta}}$, which implies:
\begin{equation}
    |\nabla \phi|^2 \le \left( g^{i\bar{j}} f_i^\alpha \overline{f_j^\beta} h_{\alpha\bar{\beta}} \right) \left( g^{k\bar{l}} g^{i\bar{j}} h_{\alpha\bar{\beta}} (\nabla_k f_i^\alpha) \overline{(\nabla_l f_j^\beta)} \right) = \phi |\nabla f|^2.
\end{equation}
This gives the bound $\frac{|\nabla \phi|^2}{\phi^2} \le \frac{|\nabla f|^2}{\phi}$. Substituting this and \eqref{final of phi 2} into the evolution of $\log \phi$, we see that
\begin{align}\label{log phi ineq}
    \left(\frac{\partial}{\partial t} - \Delta_{\omega(t)}\right) \log \phi &\le \frac{1}{\phi} \left( C\phi^2 + \phi - |\nabla f|^2 + C \right) + \frac{|\nabla f|^2}{\phi} \nonumber \\
    &= C \phi + 1 + \frac{C}{\phi},
\end{align}
at every point where $\phi>0$. Recall that $\hat{\omega}_t:=f^*\omega_Y+e^{-t}(\omega_0-f^*\omega_Y)$ and basic calculations yield that for the flow solution $\varphi(t)$ we have
\begin{equation}
    \left(\frac{\partial}{\partial t} - \Delta_{\omega(t)}\right) \varphi=\dot{\varphi}(t)-n+tr_{\omega(t)}(\hat{\omega}_t) \ge \frac{1}{2} \mathrm{tr}_{\omega(t)}(f^*\omega_Y) - C = \frac{1}{2} \phi - C,
\end{equation}
Set $Q := \log \phi - A \varphi$. Choosing the constant $A > 0$ sufficiently large such that
\begin{align*}
    \left(\frac{\partial}{\partial t} - \Delta_{\omega(t)}\right) Q &= \left(\frac{\partial}{\partial t} - \Delta_{\omega(t)}\right) \log \phi - A \left(\frac{\partial}{\partial t} - \Delta_{\omega(t)}\right) \varphi \\
    &\le C \phi+1+\frac{C}{\phi} - A (\frac{1}{2} \phi - C) \\
    &\le - \phi + C+\frac{C}{\phi}.
\end{align*}

Suppose $Q$ attains its maximum at some point $(x_0, t_0) \in X \times [0, T]$. At this maximum point, we may assume that $\phi\geq1$, for otherwise we already obtain an upper bound of $\phi$ thanks to the uniform boundedness of $\varphi$. It follows that at $(x_0,t_0)$,
\begin{equation}
    0 \le \left(\frac{\partial}{\partial t} - \Delta_{\omega(t)}\right) Q \le - \phi(x_0, t_0) + A C.
\end{equation}
Therefore, at the maximum point, we have $\phi(x_0, t_0) \le C$, which yields a uniform upper bound $\phi \le C$ due to the uniform boundedness of $\varphi$ again.
\end{proof}

Having \cref{parabolic Schwarz} in hand, we can give the proof of \cref{introduction:scalar curvature bound} by using the calculations in \cite{Zhang09} directly:

\begin{theorem}\label{Scalar curvature bound}
Let $X$ be a compact K\"ahler minimal model of general type and let $E$ be the null locus of $K_X$. Let $\omega_0$ be a Hermitian metric on $X$, which is moreover K\"ahler in a small neighborhood of $E$. Then the Chern scalar curvature $S_C(t)$ is uniformly bounded for $t\in[0,+\infty)$. 
\end{theorem}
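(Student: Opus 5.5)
The lower bound for $S_C$ was already obtained in the proof of \cref{lem:boundedness of partial_t varphi}, so only the upper bound is needed. I would follow Zhang's argument \cite{Zhang09}, working with the normalized flow \eqref{normalized main flow} and the potential $u:=\dot\varphi+\varphi$, for which $\omega(t)^n=e^u\Omega$. As computed in \cref{lem:boundedness of partial_t varphi},
\[
S_C=-\Delta_{\omega(t)}u-\operatorname{tr}_{\omega(t)}\chi,
\]
where, by \cref{rmk:semi positive rep}, $\chi=f^*\omega_Y$ is semi-positive. It therefore suffices to bound $-\Delta_{\omega(t)}\dot\varphi$ from above, since $\Delta\varphi=n-\operatorname{tr}_{\omega}\hat\omega_t$ is bounded below by $-C\operatorname{tr}_\omega\omega_0$. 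Because $S_C$ is unnormalized, the scale invariance of the statement lets me work in the normalized setting. The key inputs are the uniform bounds on $\varphi$ and $\dot\varphi$ (\cref{lower bound of varphi(t)} and \cref{lem:boundedness of partial_t varphi}) together with the parabolic Schwarz lemma \cref{parabolic Schwarz}, which gives $\operatorname{tr}_{\omega(t)}f^*\omega_Y\le C$.

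Following \cite{Zhang09}, I would run the maximum principle first on $H:=\frac{|\nabla\dot\varphi|^2}{A-\dot\varphi}$, or equivalently $|\nabla u|^2/(A-u)$, and then on
\[
K:=\frac{-\Delta_{\omega(t)}\dot\varphi}{A-\dot\varphi}+B\,H ,
\]
with $A$ and $B$ large uniform constants. Four ingredients go into these evolution inequalities:
\begin{itemize}
\item the equation $(\partial_t-\Delta)\dot\varphi=\operatorname{tr}_\omega(\chi-e^{-t}(\omega_0-\chi))\cdot(-1)+\ldots$, whose zeroth-order terms are controlled by $\operatorname{tr}_\omega\chi\le C$ via \cref{parabolic Schwarz} and by $e^{-t}\operatorname{tr}_\omega\omega_0$;
\item the Bochner-type identity for $|\nabla\dot\varphi|^2$, which produces the good terms $-|\nabla\nabla\dot\varphi|^2-|\nabla\bar\nabla\dot\varphi|^2$;
\item a lower bound for $\Delta|\nabla\dot\varphi|^2$ that would use a Ricci lower bound in the Kähler case; here I replace it by the Chern-Ricci form, which equals $-\omega-\partial_t\omega$ (so the flow structure supplies $\sqrt{-1}\partial\bar\partial\dot\varphi$ terms);
\item absorption of the remaining terms as in \cite{Zhang09}.
\end{itemize}
This yields $|\nabla\dot\varphi|^2\le C$ first, then $-\Delta\dot\varphi\le C$, and hence $S_C\le C$.

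The main obstacle is the torsion. In the Hermitian setting, commuting covariant derivatives in $\Delta|\nabla\dot\varphi|^2$ and in $\Delta\Delta\dot\varphi$ produces terms of the form $T*\nabla\nabla\dot\varphi*\nabla\dot\varphi$, $\bar\nabla T*\nabla\dot\varphi*\nabla\dot\varphi$, and second-order torsion terms, exactly as in the torsion terms of \eqref{final of phi}. The plan for these terms:
\begin{itemize}
\item Inside $U$ the metric $\omega(t)$ is Kähler, so all such terms vanish and Zhang's computation applies verbatim.
\item On $X\setminus U$, \cref{smooth convergence} gives smooth convergence, so $\omega(t)$ is uniformly equivalent to $\omega_0$ with all derivatives bounded. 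By local parabolic estimates, $\dot\varphi$ and all its derivatives are then uniformly bounded there.
\end{itemize}
The torsion terms are thus bounded on their support, and they enter as harmless constants, $C(1+|\nabla\dot\varphi|^2)$ at worst, which the $-B\,\dot\varphi$-type weights absorb. I therefore expect no new difficulty beyond careful bookkeeping: at a maximum point lying in $X\setminus U$ the bound is already known, and at a maximum point in $U$ the Kähler computation of \cite{Zhang09} goes through unchanged. In practice I would simply split according to the location of the maximum point, which avoids tracking the torsion terms at all.
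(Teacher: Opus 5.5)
\textbf{Gap: the switch from $u=\varphi+\dot\varphi$ to $\dot\varphi$.} Your skeleton is exactly the paper's: lower bound from \cref{lem:boundedness of partial_t varphi}, the Schwarz lemma \cref{parabolic Schwarz}, Zhang's two maximum-principle quotients, and a split according to whether the maximum point lies in $U$ (Kähler, Zhang's computation) or in $X\setminus U$ (Gill's smooth convergence). The genuine problem is that you start with $u=\varphi+\dot\varphi$ and then switch to $\dot\varphi$, and that step fails. With $\chi=f^*\omega_Y$ and $\hat\omega_t=\chi+e^{-t}(\omega_0-\chi)$ one has exactly
\[
S_C=-\Delta_{\omega}\dot\varphi+e^{-t}\operatorname{tr}_{\omega}(\omega_0-\chi)-n .
\]
So your reduction needs $e^{-t}\operatorname{tr}_{\omega(t)}\omega_0\le C$. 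Your stated justification, the bound $\Delta\varphi\ge -C\operatorname{tr}_\omega\omega_0$, is not uniform. For any curve $C$ contracted by $f$, $\int_C\omega(t)=e^{-t}\int_C\omega_0$, hence $\sup_C\operatorname{tr}_{\omega(t)}\omega_0\ge e^{t}$. Nothing in the paper or in your proposal bounds $e^{-t}\operatorname{tr}_{\omega(t)}\omega_0$.

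The same term also breaks the maximum-principle computations. Since $(\partial_t-\Delta)\dot\varphi=-e^{-t}\operatorname{tr}_\omega(\omega_0-\chi)-\dot\varphi$, the evolutions of $|\nabla\dot\varphi|^2$ and $\Delta\dot\varphi$ contain $e^{-t}\nabla\operatorname{tr}_\omega\omega_0$ and $e^{-t}\Delta\operatorname{tr}_\omega\omega_0$. These are third and fourth derivatives of $\varphi$ near $E$, and no good term absorbs them. A Schwarz-type argument for $\omega_0$ does not close, because the evolution of $e^{-t}\operatorname{tr}_\omega\omega_0$ carries a term of size $e^{t}\big(e^{-t}\operatorname{tr}_\omega\omega_0\big)^2$. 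For the same reason, your two gradient quotients built from $\dot\varphi$ and from $u$ are not ``equivalent'': they differ by $\nabla\varphi$, which is not controlled near $E$. Zhang's ``verbatim'' computation in $U$ is a computation for $u$, not for $\dot\varphi$.

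\textbf{Fix.} Keep $v=\varphi+\dot\varphi$ throughout, as the paper does. Then $(\partial_t-\Delta)v=\operatorname{tr}_\omega\chi-n$, so every zeroth-order term is controlled by \cref{parabolic Schwarz}. Also $S_C=-\Delta v-\phi\le -\Delta v$ with $\phi=\operatorname{tr}_\omega f^*\omega_Y\ge 0$, and no $\omega_0$ term remains.

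\textbf{A step you gloss over.} In the evolution of $|\nabla v|^2$ you meet $\operatorname{Re}\langle\nabla v,\nabla\phi\rangle$. It is handled by $|\nabla\phi|^2\le\phi|\nabla f|^2$ together with the good term $-|\nabla f|^2$ in the evolution of $\phi$. This is why the paper applies the maximum principle to $\Psi+\phi$, with $\Psi=|\nabla v|^2/(C-v)$, rather than to $\Psi$ alone. It then applies it to $\Phi=(C-\Delta v)/(C-v)$. With these changes, your split by the location of the maximum goes through exactly as in the paper.
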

\begin{proof}
The lower bound of $S(t)$ has already been proved in \cref{lem:boundedness of partial_t varphi}.

From the calculations in \cref{parabolic Schwarz}, we have 
$$
 \left(\frac{\partial}{\partial t} - \Delta_{\omega(t)}\right) \phi \le \phi - H + C\phi^2+C\le C-H,
$$
where $H:=|\nabla f|^2\geq C|\nabla\phi|^2$, as described in \cite{Zhang09}. Since in our case the flow degenerates to the K\"ahler-Ricci flow on $U$, all the calculations in \cite{Zhang09} remains valid in $U$, where we obtain
\begin{equation}
     \left(\frac{\partial}{\partial t} - \Delta_{\omega(t)}\right)(\Psi+ \phi)\le C+C|\nabla v|^2-\varepsilon|\nabla v|^4-(2-\varepsilon)\operatorname{Re}\left(\nabla(\Psi+\phi),\frac{\nabla v}{C-v}\right).
\end{equation}
Here we take $v:=\varphi+\dot{\varphi}_t$ and $\Psi:=\frac{|\nabla v|^2}{C-v}$ for some large $C$. We have shown that $v$ and $\phi$ are uniformly bounded. If the maximum point $(x,t)$ of $\Psi+\phi$ is located outside $U$, then all the metrics $\omega(t)$ are uniformly equivalent and $\varphi$ converges smoothly to $\varphi_\infty$, so we automatically have an upper bound for $\Psi(x,t)$, which yields also an upper bound for $|\nabla v|$. If the maximum point $(x,t)$ of $\Psi+\phi$ lies in $U$, then we have at $(x,t)$,
$$
0\leq C+C|\nabla v|^2-\varepsilon|\nabla v|^4,
$$
this in turn gives an upper bound for $\Psi$. Consequently, we derive that
\begin{equation}
    |\nabla v|\leq C.
\end{equation}
By considering $\Phi:=\frac{C-\Delta v}{C-v}$ as in \cite{Zhang09} and using similar arguments as above, we have that there is a uniform constant $C$ such that
\begin{equation}
    -\Delta_{\omega(t)}v\le C.
\end{equation}
Now, an easy calculation using the flow equation gives that
$$
S_C(t)=-\Delta_{\omega(t)}v-\phi\le C,
$$
the proof is therefore concluded.
    
\end{proof}

\section{Perelman's reduced length and reduced volume}\label{section Per}

In this section, we attempt to introduce Perelman's famous reduced length and reduced volume under the Chern-Ricci flow and calculate explicitly the evolution of the reduced length $L$. Moreover, we establish a version of almost monotonicity of the Jacobian of the $L$-exponential map in our case. Most of the calculations can be carried out by imitating Perelman's proof \cite{Per02} or the note \cite{KL08} directly.

\subsection{The $\mathcal{L}$-Geodesic equation and the $\mathcal{L}$-exponential map}

As in \cite[section 4.2]{LTZ26}, we first perform a reparametrization $\tilde{\omega}(s):=e^{t-T}\omega(t)$ with $s:=\frac{1}{2}(e^{t-T}-1)$ to get the unnormalized Chern-Ricci flow
\begin{equation}\label{reparametrization}
    \partial_s\tilde{\omega}(s)=-2Ric^C(\tilde{\omega}(s)),\quad s\geq s_T:=\frac{1}{2}(e^{-T}-1).
\end{equation}
We then use the change of variable $\tau=-s$, \eqref{reparametrization} becomes
\begin{equation}\label{backward CRF}
    \partial_\tau \omega=2Ric^C(\omega).
\end{equation}
If we write $M$ for the manifold and for each $X,Y\in T_{\mathbb{R}}M$,
$$
g(X,Y)=\omega(X,JY), \quad Rc(X,Y):=Ric^C(X,JY),
$$
where $J$ is the complex structure on $M$ and $Rc$ is a symmetric $2$-tensor on $M$. The backward flow \eqref{backward CRF} is equivalent to 
\begin{equation}\label{real backward CRF}
    \partial_\tau g=2Rc_g.
\end{equation}
So in the sequel, we will do calculations for the real flow \eqref{real backward CRF} on the real tangent bundle $T_{\mathbb{R}}M$. In practical use, we will always choose a large $T$ and a small positive number $0<\bar{\tau}\ll -s_T<\frac{1}{2}$ and consider the flow \eqref{real backward CRF} on $[0,\bar{\tau}]$.

For each piecewise smooth curve $\gamma:[\tau_1,\tau_2]\to M$ with $0\leq\tau_1<\tau_2$, the $\mathcal{L}$-length of $\gamma$ is defined to be 
\begin{equation}
    \mathcal{L}(\gamma):=\int_{\tau_1}^{\tau_2}\sqrt{\tau}\left(S(\gamma(\tau))+|\dot{\gamma}(\tau)|^2_{g(\tau)}\right)d\tau,
\end{equation}
where $S$ denotes the Chern scalar curvature along the curve $\gamma$ in $M$, which can be computed by taking the real trace with respect to the tensor $Rc$ (it equals to two times of the complex trace $S_C$ of $Ric^C$). Consider a variation $$\tilde{\gamma}(s,\tau):(-\varepsilon,\varepsilon)\times[\tau_1,\tau_2]\to M$$ of the curve $\gamma$. If we write $\nabla$ as the usual Levi-Civita connection on $T_{\mathbb{R}}M$ and set 
$$
X:=\frac{\partial\tilde{\gamma}}{\partial\tau},\quad Y:=\frac{\partial\tilde{\gamma}}{\partial s},
$$ 
then exactly the same calculations as in \cite[Section 7]{Per02} yields that the first variation of $\mathcal{L}$ is
\begin{equation}\label{first variation of L}
    \delta_Y\mathcal{L}=\int_{\tau_1}^{\tau_2}\sqrt{\tau}(\langle Y,\nabla S\rangle+2\langle\nabla_XY,X\rangle)d\tau,
\end{equation}
where we write $\delta_Y$ to mean $\frac{d}{ds}|_{s=0}$. Using the flow equation \eqref{real backward CRF} and doing integration by parts we derive that $\mathcal{L}$-geodesic equation (the Euler-Lagrange equation of \eqref{first variation of L}) is
\begin{equation}\label{L geodesic equation}
    \nabla_XX-\frac{1}{2}\nabla S+\frac{1}{2\tau}X+2Rc(X,\cdot)=0.
\end{equation}
Making a change of variable $s=\sqrt{\tau}$ and set $\hat{X}:=\frac{d\gamma}{ds}=2sX$, we get
\begin{equation}
    \mathcal{L}(\gamma)=\int_{s_1}^{s_2}\left(2s^2S(\gamma(s))+\frac{1}{2}|\dot{\gamma}(s)|^2_{g(s)}\right)ds,
\end{equation}
with $s_i=\sqrt{\tau_i}$ and \eqref{L geodesic equation} becomes
\begin{equation}\label{normalized L geodesic equation}
    \nabla_{\hat{X}}\hat{X}-2s^2\nabla S+4sRc(\hat{X},\cdot)=0.
\end{equation}
Now standard ODE theory implies that for each $p\in M$ and $v\in T_pM$ there exists an $\varepsilon>0$ and $\gamma(s):[s_1,s_1+\varepsilon)\to M$ such that \eqref{normalized L geodesic equation} holds, with $\gamma(s_1)=p$ and
\begin{equation}\label{initial value}
    \frac{1}{2}\dot{\gamma}(s_1)=\lim_{\tau\to\tau_1}\sqrt{\tau}\frac{d\gamma}{d\tau}=v.
\end{equation}
We claim that the solution $\gamma(s)$ can be defined on the whole interval $[s_1,s_2]$. Indeed, we use \eqref{real backward CRF} and \eqref{normalized L geodesic equation} to compute
\begin{equation}\label{eq:ds X hat}
\begin{aligned}
    &\frac{d}{ds}|\hat{X}|=\frac{1}{2|\hat{X}|}\frac{d}{ds}|\hat{X}|^2=\frac{1}{2|\hat{X}|}(4sRc(\hat{X},\hat{X})+2\langle\nabla_{\hat{X}}\hat{X},\hat{X}\rangle)\\
    =&\frac{1}{2|\hat{X}|}(-4sRc(\hat{X},\hat{X})+4s^2\langle\nabla S,\hat{X}\rangle)\\
    \leq& C|\hat{X}|+2s^2\left\langle\nabla S,\frac{\hat{X}}{|\hat{X}|}\right\rangle.
\end{aligned}
\end{equation}
An application of Shi's type estimate \cite{Shi89} (see also \cite[Appendix D]{KL08}) yields that we have the estimate
$$
|\nabla S|(x,\tau)\leq \frac{C}{\sqrt{\tau_2-\tau}},
$$
for $x\in M$ and $\tau\in[\tau_1,\tau_2)$. Indeed, our flow degenerates to the standard Ricci flow on some $U^\prime\Subset U$, so the local estimates of \cite{Shi89} can be directly applied in $U^\prime$. On the other hand, the flow converges smoothly outside $U^\prime$ by Gill's result \cref{smooth convergence}, hence all kinds of curvatures are uniformly bounded there. We thus arrive at
\begin{equation}
    \frac{d}{ds}|\hat{X}|\leq C|\hat{X}|+\frac{C}{\sqrt{s_2-s}},
\end{equation}
and hence the claim follows.

\begin{definition}\label{def:L exp map}
    For each $p\in M$ and $\tau\in[\tau_1,\tau_2]$, the $\mathcal{L}$-exponential map $\mathcal{L}\exp_\tau:T_pM\to M$ is defined to take each $v\in T_pM$ to $\gamma_{p,v}(\tau)$, where $\gamma_{p,v}$ is the unique curve satisfying \eqref{L geodesic equation}, \eqref{initial value} and starting from $p$. 
\end{definition}
In the sequel we will always take $\tau_1=0$.
\begin{definition}
    Fix $p\in M$. For each $q\in M$ and $\bar{\tau}>0$, define
    $$
L(q,\bar{\tau}):=\inf_\gamma \mathcal{L}(\gamma),
    $$
    where $\gamma$ is taken from all curves such that $\gamma(0)=p$ and $\gamma(\bar{\tau})=q$. We can then define the reduced length $l$ to be
    $$
l(q,\bar{\tau}):=\frac{1}{2\sqrt{\bar{\tau}}}L(q,\bar{\tau}),
    $$
    and the reduced volume to be
    $$
\tilde{V}(\tau)=\int_M\tau^{-n}e^{-l(q,\tau)}dq.
    $$
    Here we remind the reader that $M$ has real dimension $2n$.
\end{definition}

The following proposition says that $\mathcal{L}$-geodesics minimize $\mathcal{L}$ for a short time:

\begin{proposition}\label{short time minimize}
    For each fixed $p\in M$, there is an $r=r(p)>0$ such that for every $q\in M$ with $d_{g(0)}(q,p)\leq10r$ and every $0<\bar{\tau}<r^2$, there is a unique $\mathcal{L}$-geodesic $\gamma:[0,\bar{\tau}]\to B_{g(0)}(p,100r)$ with $\gamma(0)=p,\gamma(\bar{\tau})=q$, and such that $\mathcal{L}(\gamma)=L(q,\bar{\tau})$. Furthermore, $\gamma$ and $L(q,\bar{\tau})$ depends smoothly on $q,\bar{\tau}$.
\end{proposition}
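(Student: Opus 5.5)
We follow Perelman's original argument (cf. \cite[Section 7]{Per02} and \cite[Section 17]{KL08}), which is purely local in space-time and only uses smooth bounds on the metric and its derivatives near $p$. Since the backward flow \eqref{real backward CRF} is smooth on $M\times[0,\bar\tau]$ for each fixed $T$, the tensors $g$, $Rc$, $S$ and their derivatives are bounded on a fixed $g(0)$-ball $B_{g(0)}(p,1)$, possibly in terms of $p$. We may therefore choose $r=r(p)$ so small that on $B_{g(0)}(p,100r)\times[0,r^2]$ the metrics $g(\tau)$ are $2$-bi-Lipschitz to $g(0)$, $|S|,|Rc|\le K$, and $|\nabla S|\le K$. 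The Hermitian structure plays no role here. We only need that $\mathcal{L}$ is the Lagrangian $\sqrt{\tau}(S+|\dot\gamma|^2)$ for a smooth family of Riemannian metrics with a smooth potential $S$, and this was already used to derive \eqref{normalized L geodesic equation}.

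\textbf{Step 1 (rescaling and existence).} Use the variable $s=\sqrt\tau$, in which $\mathcal{L}=\int_0^{\sqrt{\bar\tau}}(2s^2S+\tfrac12|\gamma'(s)|^2)\,ds$. This is a regular, nondegenerate Lagrangian on $[0,\sqrt{\bar\tau}]$ with smooth coefficients in $s$. Now rescale $s=\sqrt{\bar\tau}\,\sigma$ with $\sigma\in[0,1]$. The Lagrangian becomes $\bar\tau^{-1/2}\big(\tfrac12|\gamma_\sigma|^2+2\bar\tau^2\sigma^2S\big)$, which is a small perturbation of the energy functional of $g(0)$ when $\bar\tau<r^2$ is small. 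For $q$ with $d_{g(0)}(p,q)\le 10r$, comparison with the $g(0)$-geodesic gives
\[
L(q,\bar\tau)\le C\,r^2/\sqrt{\bar\tau}+CK\bar\tau^{3/2}.
\]
Any curve leaving $B(p,100r)$ has $\int|\gamma_s|^2\ge c(100r)^2/\sqrt{\bar\tau}$, which is strictly larger. So the infimum may be taken among curves inside $B(p,100r)$. There the direct method applies: minimize over $H^1$ curves in the $s$-variable, using coercivity of the energy term. Regularity of the minimizer follows from the Euler--Lagrange equation \eqref{normalized L geodesic equation}.

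\textbf{Step 2 (uniqueness and smooth dependence).} We show that $\mathcal{L}\exp_{\bar\tau}$, restricted to $\{v: |v|\le C r/\sqrt{\bar\tau}\}$, is a diffeomorphism onto a set containing $B(p,10r)$. In the rescaled variable $\sigma$ and with initial velocity $w=2\sqrt{\bar\tau}\,v$, the rescaled equation reads
\[
\nabla_{\gamma_\sigma}\gamma_\sigma=O(\bar\tau^{2})\nabla S+O(\bar\tau)Rc(\gamma_\sigma,\cdot).
\]
This is an $O(r^2)$ perturbation in $C^1$ of the $g(0)$-geodesic equation. Hence $w\mapsto\gamma(1)$ is $C^1$-close to $\exp^{g(0)}_p$ on $\{|w|\le 20r\}$, uniformly in $\bar\tau<r^2$. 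For $r$ below the injectivity radius, $\exp^{g(0)}_p$ is a diffeomorphism with derivative near the identity. So for $r$ small this perturbed map is a diffeomorphism onto a set containing $B(p,10r)$, and it depends smoothly on $\bar\tau$.

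By Step 1, any minimizer is an $\mathcal{L}$-geodesic with $|w|\le 20r$: the bound on $L$ controls the energy and hence the initial velocity, via the ODE estimate \eqref{eq:ds X hat}. Injectivity then yields uniqueness of the minimizer and the absence of other $\mathcal{L}$-geodesics in that velocity range. Smooth dependence of $\gamma$ and of $L(q,\bar\tau)=\mathcal{L}(\gamma_{q,\bar\tau})$ on $(q,\bar\tau)$ follows from the inverse function theorem applied to $(w,\bar\tau)\mapsto(\gamma(1),\bar\tau)$.

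\textbf{Main obstacle.} The delicate point is uniformity as $\bar\tau\to0$. The initial velocity $v=\lim\sqrt\tau\,\dot\gamma$ blows up like $r/\sqrt{\bar\tau}$, so one must work in the rescaled variables, where the perturbation terms carry the factors $\bar\tau$ and $\bar\tau^2$. The key check is that the error in the $C^1$-closeness to $\exp^{g(0)}_p$ is bounded by $CK r^2$, independently of $\bar\tau\in(0,r^2)$. This follows from Gronwall's inequality applied to the linearized (Jacobi) equation, together with the local bounds on $Rc$, $\nabla S$ and $\nabla Rc$ on $B(p,100r)$.
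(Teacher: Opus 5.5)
Your proposal is correct, but it proves minimality by a different route from the paper.

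\textbf{Existence of the geodesic.} Your Step 2 is essentially the paper's construction made quantitative. The paper rescales to $y(\sigma)=\gamma(\sigma\bar s)$, notes that at $\bar s=0$ the equation reduces to the $g(0)$-geodesic equation, and applies the implicit function theorem to $G(w,\bar s)=y_{w,\bar s}(1)$ at $(0,0)$, where $G_w(0,0)=\mathrm{Id}$. Your Gronwall bound, giving $C^1$-closeness to $\exp^{g(0)}_p$ uniformly in $\bar\tau<r^2$, is the hands-on version of that step.

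\textbf{Minimality.} Here the two arguments genuinely differ.
\begin{itemize}
\item The paper never produces a minimizer. It sets $\hat L(q,\bar\tau):=\mathcal{L}(\gamma_{q,\bar\tau})$ for the geodesic just constructed and uses the first-variation identities $\nabla\hat L=2\sqrt{\tau}X$ and $\hat L_\tau=\sqrt{\tau}(S-|X|^2)$. For an arbitrary competitor $\alpha$ it completes the square to get $\frac{d}{d\tau}\hat L(\alpha(\tau),\tau)\le\sqrt{\tau}(S+|\dot\alpha|^2)$ and integrates. This is a calibration (Hamilton--Jacobi) argument.
\item You instead obtain a global minimizer by the direct method. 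You localize it via the energy lower bound for curves escaping $B_{g(0)}(p,100r)$, bound its initial velocity a priori, and identify it with the Step 2 geodesic by injectivity of the rescaled $\mathcal{L}\exp$.
\end{itemize}

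\textbf{What each route buys.} The calibration route is shorter. It avoids lower semicontinuity and regularity theory for $H^1$ minimizers, and it reuses the formulas for $\nabla L$ and $L_\tau$ that the paper needs later anyway. However, it only controls competitors that stay in the region where the field of $\mathcal{L}$-geodesics, and hence $\hat L$, is defined. To dispose of the others one still needs an escape estimate like yours, with $\hat L$ built on a ball somewhat larger than $B_{g(0)}(p,10r)$; the paper leaves this implicit. Your route makes the localization explicit and gives uniqueness of the minimizer outright.

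\textbf{Two small points.}
\begin{itemize}
\item Your escape estimate also needs a lower bound on $S$ outside the ball. This is available globally because $M$ is compact.
\item If the statement's uniqueness is meant among all $\mathcal{L}$-geodesics lying in $B_{g(0)}(p,100r)$, run your injectivity argument on $\{|w|\lesssim 100r\}$ rather than $\{|w|\le 20r\}$, and observe that faster geodesics leave the ball.
\end{itemize}
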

\begin{proof}
    In order to use the implicit function theorem, we make the change of variable $s=\sqrt{\tau}$ with $\bar{s}=\sqrt{\bar{\tau}}$ and set $y(\sigma):=\gamma(\sigma \bar{s})$, $\sigma\in[0,1]$. Then $\dot{y}(\sigma)=\bar{s}\dot{\gamma}(\sigma\bar{s})=\bar{s}\hat{X}(\sigma\bar{s})$. The normalized geodesic equation \eqref{normalized L geodesic equation} then becomes
    \begin{equation}\label{y geodesic}
\nabla_{\dot{y}}\dot{y}=\bar{s}^2\nabla_{\hat{X}}\hat{X}=\bar{s}^2(2\sigma^2\bar{s}^2\nabla S-4\sigma\bar{s}Rc(\bar{s}^{-1}\dot{y},\cdot))=2\sigma^2\bar{s}^4\nabla S-4\sigma\bar{s}^2Rc(\dot{y},\cdot).
    \end{equation}
    Note that at $\bar{s}=0$, \eqref{y geodesic} is just the standard Riemannian geodesic equation $\nabla^{g(0)}_{\dot{y}}\dot{y}=0$. Given $w\in T_pM$ and $\bar{s}>0$ small, we have to find a curve $y$ with $y(0)=p,y(1)=q$ and $\dot{y}(0)=w$. By standard ODE existence theory, there exists a curve $y=y_{w,\bar{s}}$ satisfying the above properties except that $y(1)=q$. Consider the map $G:T_pM\times[0,\varepsilon)\to M$:
    $$
G(w,\bar{s}):=\gamma_{w,\bar{s}}(1).
    $$
    Then we have to solve the equation $G(w,\bar{s})=q$. By our discussion above $G(w,0)=exp_p(w)$ is just the Riemannian exponential map, then it is clear $G(0,0)=p$ and $G_w(0,0)=Id$. The implicit function theorem now yields an $r=r(p)$ satisfying our assumptions.

        To see that $\gamma$ minimize the $\mathcal{L}$-length, we set $\hat{L}(q,\tau)$ to be the $\mathcal{L}$-length of $\gamma$ and prove that $\hat{\mathcal{L}}=\mathcal{L}$ locally. By definition we clearly have $\mathcal{L}\leq\hat{\mathcal{L}}$. For the reverse inequality, let $\alpha:[0,\bar{\tau}]\to M$ be a smooth curve from $p$ to $q$, then 
        \begin{equation}\label{derivative of hat L}
            \frac{d}{d\tau}\hat{L}(\alpha(\tau),\tau)=\hat{L}_\tau(\alpha(\tau),\tau)+\langle\nabla\hat{L}(\alpha(\tau),\tau),\dot{\alpha}(\tau)\rangle.
        \end{equation}
        View $\alpha(\tau):=q^\prime$ as an endpoint, then the same calculations as in \cite[(18.2)]{KL08} using the first variation formula of $\mathcal{L}$ shows that
        \begin{equation}
            \nabla\hat{L}(\alpha(\tau),\tau)=2\sqrt{\tau}X(\tau),
        \end{equation}
        where $X(\tau)=\dot{\gamma}(\tau)$ with $\gamma$ the unique $\mathcal{L}$-geodesic from $p$ to $q^\prime$. Similarly, the calculations in \cite[18.6]{KL08} yields that
        \begin{equation}
            \hat{L}_\tau(\alpha(\tau),\tau)=  \hat{L}_\tau(q^\prime,\tau)=\sqrt{\tau}(S(\alpha(\tau))-|X(\tau)|^2).
        \end{equation}
        
        Putting these into \eqref{derivative of hat L} we obtain
        \begin{align*}
             \frac{d}{d\tau}\hat{L}(\alpha(\tau),\tau)&=\langle2\sqrt{\tau}X(\tau),\dot{\alpha}(\tau)\rangle+\sqrt{\tau}(S(\alpha(\tau))-|X(\tau)|^2)\\
             &\leq\sqrt{\tau}(S+|\dot{\alpha}|^2)=\frac{d}{d\tau}\mathcal{L}(\alpha|_{[0,\tau]}).
        \end{align*}
        The proof is therefore concluded by integrating both sides.
        \end{proof}

Now from \cref{short time minimize} and basic broken line arguments we can conclude that for each $q\in M$ and $\bar{\tau}>0$, there is a piecewise smooth $\mathcal{L}$-geodesic $\gamma:[0,\bar{\tau}]\to M$ with $\gamma(0)=p$ and $\gamma(\bar{\tau})=q$ such that $\gamma$ minimize the $\mathcal{L}$-length.  

\begin{definition}\label{def: def of cut locus}
Fix $p\in M$ and $\tau>0$. Let $\mathcal{L}\exp_\tau:T_pM\to M$ be the $\mathcal{L}$-exponential map defined in \cref{def:L exp map}. The time-$\tau$ $\mathcal{L}$-cut locus $B_\tau\subset M$ is the set of points which are either endpoints of more than one minimizing $\mathcal{L}$-geodesics, or endpoints of a minimizing $\mathcal{L}$-geodesic $\gamma_{p,v}$ with $v\in T_pM$ a critical point of $\mathcal{L}\exp_\tau$.
\end{definition}

Let $G_\tau$ be the complement of $B_\tau$ and let $\Omega_\tau$ be the corresponding initial directions $v\in T_pM$ of the points in $G_\tau$. It is then easy to see that $\mathcal{L}\exp_\tau|_{\Omega_\tau}$ is a bijective local diffeomorphism. Arguing exactly the same as in the Riemannian case (An easy consequence of Sard's theorem and the implicit function theorem) we can deduce that $B_\tau$ is contained in a countable union of hypersurfaces in $M$, whence has measure zero. We summerize our discussions as follows:

\begin{proposition}\label{cut locus has measure zero}
    Fix $p\in M$ and $\tau>0$. Let $\mathcal{L}\exp_\tau:T_pM\to M$ be the $\mathcal{L}$-exponential map and let $B_\tau$ be the time-$\tau$ $\mathcal{L}$ cut-locus. Then $B_\tau$ has measure zero and there is an open set $\Omega_\tau\subset T_pM$ such that $\mathcal{L}\exp_\tau:\Omega_\tau\to M\setminus B_\tau$ is a diffeomorphism.
\end{proposition}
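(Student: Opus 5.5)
The plan is to follow the Riemannian argument for the cut locus (as in \cite[Section 17--18]{KL08}), using only the structure of the $\mathcal{L}$-geodesic ODE \eqref{L geodesic equation} and its existence on all of $[0,\tau]$, which was established above.

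\textbf{Regularity of the exponential map.} Via the substitution $s=\sqrt{\tau}$, the normalized equation \eqref{normalized L geodesic equation} is a smooth second-order ODE in $(\gamma,\hat X)$. Its initial condition $\hat X(0)=2v$ is smooth in $v$, and its coefficients $S$, $\nabla S$, $Rc$ are smooth in space-time for the backward flow on $[0,\bar\tau]$. Smooth dependence on initial data, combined with the global existence on $[0,\tau]$ shown above, then gives that $\mathcal{L}\exp_\tau:T_pM\to M$ is smooth.

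\textbf{Construction of $\Omega_\tau$.} I define $\Omega_\tau$ as the set of $v$ such that $\gamma_{p,v}|_{[0,\tau]}$ is the unique minimizing $\mathcal{L}$-geodesic to its endpoint and $v$ is a regular point of $\mathcal{L}\exp_\tau$. By construction $\mathcal{L}\exp_\tau(\Omega_\tau)=M\setminus B_\tau$. Injectivity follows from uniqueness of the minimizer. A bijective local diffeomorphism is a diffeomorphism, so it remains only to show that $\Omega_\tau$ is open.

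\textbf{Openness of $\Omega_\tau$.} Take $v\in\Omega_\tau$ and suppose $v_k\to v$ with $v_k\notin\Omega_\tau$. Since regularity is an open condition, we may assume each $\gamma_{p,v_k}$ fails to be the unique minimizer. Then there is a minimizing geodesic $\gamma_{p,w_k}$ to $q_k=\mathcal{L}\exp_\tau(v_k)$ with $w_k\neq v_k$. The crucial input is \emph{compactness}: the minimizers $w_k$ should stay bounded. To see this, note that $L(q_k,\tau)$ is bounded because $S$ is bounded (\cref{Scalar curvature bound}) and $M$ is compact, so $\int\sqrt\tau|\dot\gamma|^2$ is bounded. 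The differential inequality \eqref{eq:ds X hat}, with the bound $|\nabla S|\le C/\sqrt{\tau_2-\tau}$, then bounds $|\hat X|$ along the curve, which controls $|w_k|$. Passing to limits, $w_k\to w$, and $\gamma_{p,w}$ is minimizing to $q$, so $w=v$. But $\mathcal{L}\exp_\tau$ is a local diffeomorphism near $v$, contradicting $w_k\neq v_k$ with the same image. A similar limiting argument shows $M\setminus B_\tau$ is open.

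\textbf{Measure zero.} Split $B_\tau=B^1\cup B^2$. Here $B^2$ consists of endpoints of minimizers with critical initial vector; it lies inside the set of critical values of the smooth map $\mathcal{L}\exp_\tau$, so it has measure zero by Sard's theorem. The set $B^1$ consists of points with at least two minimizers but regular initial vectors. Near two such regular vectors $v_1,v_2$, the local inverses give two smooth branches $L_1,L_2$ of the $\mathcal{L}$-length, with $L=\min(L_1,L_2)$ and gradients $\nabla L_i=2\sqrt\tau X_i(\tau)$. These gradients differ because $X_1(\tau)\neq X_2(\tau)$, by ODE uniqueness from the endpoint. Hence $\{L_1=L_2\}$ is a smooth hypersurface. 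Covering by countably many such charts shows $B^1$ lies in a countable union of hypersurfaces, so it has measure zero.

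\textbf{Main obstacle.} The step I expect to be hardest is the compactness of minimizing initial vectors. In the Hermitian setting the curvature is unbounded near $E$, so the a priori bound on $|\hat X|$ near $s=0$ must rely on the Shi-type estimate and the uniform bound on $S$, rather than on a global bound on $Rc$. I would first try to bound $\int_0^\tau\sqrt\tau|\dot\gamma|^2$ and then integrate \eqref{eq:ds X hat} backward from a time where $|\hat X|$ is controlled, using the fact that $Rc$ is bounded along the relevant portion of the flow for fixed $T$.
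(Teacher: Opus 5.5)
Your proposal is correct and follows the same route as the paper. The paper simply invokes the Riemannian cut-locus argument: Sard's theorem for the critical values, and the implicit function theorem applied to branches of $L$ with distinct gradients $2\sqrt\tau X_i(\tau)$ for points with several minimizers. It leaves the openness of $\Omega_\tau$ implicit, and your compactness argument for minimizing initial vectors fills in exactly that gap.

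Two minor refinements. First, near a point of $B^1$ there can be finitely many (not just two) minimizing branches, so locally $L=\min_i L_i$ and $B^1\subset\bigcup_{i<j}\{L_i=L_j\}$. Second, the compactness step is easier than you fear: for fixed $T$ the flow on $[0,\bar\tau]$ is smooth on a compact manifold, so $\bigl|\tfrac{d}{ds}|\hat X|\bigr|\le C_T(|\hat X|+1)$ holds in both time directions.
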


\subsection{Derivatives of the reduced length}

The calculation of the first derivatives of $L(q,\tau)$ and the second variation of $\mathcal{L}$ is almost identical to that under the Ricci flow provided that we are using the Levi-Civita connection, just replacing the Ricci curvature by the Chern-Ricci curvature. So we omit the details when the calculations can be copied along the lines from \cite[section 7]{Per02} and \cite{KL08}.

Suppose that $\bar{\tau}>0$ and $q\notin B_{\tau}$. Let $\gamma:[0,\bar{\tau}]\to M$ be the unique minimizing $\mathcal{L}$-geodesic from $p$ to $q$ and let $X(\tau):=\dot{\gamma}(\tau)$. The same calculations as the real case yield that
\begin{equation}\label{nabla L}
    \nabla L(q,\bar{\tau})=2\sqrt{\bar{\tau}}X(\bar{\tau}),
\end{equation}
and
\begin{equation}\label{L_tau}
    L_{\bar{\tau}}(q,\bar{\tau})=\sqrt{\bar{\tau}}(S(q)-|X(\bar{\tau})|^2).
\end{equation}

For each $X\in T_\mathbb{R}M$, Set 
\begin{equation}\label{def of H and K}
    H(X):=-S_\tau-\frac{1}{\tau}S-2\langle\nabla S,X\rangle+2Rc(X,X),\quad K:=\int_0^{\bar{\tau}}\tau^{\frac{3}{2}}H(X(\tau))d\tau.
\end{equation}
By using the $\mathcal{L}$-geodesic equation, calculating the quantity $\frac{d}{d\tau}(S(\gamma(\tau))+|X(\tau)|^2)$ and doing integration by parts, we can derive
\begin{equation}\label{equality of K}
S(\gamma(\bar{\tau}))+|X(\bar{\tau})|^2=\frac{1}{\bar{\tau}^{\frac{3}{2}}}\left(-K+\frac{1}{2}L(q,\bar{\tau})\right).
\end{equation}
Plugging this into \eqref{nabla L} and \eqref{L_tau} we can rewrite 
\begin{equation}\label{nabla L 1}
    |\nabla L|^2(q,\bar{\tau})=-4\bar{\tau}S(q)+\frac{2}{\sqrt{\bar{\tau}}L(q,\bar{\tau})}-\frac{4}{\sqrt{\bar{\tau}}}K,
\end{equation}
and
\begin{equation}\label{L_tau 1}
       L_{\bar{\tau}}(q,\bar{\tau})=2\sqrt{\bar{\tau}}S(q)-\frac{1}{2\bar{\tau}}L(q,\bar{\tau})+\frac{1}{\bar{\tau}}K.
\end{equation}
Taking derivative again of the first variation formula \eqref{first variation of L} we have
\begin{equation}\label{second variation of L}
\delta^2_Y\mathcal{L}=\int_0^{\bar{\tau}}\sqrt{\tau}\left(Y\cdot Y\cdot S+2\langle\nabla_X\nabla_YY,X\rangle+2\langle R(Y,X)Y,X\rangle+2|\nabla_XY|^2\right)d\tau,
\end{equation}
where $R(Y,X)Y:=\nabla_Y\nabla_XY-\nabla_X\nabla_YY$ is the Riemannian curvature tensor. An integration by parts using the calculations in \eqref{first variation of L} yields that
\begin{equation}\label{delta_nabla YY}
\delta_{\nabla_YY}\mathcal{L}=2\sqrt{\bar{\tau}}\langle\nabla_YY,X\rangle-\int_0^{\bar{\tau}}\left\langle\nabla_YY,\nabla S-2\nabla_XX-4Rc(X,\cdot)-\frac{1}{\tau}X\right\rangle.
\end{equation}
Then, we define a quadratic form
\begin{equation}
    \begin{aligned}
        Q(Y,Y):=&\delta^2_Y\mathcal{L}-\delta_{\nabla_YY}\mathcal{L}\\
        =&\int_0^{\bar{\tau}}\sqrt{\tau}[Hess_S(Y,Y)+2\langle R(Y,X)Y,X\rangle+2|\nabla_XY|^2\\
       &-4(\nabla_YRc)(Y,X)+2(\nabla_XRc)(Y,Y)]d\tau.\\
    \end{aligned}
\end{equation}
To compute the real Hessian of $L$, we have to introduce the notation of $\mathcal{L}$-Jacobi fields under the Chern-Ricci flow:

\begin{definition}\label{def:L Jacobi field}
    Let $\gamma:[0,\bar{\tau}]\to M$ be a $\mathcal{L}$-geodesic and let $X(\tau):=\dot{\gamma}(\tau)$. A vector field $Y(\tau)$ along $\gamma$ is called an $\mathcal{L}$-Jacobi field if $TY\equiv0$, where the operator $T$ is defined by
    \begin{equation}
        \begin{aligned}
            TY:=&-\nabla_X\nabla_XY-\frac{1}{2\tau}\nabla_XY+R(Y,X)X\\
            &+\frac{1}{2}Hess_S(Y,\cdot)-2(\nabla_YRc)(X,\cdot)-2Rc(\nabla_YX,\cdot).
        \end{aligned}
    \end{equation}
\end{definition}
Given $v,w\in T_pM$, by standard ODE theory, there is a unique $\mathcal{L}$-Jacobi field $Y$ along $\gamma$ with $Y(0)=v$ and $\lim_{\tau\to0^+}\sqrt{\tau}(\nabla_{\dot{\gamma}}Y)(\tau)=w$. In fact, the definition of $\mathcal{L}$-Jacobi field comes from the following proposition:
\begin{proposition}\label{prop:L jacobi field derive}
    Consider a family of $\mathcal{L}$-geodesic variations $\tilde{\gamma}(s,\tau):(-\varepsilon,\varepsilon)\times[0,\bar{\tau}]\to M$ of the $\mathcal{L}$-geodesic $\gamma$. Then, $Y(\tau):=\frac{\partial\tilde{\gamma}(s,\tau)}{\partial s}|_{s=0}$ is an $\mathcal{L}$-Jacobi field along $\gamma$.
\end{proposition}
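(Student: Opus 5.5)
The plan is to derive the Jacobi equation by differentiating the $\mathcal{L}$-geodesic equation \eqref{L geodesic equation} along the variation. Write $X=\partial_\tau\tilde\gamma$ and $Y=\partial_s\tilde\gamma$. For every $s$ the curve $\tilde\gamma(s,\cdot)$ is an $\mathcal{L}$-geodesic, so
\[
\nabla_XX-\tfrac12\nabla S+\tfrac1{2\tau}X+2Rc(X,\cdot)=0
\]
holds identically on $(-\varepsilon,\varepsilon)\times[0,\bar\tau]$. We apply $\nabla_Y$ to this identity and evaluate at $s=0$.

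Throughout, $\nabla$ is the Levi-Civita connection of $g(\tau)$. Since $Y$ is a spatial variation field at fixed $\tau$, the time dependence of the metric plays no role in the $\nabla_Y$ derivative, and $\nabla_Y$ commutes with multiplication by functions of $\tau$. We also have $[X,Y]=0$ (coordinate fields of the variation) and the connection is torsion-free, which gives $\nabla_YX=\nabla_XY$.

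We then treat the terms one at a time.

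\textbf{First term.} Using $\nabla_Y\nabla_XX=\nabla_X\nabla_YX+R(Y,X)X$ (with the paper's sign convention for $R$), we get $\nabla_Y\nabla_XX=\nabla_X\nabla_XY+R(Y,X)X$. The sign of the curvature term has to be checked against the convention $R(Y,X)Y=\nabla_Y\nabla_XY-\nabla_X\nabla_YY$ fixed after \eqref{second variation of L}. Matching this sign is the main bookkeeping point, since $T$ carries $+R(Y,X)X$ together with $-\nabla_X\nabla_XY$; this is consistent once $T$ is taken to be minus the derivative.

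\textbf{Gradient term.} We have $\nabla_Y\nabla S=\mathrm{Hess}_S(Y,\cdot)$, with the index raised by $g$.

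\textbf{Drift term.} We have $\nabla_Y(\tfrac1{2\tau}X)=\tfrac1{2\tau}\nabla_XY$.

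\textbf{Ricci term.} By the Leibniz rule, $\nabla_Y(Rc(X,\cdot))=(\nabla_YRc)(X,\cdot)+Rc(\nabla_YX,\cdot)$.

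Summing the contributions and multiplying by $-1$ gives exactly $TY=0$, with $T$ as in \cref{def:L Jacobi field}.

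The expected obstacle is the first term. In the flow setting, $\nabla$ on $\gamma$ also depends on $\tau$ through $g(\tau)$, and one might worry that the commutation $\nabla_Y\nabla_X-\nabla_X\nabla_Y$ picks up a term involving $\partial_\tau\Gamma$. This does not happen, because $\nabla_X$ along the space-time curve is understood as the spatial covariant derivative with respect to $g(\tau)$ at each fixed $\tau$, and $Y$ has no time component. The commutation is therefore the standard Riemannian one for the connection $\nabla^{g(\tau)}$, applied pointwise in $\tau$.

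Note that no torsion of the Chern connection enters the argument. Everything uses the Levi-Civita connection, and $Rc$ is merely a symmetric $2$-tensor. This is why the computation copies Perelman's argument verbatim, with $Rc$ now denoting the Chern-Ricci tensor.

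Finally, smoothness of the variation on $(0,\bar\tau]$ justifies the differentiation. At $\tau=0$ we only need the identity on $(0,\bar\tau]$, since $TY$ is defined there.
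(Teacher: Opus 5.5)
Your route is the paper's own: apply $\nabla_Y$ to the $\mathcal{L}$-geodesic equation and compute term by term. However, the two points you flag as delicate are exactly where the argument breaks, so ``multiplying by $-1$ gives exactly $TY=0$'' does not follow.

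First, the commutator does see the $\tau$-dependence of the connection. Along the $\tau$-curves, $(\nabla_XW)^k=\partial_\tau W^k+\Gamma^k_{ij}(\tilde\gamma(s,\tau),\tau)X^iW^j$. So the outer $\nabla_X$ in $\nabla_X\nabla_YX$ differentiates the Christoffel symbols of $g(\tau)$ in their explicit $\tau$-slot, while $\nabla_Y$ never does. The fact that $Y$ has no time component is beside the point, since the $\tau$-derivative sits in $\nabla_X$. A coordinate computation gives
\[
\begin{aligned}
\nabla_Y\nabla_XX&=\nabla_X\nabla_XY+R(Y,X)X-(\partial_\tau\Gamma)(Y,X),\\
\langle(\partial_\tau\Gamma)(Y,X),Z\rangle&=(\nabla_YRc)(X,Z)+(\nabla_XRc)(Y,Z)-(\nabla_ZRc)(Y,X),
\end{aligned}
\]
where the second line follows from $\partial_\tau g=2Rc$. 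Together with your Leibniz rule (which is correct), the equation that $Y$ actually satisfies is
\[
\begin{aligned}
&\langle\nabla_X\nabla_XY+R(Y,X)X+\tfrac{1}{2\tau}\nabla_XY,Z\rangle-\tfrac12 Hess_S(Y,Z)+2Rc(\nabla_XY,Z)\\
&\quad+(\nabla_YRc)(X,Z)-(\nabla_XRc)(Y,Z)+(\nabla_ZRc)(Y,X)=0\qquad\text{for all } Z.
\end{aligned}
\]
Second, $\nabla_X\nabla_XY$ and $R(Y,X)X$ enter the derivative with the same sign. Negating therefore gives $-\nabla_X\nabla_XY-R(Y,X)X$, not the combination $-\nabla_X\nabla_XY+R(Y,X)X$ that appears in $T$. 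With the convention $R(Y,X)=\nabla_Y\nabla_X-\nabla_X\nabla_Y$ that you use, your consistency claim is false.

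Neither discrepancy is cosmetic. The $\partial_\tau\Gamma$ term is precisely the source of $-4(\nabla_YRc)(Y,X)+2(\nabla_XRc)(Y,Y)$ in $Q$, as in Perelman's second variation formula. If $T$ is defined as minus the operator displayed above, integrating $2\sqrt{\tau}|\nabla_XY|^2$ by parts yields \cref{expression of Q}. With $T$ as printed in \cref{def:L Jacobi field}, that identity fails: the curvature term has the wrong sign and the $(\nabla_XRc)(Y,Y)$ term is missing.

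The paper's proof takes your route and glosses over the same two points. It also writes $-Rc(\nabla_YX,\cdot)$ in the Leibniz rule, where your $+$ is right. So this computation does not give the statement with $T$ as literally printed. The repair is to define the $\mathcal{L}$-Jacobi operator as minus the true linearization above. With that definition, your argument, once the $\partial_\tau\Gamma$ term is included, is a complete proof. Since the formula for $\partial_\tau\Gamma$ only uses $\partial_\tau g=2Rc$ with $Rc$ symmetric, your observation that no Chern torsion enters remains correct.
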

\begin{proof}
    Each $\gamma_s:=\tilde{\gamma}(s,\cdot)$ is an $\mathcal{L}$-geodesic, so we have that
    $$
\nabla_XX-\frac{1}{2}\nabla S+\frac{1}{2\tau}X+2Rc(X,\cdot)=0.
    $$
    Act $\nabla_Y$ on both sides, we compute each terms:
    $$
\nabla_Y\nabla_XX=\nabla_X\nabla_YX+R(Y,X)X=\nabla_X\nabla_XY+R(Y,X)X,
    $$
    $$
\langle\nabla_Y\nabla S,Y\rangle=Hess_S(Y,Y),
    $$
    and
    $$
\nabla_Y(Rc(X,\cdot))=(\nabla_YRc)(X,\cdot)-Rc(\nabla_YX,\cdot).
    $$
    Putting everything together we derive that $TY\equiv0$.
\end{proof}
We remark that converse to \cref{prop:L jacobi field derive}, given any $\mathcal{L}$-Jacobi field $Y$ along $\gamma$, we can also use the differential of the $\mathcal{L}$-exponential map to construct an $\mathcal{L}$-geodesic variation of $\gamma$ such that the variation vector field of this variation is exactly $Y$. 

Now, a standard calculation gives
\begin{lemma}\label{expression of Q}
    $$
Q(Y,Y)=2\int_0^{\bar{\tau}}\sqrt{\tau}\langle Y,TY\rangle d\tau+2\sqrt{\bar{\tau}}\langle\nabla_XY(\bar{\tau}),Y(\bar{\tau})\rangle.
    $$
\end{lemma}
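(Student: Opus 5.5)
The identity should follow from the definition of $Q$ by integrating by parts the term $2\sqrt{\tau}|\nabla_XY|^2$ in $\tau$, exactly as in the Ricci flow case \cite[section 7]{Per02}, \cite{KL08}. Since all covariant derivatives are Levi-Civita and only $Rc$ (symmetric, equal to the real form of $Ric^C$) enters through $\partial_\tau g=2Rc$, the torsion plays no explicit role.

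First I compute $\frac{d}{d\tau}\langle\nabla_XY,Y\rangle$ along $\gamma$. Because the metric evolves by \eqref{real backward CRF}, differentiating the inner product in $\tau$ produces $2Rc(\nabla_XY,Y)$. Since $\nabla_X$ is only the spatial Levi-Civita derivative along $\gamma$, I will use the convention (as in \cite{KL08}) that the covariant $\tau$-derivative compatible with the evolving metric is $\nabla_X+Rc(\cdot)$ raised by $g$. With this convention,
\begin{equation*}
\frac{d}{d\tau}\langle\nabla_XY,Y\rangle=\langle\nabla_X\nabla_XY,Y\rangle+|\nabla_XY|^2+2Rc(\nabla_XY,Y)+\text{(terms from }\partial_\tau g\text{ and }\partial_\tau\Gamma),
\end{equation*}
where the Christoffel variation is expressed through $\nabla Rc$, giving $(\nabla_YRc)(Y,X)$- and $(\nabla_XRc)(Y,Y)$-type terms.

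Next I multiply by $2\sqrt{\tau}$ and integrate over $[0,\bar\tau]$. The factor $\frac{d}{d\tau}\sqrt{\tau}=\frac{1}{2\sqrt\tau}$ produces exactly the term $-\frac{1}{2\tau}\nabla_XY$ paired with $Y$ that appears in $T$. The boundary term at $\tau=0$ vanishes, because $\sqrt{\tau}\nabla_XY$ stays bounded while the factor $\sqrt\tau$ kills it. The boundary term at $\bar\tau$ is $2\sqrt{\bar\tau}\langle\nabla_XY(\bar\tau),Y(\bar\tau)\rangle$. This rewrites $\int 2\sqrt\tau|\nabla_XY|^2$ as the boundary term minus
\begin{equation*}
\int 2\sqrt\tau\Big\langle \nabla_X\nabla_XY+\frac{1}{2\tau}\nabla_XY,Y\Big\rangle
\end{equation*}
plus $Rc$ and $\nabla Rc$ corrections.

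Then I substitute into $Q(Y,Y)$ and regroup:
\begin{itemize}
\item $Hess_S(Y,Y)$ matches $\langle Y,\tfrac12 Hess_S(Y,\cdot)\rangle$ with the factor $2$.
\item $2\langle R(Y,X)Y,X\rangle=2\langle R(Y,X)X,Y\rangle$, with the sign convention chosen so this is the curvature term in $T$.
\item The $-4(\nabla_YRc)(Y,X)+2(\nabla_XRc)(Y,Y)$ terms, together with the corrections from the metric variation and with $Rc(\nabla_YX,Y)=Rc(\nabla_XY,Y)$ (torsion-free Levi-Civita, $[X,Y]=0$), combine into $2\langle Y,-2(\nabla_YRc)(X,\cdot)-2Rc(\nabla_YX,\cdot)\rangle$.
\end{itemize}

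The main obstacle is this bookkeeping of the $\nabla Rc$ terms. In Perelman's setting the contracted Bianchi identity is never needed here, only symmetry of $Rc$ and the formula $\partial_\tau\Gamma=g^{-1}(\nabla Rc\text{-terms})$, so the same cancellation should go through verbatim with $Rc$ the (symmetric) real Chern–Ricci tensor. I would verify the $\partial_\tau\Gamma$ contribution carefully, since any leftover term would have to come from a failure of symmetry, and none exists.
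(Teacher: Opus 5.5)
Integrating $2\sqrt{\tau}|\nabla_XY|^2$ by parts is the right route; it is the ``standard calculation'' the paper has in mind. However, the two bookkeeping claims you rely on to close it are false, so the argument does not go through as written.

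In the paper's convention, $\nabla_X$ is the Levi-Civita connection of $g(\tau)$ along $\gamma$. This is how $\frac{d}{d\tau}|\tilde{Y}|^2$ and $\frac{d}{d\tau}\langle e_i,e_j\rangle$ are computed. With this convention,
\[
\frac{d}{d\tau}\langle\nabla_XY,Y\rangle=\langle\nabla_X\nabla_XY,Y\rangle+|\nabla_XY|^2+2Rc(\nabla_XY,Y)
\]
holds \emph{exactly}. Differentiating an inner product only sees $\partial_\tau g=2Rc$, which is the $2Rc$ term you already wrote, and no $\partial_\tau\Gamma$ or $\nabla Rc$ terms arise. Switching to the compatible derivative $\nabla_X+Rc$ would change the meaning of $\nabla_X$ in $Q$, in $T$ and in the geodesic equation. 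The integration by parts therefore gives
\[
\begin{aligned}
Q(Y,Y)-2\sqrt{\bar{\tau}}\langle\nabla_XY,Y\rangle(\bar{\tau})=\int_0^{\bar{\tau}}\sqrt{\tau}\Big[&Hess_S(Y,Y)+2\langle R(Y,X)Y,X\rangle-2\langle\nabla_X\nabla_XY,Y\rangle-\tfrac{1}{\tau}\langle\nabla_XY,Y\rangle\\
&-4Rc(\nabla_XY,Y)-4(\nabla_YRc)(Y,X)+2(\nabla_XRc)(Y,Y)\Big]d\tau .
\end{aligned}
\]
Subtracting $2\langle Y,TY\rangle$, with $T$ as in \cref{def:L Jacobi field}, leaves the integrand $4\langle R(Y,X)Y,X\rangle+2(\nabla_XRc)(Y,Y)$, which does not vanish in general. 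Two of your claims fail here:
\begin{itemize}
\item Your identity $\langle R(Y,X)Y,X\rangle=\langle R(Y,X)X,Y\rangle$ holds under no sign convention, since $\langle R(A,B)C,D\rangle$ is skew in $(C,D)$ for a metric connection.
\item The $\nabla_XRc$ leftover is not a symmetry issue, contrary to your closing remark.
\end{itemize}

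The missing ingredient is the $\tau$-dependence of the connection, which enters through commutators, not through the product rule. For the two-parameter map $\tilde{\gamma}(s,\tau)$ one has
\[
\nabla_X\nabla_YZ-\nabla_Y\nabla_XZ=R(X,Y)Z+(\partial_\tau\Gamma)(Y,Z),
\]
where $\langle(\partial_\tau\Gamma)(A,B),C\rangle=(\nabla_ARc)(B,C)+(\nabla_BRc)(A,C)-(\nabla_CRc)(A,B)$. This is exactly where the $\nabla Rc$ terms in $Q$ come from, since $-2\langle(\partial_\tau\Gamma)(Y,Y),X\rangle=-4(\nabla_YRc)(Y,X)+2(\nabla_XRc)(Y,Y)$. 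The same term must be kept when the $\mathcal{L}$-geodesic equation is differentiated in \cref{prop:L jacobi field derive}, where it was dropped. Keeping it gives the Jacobi operator
\[
\begin{aligned}
TY={}&-\nabla_X\nabla_XY-\tfrac{1}{2\tau}\nabla_XY-R(Y,X)X+(\partial_\tau\Gamma)(Y,X)\\
&+\tfrac{1}{2}Hess_S(Y,\cdot)-2(\nabla_YRc)(X,\cdot)-2Rc(\nabla_YX,\cdot).
\end{aligned}
\]
Since $2\langle Y,(\partial_\tau\Gamma)(Y,X)\rangle=2(\nabla_XRc)(Y,Y)$ and $-2\langle R(Y,X)X,Y\rangle=2\langle R(Y,X)Y,X\rangle$, your integration by parts now closes with no remainder.

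So what you can actually prove is the lemma for this corrected operator. This is also what \cref{Hess bound by Q} needs, since there $T$ must annihilate genuine $\mathcal{L}$-Jacobi fields. The displayed $T$ omits $(\partial_\tau\Gamma)(Y,X)$ and has the wrong sign on the curvature term.

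A minor point: the boundary term at $\tau=0$ vanishes because $Y(0)=0$ (the variations fix $p$) while $\sqrt{\tau}\nabla_XY$ stays bounded. It is not killed by a further factor $\sqrt{\tau}$.
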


To compute $Hess_{L(\cdot,\bar{\tau})}(\cdot,\cdot)$, fix $q\in M\setminus B_{\bar{\tau}}$ and $w\in T_qM$ and let $\gamma:[0,\bar{\tau}]\to M$ be the minimizing $\mathcal{L}$-geodesic from $p$ to $q$. Choose a curve $c(s):(-\varepsilon,\varepsilon)\to M$ with $c(0)=q$ and $\dot{c}(0)=w$. Consider the $\mathcal{L}$-geodesic variation $\tilde{\gamma}(s,\tau)$ such that each $\gamma_s$ has endpoint $c(s)$, that is, $\tilde{\gamma}(s,\bar{\tau})=c(s)$. By \cref{prop:L jacobi field derive}, $Y(\tau)$ is an $\mathcal{L}$-Jacobi field along $\gamma$ with $Y(0)=0$ and $Y(\bar{\tau})=w$. It follows from \eqref{nabla L} that
\begin{equation}\label{Hess of L}
\begin{aligned}
    Hess_{L(q,\bar{\tau})}(w,w)&=\langle\nabla_Y\nabla L,Y\rangle(\bar{\tau})=\langle\nabla_Y2\sqrt{\bar{\tau}}X(\bar{\tau}),Y(\bar{\tau})\rangle\\
    &=2\sqrt{\bar{\tau}}\langle\nabla_YX(\bar{\tau}),Y(\bar{\tau})\rangle=Q(Y,Y).
    \end{aligned}
\end{equation}

\begin{lemma}\label{Hess bound by Q}
    A minimizer $Y$ of $Q(Y,Y)$ with $Y(0)=0$ and $Y(\bar{\tau})=w$ must be an $\mathcal{L}$-Jacobi field. In particular, 
    $$
Hess_{L(q,\bar{\tau})}(w,w)\leq Q(Y,Y)
    $$
    for any vector field $Y$ along $\gamma$ with $Y(0)=0$ and $Y(\bar{\tau})=w$.
\end{lemma}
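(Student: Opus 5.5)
The argument runs parallel to the Riemannian index lemma, with the operator $T$ of \cref{def:L Jacobi field} playing the role of the Jacobi operator. Fix $q\notin B_{\bar\tau}$, $w\in T_qM$, and the unique minimizing $\mathcal{L}$-geodesic $\gamma$ from $p$ to $q$. Let $\mathcal{V}_w$ be the affine space of piecewise smooth vector fields $Y$ along $\gamma$ with $Y(0)=0$ and $Y(\bar\tau)=w$.

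\textbf{First variation of $Q$.} Since $Q$ is a quadratic form, take $Y\in\mathcal{V}_w$ and a variation $Y+\epsilon Z$ with $Z(0)=Z(\bar\tau)=0$. Polarizing and integrating by parts in the same way as for \cref{expression of Q}, I expect
\[
\frac{d}{d\epsilon}\Big|_{\epsilon=0}Q(Y+\epsilon Z,Y+\epsilon Z)=4\int_0^{\bar\tau}\sqrt{\tau}\langle Z,TY\rangle\, d\tau .
\]
The boundary terms vanish: at $\tau=\bar\tau$ because $Z(\bar\tau)=0$, and at $\tau=0$ because of the $\sqrt{\tau}$ weight together with $Z(0)=0$. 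The one thing to check here is that $T$ is symmetric with respect to the weight $\sqrt{\tau}$, i.e. $\int\sqrt\tau\langle Z,TY\rangle=\int\sqrt\tau\langle Y,TZ\rangle$ modulo boundary terms. The terms $-\nabla_X\nabla_X-\frac{1}{2\tau}\nabla_X$ combine to $-\tau^{-1/2}\nabla_X(\sqrt\tau\,\nabla_X\,\cdot)$, and the curvature and Hessian terms are symmetric. For the $Rc$ terms, the derivation of $Q$ from $\delta^2\mathcal{L}-\delta_{\nabla_YY}\mathcal{L}$ already shows it is a symmetric bilinear form. So a minimizer (or any critical point) in $\mathcal{V}_w$ satisfies $TY\equiv0$ on each smooth piece. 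At a break point, the jump in $\nabla_XY$ produces a boundary term; choosing $Z$ concentrated there forces the jump to vanish, so $Y$ is $C^1$ and, by ODE uniqueness, a smooth $\mathcal{L}$-Jacobi field.

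\textbf{Existence and comparison.} Because $q\notin B_{\bar\tau}$, $q$ is not conjugate to $p$, so there is a unique $\mathcal{L}$-Jacobi field $J\in\mathcal{V}_w$. It is the one built in \eqref{Hess of L} from the $\mathcal{L}$-geodesic variation, and $\mathrm{Hess}_L(w,w)=Q(J,J)$. For an arbitrary $Y\in\mathcal{V}_w$, write $Y=J+Z$ with $Z(0)=Z(\bar\tau)=0$. Expanding,
\[
Q(Y,Y)=Q(J,J)+4\int_0^{\bar\tau}\sqrt\tau\langle Z,TJ\rangle\, d\tau+Q(Z,Z)=Q(J,J)+Q(Z,Z).
\]
It therefore remains to show $Q(Z,Z)\ge0$ for $Z$ vanishing at both ends.

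\textbf{Main obstacle: $Q(Z,Z)\geq 0$.} I would get this from minimality of $\gamma$, not from any curvature sign. Build the variation $\tilde\gamma(s,\tau)=\exp^{g(\tau)}_{\gamma(\tau)}(sZ(\tau))$; along it, $\nabla_YY=0$ at $s=0$, so $\delta_{\nabla_YY}\mathcal{L}=0$ and $Q(Z,Z)=\delta^2_Z\mathcal{L}$. The endpoints $p$ and $q$ stay fixed, and $\gamma$ minimizes $\mathcal{L}$ among curves joining them, so $\delta^2_Z\mathcal{L}\ge0$. The difficulty is regularity at $\tau=0$. One must verify that $\mathcal{L}(\gamma_s)$ is twice differentiable in $s$ and that the second variation formula \eqref{second variation of L} holds despite the singular weight. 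For this I would use that $\sqrt\tau X(\tau)$ stays bounded as $\tau\to0$ (the initial condition \eqref{initial value}), together with the bounds on $Rc$, $S$ and $\nabla S$ near the base point on the short time interval. Then every integrand in \eqref{second variation of L} is $O(\tau^{-1/2})$, which is integrable, and differentiation under the integral sign is justified by dominated convergence. Combining the three steps gives $\mathrm{Hess}_L(w,w)=Q(J,J)\le Q(Y,Y)$, with equality exactly at the Jacobi field.
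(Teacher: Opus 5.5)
Your proof is correct. The first step, that a critical point of $Q$ among fields with $Y(0)=0$, $Y(\bar\tau)=w$ is an $\mathcal{L}$-Jacobi field, is the paper's argument, and you additionally handle corners.

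For the inequality you take a genuinely different route.

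\textbf{The paper's route.} It assumes that a minimizer $Y_0$ of $Q$ on this class exists. It then shows $Y_0$ is an $\mathcal{L}$-Jacobi field, identifies it with the field from \eqref{Hess of L}, and reads off $\mathrm{Hess}_L(w,w)=Q(Y_0,Y_0)\le Q(Y,Y)$.

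\textbf{Your route.} This is the standard index-lemma argument, as in Kleiner--Lott:
\begin{enumerate}
\item You use $q\notin B_{\bar\tau}$, i.e.\ nondegeneracy of $d(\mathcal{L}\exp_{\bar\tau})_v$, to produce the unique Jacobi field $J$ with the prescribed boundary values.
\item You split $Y=J+Z$ and kill the cross term using $TJ=0$ and $Z(\bar\tau)=0$.
\item You derive $Q(Z,Z)\ge 0$ from the $\mathcal{L}$-minimality of $\gamma$, via the fixed-endpoint variation $\exp^{g(\tau)}_{\gamma(\tau)}(sZ(\tau))$, along which $\nabla_YY=0$.
\end{enumerate}

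\textbf{What your route buys.} It is complete. Existence of a minimizer of $Q$ is not automatic: it requires $Q\ge 0$ on fields vanishing at both ends. That is precisely the second-order consequence of the minimality of $\gamma$, which the paper's proof never invokes. Along an $\mathcal{L}$-geodesic past a conjugate point, $Q$ is unbounded below on this class. Your argument also shows that $J$ is the unique minimizer: if $Q(Z,Z)=0$, then $Z$ is a minimizer, hence a Jacobi field vanishing at both ends, hence $Z=0$.

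\textbf{The price.} You must justify twice differentiating $s\mapsto\mathcal{L}(\gamma_s)$ despite the singular weight at $\tau=0$. You do this correctly via boundedness of $\sqrt{\tau}X$ and dominated convergence. For fixed $T$ the flow is smooth on $X\times[0,\bar\tau]$, so all curvature quantities along $\gamma$ are bounded.

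\textbf{One soft spot.} Symmetry of the polarization of $Q$ is automatic, so it does not by itself show that integrating by parts yields $2Q(Y,Z)=4\int_0^{\bar\tau}\sqrt{\tau}\langle Z,TY\rangle\,d\tau+4\sqrt{\bar\tau}\langle\nabla_XY,Z\rangle(\bar\tau)$. What you need is that $T$ is minus the linearization of the $\mathcal{L}$-geodesic operator. Its weighted formal self-adjointness then follows because mixed second variations of $\mathcal{L}$ commute. The paper uses the same identity without comment.
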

\begin{proof}
  Since $Q$ is a bilinear quadratic form, if $Y_0$ is a minimizer of $Q$, we can write
  \begin{align*}
0&=\frac{d}{ds}|_{s=0}Q(Y_0+sZ,Y_0+sZ)=2Q(Y_0,Z)\\
&=4\int_0^{\bar{\tau}}\sqrt{\tau}\langle Z,TY_0\rangle d\tau+4\sqrt{\bar{\tau}}\langle\nabla_XY(\bar{\tau}),Z(\bar{\tau})\rangle\\
&=4\int_0^{\bar{\tau}}\sqrt{\tau}\langle Z,TY_0\rangle d\tau,
  \end{align*}
for any vector field $Z$ along $\gamma$ with $Z(0)=0$ and $Z(\bar{\tau})=0$. This implies that $TY_0\equiv0$ and hence $Y_0$ is an $\mathcal{L}$-Jacobi field. 

It follows from \eqref{Hess of L} that 
$$
Hess_L(w,w)=Q(Y_0,Y_0)\leq Q(Y,Y),
$$
 for any vector field $Y$ along $\gamma$ with $Y(0)=0$ and $Y(\bar{\tau})=w$.
\end{proof}

\subsection{The Laplacian bound of $L$}

Having \cref{Hess bound by Q} in hand, we have to choose an appropriate $Y$ on the right-hand side to control $Hess_L$. Suppose $Y(\bar{\tau})$ is a unit vector field at $\gamma(\bar{\tau})$. As in \cite{Per02}, we solve
\begin{equation}\label{construction of tilde Y}
    \nabla_X\tilde{Y}=-Rc(\tilde{Y},\cdot)+\frac{1}{2\tau}\tilde{Y},
\end{equation}
on the interval $(0,\bar{\tau}]$ with the boundary condition $\tilde{Y}(\bar{\tau})=Y(\bar{\tau})$. We compute
$$
\frac{d}{d\tau}|\tilde{Y}|^2=2\langle\nabla_X\tilde{Y},\tilde{Y}\rangle+2Rc(\tilde{Y},\tilde{Y})=\frac{1}{\tau}|\tilde{Y}|^2.
$$
This in turn implies that
\begin{equation}\label{tilde Y size}
    |\tilde{Y}(\tau)|^2=\frac{\tau}{\bar{\tau}},
\end{equation}
since $|Y(\bar{\tau})|=1$. Clearly, \eqref{tilde Y size} yields that we can extend $\tilde{Y}$ continuously to $[0,\bar{\tau}]$ and setting $\tilde{Y}(0)=0$. 

\begin{lemma}\label{lem:estimate of Hess}
    We have
    \begin{equation}\label{hess bound 1}
        Hess_L(Y(\bar{\tau}),Y(\bar{\tau}))\leq Q(\tilde{Y},\tilde{Y})=\frac{1}{\sqrt{\bar{\tau}}}-2\sqrt{\bar{\tau}}Rc(Y(\bar{\tau}),Y(\bar{\tau}))-\int_0^{\bar{\tau}}\sqrt{\tau}H(X,\tilde{Y})d\tau,
    \end{equation}
    where
    \begin{equation}\label{expression of H}
        \begin{aligned}
            H(X,\tilde{Y}):=&-Hess_S(\tilde{Y},\tilde{Y})-2\langle R(\tilde{Y},X)\tilde{Y},X\rangle-4\nabla_XRc(\tilde{Y},\tilde{Y})\\
            &+4\nabla_{\tilde{Y}}Rc(\tilde{Y},X)-2Rc_\tau(\tilde{Y},\tilde{Y})+2|Rc(\tilde{Y},\cdot)|^2-\frac{1}{\tau}Rc(\tilde{Y},\tilde{Y}).
        \end{aligned}
    \end{equation}
\end{lemma}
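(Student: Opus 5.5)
The inequality in \eqref{hess bound 1} comes directly from \cref{Hess bound by Q}. Extended by $\tilde{Y}(0)=0$, the field $\tilde{Y}$ is an admissible variation field with $\tilde{Y}(0)=0$ and $\tilde{Y}(\bar{\tau})=Y(\bar{\tau})$. Its regularity at $\tau=0$ is controlled by \eqref{tilde Y size}: $|\tilde{Y}|=O(\sqrt{\tau})$ and $|\nabla_X\tilde{Y}|=O(\tau^{-1/2})$, so every integrand in $Q$ is integrable against $\sqrt{\tau}$. What remains is to evaluate $Q(\tilde{Y},\tilde{Y})$ explicitly, as in Perelman's computation \cite[Section 7]{Per02} and \cite[Section 20]{KL08}. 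The only change is that $Rc$ is now the symmetrized real Chern-Ricci tensor, and the backward flow \eqref{real backward CRF} plays the role of $\partial_\tau g=2Rc$. No contracted Bianchi identity may be used, since that identity fails for the Chern-Ricci tensor, so I will keep all terms as they stand.

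The plan is to start from the defining formula for $Q$ and substitute \eqref{construction of tilde Y} into the term $2|\nabla_X\tilde{Y}|^2$. Expanding the square gives
\begin{equation*}
2|\nabla_X\tilde{Y}|^2=2|Rc(\tilde{Y},\cdot)|^2-\frac{2}{\tau}Rc(\tilde{Y},\tilde{Y})+\frac{1}{2\tau^2}|\tilde{Y}|^2 .
\end{equation*}
The last piece integrates to $\int_0^{\bar{\tau}}\sqrt{\tau}\,\frac{1}{2\tau\bar{\tau}}\,d\tau=\frac{1}{\sqrt{\bar{\tau}}}$. This yields the leading term $\frac{1}{\sqrt{\bar{\tau}}}$ in \eqref{hess bound 1}.

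Next I convert the term $2(\nabla_XRc)(\tilde{Y},\tilde{Y})$ using a total derivative. Along $\gamma$,
\begin{equation*}
\frac{d}{d\tau}Rc(\tilde{Y},\tilde{Y})=Rc_\tau(\tilde{Y},\tilde{Y})+(\nabla_XRc)(\tilde{Y},\tilde{Y})+2Rc(\nabla_X\tilde{Y},\tilde{Y}),
\end{equation*}
and \eqref{construction of tilde Y} turns the last term into $-2|Rc(\tilde{Y},\cdot)|^2+\frac{1}{\tau}Rc(\tilde{Y},\tilde{Y})$. I then write $2\sqrt{\tau}(\nabla_XRc)(\tilde{Y},\tilde{Y})=4\sqrt{\tau}(\nabla_XRc)(\tilde{Y},\tilde{Y})-2\sqrt{\tau}(\nabla_XRc)(\tilde{Y},\tilde{Y})$ and replace the second half by the total derivative identity. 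Integrating $\frac{d}{d\tau}\bigl(2\sqrt{\tau}Rc(\tilde{Y},\tilde{Y})\bigr)$ gives the boundary term $-2\sqrt{\bar{\tau}}Rc(Y(\bar{\tau}),Y(\bar{\tau}))$; the contribution at $0$ vanishes because $|\tilde{Y}|^2=\tau/\bar{\tau}$. The $\frac{1}{\sqrt{\tau}}Rc(\tilde{Y},\tilde{Y})$ term produced by differentiating $\sqrt{\tau}$ also enters here. Collecting the coefficients of $Rc_\tau$, $|Rc(\tilde{Y},\cdot)|^2$, $\frac{1}{\tau}Rc(\tilde{Y},\tilde{Y})$, $\nabla_XRc$ and $\nabla_{\tilde{Y}}Rc$ should reproduce exactly $-\sqrt{\tau}H(X,\tilde{Y})$ with $H$ as in \eqref{expression of H}. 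The Hessian of $S$ and the curvature term $2\langle R(\tilde{Y},X)\tilde{Y},X\rangle$ pass through unchanged.

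The main obstacle is this bookkeeping: the signs and numerical coefficients, in particular the factor in front of $\frac{1}{\tau}Rc$ and the factor $4$ in front of $\nabla_XRc$, must come out matching \eqref{expression of H}. I would double-check it by confirming that, in the Kähler/Ricci flow case, the final integrand agrees with Perelman's $H(X,Y)$ from \cite[(7.6)]{Per02}. Only Levi-Civita calculus and the evolution $\partial_\tau g=2Rc$ are used, so torsion enters solely through the definition of $Rc$, and the algebra is formally identical to Perelman's. I would also verify the integrability near $\tau=0$ using $|\tilde{Y}|^2=\tau/\bar{\tau}$ together with the bounds on $X$ coming from $\sqrt{\tau}X\to v$.
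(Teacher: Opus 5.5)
Your proposal is correct and follows the paper's route. The paper applies \cref{Hess bound by Q} to the admissible field $\tilde{Y}$ and defers the evaluation of $Q(\tilde{Y},\tilde{Y})$ to Perelman's computation with $Rc$ in place of $Ric$, and your explicit bookkeeping does reproduce exactly the stated $H(X,\tilde{Y})$. That bookkeeping expands $2|\nabla_X\tilde{Y}|^2$ via \eqref{construction of tilde Y}, splits $2\nabla_XRc=4\nabla_XRc-2\nabla_XRc$, and trades the second half for the total derivative of $2\sqrt{\tau}Rc(\tilde{Y},\tilde{Y})$; you are also right that no Bianchi identity is needed at this stage.
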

\begin{proof}
   Using \cref{Hess bound by Q}, the calculations follow from the Ricci-flow case verbatim, see \cite{Per02} and \cite{KL08}.
\end{proof}

We are now ready to derive the real Laplacian bound of $L(\cdot,\bar{\tau})$. Choose an orthonormal basis $\{Y_i(\bar{\tau})\}_{i=1}^{2n}$ of $T_{\gamma(\bar{\tau})}M$ and let $\tilde{Y}_i(\tau)$ be defined as in \eqref{construction of tilde Y}. Set $e_i(\tau):=\left(\frac{\bar{\tau}}{\tau}\right)^{\frac{1}{2}}\tilde{Y}_i(\tau)$, we have the following

\begin{lemma}\label{onb of e_i}
    $\{e_i(\tau)\}_{i=1}^{2n}$ forms an orthonormal basis of $T_{\gamma(\tau)}M$ for each $\tau\in[0,\bar{\tau}]$.
\end{lemma}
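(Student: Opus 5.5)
The plan is to show that the Gram matrix $A_{ij}(\tau):=\langle e_i(\tau),e_j(\tau)\rangle_{g(\tau)}$ is constant in $\tau$ and equals $\delta_{ij}$ at $\tau=\bar{\tau}$. There are three steps.

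\emph{Step 1: compute the derivative of $\langle\tilde{Y}_i,\tilde{Y}_j\rangle$.} Along $\gamma$ the metric evolves by \eqref{real backward CRF}, so for vector fields along $\gamma$ we have
$$
\frac{d}{d\tau}\langle \tilde{Y}_i,\tilde{Y}_j\rangle=2Rc(\tilde{Y}_i,\tilde{Y}_j)+\langle\nabla_X\tilde{Y}_i,\tilde{Y}_j\rangle+\langle\tilde{Y}_i,\nabla_X\tilde{Y}_j\rangle .
$$
Here $\nabla$ is the Levi-Civita connection of $g(\tau)$, which is metric, so the last two terms are the usual covariant derivative terms. This is the polarized version of the computation preceding \eqref{tilde Y size}.

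\emph{Step 2: substitute the defining ODE.} Plugging \eqref{construction of tilde Y} into the derivative gives
$$
\frac{d}{d\tau}\langle\tilde{Y}_i,\tilde{Y}_j\rangle=2Rc(\tilde{Y}_i,\tilde{Y}_j)-Rc(\tilde{Y}_i,\tilde{Y}_j)-Rc(\tilde{Y}_j,\tilde{Y}_i)+\frac{1}{\tau}\langle\tilde{Y}_i,\tilde{Y}_j\rangle=\frac{1}{\tau}\langle\tilde{Y}_i,\tilde{Y}_j\rangle .
$$
The key point is that $Rc(X,Y)=Ric^C(X,JY)$ is a \emph{symmetric} real $2$-tensor. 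Its symmetry comes from $Ric^C$ being a real $(1,1)$-form, so $Ric^C(JX,JY)=Ric^C(X,Y)$. I will state this explicitly, since it is the only place the Hermitian (non-Riemannian) Chern-Ricci curvature enters. It is also what makes the two $Rc$ terms cancel exactly, as they do in the Ricci flow case. Torsion plays no role here because we differentiate with the Levi-Civita connection.

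\emph{Step 3: integrate the ODE.} The scalar ODE $f'=f/\tau$ has solution $f(\tau)=f(\bar{\tau})\,\tau/\bar{\tau}$ on $(0,\bar{\tau}]$. Hence
$$
\langle e_i,e_j\rangle(\tau)=\frac{\bar{\tau}}{\tau}\langle\tilde{Y}_i,\tilde{Y}_j\rangle(\tau)=\langle Y_i(\bar{\tau}),Y_j(\bar{\tau})\rangle=\delta_{ij}.
$$
So the $2n$ vectors $e_i(\tau)$ are orthonormal and therefore form a basis of the $2n$-dimensional space $T_{\gamma(\tau)}M$.

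At $\tau=0$ I will interpret the statement as a limit. I expect this endpoint to be the only subtle step, since \eqref{construction of tilde Y} is singular there. The fields $e_i$ satisfy $\nabla_Xe_i=-Rc(e_i,\cdot)-\frac{1}{2\tau}e_i+\frac{1}{2\tau}e_i=-Rc(e_i,\cdot)$, which is a regular linear ODE. It stays regular provided $|X|$ is integrable near $0$, and this holds since $\sqrt{\tau}X$ has a limit by \eqref{initial value}. So the $e_i$ extend continuously to $\tau=0$, and orthonormality passes to the limit.
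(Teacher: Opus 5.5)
Your proof is correct and is essentially the paper's argument. The paper first derives $\nabla_X e_i=-Rc(e_i,\cdot)$ and then shows $\frac{d}{d\tau}\langle e_i,e_j\rangle=0$ directly, whereas you differentiate the Gram matrix of the $\tilde Y_i$ and rescale; this is the same computation. Your explicit justification of the symmetry of $Rc$, and your treatment of the endpoint $\tau=0$ through the regular ODE for $e_i$, fill in points the paper leaves implicit.
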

\begin{proof}
    We have $\tilde{Y}_i(\tau)=\left(\frac{\tau}{\bar{\tau}}\right)^{\frac{1}{2}}e_i(\tau)$, putting this into \eqref{construction of tilde Y}, we derive
    \begin{equation}
    \begin{aligned}
    \nabla_X \tilde{Y}_i &= \frac{d}{d\tau}\left(\sqrt{\frac{\tau}{\overline{\tau}}}\right) e_i(\tau) + \sqrt{\frac{\tau}{\overline{\tau}}} \nabla_X e_i(\tau) = \frac{1}{2\tau}\sqrt{\frac{\tau}{\overline{\tau}}} e_i(\tau) + \sqrt{\frac{\tau}{\overline{\tau}}} \nabla_X e_i(\tau)\\
   & =-Rc\left(\sqrt{\frac{\tau}{\overline{\tau}}} e_i(\tau), \cdot\right) + \frac{1}{2\tau} \sqrt{\frac{\tau}{\overline{\tau}}} e_i(\tau),
    \end{aligned}
    \end{equation}
    this gives the evolution of $e_i(\tau)$:
    \begin{equation}
    \nabla_X e_i = -Rc(e_i, \cdot).
    \end{equation}
    Using the flow equation \eqref{real backward CRF}, we can write
    \begin{equation}
    \frac{d}{d\tau} \langle e_i, e_j \rangle_{g(\tau)} = \langle \nabla_X e_i, e_j \rangle + \langle e_i, \nabla_X e_j \rangle + 2Rc(e_i, e_j)=0.
    \end{equation}
    Consequently, $\langle e_i(\tau), e_j(\tau) \rangle=\delta_{ij}$ for each $\tau\in[0,\bar{\tau}]$.
\end{proof}

We will record two results due to Liu-Yang \cite{LY17}. We first briefly recall the notations involving the torsion tensor introduced in \cite{LY17}. Let $\omega = \sqrt{-1} g_{i\bar{j}} dz^i \wedge d\bar{z}^j$ be a Hermitian metric. The torsion tensor $\mathcal{T}$ of the Chern connection is locally given by
\begin{equation}
    \mathcal{T}_{ij}^k = g^{k\bar{l}} \left( \frac{\partial g_{j\bar{l}}}{\partial z^i} - \frac{\partial g_{i\bar{l}}}{\partial z^j} \right).
\end{equation}
Following \cite{LY17}, we define two different $(1,1)$-forms constructed from the contractions of $\mathcal{T}$ and its complex conjugate $\bar{\mathcal{T}}$:
\begin{equation}
    \mathcal{T} \boxdot \bar{\mathcal{T}} := g^{p\bar{q}} g_{k\bar{l}} \mathcal{T}_{ip}^k \overline{\mathcal{T}_{jq}^l} dz^i \wedge d\bar{z}^j,
\end{equation}
and 
\begin{equation}
    \mathcal{T} \circ \bar{\mathcal{T}} := g^{p\bar{q}} g^{s\bar{t}} g_{k\bar{j}} g_{i\bar{l}} \mathcal{T}_{sp}^k \overline{\mathcal{T}_{tq}^l} dz^i \wedge d\bar{z}^j.
\end{equation}
In general, these two $(1,1)$-forms are not identical. 
\begin{lemma}\cite[Theorem 1.12 (4)]{LY17} \label{LY17 1}
    On a compact Hermitian manifold $(M,\omega)$, the relation between the Chern-Ricci curvature  $Ric^C$ and the complexified Riemannian Ricci curvature $Ric^{\mathbb{C}}$ is 
    \begin{align*}
        Ric^{\mathbb{C}}=&Ric^C-\sqrt{-1}\Lambda(\partial\bar{\partial}\omega)-\frac{1}{2}(\partial\partial^*\omega+\bar{\partial}\bar{\partial}^*\omega)+\frac{\sqrt{-1}}{4}(2\mathcal{T}\boxdot \bar{\mathcal{T}}+\mathcal{T}\circ\bar{\mathcal{T}})\\
        &+\frac{1}{2}\left(\mathcal{T}([\partial^*\omega]^\#+\overline{\mathcal{T}([\partial^*\omega]^\#})\right)=Ric^C+\mathcal{E}(\mathcal{T}),
    \end{align*}
    here, by $\mathcal{E}(\mathcal{T})$ we always mean various product and contraction operations of the metric tensor and torsion terms.
\end{lemma}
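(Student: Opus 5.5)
The identity in \cref{LY17 1} is a pointwise tensor identity, so I would prove it by a direct local computation comparing the Levi-Civita connection $\nabla^{LC}$ of $g$ with the Chern connection $\nabla^{C}$ of $\omega$.

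\textbf{Step 1: the difference tensor.} Since both connections are metric, their difference $A:=\nabla^{LC}-\nabla^{C}$ is determined algebraically by the Chern torsion. The standard Koszul-type formula, written in the complexified tangent bundle $T_{\mathbb{C}}M=T^{1,0}\oplus T^{0,1}$, expresses each component of $A$ as a linear combination of $\mathcal{T}_{ij}^k$, $\overline{\mathcal{T}_{ij}^k}$ and their index-lowered versions. This involves only first derivatives of $g_{i\bar j}$.

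\textbf{Step 2: curvature difference and Ricci trace.} Expanding $R^{LC}(X,Y)=[\nabla^{C}_X+A_X,\nabla^{C}_Y+A_Y]-(\nabla^{C}+A)_{[X,Y]}$ gives
$$
R^{LC}=R^{C}+(\nabla^{C}A)+A*A+(\text{terms from } T\text{ in } [X,Y]),
$$
schematically. Taking the Riemannian Ricci trace, complexifying, and extracting the $(1,1)$-part (the $(2,0)$ and $(0,2)$ parts do not enter $Ric^{\mathbb{C}}$ as a $(1,1)$-form) produces $Ric^{\mathbb{C}}$. The trace of $R^{C}$ gives a mixture of the first Chern-Ricci form and the second Chern-Ricci form, because the two traces of the Chern curvature $R_{i\bar j k\bar l}$ differ.

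\textbf{Step 3: identifying each group of terms.} The first-order terms come from the difference between the two Chern-Ricci traces, which by the commutation/Bianchi formula used in \cref{parabolic Schwarz} is a $\nabla T$ term, together with $\nabla^{C}A$. I would rewrite all of them in terms of the forms $\partial\omega$ and $\bar\partial\omega$, using $\partial^*\omega=\sqrt{-1}\,\bar\tau$-type identities (cf. \cref{lemma:torison}). Collecting them should reproduce $-\sqrt{-1}\Lambda(\partial\bar\partial\omega)-\tfrac12(\partial\partial^*\omega+\bar\partial\bar\partial^*\omega)$. The quadratic terms $A*A$ sort, according to the index pattern of the contraction, into $\mathcal{T}\boxdot\bar{\mathcal{T}}$, $\mathcal{T}\circ\bar{\mathcal{T}}$, and terms where a trace of $\mathcal{T}$ (the torsion $1$-form, i.e. $\partial^*\omega$ dualized) is inserted into $\mathcal{T}$. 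These last terms give $\tfrac12\bigl(\mathcal{T}([\partial^*\omega]^\#)+\overline{\mathcal{T}([\partial^*\omega]^\#)}\bigr)$.

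To simplify the bookkeeping, I would compute at a point in holomorphic coordinates with $g_{i\bar j}=\delta_{ij}$ and the symmetric part of $\partial_i g_{j\bar k}$ vanishing. Such coordinates exist for any Hermitian metric, and in them the first derivatives of $g$ are exactly $\tfrac12\mathcal{T}$.

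\textbf{Main obstacle.} The main difficulty is Step 3. The individual constants $2$, $1$, $\tfrac14$, $\tfrac12$ must be matched carefully, and the several first-order torsion derivatives must be checked to combine into exactly the operators $\Lambda\partial\bar\partial$ and $\partial\partial^*$, with no leftover terms. Since in this paper only the structural form $Ric^{\mathbb{C}}=Ric^C+\mathcal{E}(\mathcal{T})$ is used later (with $\mathcal{E}$ built from $\mathcal{T}$, $\nabla\mathcal{T}$ and the metric, hence vanishing on the Kähler region $U$), I would, if necessary, settle for verifying this structural statement and defer to \cite{LY17} for the exact constants.
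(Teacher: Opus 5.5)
Your proposal is sound as far as it goes, but it takes a different route from the paper, and its Step~3 does not actually prove the displayed formula.

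\textbf{What the paper does.} The paper gives no derivation of this identity. It imports it from \cite{LY17}, and \cref{rmk:torsion bound} then notes that even the formula is dispensable. What is used later (in \cref{prop:estimate of Delta L} and \cref{lem:almost monotonicity}) is only that $|Rc-Ric|$, $|S-R|$, $|\nabla S-\nabla R|$ and the torsion contractions stay uniformly bounded along the flow. The paper obtains this softly: $\omega(t)$ is K\"ahler on $U$, so $Rc=Ric$ and $S=R$ there, and on $X\setminus U$ Gill's smooth convergence (\cref{smooth convergence}) bounds every curvature and torsion quantity.

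\textbf{What your Steps 1--2 give.} Your route is a genuine derivation, and Steps 1--2 are correct. The contorsion $A=\nabla^{LC}-\nabla^C$ is algebraic in the Chern torsion. With $T$ the torsion of $\nabla^C$, one has $R^{LC}(X,Y)=R^C(X,Y)+(\nabla^C_XA)_Y-(\nabla^C_YA)_X+A_{T(X,Y)}+[A_X,A_Y]$. The commutation formula of \cite{TW15} then converts the trace of $R^C$ into $Ric^C$ plus a derivative of torsion. This proves, pointwise on any Hermitian manifold and independently of the flow, that $Ric^{\mathbb{C}}=Ric^C+\mathcal{E}(\mathcal{T})$, with $\mathcal{E}$ a universal expression in $g,\mathcal{T},\nabla\mathcal{T}$. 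That is more than the soft argument gives, and it is enough for every later use.

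\textbf{The gap in Step 3.} Step~3 is only a heuristic (``should reproduce''). The exact coefficients $2,1,\tfrac14,\tfrac12$ and the identification with $\Lambda(\partial\bar\partial\omega)$ and $\partial\partial^*\omega$ are not established by your argument. They rest on \cite{LY17}, which is exactly their status in the paper.

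\textbf{Two corrections if you carry this out.}

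First, the Riemannian trace of $R^C$ is not a mixture of the first and second Chern--Ricci forms. For $Y=\partial_i$, $Z=\partial_{\bar j}$, only $X=\partial_{\bar k}$ contributes, because the Chern curvature is of type $(1,1)$. The result is the mixed contraction $g^{q\bar k}R_{i\bar k q\bar j}$, which pairs a form index with an endomorphism index. A single commutation converts it to $R^C_{i\bar j}$ up to a $\nabla\overline{\mathcal{T}}$ term.

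Second, the paper's application needs $Ric-Rc=\mathcal{E}(\mathcal{T})$ for the full real Ricci tensor, namely the contracted Bianchi identity and the term $2Ric(X,X)-2Rc(X,X)$ in \cref{prop:estimate of Delta L}. By contrast, \cite{LY17} and your Step~2 keep only the $(1,1)$-part. So do not discard the $(2,0)+(0,2)$ part. Since $R^C(\partial_{\bar k},\partial_i)$ preserves $T^{1,0}$, the Riemannian trace of $R^C$ has no $(2,0)$-component. Hence that part of $Ric$ is a pure torsion expression coming from the $A$-terms, and your computation covers it at no extra cost.
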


\begin{lemma}\cite[Corollary 1.13]{LY17} \label{LY17 2}
    On a compact Hermitian manifold $(M,\omega)$, the Riemannian scalar curvature $R$ and the Chern scalar curvature are related by
    $$
R=2S_{C}+\left(\langle\partial\partial^*\omega+\bar{\partial}\bar{\partial}^*\omega,\omega\rangle-2|\partial^*\omega|^2\right)-\frac{1}{2}|\mathcal{T}|^2.
    $$
    In particular, we have that
    $$
R-S=\left(\langle\partial\partial^*\omega+\bar{\partial}\bar{\partial}^*\omega,\omega\rangle-2|\partial^*\omega|^2\right)-\frac{1}{2}|\mathcal{T}|^2=\mathcal{E}(\mathcal{T}).
    $$
\end{lemma}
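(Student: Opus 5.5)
This lemma is quoted from \cite[Corollary 1.13]{LY17}, so I would derive it from \cref{LY17 1} by taking traces. The plan is to take the trace of both sides of the identity in \cref{LY17 1} with respect to $\omega$, using the appropriate normalization. This requires first fixing conventions. The Riemannian scalar curvature $R$ is the real trace of the Riemannian Ricci tensor with respect to $g$. For a $J$-invariant real symmetric tensor, this real trace equals twice the complex trace $\operatorname{tr}_\omega$ of the associated $(1,1)$-form. The Riemannian Ricci tensor of a non-K\"ahler metric need not be $J$-invariant. However, when one complexifies, its $(2,0)+(0,2)$ part has zero trace against $\omega$, since $\omega$ is of type $(1,1)$. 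So only the $(1,1)$ part $Ric^{\mathbb{C}}$ contributes, and we get $R=2\operatorname{tr}_\omega Ric^{\mathbb{C}}=2\Lambda_\omega Ric^{\mathbb{C}}$ (up to the fixed normalization of $\Lambda$). Likewise $2\operatorname{tr}_\omega Ric^C=2S_C$, which by the definition in Section~\ref{section Per} equals $S$.

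Next I would trace the correction terms one at a time. $\Lambda(\sqrt{-1}\Lambda\partial\bar\partial\omega)$ gives a multiple of $\langle\partial\bar\partial\omega,\omega^2\rangle$. Integrating by parts pointwise with the K\"ahler identities for Hermitian metrics (the $\tau$-twisted commutators $[\Lambda,\partial]=\sqrt{-1}(\bar\partial^*+\bar\tau^*)$ of Demailly), this is rewritten in terms of $\langle\partial\partial^*\omega+\bar\partial\bar\partial^*\omega,\omega\rangle$ and $|\partial^*\omega|^2$. Tracing $-\frac12(\partial\partial^*\omega+\bar\partial\bar\partial^*\omega)$ directly gives the inner product with $\omega$. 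For the quadratic torsion terms, $\operatorname{tr}_\omega(\mathcal{T}\boxdot\bar{\mathcal{T}})$ and $\operatorname{tr}_\omega(\mathcal{T}\circ\bar{\mathcal{T}})$ both equal $|\mathcal{T}|^2$ up to constants. The term $\mathcal{T}([\partial^*\omega]^\#)$ traces to a multiple of $|\partial^*\omega|^2$, because $\partial^*\omega$ is (up to a constant) the contraction $\mathcal{T}^k_{ki}$, i.e.\ essentially $\bar\tau$ from \cref{lemma:torison}. Collecting the coefficients gives the formula for $R-2S_C$.

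The main obstacle is bookkeeping of the constants: the normalizations of $\Lambda$, of $|\mathcal{T}|^2$, and of $\partial^*\omega$ versus $\tau$ must match those of \cite{LY17} exactly, and the $\Lambda\partial\bar\partial\omega$ term must be traced correctly. For the application, though, only the structural conclusion matters. Every correction term is a polynomial contraction of $g$, $\mathcal{T}$ and $\nabla\mathcal{T}$, which is what the notation $\mathcal{E}(\mathcal{T})$ records. So $R-S=\mathcal{E}(\mathcal{T})$ follows immediately once $S=2S_C$. Under our hypotheses this term vanishes on $U$ and is uniformly bounded elsewhere by \cref{smooth convergence}.
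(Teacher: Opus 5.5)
The paper gives no proof of this identity. It simply quotes \cite[Corollary 1.13]{LY17}, and \cref{rmk:torsion bound} observes that the explicit formula is never needed: $R=S$ on $U$, where the flow is K\"ahler, and everything is bounded off $U$ by \cref{smooth convergence}. Your structural conclusion $R-S=\mathcal{E}(\mathcal{T})$ is correct, and it is all that is used later. As a derivation of the displayed formula, however, your sketch fails at its one nontrivial step.

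You assert that the trace of $-\sqrt{-1}\Lambda(\partial\bar\partial\omega)$ can be rewritten using only $\langle\partial\partial^*\omega+\bar\partial\bar\partial^*\omega,\omega\rangle$ and $|\partial^*\omega|^2$. For $n\geq3$ this is false. Test it on a balanced non-K\"ahler metric, where $\partial^*\omega=0$ but $\mathcal{T}\neq0$; such metrics exist for $n\geq3$, and the identity is pointwise and universal. Both of your quantities vanish there. On the other hand, expanding
\[
\partial\bar\partial\omega^{n-1}=(n-1)\,\partial\bar\partial\omega\wedge\omega^{n-2}-(n-1)(n-2)\,\bar\partial\omega\wedge\partial\omega\wedge\omega^{n-3}
\]
and using $\partial\bar\partial\omega^{n-1}=0$ gives $\sqrt{-1}\,\partial\bar\partial\omega\wedge\omega^{n-2}=(n-2)\sqrt{-1}\,\bar\partial\omega\wedge\partial\omega\wedge\omega^{n-3}$. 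By the Hodge--Riemann relations for the primitive $(2,1)$-form $\partial\omega$, this is a positive multiple of $|\mathcal{T}|^2\omega^n$.

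This contribution is not a matter of constants; it decides the sign. In the balanced case, the only other surviving correction in \cref{LY17 1} is $\frac{\sqrt{-1}}{4}(2\mathcal{T}\boxdot\bar{\mathcal{T}}+\mathcal{T}\circ\bar{\mathcal{T}})$, whose trace is a positive multiple of $|\mathcal{T}|^2$. Without the $|\mathcal{T}|^2$ coming from the $\Lambda$-term, your bookkeeping would therefore give $R-2S_C$ equal to a positive multiple of $|\mathcal{T}|^2$, not $-\frac12|\mathcal{T}|^2$.

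To repair the argument, trace the $\Lambda$-term through the same expansion in general. By the conjugate of \cref{lemma:torison}, $\bar\partial\omega^{n-1}=\bar\tau\wedge\omega^{n-1}$, so $\partial\bar\partial\omega^{n-1}=(\partial\bar\tau-\bar\tau\wedge\tau)\wedge\omega^{n-1}$. This yields the divergence term $\langle\partial\partial^*\omega+\bar\partial\bar\partial^*\omega,\omega\rangle$ plus a multiple of $|\tau|^2$. Next, the primitive decomposition $\partial\omega=(\partial\omega)_0+\frac{1}{n-1}\tau\wedge\omega$ turns $\sqrt{-1}\,\bar\partial\omega\wedge\partial\omega\wedge\omega^{n-3}$ into a combination of $|\mathcal{T}|^2$ and $|\tau|^2\sim|\partial^*\omega|^2$. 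Only after this, and after actually carrying out the coefficient count you deferred, can you obtain the $-2|\partial^*\omega|^2-\frac12|\mathcal{T}|^2$ of the statement.
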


\begin{remark}\label{rmk:torsion bound}
    In fact, we even do not need the explicit relation in \cref{LY17 1} and \cref{LY17 2}. Since our Chern-Ricci flow is K\"ahler in a neighborhood $U$ of $E$, it follows that $Rc=Ric$ and $S=R$ in $U$. By Gill's result \cref{smooth convergence} we have that $\omega(t)$ converges smoothly outside $U$, hence all the curvatures are uniformly bounded. As a result, we always have $|Rc-Ric|\leq C$ $|S-R|\leq C$ and $|\nabla S-\nabla R|\leq C$ globally on $M$. 
\end{remark}

\begin{proposition}\label{prop:estimate of Delta L}
  We have the following Laplacian estimate of $L$:
    $$
\Delta_{g(\bar{\tau})}L(\cdot,\bar{\tau})\leq \frac{2n}{\sqrt{\bar{\tau}}}-2\sqrt{\bar{\tau}}S-\frac{1}{\bar{\tau}}K+C+C(T)L(\cdot,\bar{\tau}),
    $$
    where $C$ is a uniform constant and $C(T)$ tends to $0$ as the time $T$ in \eqref{reparametrization} tends to $+\infty$.
\end{proposition}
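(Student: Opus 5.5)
Sum the Hessian bound of \cref{lem:estimate of Hess} over the orthonormal frame $\{Y_i(\bar\tau)\}_{i=1}^{2n}$ and compare the result with Perelman's Ricci flow computation, where the traced integrand becomes a total $\tau$-derivative. Concretely, applying \eqref{hess bound 1} to each $Y_i(\bar\tau)$ and summing gives
\begin{equation*}
\Delta_{g(\bar\tau)}L(\cdot,\bar\tau)\le \frac{2n}{\sqrt{\bar\tau}}-2\sqrt{\bar\tau}\,S_{\mathrm{tr}}(q)-\int_0^{\bar\tau}\sqrt{\tau}\sum_i H(X,\tilde Y_i)\,d\tau ,
\end{equation*}
where $S_{\mathrm{tr}}$ is the $g$-trace of $Rc$. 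By the paper's convention this trace is exactly the Chern scalar $S$, so the second term already matches. Using $\tilde Y_i=(\tau/\bar\tau)^{1/2}e_i$ and \cref{onb of e_i}, the integrand becomes $\frac{\tau^{3/2}}{\bar\tau}\sum_i H(X,e_i)$. The remaining task is to compare $\sum_iH(X,e_i)$ with $H(X)$ from \eqref{def of H and K}. Once that is done, the integral becomes $\frac1{\bar\tau}K$ plus an error.

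For the comparison, I would split each curvature quantity in $H(X,e_i)$ into its Riemannian part and a torsion part, using \cref{rmk:torsion bound}: $Rc=Ric+\mathcal E$ and $S=R+\mathcal E$. On the Riemannian side I would take the traces term by term:
\begin{itemize}
\item $\sum_i Hess_S(e_i,e_i)=\Delta S$;
\item $\sum_i\langle R(e_i,X)e_i,X\rangle=-Ric(X,X)$;
\item $\sum_i(\nabla_XRc)(e_i,e_i)=\langle\nabla S,X\rangle$ up to torsion, because the frame is $Rc$-transported;
\item $\sum_i(\nabla_{e_i}Rc)(e_i,X)=\frac12\langle\nabla R,X\rangle$ by the contracted Bianchi identity for $Ric$;
\item $\sum_iRc_\tau(e_i,e_i)$, which the traced evolution equation $S_\tau=\operatorname{tr}Rc_\tau-2|Rc|^2$ turns into $S_\tau+2|Rc|^2$.
\end{itemize}
After these substitutions, the identity $\Delta S+2|Rc|^2=\partial_\tau S$ (true for the Ricci flow) is needed to cancel the $\Delta S$ term. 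This identity holds exactly in $U$, where the flow is Kähler–Ricci. Outside $U$, by \cref{smooth convergence}, all curvatures, their derivatives, and $|S_\tau|$ are bounded, so the identity holds up to an error bounded by $C$. Collecting everything should give
\begin{equation*}
\sum_iH(X,e_i)= H(X)+\mathcal E,\qquad |\mathcal E|\le C(1+|X|)\ \text{or}\ C(1+|X|^2).
\end{equation*}
Here the error is supported outside $U$ (that is, on $\operatorname{supp}\mathcal T$), and on that set the flow is uniformly controlled.

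I expect the main obstacle to be the error integral $\frac1{\bar\tau}\int_0^{\bar\tau}\tau^{3/2}|\mathcal E|\,d\tau$, specifically its $|X|^2$ part. Terms like $Ric(X,X)-Rc(X,X)$ and the Bianchi defect $\langle\nabla S-\nabla R,X\rangle$ grow with the velocity. I would first use $|X|\le 1+|X|^2$ to reduce everything to a $|X|^2$ bound. I would then control $\int_0^{\bar\tau}\tau^{3/2}|X|^2\,d\tau\le\bar\tau\int_0^{\bar\tau}\sqrt\tau\,|X|^2\,d\tau\le\bar\tau\bigl(L+C\bar\tau^{3/2}\bigr)$, using that $S$ is bounded by \cref{Scalar curvature bound}. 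After dividing by $\bar\tau$, this contributes $C+C\,L$. To make the coefficient of $L$ small, I would track the factor coming from the reparametrization \eqref{reparametrization}. After rescaling by $e^{t-T}$, the torsion of $\tilde\omega(s)$ outside $U$ carries a factor $e^{-t}$, which is at most of order $e^{-T}$ times a bounded quantity (the smallness $|\theta_t|=o(e^{-t})$ was used in the same way in \cref{L^1 bound of G}). The coefficient in front of $|X|^2$ would therefore be $C(T)\to0$, while the velocity-independent torsion errors are absorbed into the constant $C$. Combining these bounds yields the stated inequality with $-\frac1{\bar\tau}K+C+C(T)L$.
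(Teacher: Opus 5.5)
Your proposal is correct and is essentially the paper's proof. As in the paper, you sum \eqref{hess bound 1} over the transported frame $e_i$, compare $\sum_i H(X,e_i)$ with $H(X)$ via $Rc=Ric+\mathcal{E}(\mathcal{T})$ and the contracted Bianchi identity for $Ric$, and absorb the $|X|^2$ error (coefficient $O(e^{-T})$ since $d\omega(t)=e^{-t}d\omega_0$ vanishes on $U$) into $C(T)L$ using the Chern scalar curvature bound. The only deviation is how you handle the defect in the evolution of $S$: you argue by locality (exact in $U$, bounded outside $U$ by \cref{smooth convergence}), whereas the paper uses the global Chern evolution $\partial_\tau S=-2\Delta_{\omega}S-2|Rc|^2$ with \cref{real and complex Laplacian} to exhibit it as a $\langle\nabla S,\theta^\#\rangle$ term; either way it is bounded and goes into $C$. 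Watch the sign: in backward time the identity is $\Delta S+2|Rc|^2=-\partial_\tau S$.
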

\begin{proof}
    Combining \cref{lem:estimate of Hess} and \cref{onb of e_i} and summing over $i$ we can write
    \begin{equation}\label{Lap of L 1}
        \begin{aligned}
            \Delta_{g(\bar{\tau})}L\leq\frac{2n}{\sqrt{\bar{\tau}}}-2\sqrt{\bar{\tau}}S-\frac{1}{\bar{\tau}}\int_0^{\bar{\tau}}\sum_{i=1}^{2n}{\tau}^{\frac{3}{2}}H(X(\tau),e_i(\tau))d\tau.
        \end{aligned}
    \end{equation}
    We next compute $\sum_{i=1}^{2n}H(X,e_i)$. Invoking \eqref{expression of H}, we can write
    \begin{equation}
        \begin{aligned}
            \sum_{i=1}^{2n}H(X,e_i)=&-\Delta_{g(\tau)}S+2Ric(X,X)-4X\cdot S+4\sum_{i=1}^{2n}\nabla_{e_i}Rc(e_i,X)\\
            &-2\sum_{i=1}^{2n}Rc_{\tau}(e_i,e_i)+2|Rc|^2-\frac{1}{\tau}S.
        \end{aligned}
    \end{equation}
We have 
\begin{equation} S_\tau=\partial_\tau(g^{ij}Rc_{ij})=-2Rc^{ij}Rc_{ij}+g^{ij}\partial_\tau Rc_{ij}=-2|Rc|^2+tr_g(\partial_\tau Rc).
\end{equation}
On the other hand, we know that the complex Chern scalar curvature (defined by $tr_\omega(Ric^C)$) satisfies $S_C=\frac{1}{2}S$, $|Ric^C|^2=\frac{1}{2}|Rc|^2$ and
$$
\partial_\tau S_C = -2\Delta_{\omega(\tau)} S_C - 2|Ric^C|^2,
$$
where $\Delta_{\omega(\tau)}$ is the Chern-Laplacian. This gives that
\begin{equation}\label{partial_tau S}
\partial_\tau S = -2\Delta_{\omega(\tau)} S - 2|Rc|^2_g.
\end{equation}
Consequently,
\begin{equation}
    \sum_{i=1}^{2n}Rc_{\tau}(e_i,e_i)=tr_g(\partial_\tau Rc)=S_\tau+2|Rc|^2=-2\Delta_{\omega(\tau)}S=-\Delta_{g(\tau)}S+\langle\nabla S,\theta^\#\rangle.
\end{equation}
Using \cref{LY17 1} and the contracted second Bianchi identity for the Riemannian Ricci curvature $Ric$, we can write
\begin{equation}
    \begin{aligned}
+4\sum_{i=1}^{2n}\nabla_{e_i}Rc(e_i,X)&=4\sum_{i=1}^{2n}\nabla_{e_i}(Ric+\mathcal{E}(\mathcal{T}))(e_i,X)\\
&=2\langle\nabla R,X\rangle+4\sum_{i=1}^{2n}\nabla_{e_i}\mathcal{E}(\mathcal{T})(e_i,X),
    \end{aligned}
\end{equation}
where $R$ is now the Riemannian scalar curvature and $\mathcal{E}(\mathcal{T})$ is the torsion term calculated explicitly in \cref{LY17 1}.  Summing all the terms, we have
\begin{equation}
    \begin{aligned}
\sum_{i=1}^{2n}H(X,e_i)=&\Delta_{g(\tau)}S+2Ric(X,X)-4\langle\nabla S,X\rangle+2\langle\nabla R,X\rangle\\
         &+4\sum_{i=1}^{2n}\nabla_{e_i}\mathcal{E}(\mathcal{T})(e_i,X)-2\langle\nabla S,\theta^\#\rangle+2|Rc|^2-\frac{1}{\tau}S\\
         =&-S_\tau+2Ric(X,X)-4\langle\nabla S,X\rangle+2\langle\nabla R,X\rangle\\         
&+4\sum_{i=1}^{2n}\nabla_{e_i}\mathcal{E}(\mathcal{T})(e_i,X)-\langle\nabla S,\theta^\#\rangle-\frac{1}{\tau}S,
    \end{aligned}
\end{equation}
where in the last equality we have used \eqref{partial_tau S}. Recall from \eqref{def of H and K} that
$$
  H(X):=-S_\tau-\frac{1}{\tau}S-2\langle\nabla S,X\rangle+2Rc(X,X),
$$
Therefore, \cref{LY17 1} and \cref{LY17 2} (see also \cref{rmk:torsion bound}) yields that
\begin{equation}\label{H-H}
    \begin{aligned}
        &\left|H(X)-\sum_{i=1}^{2n}H(X,e_i)\right|&\\
        =&\left|2Ric(X,X)-2Rc(X,X)-2\langle\nabla S,X\rangle+2\langle\nabla R,X\rangle+4\sum_{i=1}^{2n}\nabla_{e_i}\mathcal{E}(\mathcal{T})(e_i,X)-\langle\nabla S,\theta^\#\rangle\right|\\
        \leq&\mathcal{E}(\mathcal{T})|X|^2+\mathcal{E}(\mathcal{T})|X|+|\nabla S|\cdot|\theta^\#|.
    \end{aligned}
\end{equation}
Putting \eqref{H-H} into \eqref{Lap of L 1}, we get
\begin{equation}
    \begin{aligned}
           \Delta_{g(\bar{\tau})}L\leq&\frac{2n}{\sqrt{\bar{\tau}}}-2\sqrt{\bar{\tau}}S-\frac{1}{\bar{\tau}}\int_0^{\bar{\tau}}\tau^{\frac{3}{2}}H(X(\tau))d\tau\\
           &+\frac{1}{\bar{\tau}}\int_0^{\bar{\tau}}\tau^{\frac{3}{2}}\left(\mathcal{E}(\mathcal{T})|X|^2+\mathcal{E}(\mathcal{T})|X|+|\nabla S|\cdot|\theta^\#|\right)d\tau\\
           \leq&\frac{2n}{\sqrt{\bar{\tau}}}-2\sqrt{\bar{\tau}}S-\frac{1}{\bar{\tau}}K+C+C\int_0^{\bar{\tau}}\tau^{\frac{1}{2}}(|X|^2+|X|)d\tau\\
           \leq&\frac{2n}{\sqrt{\bar{\tau}}}-2\sqrt{\bar{\tau}}S-\frac{1}{\bar{\tau}}K+C+C(T)\int_0^{\bar{\tau}}\tau^{\frac{1}{2}}(S+|X|^2+|X|)d\tau\\
            \leq&\frac{2n}{\sqrt{\bar{\tau}}}-2\sqrt{\bar{\tau}}S-\frac{1}{\bar{\tau}}K+C+C(T)L,
    \end{aligned}
\end{equation}
where we have used \cref{Scalar curvature bound} and the fact that all the torsion terms remain uniformly bounded in our case.
\end{proof}
As in \cite{Per02}, setting $\bar{L}(q,\tau):=2\sqrt{\tau}L(q,\tau)$, we then have the following evolution estimate of $\bar{L}$:

\begin{lemma}\label{lem:evolution of L bar}
    There is a uniform constant $C$ such that
    $$
\left(\frac{\partial}{\partial\bar{\tau}}+\Delta_{g(\bar{\tau})}\right)\bar{L}(\cdot,\bar{\tau})\leq C(\bar{L}(\cdot,\bar{\tau})+1).
    $$
\end{lemma}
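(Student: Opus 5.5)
Since $\bar{L}=2\sqrt{\bar{\tau}}L$, I will compute its space-time derivative directly from the first-derivative formula \eqref{L_tau 1} and the Laplacian bound of \cref{prop:estimate of Delta L}, exactly as in Perelman's derivation of $(\partial_{\bar\tau}+\Delta)\bar L\le 2n$. Throughout I work at points $q\notin B_{\bar\tau}$, where $L(\cdot,\bar\tau)$ is smooth; the inequality is then understood in the barrier/distributional sense across the cut locus, which has measure zero by \cref{cut locus has measure zero}.

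First I compute the time derivative. Using \eqref{L_tau 1},
\[
\partial_{\bar\tau}\bar L=\frac{1}{\sqrt{\bar\tau}}L+2\sqrt{\bar\tau}\,L_{\bar\tau}
=\frac{1}{\sqrt{\bar\tau}}L+4\bar\tau S-\frac{1}{\sqrt{\bar\tau}}L+\frac{2}{\sqrt{\bar\tau}}K
=4\bar\tau S+\frac{2}{\sqrt{\bar\tau}}K .
\]
Next I compute the Laplacian. Multiplying \cref{prop:estimate of Delta L} by $2\sqrt{\bar\tau}$ gives
\[
\Delta_{g(\bar\tau)}\bar L\le 4n-4\bar\tau S-\frac{2}{\sqrt{\bar\tau}}K+2\sqrt{\bar\tau}\,C+2\sqrt{\bar\tau}\,C(T)L .
\]
Adding the two expressions, the $S$ terms and the $K$ terms cancel exactly, as in the Ricci flow case. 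This leaves
\[
\Big(\partial_{\bar\tau}+\Delta_{g(\bar\tau)}\Big)\bar L\le 4n+2\sqrt{\bar\tau}\,C+C(T)\bar L .
\]
Since $\bar\tau<\tfrac12$ and $C(T)$ is bounded for large $T$ (it even tends to $0$), the right-hand side is bounded by $C(\bar L+1)$, which is the claim.

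I expect no step here to be a real obstacle; the content is in \cref{prop:estimate of Delta L}. The only points to check are the following. First, the lower-order terms in that proposition really are absorbed into $C+C(T)L$ with $L$ (not $|L|$). This uses that $S$ is bounded (\cref{Scalar curvature bound}), so that $L+C\sqrt{\bar\tau}^{\,3}\ge \int\sqrt\tau|X|^2$; an additive constant coming from the lower bound of $S$ is harmless. Second, the global interpretation needs care: at points of $B_{\bar\tau}$ I would use Perelman's standard barrier argument, taking a smooth upper barrier from a nearby minimizing $\mathcal{L}$-geodesic. In the intended applications, however, only the integrated or a.e.\ form of the inequality is needed.
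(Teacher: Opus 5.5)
Your proposal is correct and follows essentially the paper's proof: you use \eqref{L_tau 1} to get $\partial_{\bar\tau}\bar L=4\bar\tau S+\frac{2}{\sqrt{\bar\tau}}K$, add $2\sqrt{\bar\tau}$ times the bound of \cref{prop:estimate of Delta L} so that the $S$- and $K$-terms cancel, and absorb the remainder $4n+2\sqrt{\bar\tau}C+C(T)\bar L$ into $C(\bar L+1)$. Two points that the paper leaves implicit are also covered in your remarks: the behaviour across the cut locus, and the lower bound $\bar L\ge -C\bar\tau^{2}$ (from the bound on $S$) that the final absorption needs.
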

\begin{proof}
    Combining \eqref{L_tau 1} and \cref{prop:estimate of Delta L}, we have
    \begin{align*}
        \bar{L}_{\bar{\tau}}+\Delta_{g(\bar{\tau})}\bar{L}&\leq 4\bar{\tau}S(q)+\frac{2}{\sqrt{\bar{\tau}}}K+4n-4\bar{\tau}S(q)-\frac{2}{\sqrt{\bar{\tau}}}K+2\sqrt{\bar{\tau}}C+C(T)\sqrt{\bar{\tau}}L\\
        &\leq 4n+C+C(T)\bar{L}\leq C(\bar{L}+1),
    \end{align*}
    the proof is finished.
\end{proof}

\subsection{Almost monotonicity of the reduced volume}

In this subsection, we calculate the derivatives of the reduced volume $V(\tau)$, which we recall is defined by
\begin{align*}
    V(\tau):=\int_X\tau^{-n}e^{-l(q,\tau)}dq=\int_{T_pM}\tau^{-n}e^{-l(\mathcal{L}\exp_\tau(v),\tau)}\mathcal{J}(v,\tau)\mathds{1}_{\Omega_\tau}dv,
\end{align*}
where $\Omega_\tau$ is the directions $v\in T_pM$ such that $\mathcal{L}\exp_\tau(v)\notin B_\tau$, the time-$\tau$ $\mathcal{L}$-cut locus and $\mathcal{J}(v,\tau)$ is the Jacobian of the differential of the $\mathcal{L}$-exponential map. By \cref{cut locus has measure zero}, the change of variable formula above is valid. We are now going to calculate the derivatives of
\begin{align*}
    \log\left(\tau^{-n}e^{-l(\mathcal{L}\exp_\tau(v),\tau)}\mathcal{J}(v,\tau)\right)=-n\log(\tau)-l(\mathcal{L}\exp_\tau(v),\tau)+\log\mathcal{J}(v,\tau).
\end{align*}
\begin{lemma}\label{lem:almost monotonicity}
    We have the following almost monotonicity estimate of the integrand of the reduced volume:
    $$
\mathcal{J}(v,\tau_1)\geq\mathcal{J}(v,\tau_2)\left(\frac{\tau_1}{\tau_2}\right)^{-n}e^{l(\gamma(\tau_1),\tau_1)-l(\gamma(\tau_2),\tau_2)}\cdot e^{ -C \int_{\tau_1}^{\tau_2} l(\gamma(\tau), \tau) d\tau -C(\tau_2-\tau_1)},
    $$
    where $v\in T_pM$, $0\leq\tau_1<\tau_2\leq\bar{\tau}$ and $C$ is a uniform constant.
\end{lemma}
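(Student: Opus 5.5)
We follow Perelman's computation of $\frac{d}{d\tau}\log\mathcal{J}(v,\tau)$ along a fixed $\mathcal{L}$-geodesic $\gamma=\gamma_{p,v}$, and keep track of the torsion errors with \cref{rmk:torsion bound}. Throughout we assume $\gamma(\tau)\notin B_\tau$ for $\tau\in(\tau_1,\tau_2]$, as in the change of variables for $V(\tau)$; otherwise we argue on the portion of $\Omega_\tau$ where the statement is meaningful and use that $\mathcal{J}$ vanishes past the cut locus.

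\textbf{Step 1: derivative of $\log\mathcal{J}$.} Fix a basis of $T_pM$. The differential $d(\mathcal{L}\exp_\tau)_v$ sends it to $\mathcal{L}$-Jacobi fields $Y_1,\dots,Y_{2n}$ along $\gamma$ with $Y_i(0)=0$. Since $\partial_\tau g=2Rc$, we get
\[
\frac{d}{d\tau}\log\mathcal{J}=S(\gamma(\tau))+\sum_i\langle\nabla_XY_i,Y_i\rangle
\]
at $\tau$, where the $Y_i$ are taken orthonormal at that time. By \eqref{Hess of L} and \cref{expression of Q}, $2\sqrt{\tau}\langle\nabla_XY_i,Y_i\rangle=\mathrm{Hess}_L(Y_i,Y_i)$. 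Hence
\[
\frac{d}{d\tau}\log\mathcal{J}=S+\frac{1}{2\sqrt{\tau}}\Delta_{g(\tau)}L(\gamma(\tau),\tau).
\]
We then apply \cref{prop:estimate of Delta L}. Here $\bar\tau$ is replaced by $\tau$ and $K$ by $K(\tau)=\int_0^\tau s^{3/2}H(X)ds$. This gives
\[
\frac{d}{d\tau}\log\mathcal{J}\le \frac{n}{\tau}-\frac{1}{2\tau^{3/2}}K(\tau)+\frac{C}{\sqrt\tau}+\frac{C}{\sqrt\tau}L.
\]

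\textbf{Step 2: derivative of $l$ along $\gamma$.} Using \eqref{nabla L}, \eqref{L_tau} and \eqref{equality of K}, exactly as in Perelman's argument,
\[
\frac{d}{d\tau}l(\gamma(\tau),\tau)=-\frac{l}{\tau}+\frac{1}{2\sqrt\tau}(S+|X|^2)=-\frac{1}{2\tau^{3/2}}K(\tau).
\]
This identity involves no torsion, since it only uses the first variation of $\mathcal{L}$. Subtracting the two derivatives, the $K$ terms cancel and we obtain
\[
\frac{d}{d\tau}\Big(-n\log\tau-l+\log\mathcal{J}\Big)\le \frac{C}{\sqrt\tau}\big(1+L\big)=\frac{C}{\sqrt\tau}+2C\,l .
\]

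\textbf{Step 3: integrate and absorb the constants.} Integrating from $\tau_1$ to $\tau_2$ gives
\[
\log\big(\tau_2^{-n}e^{-l_2}\mathcal{J}_2\big)-\log\big(\tau_1^{-n}e^{-l_1}\mathcal{J}_1\big)\le C\int_{\tau_1}^{\tau_2} l\,d\tau+C(\sqrt{\tau_2}-\sqrt{\tau_1}).
\]
Rearranging yields the stated inequality, up to the form of the last error term. We would rather obtain $C(\tau_2-\tau_1)$ directly, so we revisit the term $C/\sqrt\tau$ in Step 1. It comes from $\frac{1}{2\sqrt\tau}\cdot C$ in the Laplacian bound. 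Inspecting the proof of \cref{prop:estimate of Delta L}, the bounded-torsion contributions there appear as $\frac{1}{\tau}\int_0^\tau s^{3/2}\cdot O(1)\,ds=O(\tau^{3/2})$ and as $O(1)\int_0^\tau s^{1/2}|X|^2ds$. The former is $O(\sqrt\tau)$ after dividing by $2\sqrt\tau$. The latter is bounded by $C(L+\tau^{3/2})$ via the scalar curvature bound \cref{Scalar curvature bound}. The error is therefore $C+Cl$, which integrates to $C(\tau_2-\tau_1)+C\int l$.

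\textbf{Main obstacle.} The key step is this careful tracking of powers of $\tau$ in the torsion errors from \cref{prop:estimate of Delta L}. Those errors must be genuinely of size $C\sqrt\tau(1+L/\sqrt\tau)$, and the terms $|\nabla S||\theta^\#|$ must be controlled using the vanishing of $\theta$ on $U$ together with the bounded geometry outside $U$. A secondary point is justifying the use of the Hessian comparison along all of $\gamma$, which requires $\gamma(\tau)\notin B_\tau$ for the intermediate times.
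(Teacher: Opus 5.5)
Your proposal is correct and is essentially the paper's argument: the Gram-determinant formula $\frac{d}{d\tau}\log\mathcal{J}=S+\frac{1}{2\sqrt{\tau}}\sum_i \mathrm{Hess}_L(Y_i,Y_i)$, the comparison fields $\tilde{Y}_i$ of \cref{lem:estimate of Hess}, the torsion discrepancy \eqref{H-H} bounded by $C(l+1)$, cancellation of $K$ against $\frac{d}{d\tau}l=-\frac{K}{2\tau^{3/2}}$, and integration. The paper does the torsion bookkeeping directly on $\sum_i H(X,e_i)$, which is your ``revisited'' version rather than the stated form of \cref{prop:estimate of Delta L}.

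There are two harmless slips. The middle expression in your Step 2 should read $\frac{1}{2}(S+|X|^2)-\frac{l}{2\tau}$. Also, rearranging your Step 3 inequality gives the factor $(\tau_1/\tau_2)^{n}$, not the $(\tau_1/\tau_2)^{-n}$ printed in the statement and in the paper's last line. That exponent is a sign typo, as the Euclidean model $\mathcal{J}(v,\tau)=4^n\tau^n$, $l\equiv|v|^2$ shows, and only $(\tau_1/\tau_2)^{n}\geq C^{-n}$ for $\tau_1\geq C^{-1}\bar{\tau}$ is used later.
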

\begin{proof}
    Let $\gamma$ be an $\mathcal{L}$-geodesic starting from $p$ with initial vector $v\in T_pM$, then by definition we have $\mathcal{L}\exp_\tau(v)=\gamma(\tau)$. It follows from \eqref{equality of K} and the definition of $L$ that
    \begin{equation}\label{derivative of l}
    \begin{aligned}
        \left.\frac{d}{d\tau}\right|_{\tau=\bar{\tau}}l(\gamma(\tau),\tau)= &\left.\frac{d}{d\tau}\right|_{\tau=\bar{\tau}}\left(\frac{1}{2\sqrt{\tau}}L(\gamma(\tau),\tau)\right)=\frac{1}{2}\left(S(\gamma(\bar{\tau}))+|X(\bar{\tau})|^2\right)-\frac{1}{4\bar{\tau}^{\frac{3}{2}}}L\\
        =&\frac{1}{\bar{\tau}^{\frac{3}{2}}}\left(-\frac{1}{2}K+\frac{1}{4}L-\frac{1}{4}L\right)=-\frac{1}{2\bar{\tau}^{\frac{3}{2}}}K.
        \end{aligned}
    \end{equation}
  For $v\in T_pM$, choose a real orthonormal basis $\{w_i\}_{i=1}^{2n}$ of $T_vT_pM=T_pM$ and consider the differential map
  $$
d(\mathcal{L}\exp_\tau)_v:T_vT_pM\to T_{\gamma(\tau)}M.
  $$
  
    Set $Y_i(\tau):=d(\mathcal{L}\exp_\tau)_v(w_i)$. It is easy to see that $\left.Y_i(\tau)=\frac{\partial}{\partial s}\right|_{s=0}\tilde{\gamma}(s,\tau)$ for 
    $$
\tilde{\gamma}(s,\tau):=\mathcal{L}\exp_\tau(v+sw_i),
    $$
    which is clearly an $\mathcal{L}$-geodesic variation of $\gamma$ when the points do not meet the $\mathcal{L}$-cut locus $B_\tau$. It follows from \cref{prop:L jacobi field derive} that  $\{Y_i\}_{i=1}^{2n}$ is a basis of the $\mathcal{L}$-Jacobi fields along $\gamma$ with $Y_i(0)=0$. Without loss of generality, we shall also assume that $\{Y_i(\bar{\tau})\}_{i=1}^{2n}$ forms an orthonormal basis of $T_{\gamma{(\bar{\tau}})}M$. Indeed, we can first choose an orthonormal basis $\{e_i\}_{i=1}^{2n}$ of $T_{\gamma{(\bar{\tau}})}M$ and set $w_i:=(d(\mathcal{L}\exp_{\bar{\tau}})_v)^{-1}(e_i)$.

    It is now well known from linear algebra that 
    \begin{equation}\label{J^2}
    \mathcal{J}(v,\tau)^2=\det\left[\left(d(\mathcal{L}\exp_\tau)_v\right)^*d(\mathcal{L}\exp_\tau)_v\right]=\lambda \det(S(\tau)),
    \end{equation} 
    where $S_{ij}(\tau):=\langle Y_i(\tau),Y_j(\tau)\rangle$ is the Gram matrix. Consequently, we can write
    \begin{align*}
        \frac{d}{d\tau}\log \mathcal{J}(v,\tau)^2=\frac{d}{d\tau}\log\det(S(\tau))=tr\left(S^{-1}\frac{dS}{d\tau}\right).
    \end{align*}
    Since we have that $S(\bar{\tau})=I_{2n}$,
    \begin{equation}\label{log j 1}
         \left.\frac{d}{d\tau}\right|_{\tau=\bar{\tau}}\log \mathcal{J}(v,\tau)=\left.\frac{1}{2}\sum_{i=1}^{2n}\frac{d|Y_i|^2}{d\tau}\right|_{\tau=\bar{\tau}}.
    \end{equation}
    Since $Y_i$ are $\mathcal{L}$-Jacobi fields, invoking \eqref{Hess of L} we can write
    \begin{align*}
       \left. \frac{d|Y_i|^2}{d\tau}\right|_{\tau=\bar{\tau}}=2Rc(Y_i(\bar{\tau}),Y_i(\bar{\tau}))+\frac{1}{\bar{\tau}}Hess_L(Y_i(\bar{\tau}),Y_i(\bar{\tau})).
    \end{align*}
    Let $\tilde{Y}_i(\tau)$ be defined as in \eqref{construction of tilde Y} with $\tilde{Y}_i(\bar{\tau})=Y_i(\bar{\tau})$ and let $e_i(\tau):=\left(\frac{\bar{\tau}}{\tau}\right)^{\frac{1}{2}}\tilde{Y}_i(\tau)$, which form an orthonormal basis for each $\tau$ by \cref{onb of e_i}. Then \cref{lem:estimate of Hess} yields that
    \begin{align*}
        \frac{1}{\sqrt{\bar{\tau}}}Hess_L(Y_i(\bar{\tau}),Y_i(\bar{\tau}))\leq \frac{1}{\bar{\tau}}-2Rc(Y_i(\bar{\tau}),Y_i(\bar{\tau}))-\frac{1}{\sqrt{\bar{\tau}}}\int_0^{\bar{\tau}}\sqrt{\tau}H(X,\tilde{Y}_i)d\tau
    \end{align*}
    and hence
    $$
 \left.\frac{d|Y_i|^2}{d\tau}\right|_{\tau=\bar{\tau}}\leq\frac{1}{\bar{\tau}}-\frac{1}{\sqrt{\bar{\tau}}}\int_0^{\bar{\tau}}\sqrt{\tau}H(X,\tilde{Y}_i)d\tau.
    $$
    Putting this into \eqref{log j 1}, we derive
    \begin{equation}\label{log j 2}
       \left.\frac{d}{d\tau}\right|_{\tau=\bar{\tau}}\log \mathcal{J}(v,\tau)\leq\frac{n}{\bar{\tau}}-\frac{1}{2}\bar{\tau}^{-\frac{3}{2}}\int_0^{\bar{\tau}}\tau^{\frac{3}{2}}\sum_{i=1}^{2n}H(X(\tau),e_i(\tau))d\tau.
    \end{equation}
    We have already computed in \eqref{H-H} that
    \begin{align*}
        \left|H(X)-\sum_{i=1}^{2n}H(X,e_i)\right|\leq\mathcal{E}(\mathcal{T})|X|^2+\mathcal{E}(\mathcal{T})|X|+|\nabla S|\cdot|\theta^\#|.
    \end{align*}
    So, the derivatives of $\log\mathcal{J}$ can finally be controlled by
    \begin{equation}\label{log j 3}
    \begin{aligned}
          \left.\frac{d}{d\tau}\right|_{\tau=\bar{\tau}}\log \mathcal{J}(v,\tau)&\leq\frac{n}{\bar{\tau}}-\frac{1}{2}\bar{\tau}^{-\frac{3}{2}}K+C\bar{\tau}^{-\frac{1}{2}}\int_0^{\bar{\tau}}\tau^{\frac{1}{2}}(|X|^2+|X|+1)d\tau\\
         &\leq\frac{n}{\bar{\tau}}-\frac{1}{2}\bar{\tau}^{-\frac{3}{2}}K+C(l(\gamma(\bar{\tau}),\bar{\tau})+1).
         \end{aligned}
    \end{equation}
    Combining \eqref{derivative of l} and \eqref{log j 3}, we finally arrive at 
    \begin{equation}\label{final derivative}
          \left.\frac{d}{d\tau}\right|_{\tau=\bar{\tau}}\log\left(\tau^{-n}e^{-l(\mathcal{L}\exp_\tau(v),\tau)}\mathcal{J}(v,\tau)\right)\leq C(l(\mathcal{L}\exp_{\bar{\tau}}(v),\bar{\tau})+1).
    \end{equation}
  For convenience, we denote the integrand of the reduced volume by$$W(v, \tau) := \tau^{-n}e^{-l(\mathcal{L}\exp_\tau(v),\tau)}\mathcal{J}(v,\tau).$$Then the differential inequality \eqref{final derivative} can be rewritten as
  \begin{equation}
  \frac{d}{d\tau}\log W(v, \tau) \leq C l(\mathcal{L}\exp_\tau(v),\tau).
  \end{equation}
  By integrating this differential inequality along the $\mathcal{L}$-geodesic $\gamma(\tau)$ from $0\leq\tau_1$ to $\tau_2\leq\bar{\tau}$, we obtain
  \begin{equation}\label{integral of W}W(v, \tau_2) \leq W(v, \tau_1) \exp\left( C \int_{\tau_1}^{\tau_2} l(\gamma(\tau), \tau) d\tau +C(\tau_2-\tau_1)\right).
  \end{equation}
This yields that
\begin{align*}
    \mathcal{J}(v,\tau_1)\geq\mathcal{J}(v,\tau_2)\left(\frac{\tau_1}{\tau_2}\right)^{-n}e^{l(\gamma(\tau_1),\tau_1)-l(\gamma(\tau_2),\tau_2)}\cdot e^{ -C \int_{\tau_1}^{\tau_2} l(\gamma(\tau), \tau) d\tau -C(\tau_2-\tau_1)},
\end{align*}
and hence the proof is concluded.
\end{proof}
\begin{remark}
    In applications, we will have $l(\gamma(\tau),\tau)\leq\frac{C}{\tau}$ and $\tau_1\geq C^{-1}\bar{\tau}$ (see \cref{lem:L-geodesic-exit-time} below), so the above torsion terms will not cause essential problems.
\end{remark}

\section{Reduction to a key estimate}

We have now established various a priori estimates and techniques to run the arguments in \cite{LTZ26}. Recall that we have a holomorphic birational map $f$ from $X$ to $Y=X_{can}$, the canonical model of $X$. We let $(\omega_{can},d_{can})$ be the K\"ahler-Einstein current and the associated distance on $Y$. We also denote $(\omega_Y,d_Y)$ to be the restriction of the Fubini-Study metric from a projective space to $Y$ and its corresponding distance function. The following important bi-H\"older equivalence of $d_Y$ and $d_{can}$ was established very recently in \cite{LTZ26}:
\begin{theorem}\cite[Theorem 3.6]{LTZ26} \label{Holder of d_can}
    There are constants $C>0$ and $\alpha\in(0,1]$ such that for all $x,y\in Y$, we have
    $$
C^{-1}d_Y(x,y)\leq d_{can}(x,y)\leq d_Y(x,y)^\alpha.
    $$
\end{theorem}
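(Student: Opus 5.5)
The plan is to prove the two inequalities separately. Both rest on the fact that $\omega_{can}$ is a Kähler–Einstein current on the normal projective variety $Y$ with bounded potential. Concretely, $\omega_{can}=\omega_Y+\sqrt{-1}\partial\bar\partial\phi$ with $\phi\in L^\infty\cap C^0$, and $\omega_{can}^n = e^{\phi}\Omega_Y$ for a volume form $\Omega_Y$ with $L^p$ density, $p>1$. This follows from \cref{lower bound of varphi(t)} (see \cref{rmk:semi positive rep}), pushed forward by $f$, together with the Eyssidieux–Guedj–Zeriahi/Kołodziej theory. The metric $d_{can}$ is understood as the metric completion of $(Y\setminus D,\omega_{can})$, which Song identified with $Y$ itself.

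\textbf{Lower bound.} The first step is a Chern–Lu/Schwarz inequality $\omega_{can}\geq C^{-1}\omega_Y$ on $Y^{reg}\setminus D$. We obtain it by letting $t\to\infty$ in \cref{parabolic Schwarz}: $\omega(t)\geq C^{-1}f^*\omega_Y$ holds uniformly, and by \cref{smooth convergence} $\omega(t)\to\omega_{KE}$ smoothly on $X\setminus E$. Any $\omega_{can}$-path in $Y\setminus D$ therefore has $\omega_Y$-length at most $C$ times its $\omega_{can}$-length. Since $D$ has real codimension at least $2$, near-minimizing paths can be perturbed to avoid $D$. Because $d_Y$ is the extrinsic Fubini–Study distance, which is no larger than the intrinsic one, we get $d_Y\le C\,d_{can}$ on $Y\setminus D$, and then on all of $Y$ by density and completion.

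\textbf{Upper bound.} This is the main obstacle, since no pointwise upper bound for $\omega_{can}$ exists near $D$. We would argue locally via Green's function/diameter estimates at small scales, imitating \cref{diameter estimates} and \cref{volume non-collapsing estiamte}.

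First, by compactness it suffices to show $d_{can}(x,y)\le C\,d_Y(x,y)^\alpha$ when $d_Y(x,y)=r$ is small. Fix such $x,y$ and cover the $\omega_Y$-ball $B_Y(x,2r)$. On the open set $B_Y(x,2r)$ we consider the Kähler metrics $\omega_\epsilon$ obtained by smoothing $\omega_{can}$: approximate $Y$ by a resolution $X$ with metrics $f^*\omega_{can}+\epsilon\omega_X$, solving Monge–Ampère equations with data in $L^p$.

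Second, using the $L^\infty$ estimates (\cref{a priori in nef class for MA}) and a local Kołodziej-type Hölder estimate, we obtain $\omega_\epsilon$-volume bounds $\mathrm{Vol}_{\omega_\epsilon}(f^{-1}B_Y(x,2r))\le C r^{\beta}$ for some $\beta>0$. This comes from Hölder's inequality applied to $e^{\phi}\Omega_Y$ with $L^p$ density against the Fubini–Study volume $\sim r^{2n}$.

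Third, we apply the Guo–Phong–Song–Sturm relative diameter estimate, i.e. the Green's function argument of \S5–6 localized with cutoffs. This gives that $f^{-1}B_Y(x,r)$ has $\omega_\epsilon$-diameter at most $C\,\mathrm{Vol}_{\omega_\epsilon}(f^{-1}B_Y(x,2r))^{\gamma}$, uniformly in $\epsilon$. The entropy bound needed for this is uniform because the densities are in $L^p$.

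Finally, letting $\epsilon\to0$ gives $d_{can}(x,y)\le Cr^{\beta\gamma}$. Shrinking $\alpha$ to absorb $C$ for small $r$, and using the global bound $\operatorname{diam}(Y,d_{can})\le C$ from \cref{diameter estimates}, we get $d_{can}\le d_Y^\alpha$ for all pairs.

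The delicate point is making the relative diameter estimate genuinely local, so that it is controlled by the small local volume rather than the total volume. There we would use the cutoff integration-by-parts technique of \cref{L^p bound of green functions}.
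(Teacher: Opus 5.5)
The paper gives no proof of this statement; it quotes it from \cite[Theorem 3.6]{LTZ26}. Your lower bound is fine. Letting $t\to\infty$ in \cref{parabolic Schwarz} and using \cref{smooth convergence} gives $f^*\omega_{can}\ge C^{-1}f^*\omega_Y$ on $X\setminus E$. So every path in $Y\setminus D$ has $\omega_Y$-length at most $C^{1/2}$ times its $\omega_{can}$-length, which gives $d_Y\le C\,d_{can}$ on $Y\setminus D$, and then on $Y$ by completion.

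The upper bound has a genuine gap. The estimate $\operatorname{diam}_{\omega_\epsilon}(f^{-1}B_Y(x,r))\le C\operatorname{Vol}_{\omega_\epsilon}(f^{-1}B_Y(x,2r))^{\gamma}$, uniformly in $\epsilon$, carries the entire H\"older bound, and you assert it rather than derive it.

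\textbf{The estimate is false as stated.} Take $x\in D$ with a positive-dimensional fibre $F=f^{-1}(x)$. For fixed $\epsilon>0$ the metric $\omega_\epsilon$ is nondegenerate, so $\operatorname{diam}_{\omega_\epsilon}(F)>0$. Yet $\operatorname{Vol}_{\omega_\epsilon}(f^{-1}B_Y(x,2r))\to\operatorname{Vol}_{\omega_\epsilon}(F)=0$ as $r\to0$. Any correct version needs an error term that vanishes only as $\epsilon\to0$ at fixed $r$; in other words, it must capture the metric collapse of the fibres of $f$.

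\textbf{The tools you invoke do not supply this.}
The cutoff integration by parts in \cref{L^p bound of green functions} lives on $X\setminus U_1$, where the $\omega_t$ are uniformly equivalent, and only absorbs the torsion--gradient coupling.
Green's formula bounds $d(x_0,y_0)$ by the global quantities $V^{-1}\int_X d\,\omega^n+\int_X|\nabla G(y_0,\cdot)|$.
The cutoff trick of \cref{volume non-collapsing estiamte}, applied to a small-volume set, only bounds the distance of its points to its boundary (as in \cref{LTZ lemma 4.1}), not its diameter.

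\textbf{The obvious elementary localization also fails.} Suppose two points of $f^{-1}B_Y(x,r)$ are at $\omega_\epsilon$-distance $R$. A path joining them inside $f^{-1}B_Y(x,r)$ carries $\gtrsim R/r$ disjoint $\omega_\epsilon$-balls of radius $r$, and the Schwarz lemma keeps these balls inside $f^{-1}B_Y(x,Cr)$. Comparing a non-collapsing bound $\ge c\,r^{a}$ per ball with your volume bound $\le C r^{2n(1-1/p)}$ gives $R\le C r^{1-a+2n(1-1/p)}$.
\begin{itemize}
\item With the exponent $a$ of \cref{volume non-collapsing estiamte}, which exceeds $2n+1$, the resulting power of $r$ is negative.
\item Even the sharp Bishop--Gromov value $a=2n$ requires $p>2n$.
\item But if $f$ has a fibre of dimension $k\ge1$, the $\omega_X$-tube of radius $cs$ around it lies in $f^{-1}B_Y(x,s)$ and has volume $\gtrsim s^{2(n-k)}$. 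This forces $p\le n/k\le n$.
\end{itemize}
So this route works only when $E=\emptyset$, where there is nothing to prove. A genuinely new input is needed near $D$, and this is exactly what the paper imports from \cite{LTZ26}.

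Finally, your last step cannot absorb the constant for pairs with $d_Y\ge r_0$ unless $\operatorname{diam}(Y,d_{can})\le r_0^{\alpha}<1$. You should keep the constant, in the form $d_{can}\le C\,d_Y^{\alpha}$, which is also how the result is used in Section 11.
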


In the sequel, we will fix a small constant $\varepsilon>0$ and set 
$$
V_\varepsilon:=\{y\in Y\;|\;d_{can}(E,y)<\varepsilon\},\quad\tilde{V}_\varepsilon:=f^{-1}(V_\varepsilon).
$$
The notation $(Y\setminus V_\varepsilon,d_{can})$ will always mean the restricted metric $d_{can}|_{Y\setminus V_\varepsilon}$, and similarly for $(X\setminus\tilde{V}_\varepsilon,d_t)$. Following the notations used in a series works of Cheeger-Colding, we will use the notation $\Psi(\alpha,\beta,\gamma|A,B)$ to denote a number depending on $\alpha,\beta,\gamma,A$ and $B$ such that $\Psi(\alpha,\beta,\gamma|A,B)\to0$ as $\alpha,\beta,\gamma\to0$ while $A$ and $B$ are fixed.

\begin{lemma}\label{LTZ lemma 4.1}
    We have
    \begin{equation}\label{dt distance}
        d_{GH}((X,d_t),(X\setminus\tilde{V}_\varepsilon,d_t))=\Psi(\varepsilon,t^{-1}),
    \end{equation}
    and
    \begin{equation}\label{d_can distance}
           d_{GH}((Y,d_{can}),(Y\setminus{V}_\varepsilon,d_{can}))=\Psi(\varepsilon).
    \end{equation}
\end{lemma}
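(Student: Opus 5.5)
The natural approach is to use the standard fact that if $A\subset Z$ is a subset of a metric space with $d_{GH}$-error controlled by the Hausdorff distance, then $d_{GH}((Z,d),(A,d|_A))\leq d_H(A,Z)$ (the identity inclusion is an isometric embedding into $Z$). So both statements reduce to showing that every point of the whole space lies within a small distance of the complement of the neighborhood: for \eqref{d_can distance}, that $\sup_{y\in V_\varepsilon}d_{can}(y,Y\setminus V_\varepsilon)=\Psi(\varepsilon)$, and for \eqref{dt distance}, that $\sup_{x\in\tilde V_\varepsilon}d_t(x,X\setminus\tilde V_\varepsilon)=\Psi(\varepsilon,t^{-1})$.

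For \eqref{d_can distance}, we take $y\in V_\varepsilon$, so there is $e\in D$ (the image of $E$) with $d_{can}(e,y)<\varepsilon$. Since $D$ is a proper analytic subset of the irreducible variety $Y$, there are points of $Y\setminus D$ arbitrarily $d_Y$-close to $e$; more quantitatively, we want a point $y'$ with $d_{can}(y',D)\geq\varepsilon$ and $d_{can}(y,y')\leq\Psi(\varepsilon)$. Here we plan to use \cref{Holder of d_can}. Choose $\delta=\delta(\varepsilon)$ with $\delta\to0$ as $\varepsilon\to0$ such that every $d_Y$-ball of radius $\delta$ centered on $D$ contains a point at $d_Y$-distance at least $C\varepsilon$ from $D$. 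This holds because $D$ has real codimension at least $2$, and a uniform version follows by compactness and a local Łojasiewicz-type estimate, or more crudely because $Y\setminus D$ is dense and $D$ is compact. Then $d_{can}(y',D)\ge C^{-1}d_Y(y',D)\geq\varepsilon$ and $d_{can}(e,y')\le\delta^\alpha$. Hence $d_{can}(y,Y\setminus V_\varepsilon)\le\varepsilon+\delta^\alpha=\Psi(\varepsilon)$.

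For \eqref{dt distance}, we take $x\in\tilde V_\varepsilon$ and need a curve of short $g_t$-length from $x$ to $X\setminus\tilde V_\varepsilon$. We plan to use Gill's smooth convergence (\cref{smooth convergence}) on compact sets away from $E$, together with the volume non-collapsing and diameter bounds of \cref{main theorem}. The argument is a standard volume-comparison trick. Suppose $B_{d_t}(x,r)\subset\tilde V_\varepsilon$. Then \cref{volume non-collapsing estiamte} gives $\operatorname{Vol}_{\omega_t}(\tilde V_\varepsilon)\geq c r^{\alpha}$. On the other hand, by \cref{cor:uniform equivalence of volume forms}, $\operatorname{Vol}_{\omega_t}(\tilde V_\varepsilon)\le C\operatorname{Vol}_{\omega_0}(\tilde V_\varepsilon)$. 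This tends to $0$ as $\varepsilon\to0$, since $\tilde V_\varepsilon$ decreases to $f^{-1}(D)=E$, which has measure zero. Therefore $r\le\Psi(\varepsilon)$, uniformly in $t$, so $d_t(x,X\setminus\tilde V_\varepsilon)\le\Psi(\varepsilon)$. This even removes the $t^{-1}$ dependence. The only subtlety is that the normalization: $d_t$ here refers to the normalized-flow metric (equivalently $\omega(t)/t$). The volume bound of \cref{volume non-collapsing estiamte} holds for $r<r_0$, which suffices once $\Psi(\varepsilon)<r_0$.

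The main obstacle is the uniform version of the density claim in the first part: controlling, uniformly over $e\in D$, how close to $e$ one finds points of $Y$ with $d_Y$-distance $\gtrsim\varepsilon$ from $D$. If a clean analytic argument is awkward, the fallback is to run the same volume argument on $(Y,d_{can})$. In that version, $\omega_{can}^n$ has $L^p$ density against $\omega_Y^n$, so $\operatorname{Vol}_{can}(V_\varepsilon)\to0$. The remaining input is a lower volume bound for $d_{can}$-balls, which would follow by passing the uniform non-collapsing to the limit on $Y\setminus D$ via smooth convergence.
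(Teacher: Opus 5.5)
Your proposal is correct.

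\textbf{Part \eqref{dt distance}.} This is the paper's argument. The uniform non-collapsing of \cref{volume non-collapsing estiamte} is played against
$\operatorname{Vol}_{\omega_t}(\tilde V_\varepsilon)\le C\operatorname{Vol}_{\omega_0}(\tilde V_\varepsilon)\to 0$.
The paper phrases this through an auxiliary neighbourhood $W$ of $E$. Applying it directly to $\tilde V_\varepsilon$, as you do, is cleaner and gives a bound independent of $t$.

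\textbf{Part \eqref{d_can distance}.} Here your route genuinely differs from the paper's.
\begin{itemize}
\item The paper only says the proof is ``similar''. That is, it uses the volume non-collapsing of $(Y,d_{can})$ from \cite[Theorem 3.5]{GPSS23}, together with the smallness of $\operatorname{Vol}(V_\varepsilon,\omega_{can}^n)$.
\item Your main route instead combines the bi-H\"older equivalence \cref{Holder of d_can} with a purely topological fact.
\end{itemize}
The step you flag as the main obstacle is not one. Cover the compact set $D$ by finitely many $d_Y$-balls of radius $\delta/2$, and in each pick a point of the dense set $Y\setminus D$. This gives $s(\delta)>0$ such that every point of $D$ is $\delta$-close to a point at $d_Y$-distance $\ge s(\delta)$ from $D$. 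That is exactly the uniform statement you need, and no \L{}ojasiewicz estimate is required.

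\textbf{Comparison.} Your route needs no volume information on the singular limit. It uses only a result the paper invokes anyway in the proof of \cref{prop:key estimate}. The paper's route runs parallel to the first part and gives an explicit rate in terms of $\operatorname{Vol}(V_\varepsilon,\omega_{can}^n)$ and the non-collapsing exponent. The cost is an external non-collapsing theorem for $d_{can}$.

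\textbf{On your fallback.} It coincides with the paper's route, but it should rest on that citation rather than on ``passing the non-collapsing to the limit via smooth convergence''. Transferring $d_t$-balls into $d_{can}$-balls requires $d_{can}(f(x),f(z))\le d_t(x,z)+o(1)$. That is the hard estimate \eqref{eq:4.10} to which the later sections are devoted.
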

\begin{proof}
    We only give a proof of \eqref{dt distance} as the proof of \eqref{d_can distance} is similar using the volume non-collapsing estimate in \cite[Theorem 3.5]{GPSS23}. 

    For each $0<\varepsilon<r_0$, where $r_0$ is the uniform constant in \cref{volume non-collapsing estiamte}, it suffices to show that there is a neighborhood $W$ of $E$ with smooth boundary such that for any $x\in W$, we have $d_t(x,\partial W)<\varepsilon$ for all large $t$. Recall that from \cref{estimates of varphi} we have $\omega_t^n\leq C\Omega$ for some uniform constant $C$ and a fixed volume form $\Omega$ on $X$, so we can choose $W$ so close to $E$ that 
$$
\operatorname{Vol}_{g_t}(W)<c\left(\frac{\varepsilon}{2}\right)^\alpha,\quad \forall t>0,
$$
where $\alpha,c$ are the uniform constants in \cref{volume non-collapsing estiamte}. 
On the other hand, fix $x\in W$ and suppose that $d_t(x,\partial W)\geq\varepsilon$ for some $t>0$. It follows that the geodesic ball $B_{g_t}(x,\frac{\varepsilon}{2})\subset W$, the volume non-collapsing estimate \cref{volume non-collapsing estiamte} then yields that
\begin{align*}
    c\left(\frac{\varepsilon}{2}\right)^\alpha\leq\operatorname{Vol}_{g_t}\left(B_{g_t}\left(x,\frac{\varepsilon}{2}\right)\right)\leq\operatorname{Vol}_{g_t}(W)<c\left(\frac{\varepsilon}{2}\right)^\alpha,
\end{align*}
a contradiction.
\end{proof}

As in \cite{LTZ26}, \cref{main thm:uniqueness} can be reduced to establish an upper bound of $d_{can}$ in terms of $d_t$:

\begin{lemma}
    If for any given $\varepsilon>0$ and $p,q\in X\setminus\tilde{V}_\varepsilon$, we have
    \begin{equation}\label{eq:4.10}
        d_{can}(f(p),f(q))\leq d_t(p,q)+\Psi(t^{-1}|\varepsilon).
    \end{equation}
    Then, $(X,d_t)$ converges to $(Y,d_{can})$ in the Gromov-Hausdorff topology.
\end{lemma}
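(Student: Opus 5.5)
Given the hypothesis \eqref{eq:4.10}, I will show $d_{GH}((X,d_t),(Y,d_{can}))\to0$ in three moves. First, by \cref{LTZ lemma 4.1} it suffices to show that $d_{GH}((X\setminus\tilde V_\varepsilon,d_t),(Y\setminus V_\varepsilon,d_{can}))=\Psi(t^{-1}|\varepsilon)$. Second, on $X\setminus\tilde V_\varepsilon$ the map $f$ is a biholomorphism onto $Y\setminus V_\varepsilon$, so it is natural to use $f$ itself as the Gromov--Hausdorff approximation. Third, I need the distortion of $f$ to be small, i.e. a two-sided comparison
\begin{equation*}
\bigl|d_{can}(f(p),f(q))-d_t(p,q)\bigr|\le\Psi(t^{-1}|\varepsilon),\qquad p,q\in X\setminus\tilde V_\varepsilon .
\end{equation*}
Because $f$ is surjective between the truncated sets, distortion control alone gives the GH estimate. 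The upper bound on $d_{can}$ is exactly the hypothesis \eqref{eq:4.10}, so the remaining work is the reverse inequality $d_t(p,q)\le d_{can}(f(p),f(q))+\Psi(t^{-1}|\varepsilon)$.

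For the reverse inequality I follow \cite{LTZ26}. Fix $p,q$ and take a $d_{can}$-almost minimizing curve between $f(p)$ and $f(q)$. Since $D$ has Minkowski codimension at least $2$, and using the convexity of the regular part $Y\setminus D$ for $d_{can}$ (from \cite{Song14} together with the bi-H\"older comparison \cref{Holder of d_can}), I can perturb this curve so that it stays in $Y\setminus V_\delta$ for some $\delta=\delta(\varepsilon,\eta)$. The perturbed curve has $d_{can}$-length at most $d_{can}(f(p),f(q))+\eta$. On the compact set $X\setminus\tilde V_\delta$, \cref{smooth convergence} gives $\omega(t)\to\omega_{KE}=f^*\omega_{can}$ smoothly. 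Hence the $g_t$-length of the lifted curve converges to its $\omega_{can}$-length. Letting $t\to\infty$ and then $\eta\to0$ gives the reverse inequality.

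The main obstacle is making the perturbation step uniform in the pair $p,q$ and producing a rate of the form $\Psi(t^{-1}|\varepsilon)$ rather than just a pointwise limit. I would handle this by compactness. Take a finite $\eta$-net of $Y\setminus V_\varepsilon$ under $d_{can}$, choose one almost-minimizing curve avoiding $V_\delta$ for each pair of net points, and note that the smooth convergence is uniform on the finitely many compact curves. The bound then extends from net points to arbitrary points via the triangle inequality. In this extension, distances for $d_t$ between nearby points of $X\setminus\tilde V_\varepsilon$ are controlled by the uniform equivalence of $\omega(t)$ with $\omega_{KE}$ away from $E$. With the two-sided distortion bound in hand, combining it with \cref{LTZ lemma 4.1} through the triangle inequality for $d_{GH}$ finishes the proof.
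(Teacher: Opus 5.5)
Your proposal is correct and follows the same route as the paper. Both arguments use $f$ as the approximation on the truncated sets, obtain the reverse inequality $d_t(p,q)\le d_{can}(f(p),f(q))+\Psi(t^{-1}|\varepsilon)$ from Gill's smooth convergence along curves in the regular region, and glue with \cref{LTZ lemma 4.1}; you supply the details the paper leaves as ``not hard to see''.

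One simplification: the avoidance step needs no convexity or Minkowski-codimension input. Since $d_{can}$ is by definition the completion of the $\omega_{can}$-length metric on $Y\setminus D$, almost-minimizing curves can be taken inside $Y\setminus D$, and being compact they automatically avoid some $V_\delta$. In the net step, choosing $\eta<\varepsilon/4$ keeps the short connecting curves inside $Y\setminus V_{\varepsilon/2}$, where $\omega(t)$ and $\omega_{KE}$ are uniformly equivalent for large $t$.
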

\begin{proof}
    By the smooth convergence \cref{smooth convergence} and \cref{LTZ lemma 4.1}, it is not hard to see that for each $p,q\in X\setminus\tilde{V}_\varepsilon$, we have
    $$
d_t(p,q)\leq d_{can}(f(p),f(q))+\Psi(t^{-1}|\varepsilon),
    $$
    this combined with our condition \eqref{eq:4.10} yields that
    $$
|d_t(p,q)-d_{can}(f(p),f(q))|\leq\Psi(t^{-1}|\varepsilon),
    $$
    and we get the desired Gromov-Hausdorff approximation.
\end{proof}
Now, everything is reduced to show \eqref{eq:4.10} in our case. Recall the reparametrization \eqref{backward CRF}, and here we write as
\begin{equation}
    \partial_\tau\tilde{g}=2Ric^C(\tilde{g}(\tau)),\quad\tau\in[0,\bar{\tau}],
\end{equation}
to distinguish notations. Since it depends on the time $T>>1$ ($\tilde{g}(0)=g(T)$), we will use $L_T(q,\bar{\tau})$ to denote the $\mathcal{L}$-length of the minimizing $\mathcal{L}$-geodesic from $p$ to $q$ in the sequel. Also, since we always consider parameters
\begin{equation}
    T>>1>>\bar{\tau}>0,
\end{equation}
in this convention we have the uniform Chern Scalar curvature bound
\begin{equation}\label{eq:chern scalar curvature bound}
    \sup_X\left|S_C(\tilde{g}(\tau))\right|\leq\frac{C}{1-2\bar{\tau}}\leq C,
\end{equation}
by \cref{Scalar curvature bound} and the fact that $\bar{\tau}\ll 1$. As in \cite{LTZ26}, we need the following two estimates of the upper and lower bound of $L_T$:
\begin{lemma}\label{lem:upper bound of L}
    Fix $p\in X\setminus\tilde{V}_\varepsilon$ and let $L_T(q,\bar{\tau})$ be the $\mathcal{L}$-distance from $p$ to $q$. Then for each $q\in X\setminus\tilde{V}_\varepsilon$, we have
    \begin{equation}\label{eq:uper bound of L}
        L_T(q,\bar{\tau})\leq\frac{1}{2\sqrt{\bar{\tau}}}d_{can}(f(p),f(q))^2+\Psi(T^{-1}|\varepsilon,\bar{\tau})+\Psi(\bar{\tau}|\varepsilon).
    \end{equation}
\end{lemma}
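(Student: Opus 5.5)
Since $L_T(q,\bar\tau)$ is an infimum over curves, it suffices to construct one test curve $\gamma:[0,\bar\tau]\to X$ with $\gamma(0)=p$, $\gamma(\bar\tau)=q$ and to bound its $\mathcal{L}$-length. Set $D:=d_{can}(f(p),f(q))$.

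\textbf{Step 1: an almost-minimizing path in the regular part.} By \cref{Holder of d_can}, \cref{LTZ lemma 4.1} and the construction of $d_{can}$ as the metric completion of $(X\setminus E,\omega_{KE})$, there is a smooth curve $\sigma:[0,1]\to Y\setminus V_{\varepsilon'}$, with $\varepsilon'=\varepsilon'(\varepsilon)$ and $\omega_{can}$-length at most $D+\Psi(\varepsilon'|\varepsilon)$, joining $f(p)$ to $f(q)$. The reason is that $Y\setminus D$ is (after removing a set of Minkowski codimension at least $2$) a convex-in-the-limit region. More concretely, the almost-geodesic can be pushed off $V_{\varepsilon'}$ with small length cost because $D$ has real codimension $\ge2$ and $\omega_{can}$ is bounded on compact subsets. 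Pull back to $\tilde\sigma=f^{-1}\circ\sigma$, which lies in the fixed compact set $K:=X\setminus\tilde V_{\varepsilon'}$.

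\textbf{Step 2: choice of test curve and the kinetic term.} Parametrize $\gamma(\tau):=\tilde\sigma(\sqrt{\tau/\bar\tau})$. Under the backward flow, $\tilde g(\tau)$ is a rescaling of $g(T+\cdot)$, and $\tau\in[0,\bar\tau]$ corresponds to times $t\ge T+\log(1-2\bar\tau)$. By Gill's smooth convergence (\cref{smooth convergence}), on $K$ we have $\|\tilde g(\tau)-(1-2\tau)\,\omega_{KE}\|_{C^0(K)}\le\Psi(T^{-1}|\varepsilon)$ uniformly in $\tau\in[0,\bar\tau]$. Hence $|\dot\gamma|^2_{\tilde g(\tau)}\le(1+\Psi(T^{-1}|\varepsilon)+C\bar\tau)\,|\tilde\sigma'|^2_{\omega_{KE}}/(4\tau\bar\tau)$. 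With $\sigma$ parametrized at constant speed, substituting $u=\sqrt{\tau/\bar\tau}$ gives
\[
\int_0^{\bar\tau}\sqrt\tau\,|\dot\gamma|^2\,d\tau\le\frac{(1+\Psi)\,(D+\Psi(\varepsilon'|\varepsilon))^2}{2\sqrt{\bar\tau}}.
\]
The Riemannian metric attached to $\omega_{KE}$ induces $d_{can}$ on the regular part, up to the fixed normalization already used in \cref{LTZ lemma 4.1}.

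\textbf{Step 3: the scalar curvature term.} By \eqref{eq:chern scalar curvature bound}, $\int_0^{\bar\tau}\sqrt\tau\,S\,d\tau\le C\bar\tau^{3/2}=\Psi(\bar\tau)$.

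\textbf{Step 4: collecting the error terms.} The error from Step 2 is $\Psi(T^{-1}|\varepsilon)\,D^2/\sqrt{\bar\tau}$. This is $\Psi(T^{-1}|\varepsilon,\bar\tau)$, because $D$ is uniformly bounded by the diameter bound for $d_{can}$ (from \cref{Holder of d_can}). The error $C\bar\tau D^2/\sqrt{\bar\tau}$ is $\Psi(\bar\tau)$. The path-length error $\Psi(\varepsilon'|\varepsilon)D/\sqrt{\bar\tau}$ must be made $\Psi(\bar\tau|\varepsilon)$.

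\textbf{Main obstacle.} This last term is the main obstacle: the excess length of the path must be taken to $0$ independently of $\bar\tau$. I would first try to make this excess exactly zero up to an arbitrarily small $\delta$. That is, for fixed $\varepsilon$ and each $\delta$, choose $\sigma$ with excess at most $\delta$, staying in some $Y\setminus V_{\varepsilon'(\delta)}$, and then let $T\to\infty$ first. The cost of shrinking $\varepsilon'(\delta)$ is absorbed into $\Psi(T^{-1}|\varepsilon,\bar\tau)$, since $\delta$ may be chosen depending on $\bar\tau$ and the constants in Step 2 depend only on the compact set. This yields \eqref{eq:uper bound of L}.
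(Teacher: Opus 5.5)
Your proposal is correct and is essentially the argument the paper imports from Lee--Tosatti--Zhang: take as test curve a near-minimizing $\omega_{KE}$-path from $p$ to $q$ inside $X\setminus E$, reparametrized by $u=\sqrt{\tau/\bar\tau}$; control the kinetic term by Gill's smooth convergence on the compact set containing it, and the potential term by the uniform Chern scalar curvature bound \eqref{eq:chern scalar curvature bound}. Two points deserve to be made explicit: Step 1 really rests on $d_{can}$ being the length metric of $\omega_{KE}$ on the regular part, not on \cref{Holder of d_can} or \cref{LTZ lemma 4.1}, with $\varepsilon'$ made uniform in $p,q$ by compactness of $X\setminus\tilde V_\varepsilon$; and the excess should be chosen as $\delta=o(\sqrt{\bar\tau})$, e.g.\ $\delta=\bar\tau$, so that the $T$-error genuinely depends on $\bar\tau$ through $\varepsilon'(\delta)$, which the statement allows.
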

\begin{proof}
    Taking into account the Chern scalar curvature bound \eqref{eq:chern scalar curvature bound} and the smooth convergence \cref{smooth convergence}, the proof of \cite[Page 18, equation (4.29)]{LTZ26} can be carried out word by word. Note that the term $\Psi(T^{-1}|\varepsilon,\bar{\tau})$ can essentially be written as $\Psi(T^{-1}|\varepsilon)$ here, as it cannot blowup as $\bar{\tau}\to0$.
\end{proof}

The hard part is to establish the lower bound of $L_T$:

\begin{proposition}\label{prop:key estimate}
    Fix $p\in X\setminus\tilde{V}_\varepsilon$ and let $L_T(q,\bar{\tau})$ be the $\mathcal{L}$-distance from $p$ to $q$. Then for each $q\in X\setminus\tilde{V}_\varepsilon$, we have
    \begin{equation}\label{key inequality}
        L_T(q,\bar{\tau})\geq\frac{1}{2\sqrt{\bar{\tau}}}d_{can}(f(p),f(q))^2+\Psi(T^{-1}|\varepsilon,\bar{\tau})+\Psi(\bar{\tau}|\varepsilon).
    \end{equation}
\end{proposition}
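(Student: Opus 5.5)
The plan is to follow the strategy of \cite{LTZ26}: produce, for most of the minimizing $\mathcal{L}$-geodesics, a competitor curve in $(Y\setminus V_{\varepsilon'},d_{can})$ whose length bounds $d_{can}(f(p),f(q))$ from above. The three ingredients are the almost-avoidance principle, the almost monotonicity of \cref{lem:almost monotonicity}, and a local gradient bound for $L_T$.

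\textbf{Step 1: a priori bounds on minimizers.} For a minimizing $\mathcal{L}$-geodesic $\gamma$ from $p$ to $q$, \cref{lem:upper bound of L} and the scalar curvature bound \eqref{eq:chern scalar curvature bound} give
\[
\int_0^{\bar\tau}\sqrt{\tau}\,|\dot\gamma|^2\,d\tau\le \frac{C}{\sqrt{\bar\tau}}.
\]
Then \cref{lem:evolution of L bar} and a maximum principle argument, run away from $E$ where the geometry is uniformly controlled, give $\bar L\le C$ on the relevant region. This yields $l(\cdot,\tau)\le C/\tau$.

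\textbf{Step 2: almost-avoidance.} By the exit-time lemma \cref{lem:L-geodesic-exit-time}, a geodesic starting at $p\notin\tilde V_\varepsilon$ stays outside $\tilde V_{\varepsilon/2}$ until some $\tau_1\ge C^{-1}\bar\tau$. On $[\tau_1,\bar\tau]$, \cref{lem:almost monotonicity} together with $l\le C/\tau$ gives
\[
\mathcal{J}(v,\tau)\ge c\,\mathcal{J}(v,\bar\tau)\,(\tau/\bar\tau)^{n}e^{-C}.
\]
This lets us pull the measure of endpoints $q'$ near $q$ back to $\mathcal{L}$-geodesics and compare it with the volume swept at earlier times. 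Integrating over a small $d_t$-ball $B$ around $q$ and over $\tau$, the set of $(q',\tau)$ with $\gamma_{q'}(\tau)\in\tilde V_\delta$ has measure bounded by
\[
C\int\operatorname{Vol}(\tilde V_\delta)\,d\tau .
\]
By \cref{Holder of d_can} and the Minkowski dimension bound $\le 2n-2$ of $D$, we have $\operatorname{Vol}(V_\delta)\le C\delta^{2\alpha}$. Chebyshev's inequality then gives many $q'\in B$ whose geodesic spends time $\le\Psi(\delta|\varepsilon,\bar\tau)$ in $\tilde V_\delta$.

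\textbf{Step 3: comparison.} For such a $q'$, use the smooth convergence \cref{smooth convergence} outside $\tilde V_\delta$ and Cauchy--Schwarz to bound $\int_0^{\bar\tau}|\dot\gamma|_{g(T)}\,d\tau$. Projecting by $f$ and using the $d_{can}$-diameter of $V_\delta$, which is $\Psi(\delta)$, to jump across the short excursions, we obtain
\[
d_{can}(f(p),f(q'))^2\le 2\sqrt{\bar\tau}\,L_T(q',\bar\tau)+\Psi+C\bar\tau.
\]
To pass from $q'$ to $q$, we need the local spatial gradient bound for $L_T$ near points outside $\tilde V_\varepsilon$, namely \cref{lem:local_gradient_L}. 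This is obtained from \eqref{nabla L}, $|\nabla L|=2\sqrt{\bar\tau}|X(\bar\tau)|$, via a backward exit-time ODE argument on \eqref{eq:ds X hat}, using bounded geometry away from $E$. We then conclude by letting $\delta\to0$ and shrinking the ball.

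The main obstacle is Step 2: converting almost monotonicity into quantitative avoidance. The torsion error factor has to be shown uniformly bounded, which needs both $\tau_1\gtrsim\bar\tau$ and $l\le C/\tau$. The volume computation of the swept set must also be done through the Jacobian, since there is no exact monotonicity to fall back on. I would first try to reproduce \cite{LTZ26}'s tube-volume argument verbatim, inserting the factor $e^{-C}$ at each place where monotonicity is used.
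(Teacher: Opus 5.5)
\textbf{Gap in Step 3: controlling total time near $D$ does not control the $d_{can}$-cost of excursions.}

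Your Step 2 only bounds the \emph{total time} that $\gamma$ spends in $\tilde V_\delta$, and hence, by Cauchy--Schwarz, the total $\tilde g$-length spent there. Step 3 then tries to ``jump across the short excursions'' using the claim that the $d_{can}$-diameter of $V_\delta$ is $\Psi(\delta)$. That claim is false in general. $V_\delta$ contains $D=f(E)$, which for $n\ge 3$ can be positive-dimensional; for example, a crepant resolution of a curve of transversal $A_1$ singularities of $X_{can}$ has $D$ a curve. Then $\operatorname{diam}_{d_{can}}V_\delta\ge \operatorname{diam}_{d_{can}}D>0$.

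When $D$ is finite your shortcut can be made to work: jump from the first entry to the last exit of each component of $V_\delta$ at cost $O(\delta)$, and then no avoidance is needed at all. That is not the general case.

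Near $D$ the only comparisons available are $\tilde g(\tau)\ge C^{-1}f^*g_Y$ from \cref{parabolic Schwarz} and $d_{can}\le C d_Y^{\alpha}$ from \cref{Holder of d_can}. So an excursion of $\tilde g$-length $\ell_i$ can cost up to $C\ell_i^{\alpha}$ in $d_{can}$. The sum $\sum_i \ell_i^{\alpha}\le N^{1-\alpha}\bigl(\sum_i\ell_i\bigr)^{\alpha}$ is not small unless the number $N$ of excursions is controlled. A small total time says nothing about $N$.

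\textbf{The missing idea: count the excursions.}

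This is why the paper, following \cite{LTZ26}, formulates almost avoidance as a bound on the \emph{number} of $\eta$-events (\cref{prop:almost-avoidance-D1}, \cref{prop:almost aviodable:final}).
\begin{itemize}
\item A curve with more than $\eta^{-\sigma}$ events spends time $\gtrsim \bar\tau\,\eta^{2(1-\sigma)}$ in $\tilde U_\eta$.
\item The Jacobian/Fubini argument shows such endpoints have small measure, provided $\operatorname{Vol}(U_\eta)\le C_\rho\eta^{2-\rho}$ with $\rho<2\sigma$.
\item This nearly sharp codimension-two Minkowski bound with respect to $d_{can}$ forces the splitting of $D$ into the metric singular part $D_1$ (handled by Cheeger--Naber) and $D\cap\mathcal R_{\varepsilon_1}$ (handled by $L^{p_0}$-integrability of $\omega_{can}^n/\omega_Y^n$), treated at two scales $\eta'\ll\eta$.
\item The discrete H\"older inequality, with $\sigma$ small relative to $\alpha$, then gives $\int_{I\cup I'}|\partial_\tau(f\circ\gamma)|_{g_{can}}\,d\tau\to 0$.
\end{itemize}
A bound like your $\operatorname{Vol}(V_\delta)\le C\delta^{2\alpha}$ is too weak to close such a counting argument when $\alpha$ is small. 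It also does not follow from \cref{Holder of d_can}, which only gives $V_\delta\subset\{d_Y(\cdot,D)<C\delta\}$.

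\textbf{Smaller points.}
\begin{itemize}
\item Your Jacobian lower bound drops the factor $e^{l(\gamma(\tau),\tau)-l(q',\bar\tau)}$. This factor is only bounded below by $e^{-C/\bar\tau}$, since $l(q',\bar\tau)$ is of order $\bar\tau^{-1}$. It is harmless because $\bar\tau$ is fixed before the neighborhood scale is sent to zero.
\item The bound $l\le C/\tau$ along a minimizer needs no maximum principle. Restricting a minimizing $\mathcal L$-geodesic gives $L(\gamma(\tau),\tau)\le L(q',\bar\tau)+C\bar\tau^{3/2}$ directly.
\end{itemize}
The remaining ingredients match the paper: Cauchy--Schwarz on the portion where $\tilde g(\tau)\approx f^*g_{can}$, the exit-time lemma, and \cref{lem:local_gradient_L} to pass from $q'$ to $q$.
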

Accepting this proposition, we show that it implies \cref{main thm:uniqueness}:
\begin{theorem}
    \cref{prop:key estimate} implies \cref{main thm:uniqueness}.
\end{theorem}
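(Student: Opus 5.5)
The plan is to deduce \eqref{eq:4.10} from \cref{prop:key estimate} combined with \cref{lem:upper bound of L}, by comparing the $\mathcal{L}$-length $L_T(q,\bar{\tau})$ with the Riemannian distance $d_T(p,q)$ of $g(T)=\tilde g(0)$. After that, the preceding reduction lemma gives Gromov--Hausdorff convergence of $(X,d_t)$ to $(Y,d_{can})$. The homeomorphism of the limit with $X_{can}$ is already known from \cite{Song14}.

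Fix $\varepsilon>0$ and $p,q\in X\setminus\tilde V_\varepsilon$. Let $\sigma:[0,1]\to X$ be a minimizing $g(T)$-geodesic from $p$ to $q$ of length $d_T(p,q)$, which is uniformly bounded by \cref{diameter estimates}. Reparametrize it as $\gamma(\tau)=\sigma(\sqrt{\tau/\bar\tau})$ for $\tau\in[0,\bar\tau]$. Using this as a competitor in the definition of $L_T$ gives
\[
L_T(q,\bar\tau)\le\int_0^{\bar\tau}\sqrt{\tau}\,S\,d\tau+\int_0^{\bar\tau}\sqrt{\tau}\,|\dot\gamma|^2_{\tilde g(\tau)}\,d\tau .
\]
The first term is $O(\bar\tau^{3/2})$ by the Chern scalar curvature bound \eqref{eq:chern scalar curvature bound}. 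For the second term, with $s=\sqrt{\tau}$, it equals $\frac12\int_0^{\sqrt{\bar\tau}}|\frac{d\gamma}{ds}|^2_{\tilde g(s^2)}ds$. We compare $\tilde g(\tau)$ with $\tilde g(0)=g(T)$ for $\tau\le\bar\tau$. Since $\tilde g(\tau)=e^{t-T}g(t)$ with $t-T=\log(1-2\tau)$, this is the normalized flow over a time window of length $O(\bar\tau)$ near time $T$. The evolution is $\partial_\tau\tilde g=2Rc$, and the uniform lower bound $Ric^C\ge -C\tilde g$ from \cref{parabolic Schwarz} is not directly available, so instead I would use the volume-form equivalence and estimate the length directly. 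Along the flow, $\partial_t\omega=-Ric^C-\omega$ with $-Ric^C=\sqrt{-1}\partial\bar\partial\log\omega^n$. I would bound $\frac{d}{d\tau}\int|\dot\gamma|$ only for the comparison of lengths, using the fact that $\tilde g(\tau)\le (1+\Psi(\bar\tau))\tilde g(0)+\ldots$. This step is delicate.

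A cleaner option is to avoid pointwise metric comparison entirely and follow \cite{LTZ26}: choose $\gamma$ to be a $d_T$-almost-minimizing curve that avoids $\tilde V_{\varepsilon'}$ for a smaller $\varepsilon'$, up to an error $\Psi(\varepsilon'|\ldots)$ obtained from \cref{LTZ lemma 4.1}. Away from $E$, Gill's smooth convergence (\cref{smooth convergence}) gives $\tilde g(\tau)\le(1+\Psi(T^{-1}|\varepsilon')+C\bar\tau)\tilde g(0)$ uniformly. Then
\[
L_T(q,\bar\tau)\le\frac{1}{2\sqrt{\bar\tau}}\bigl(d_T(p,q)+\Psi(\varepsilon')\bigr)^2(1+\Psi(T^{-1}|\varepsilon')+C\bar\tau)+C\bar\tau^{3/2}.
\]
Inserting the lower bound \eqref{key inequality}, multiplying by $2\sqrt{\bar\tau}$, and taking square roots (distances are bounded) gives
\[
d_{can}(f(p),f(q))\le d_T(p,q)+\Psi(\varepsilon')+\Psi(T^{-1}|\varepsilon,\varepsilon',\bar\tau)+\Psi(\bar\tau|\varepsilon).
\]
Here the $\Psi$ errors in \eqref{key inequality} get multiplied by $\sqrt{\bar\tau}$, which only helps.

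To finish, let $T\to\infty$ first, then $\bar\tau\to0$, then $\varepsilon'\to0$; this yields \eqref{eq:4.10} with $t=T$. The main obstacle is justifying that near-minimizing $d_T$-paths can be pushed off $\tilde V_{\varepsilon'}$ with small length cost. I expect to handle this by routing through $\partial\tilde V_{\varepsilon'}$: \cref{LTZ lemma 4.1} shows that every point of the neighborhood lies within $\Psi(\varepsilon'|\ldots)$ of its boundary in $d_T$. The boundary region is connected outside $E$, and there the metrics are uniformly smoothly controlled, so short detours exist.
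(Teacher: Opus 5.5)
Your argument needs, uniformly as $T\to\infty$ with $\bar\tau$ fixed, an upper bound
\[
L_T(q,\bar\tau)\le\frac{1}{2\sqrt{\bar\tau}}\bigl(d_T(p,q)+\Psi\bigr)^2+\Psi .
\]
Neither of your routes supplies it.

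\emph{Metric comparison.} Comparing $\tilde g(\tau)$ with $\tilde g(0)=g(T)$ along a $g(T)$-geodesic requires $\tilde g(\tau)\le(1+\Psi)\tilde g(0)$ on $[0,\bar\tau]$. That amounts to an upper bound on $Rc$ near $E$ independent of $T$. Near $E$ one only controls the Chern scalar curvature, and hence the volume form, not $Rc$.

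\emph{Detours.} The detour route is circular. Suppose every near-minimizing $d_T$-path between points of $X\setminus\tilde V_\varepsilon$ could be pushed into $X\setminus\tilde V_{\varepsilon'}$ at cost $\Psi(\varepsilon')$, uniformly in $T$. Then smooth convergence (\cref{smooth convergence}) on the compact set $X\setminus\tilde V_{\varepsilon'}$ would already give $d_{can}(f(p),f(q))\le d_T(p,q)+\Psi$. That is \eqref{eq:4.10}, obtained without using \cref{prop:key estimate} at all. Ruling out shortcuts through the degenerate region is exactly what has to be proved.

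\cref{LTZ lemma 4.1} does not rule them out. It says every point of $\tilde V_{\varepsilon'}$ is $d_T$-close to $\partial\tilde V_{\varepsilon'}$. It says nothing about two boundary points $a,b$ with $d_T(a,b)$ small being close for the exterior metric. Connectedness and smooth control of the exterior only bound a detour by the exterior distance, which is essentially $d_{can}(f(a),f(b))$. Comparing that quantity with $d_T(a,b)$ is precisely what you are trying to do.

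\emph{The paper's route.} The paper never bounds $\bar L_T(\cdot,\bar\tau)$ above by $d_T$ pointwise. It multiplies the supersolution inequality $(\partial_\tau+\Delta)\bar L_T\le C(\bar L_T+1)$ of \cref{lem:evolution of L bar} by a cutoff $f^*\chi$ supported in $f^{-1}(B^{d_{can}}(f(q),2\delta))$ and integrates over $X\times[0,\bar\tau]$.
\begin{itemize}
\item $d_T$ enters only at $\tau=0$, through $\bar L_T(\cdot,\tau)\to d_T(p,\cdot)^2$ as $\tau\to0$ for fixed $T$. There a $T$-dependent metric comparison is harmless.
\item The lower bound at $\tau=\bar\tau$ comes from \cref{prop:key estimate} applied at points $q'$ near $q$.
\item The error terms are of size $C(\varepsilon)\bar\tau\delta^{-2}$. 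They are controlled by the $d_{can}$-upper bound of \cref{lem:upper bound of L}, the Laplacian bound for $f^*\chi$ from \cref{parabolic Schwarz}, and the Chern scalar curvature bound for $\partial_\tau\tilde\omega^n$.
\end{itemize}
Choosing $\delta$, then $\bar\tau$, then $T$ yields \eqref{eq:4.10}. The missing idea in your proposal is this use of the evolution inequality for $\bar L_T$ to carry information from $\tau=0$ to $\tau=\bar\tau$ in an averaged sense. It is what replaces the unavailable uniform comparison between $\tilde g(\tau)$ and $g(T)$.
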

\begin{proof}
    Set $\bar{L}_T(q,\bar{\tau})=2\sqrt{\bar{\tau}}L_T(q,\bar{\tau})$, it follows from \cref{lem:evolution of L bar} that 
     \begin{equation}\label{eq:evolution of bar L}
\left(\frac{\partial}{\partial\bar{\tau}}+\Delta_{\tilde{g}(\bar{\tau})}\right)\bar{L}_T(\cdot,\bar{\tau})\leq C(\bar{L}_T(\cdot,\bar{\tau})+1).
    \end{equation}
    Meanwhile, \cref{prop:key estimate} gives that
    \begin{equation}\label{eq:key estimate for bar L}
        \bar{L}_T(q^\prime,\bar{\tau})\geq d_{can}(f(p),f(q))^2+\Psi(T^{-1}|\varepsilon,\bar{\tau})+\Psi(\bar{\tau}|\varepsilon)+\Psi(\delta|\varepsilon),
    \end{equation}
for any $q^\prime\in f^{-1}\left(B^{d_{can}}(f(q),\delta)\right)$ with $\delta\ll \varepsilon$. Let $\chi$ be a smooth cutoff function on $Y$ supported in $B^{d_{can}}(f(q),2\delta)$ such that $\chi|_{B^{d_{can}}(f(q),\delta)}\equiv1$. It is easy to see that there exists a constant $C(\varepsilon)$ such that $|\nabla\chi|_{g_{can}}\leq C(\varepsilon)\delta^{-1}$ and $-C(\varepsilon)\omega_Y\leq\delta^2\sqrt{-1}\partial\bar{\partial}\chi\leq C(\varepsilon)\omega_Y$, which in turn combined with the parabolic Schwarz lemma \cref{parabolic Schwarz} implies the Chern-Lapalcian bound of $f^*\chi$:
$$
\sup_X\left|\Delta^C_{\tilde{\omega}(\tau)}(f^*\chi)\right|=\sup_X\left|tr_{\tilde{\omega}(\tau)}\left(\sqrt{-1}\partial\bar{\partial}f^*\chi\right)\right|\leq C(\varepsilon)\delta^2.
$$
By \cref{real and complex Laplacian}, we also have the Riemannian Laplacian bound
\begin{equation}\label{eq:lap of chi}
    \sup_X\left|\Delta_{\tilde{g}(\tau)}(f^*\chi)\right|\leq2\sup_X\left|\Delta^C_{\tilde{\omega}(\tau)}(f^*\chi)\right|+\sup_X|\nabla\chi|_{g_{can}}|\theta^\#|_{g_{can}}\leq C(\varepsilon)\delta^{-2}.
\end{equation}
Now, multiplying both sides of \eqref{eq:evolution of bar L} by $f^*\chi$ and integrating we obtain
\begin{equation}
    \begin{aligned}
&\int_0^{\bar{\tau}}\int_X(f^*\chi)\partial_\tau\bar{L}_T(\cdot,\tau)\tilde{\omega}(\tau)^nd\tau+
\int_0^{\bar{\tau}}\int_X(f^*\chi)\Delta_{\tilde{g}(\tau)}\bar{L}_T(\cdot,\tau)\tilde{\omega}(\tau)^nd\tau\\
\leq&\int_0^{\bar{\tau}}\int_XC(\bar{L}_T(\cdot,\tau)+1)\tilde{\omega}(\tau)^nd\tau.
    \end{aligned}
\end{equation}
An integration by parts using $\partial_\tau(f^*\chi)$ and $\partial_\tau\tilde{\omega}(\tau)^n=2S_C(\tilde{g}(\tau))\tilde{\omega}(\tau)^n$ yields that
\begin{equation}\label{eq:int by parts of L}
    \begin{aligned}
       & \int_X(f^*\chi)\bar{L}_T(\cdot,\bar{\tau})\tilde{\omega}(\tau)^n- \int_X(f^*\chi)\bar{L}_T(\cdot,0)\tilde{\omega}(0)^n+\int_0^{\bar{\tau}}\int_X\Delta_{\tilde{g}(\tau)}(f^*\chi)\bar{L}_T(\cdot,\tau)\tilde{\omega}(\tau)^nd\tau\\
        \leq&\int_0^{\bar{\tau}}\int_XC(f^*\chi)(\bar{L}_T(\cdot,\tau)+1)\tilde{\omega}(\tau)^nd\tau+2\int_0^{\bar{\tau}}\int_X(f^*\chi)\bar{L}_T(\cdot,\tau)S_C(\tilde{g}(\tau))\tilde{\omega}(\tau)^n\\
        \leq& C\int_0^{\bar{\tau}}\int_X(f^*\chi)\left[d_{can}(f(p),f(q))^2+\Psi(T^{-1}|\varepsilon,\bar{\tau})+\Psi(\bar{\tau}|\varepsilon)+\Psi(\delta|\varepsilon)\right]\tilde{\omega}(\tau)^nd\tau\\
        \leq&\Psi(\bar{\tau}|\delta,\varepsilon)\int_X(f^*\chi)\tilde{\omega}(\tau)^n,
    \end{aligned}
\end{equation}
where in the second inequality we have used the upper bound of $\bar{L}_T$ \cref{lem:upper bound of L} and the Chern scalar curvature bound \eqref{eq:chern scalar curvature bound}. Applying the lower bound estimate \eqref{eq:key estimate for bar L} to the term $\int_X(f^*\chi)\bar{L}_T(\cdot,\bar{\tau})\tilde{\omega}(\tau)^n$ and the Riemannian Laplacian estimate \eqref{eq:lap of chi} to the term $\int_0^{\bar{\tau}}\int_X\Delta_{\tilde{g}(\tau)}(f^*\chi)\bar{L}_T(\cdot,\tau)\tilde{\omega}(\tau)^nd\tau$, we can further write
\begin{equation}\label{eq:control of dcan 1}
  \left[d_{can}(f(p),f(q))^2+\Psi(T^{-1}|\varepsilon,\bar{\tau})+\Psi(\bar{\tau}|\varepsilon,\delta)\right]\int_X(f^*\chi)\tilde{\omega}(\tau)^n\leq  \int_X(f^*\chi)\bar{L}_T(\cdot,0)\tilde{\omega}(0)^n.
\end{equation}
We have $\tilde{\omega}(0)=\omega(T)$, and basic calculations using the Cauchy-Schwarz inequality give
\begin{equation}\label{eq:bar L tau to 0}
    \lim_{\tau\to0^+}\bar{L}_T(q,\tau)=d_{\tilde{g}(0)}(p,q)^2=d_{g(T)}(p,q)^2.
\end{equation}
Therefore, we can continue to estimate the right-hand side of \eqref{eq:control of dcan 1}:
\begin{equation}\label{eq:key intermediate calculation}
    \int_X(f^*\chi)\bar{L}_T(\cdot,0)\tilde{\omega}(0)^n= \int_X(f^*\chi)d_{T}(p,\cdot)\tilde{\omega}(0)^n\leq\left(d_T(p,q)+\Psi(T^{-1},\delta|\varepsilon)\right)^2\int_X(f^*\chi)\tilde{\omega}(0)^n.
\end{equation}
On the other hand, from $\partial_\tau\tilde{\omega}(\tau)^n=2S_C(\tilde{g}(\tau))\tilde{\omega}(\tau)^n$, we know that 
\begin{equation}\label{omega tilde}
\tilde{\omega}(\tau)^n\geq\left(1-\bar{\tau}2S_C\right)\tilde{\omega}(0)^n\geq\left(1-\Psi(\bar{\tau})\right)\tilde{\omega}(0)^n,\quad\forall\tau\in[0,\bar{\tau}].
\end{equation}
Putting \eqref{eq:key intermediate calculation} and \eqref{omega tilde} into \eqref{eq:control of dcan 1}, we can write
\begin{equation}\label{eq:4.10 another version}
     \left[d_{can}(f(p),f(q))^2+\Psi(T^{-1}|\varepsilon,\bar{\tau})+\Psi(\bar{\tau}|\varepsilon,\delta)\right](1-\Psi(\bar{\tau}))\leq\left(d_T(p,q)+\Psi(T^{-1},\delta|\varepsilon)\right)^2.
\end{equation}
Based on \eqref{eq:4.10 another version}, \eqref{eq:4.10} is achieved by first choosing $\delta$ small enough, then $\bar{\tau}$ sufficiently small, and finally $T^{-1}$ sufficiently small. The proof is thus completed.
\end{proof}

\section{An almost avoidance principle}

In this section, we have to extend Lee-Tosatti-Zhang's almost avoidance principle to our Hermitian case. As in \cite{LS22}, the metric $\varepsilon$-regular set of $Y$ is defined to be
\begin{equation}\label{eq:epsilon regular set}
    \mathcal{R}_\varepsilon:=\left\{y\in Y,\lim_{r\to0^+}\frac{\operatorname{Vol}(B^{d_{can}}(y,r))}{\omega_{2n}r^{2n}}>1-\varepsilon\right\},
\end{equation}
where $\omega_{2n}$ is the volume of the unit ball in $\mathbb{C}^n$. By the continuity of volume functions we know that $\mathcal{R}_\varepsilon$ is open and that $\mathcal{R}_{\varepsilon^\prime}\subset\mathcal{R}_\varepsilon$ if $\varepsilon^\prime<\varepsilon$. It was proved in \cite[Theorem 5.4, Lemma 5.5]{LTZ26} that one can fix $\varepsilon_1$ sufficiently small such that
\begin{equation}\label{eq:Lp integrability of quotient}
    \mathcal{R}_{\varepsilon_1}\subset Y^{reg},\quad\frac{\omega_{can}^n}{\omega_Y^n}\in L^{p_0}_{loc}(\mathcal{R}_{\varepsilon_1},\omega_Y^n).
\end{equation}
Then we decompose $D\subset Y$ into a disjoint union of metric singular set and algebraic singular set:
\begin{equation}\label{eq:decomposition of D}
    D=(D\cap(Y\setminus\mathcal{R}_{\varepsilon_1}))\cup(D\cap\mathcal{R}_{\varepsilon_1}):=D_1\cup(D\cap\mathcal{R}_{\varepsilon_1}).
\end{equation}
Now, we set
\begin{equation}
    U_\eta:=\{y\in Y,d_{can}(y,D_1)<\eta\},
\end{equation}
for $0<\eta\leq\eta_0$ such that $U_{\eta_0}\cap\left(B^{d_{can}}(f(p),2\delta)\cup B^{d_{can}}(f(q),2\delta)\right)$, where $p,q\in\tilde{V}_\varepsilon$ and $\delta\ll \varepsilon$. This $\varepsilon$ will be fixed throughout the discussion.

It was also illustrated in \cite{LTZ26} that by an extension of Cheeger-Naber's density estimate of metric regular sets \cite{CN13} to $\operatorname{RCD}$ spaces (see \cite{Szé25, ABS19}), we can derive a Minkowski dimension estimate of $D_1$: $\operatorname{dim}_{\mathcal{M}}D_1\leq 2n-2$. That is to say, for any small $\rho$ with $0<\rho<1$, we have
\begin{equation}\label{eq:minkow dim}
    \operatorname{Vol}(U_\eta,\omega_{can}^n)\leq C_\rho\eta^{2-\rho}.
\end{equation}
\begin{remark}\label{rmk:remove RCD}
  Since we are concerning the non-collapsing case here, the general theory of $\operatorname{RCD}$ spaces used for \eqref{eq:minkow dim} could be avoided. We give a sketch of proof here for the reader's convenience.
    
Let $\chi^\prime:=f^*\omega_Y$ be a semi-positive and big form on $X$ and consider a family of complex Monge-Amp\`ere equations
\begin{equation}\label{song MA}
    (\chi^\prime+e^{-t}\omega_X+\sqrt{-1}\partial\bar{\partial}\varphi_t)^n=e^{\varphi_t} f^*\Omega,
\end{equation}
on $X$, where $\omega_X$ is a K\"ahler metric on $X$ and $\Omega$ is a volume form on $Y$ with $\sqrt{-1}\partial\bar{\partial}\log \Omega=\omega_Y$. \eqref{song MA} is also equivalent to 
$$
Ric(\omega^\prime(t))=-\omega^\prime(t)+e^{-t}\omega_X.
$$
It follows from standard a priori estimates that $\varphi_t$ is uniformly bounded, hence it is easy to see that the family of K\"ahler metrics $\omega^\prime(t):=\chi^\prime+e^{-t}\omega_X+\sqrt{-1}\partial\bar{\partial}\varphi_t$ fit into the framework of \cite[Theorem 1.1]{GPSS24a}, which implies the uniform diameter estimates and volume non-collapsing estimates of $\omega^\prime(t)$. Note that it was also proved in \cite{Song14} that $(X,\omega^\prime(t))$ converges in the Gromov-Hausdorff topology to $Y$. Consequently, $Y$ is a non-collapsed Ricci-limit space satisfying all the assumptions in \cite[Theorem 1.3]{CN13}.

It thus remains to show that there is $\rho_0=\rho_0(\varepsilon)>0$ such that $Y\setminus\mathcal{R}_\varepsilon\subset S_{\rho,r}^{2n-2}$ for all $0<\rho<\rho_0$, where $S_{\rho,r}^{2n-2}$ is the $(2n-2)$-th effective singular stratum defined by
$$
S_{\rho,r}^{2n-2}:=\left\{y\in Y|d_{GH}(B^{d_{can}}(y,s),B((0,z^*),s))\geq\rho s,\,\forall\mathbb{R}^{2n-1}\times C(Z),r\leq s\leq1\right\},
$$
as in \cite[Definition 1.2]{CN13}, where $z^*$ is the vertex of the metric cone $C(Z)$. 

If $y\notin S_{\rho,r}^{2n-2}$, then there is $s\geq r$ and a metric cone $C(Z)$ with $\operatorname{diam}Z\leq\pi$ such that $B^{d_{can}}(y,s)$ is $\rho s$-close to $B((0,z^*),s)\subset \mathbb{R}^{2n-1}\times C(Z)$. Rescaling metrics by $s^{-1}$, we have $d_{GH}(B^{s^{-1}d_{can}}(y,1),B((0,z^*),1))<\rho$. Since $Y$ is a Gromov-Hausdorff limit of K\"ahler manifolds with Ricci curvature bounded below, its tangent cones must be K\"ahler cones (cf. Cheeger-Colding-Tian \cite[Theorem 9.1]{CCT02}). This forces $\mathbb{R}^{2n-1}\times C(Z)$ to actually be $\mathbb{R}^{2n}$ by standard K\"ahler cone spilting.

Now, applying Colding's volume convergence theorem \cite{Co97, CC97}, for any $\delta > 0$, we can choose $\rho_0 > 0$ sufficiently small such that if the rescaled ball is $\rho_0$-close to the unit ball in $\mathbb{R}^{2n}$, its volume must satisfy
$$
\frac{\operatorname{Vol}(B^{d_{can}}(y,s))}{\omega_{2n}s^{2n}} > 1 - \delta.
$$
We fix $\delta = \varepsilon/2$. On the other hand, since $y\in Y\setminus\mathcal{R}_\varepsilon$, by definition we have
$$
\lim_{\tau\to 0} \frac{\operatorname{Vol}(B^{d_{can}}(y,\tau))}{\omega_{2n}\tau^{2n}} \leq 1 - \varepsilon.
$$
By the Bishop-Gromov volume comparison on $Y$, the volume ratio $\tau \mapsto \frac{\operatorname{Vol}(B^{d_{can}}(y,\tau))}{\omega_{2n}\tau^{2n}}$ is monotonically non-increasing (this can be proved similarly as in \cite[Proposition 2.3]{CW17}). Thus, for our $s \geq r$, we must have
$$
\frac{\operatorname{Vol}(B^{d_{can}}(y,s))}{\omega_{2n}s^{2n}} \leq \lim_{\tau\to 0} \frac{\operatorname{Vol}(B^{d_{can}}(y,\tau))}{\omega_{2n}\tau^{2n}} \leq 1 - \varepsilon.
$$
This leads to the contradiction $1 - \varepsilon \geq \frac{\operatorname{Vol}(B^{d_{can}}(y,s))}{\omega_{2n}s^{2n}} > 1 - \varepsilon/2$ and hence the proof is finished.
\end{remark}
The following observation due to \cite{LT23, LTZ26} is of key importance, whose proof follows verbatim by using \eqref{eq:chern scalar curvature bound} and \cref{lem:upper bound of L}. 

\begin{lemma}\label{lem:L-geodesic-exit-time}
Suppose $\gamma$ is a minimizing $\mathcal{L}$-geodesic connecting $(p,0)_T$ and $(q',\bar{\tau})_T$, with the endpoint $q'$ satisfying $d_{can}(f(q'), f(q)) < \delta$. Let $\bar{\tau}'$ be the first time when the projected curve $f(\gamma)$ leaves the metric ball $B^{d_{can}}(f(p), \delta)$. Provided that $\bar{\tau}$ is chosen small enough and $T$ is sufficiently large, there exists a uniform constant $C = C(\delta) > 0$ such that
\begin{equation}\label{eq:L-geodesic-exit-time}
    \bar{\tau}' \geq C^{-1}\bar{\tau}.
\end{equation}
\end{lemma}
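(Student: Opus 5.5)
The plan is to argue by contradiction via an energy estimate for the initial segment of $\gamma$, as in \cite{LT23, LTZ26}. Write $\gamma:[0,\bar\tau]\to X$ and set $\bar\tau'$ as in the statement, so that $d_{can}(f(p),f(\gamma(\bar\tau')))=\delta$. The relevant quantity is the $\mathcal{L}$-length of $\gamma|_{[0,\bar\tau']}$, which will be bounded below in terms of $\delta^2/\sqrt{\bar\tau'}$ and bounded above by the total length $L_T(q',\bar\tau)$.

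The first step is an upper bound. Since $\gamma$ is minimizing, the uniform Chern scalar curvature bound \eqref{eq:chern scalar curvature bound} gives
\[
\int_0^{\bar\tau'}\sqrt{\tau}\,|\dot\gamma|^2_{\tilde g(\tau)}\,d\tau \le L_T(q',\bar\tau)+C\int_0^{\bar\tau}\sqrt\tau\, d\tau \le L_T(q',\bar\tau)+C\bar\tau^{3/2}.
\]
I then apply \cref{lem:upper bound of L} to $q'$, whose image lies within $\delta\ll\varepsilon$ of $f(q)$ and is therefore still away from $E$. Using the diameter bound for $d_{can}$ (or \cref{diameter estimates}), this yields $L_T(q',\bar\tau)\le C/\sqrt{\bar\tau}$ once $\bar\tau$ is small and $T$ is large.

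The second step, which is the main one, is the lower bound. I compare $\tilde g(\tau)$ with $f^*\omega_Y$ using the parabolic Schwarz lemma \cref{parabolic Schwarz}. After the reparametrization \eqref{reparametrization}, $\tilde g(\tau)$ is a rescaling of $g(T-\cdot)$ by a factor in $[1-2\bar\tau,1]$, so $\tilde g(\tau)\ge C^{-1} f^*g_Y$ uniformly. Hence
\[
\int_0^{\bar\tau'}|\dot\gamma|_{\tilde g}\,d\tau \ge C^{-1} d_Y\big(f(p),f(\gamma(\bar\tau'))\big).
\]
By the bi-Hölder equivalence \cref{Holder of d_can}, $d_Y\ge d_{can}^{1/\alpha}$, so this is at least $C^{-1}\delta^{1/\alpha}=:c(\delta)>0$. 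Cauchy--Schwarz then gives
\[
c(\delta)^2\le\Big(\int_0^{\bar\tau'}\tau^{1/4}|\dot\gamma|\,\tau^{-1/4}d\tau\Big)^2\le \int_0^{\bar\tau'}\sqrt\tau|\dot\gamma|^2d\tau\cdot 2\sqrt{\bar\tau'}.
\]

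Combining the two steps,
\[
c(\delta)^2\le 2\sqrt{\bar\tau'}\big(C\bar\tau^{-1/2}+C\bar\tau^{3/2}\big),
\]
so $\sqrt{\bar\tau'/\bar\tau}\ge c'(\delta)$, which is \eqref{eq:L-geodesic-exit-time}. The delicate point is to make the comparison $\tilde g(\tau)\gtrsim f^*\omega_Y$ legitimate along the whole curve, including where it enters the degenerate region near $E$. The Schwarz lemma is global, so this seems fine. If a sharper constant were needed, I would instead replace $d_Y$ by $d_{can}$ through the locally uniform smooth convergence on $X\setminus\tilde V_{\varepsilon}$ of \cref{smooth convergence}, at the cost of $\Psi(T^{-1}|\varepsilon)$ errors.
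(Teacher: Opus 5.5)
Your proof is correct and follows the route the paper intends. The paper gives no proof of its own; it defers to \cite{LT23, LTZ26} using exactly \eqref{eq:chern scalar curvature bound} and \cref{lem:upper bound of L}, which is what you do: bound $\int_0^{\bar\tau}\sqrt{\tau}\,|\dot\gamma|^2\,d\tau\le C\bar\tau^{-1/2}$ from above, bound the displacement of $f\circ\gamma$ up to $\bar\tau'$ from below, and close with Cauchy--Schwarz. The ``delicate point'' you flag does not arise: on $[0,\bar\tau']$ the curve stays in $f^{-1}\big(B^{d_{can}}(f(p),\delta)\big)$, which is a compact subset of $X\setminus E$ because $\delta\ll\varepsilon$, so either your Schwarz-lemma comparison or smooth convergence there suffices (and if $f\circ\gamma$ never leaves the ball you simply take $\bar\tau'=\bar\tau$).
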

Following \cite{LTZ26}, we introduce the notion of $\eta$-events:
\begin{definition}
    Let $\gamma$ be a piecewise differentiable curve in $X$, we say $\gamma$ has an $\eta$-event with respect to $U_\eta$ if $f(\gamma)$ enters $U_\eta$ and reaches $U_{\frac{\eta}{2}}$ before returning to the boundary of $U_\eta$.
\end{definition}
We next move on to establish the almost avoidance principle in our case, utilizing \cref{lem:almost monotonicity} and the preparations in \cref{section Per}, the proof of \cite[Proposition 5.3]{LTZ26} can be carried over to our case:
\begin{proposition}\label{prop:almost-avoidance-D1}
Fix $p,q\in X\setminus\tilde{V}_\varepsilon$ again. Let $D_1$ and $U_\eta$ be constructed as above. Fix constants $\sigma>\frac{\rho}{2}$ and consider parameters satisfying $T^{-1} \ll \eta \ll \bar{\tau} \ll \delta\ll\varepsilon$, there exists a measurable subset $\Omega = \Omega(T, \eta, \bar{\tau}, \delta)$ of the ball $B^{d_T}(q, \delta)$ with volume
\begin{equation}\label{eq:vol-omega-D1}
    \operatorname{Vol}(\Omega, \omega(T)^n) \geq \frac{1}{2} \operatorname{Vol}(B^{d_T}(q, \delta), \omega(T)^n),
\end{equation}
such that for any $q' \in \Omega$, there is a minimizing $\mathcal{L}$-geodesic from $(p, 0)_T$ to $(q', \bar{\tau})_T$ which has at most $\eta^{-\sigma}$ $\eta$-events with respect to the family $\{U_\eta\}$.
\end{proposition}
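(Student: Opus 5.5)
Following \cite[Proposition 5.3]{LTZ26}, I would argue by contradiction through the reduced volume, pulling the problem back to $T_pM$ via the $\mathcal{L}$-exponential map. Let $\Omega^c\subset B^{d_T}(q,\delta)$ be the set of points $q'$ (off the cut locus $B_{\bar\tau}$, which is negligible by \cref{cut locus has measure zero}) whose unique minimizing $\mathcal{L}$-geodesic $\gamma_{q'}$ has more than $\eta^{-\sigma}$ $\eta$-events. Suppose $\operatorname{Vol}(\Omega^c,\omega(T)^n)>\frac12\operatorname{Vol}(B^{d_T}(q,\delta),\omega(T)^n)$. By the volume non-collapsing estimate \cref{volume non-collapsing estiamte} (together with smooth convergence away from $E$, since $q\notin\tilde V_\varepsilon$), the right-hand side is bounded below by $c(\delta)>0$. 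Let $A\subset\Omega_{\bar\tau}\subset T_pM$ be the set of initial vectors of these geodesics.

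The first step is to produce a lower bound on the space-time volume near $D_1$. Each $\eta$-event means $f(\gamma)$ travels from $\partial U_\eta$ to $U_{\eta/2}$ and back. Using the speed bound $|X|\lesssim \sqrt{l}/\sqrt{\tau}$, with $l\le C/\tau$ by \cref{lem:upper bound of L} and the scalar curvature bound \eqref{eq:chern scalar curvature bound}, together with the Schwarz lemma \cref{parabolic Schwarz} to compare $d_{can}$/$d_Y$ with $\tilde g(\tau)$, each event costs a time duration of at least $c\,\eta^{2}\bar\tau$ (up to the $\Psi$-comparison between $d_T$ and $d_{can}$ on the region). Since the curve leaves $B^{d_{can}}(f(p),\delta)$ only after $\bar\tau'\ge C^{-1}\bar\tau$ by \cref{lem:L-geodesic-exit-time}, and $U_{\eta_0}$ avoids that ball, all events occur in $[C^{-1}\bar\tau,\bar\tau]$. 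Hence for $v\in A$,
\[
|\{\tau\in[C^{-1}\bar\tau,\bar\tau]:\gamma_v(\tau)\in f^{-1}(U_\eta)\}|\ge c\,\eta^{2-\sigma}\bar\tau .
\]

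Next I would convert time spent into volume using \cref{lem:almost monotonicity}. For $\tau\in[C^{-1}\bar\tau,\bar\tau]$, the error factor $e^{-C\int l\,d\tau-C\bar\tau}$ is bounded below uniformly because $l\le C/\tau$ and $\tau\ge C^{-1}\bar\tau$, and $e^{l(\gamma(\tau),\tau)-l(q',\bar\tau)}$ is likewise controlled. This gives $\mathcal{J}(v,\tau)\ge c\,\mathcal{J}(v,\bar\tau)$. Integrating over $v\in A$, applying Fubini in $(v,\tau)$, and pushing forward by $\mathcal{L}\exp_\tau$ (injective on $\Omega_\tau$), I get
\[
\int_{C^{-1}\bar\tau}^{\bar\tau}\operatorname{Vol}_{\tilde g(\tau)}\bigl(f^{-1}(U_\eta)\bigr)\,d\tau\ \ge\ c\,\eta^{2-\sigma}\bar\tau\int_A\mathcal{J}(v,\bar\tau)\,dv\ \ge\ c(\delta)\,\eta^{2-\sigma}\bar\tau .
\]
On the other hand, $\tilde\omega(\tau)^n\le C\omega(T)^n$ by \eqref{omega tilde}. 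Moreover, $\omega(T)^n\to\omega_{can}^n$ on the relevant region: on $\mathcal R_{\varepsilon_1}$ this follows from the $L^{p_0}$ integrability \eqref{eq:Lp integrability of quotient}, and near $D$ it follows from the volume bound $\omega(T)^n\le C\Omega$ in \cref{estimates of varphi}, with $T^{-1}\ll\eta$. Therefore the left-hand side is at most $C\bar\tau\,(C_\rho\eta^{2-\rho}+\Psi(T^{-1}|\eta))$ by \eqref{eq:minkow dim}. Since $\sigma>\rho/2$, hmm, I need $\eta^{2-\sigma}\gg\eta^{2-\rho}$, i.e. $\sigma>\rho$. I would therefore apply \eqref{eq:minkow dim} with $\rho$ replaced by a smaller exponent $\rho'<\sigma$, which is allowed because $\rho$ there is arbitrary. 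Letting $\eta\to0$ then yields the contradiction.

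I expect the main obstacle to be the first step: the lower bound on the duration of an event. Bounding the speed of $f(\gamma)$ in $d_{can}$ requires comparing $\tilde g(\tau)$ with $d_{can}$ near $\partial U_\eta$, where only the Schwarz lemma and the bi-Hölder estimate \cref{Holder of d_can} are available. I would first try measuring events in $d_Y$ instead, accepting a power loss in $\eta$ that is then absorbed by choosing $\rho'$ small.
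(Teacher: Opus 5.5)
\textbf{Verdict: the approach matches the paper, but the step you flag as the main obstacle is left open, and the fallback you propose for it fails.}

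Your architecture is the paper's. You discard initial vectors whose geodesics have too many events, and confine $I_v$ to $[C^{-1}\bar\tau,\bar\tau]$ by \cref{lem:L-geodesic-exit-time}. You bound $|I_v|$ from below by Cauchy--Schwarz against $\int_0^{\bar\tau}\sqrt{\tau}|\dot\gamma|^2\,d\tau\le C\bar\tau^{-1/2}$. You move from $\mathcal{J}(v,\bar\tau)$ to $\mathcal{J}(v,\tau)$ via \cref{lem:almost monotonicity}, and conclude with Fubini, \eqref{eq:minkow dim} and non-collapsing.

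The gap is the lower bound for the $\tilde g(\tau)$-length of one $\eta$-event. Measuring events in $d_Y$ does not work. By \cref{Holder of d_can}, a $d_{can}$-separation $\eta/2$ only gives a $d_Y$-separation $\gtrsim\eta^{1/\alpha}$. So, with \cref{parabolic Schwarz}, each event has $\tilde g(\tau)$-length only $\gtrsim\eta^{1/\alpha}$. Even summing over events before Cauchy--Schwarz, you get only $|I_v|\gtrsim\bar\tau\eta^{2/\alpha-2\sigma}$, and a bad set of volume $\lesssim e^{C/\bar\tau}\eta^{2\sigma-\rho'-(2/\alpha-2)}$. The deficit $2/\alpha-2$ does not depend on $\rho'$, so shrinking $\rho'$ cannot absorb it. You would prove the statement only for $\sigma>1/\alpha-1+\rho'/2$. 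That lower bound on $\sigma$ is useless in the proof of \cref{prop:key estimate}, where $\sigma$ must be small in terms of $\alpha$. Nor can you use a global $d_T$-versus-$d_{can}$ comparison: that is exactly \eqref{eq:4.10}, which this section is working towards.

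The missing observation is that no comparison near the singular set is needed. Between entering $U_\eta$ and first reaching $\overline{U_{\eta/2}}$, $f\circ\gamma$ stays in the compact annulus $A_\eta:=\{\eta/2\le d_{can}(\cdot,D_1)\le\eta\}$. This annulus is disjoint from $D$ here: $D=f(E)\subset Y^{sing}$, since the crepant map $f$ is an isomorphism over $Y^{reg}$, while $D\setminus D_1\subset\mathcal{R}_{\varepsilon_1}\subset Y^{reg}$. Hence $f^{-1}(A_\eta)\Subset X\setminus E$, and \cref{smooth convergence} gives $f^*g_{can}\le 2\tilde g(\tau)$ there for all $\tau\in[0,\bar\tau]$ once $T^{-1}\ll\eta$. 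This is where that part of the parameter ordering is used. The $g_{can}$-length of a curve in $Y\setminus D$ dominates the $d_{can}$-distance of its endpoints, so each event has $\tilde g$-length $\ge\eta/4$. This is what the paper's inequality $\eta^{1-\sigma}\le C\int_{I_v}|\partial_\tau\gamma|_{\tilde g(\tau)}d\tau$ rests on.

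Smaller corrections:
\begin{enumerate}
\item The factor $e^{l(\gamma(\tau),\tau)-l(q',\bar\tau)}$ is not uniformly bounded below. You only have $l\ge -C\tau$ and $l(q',\bar\tau)\le C/\bar\tau$, so you get $\mathcal{J}(v,\tau)\ge C^{-1}e^{-C/\bar\tau}\mathcal{J}(v,\bar\tau)$. This is harmless because $\eta\ll\bar\tau$.
\item There is no pointwise speed bound $|X|\lesssim\sqrt{l/\tau}$ along the whole curve (compare \cref{lem:local_gradient_L}); only the integral bound holds, and it suffices.
\item Summing the event lengths before applying Cauchy--Schwarz gives $|I_v|\ge c\bar\tau\eta^{2-2\sigma}$, which is exactly why $\sigma>\rho/2$ suffices. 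Your per-event count with a smaller Minkowski exponent also works.
\item For the volume of $f^{-1}(U_\eta)$ you need neither \eqref{eq:Lp integrability of quotient} nor convergence of volume forms. \cref{cor:uniform equivalence of volume forms} and smooth convergence already give $\omega(T)^n\le C\omega_0^n\le C'\omega_{KE}^n$ on $X\setminus E$.
\end{enumerate}
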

\begin{proof}
Thanks to \cref{cut locus has measure zero}, $\mathcal{L}\exp_{\bar{\tau}}:\Omega_{\bar{\tau}}\to X\setminus B_\tau$ is a diffeomorphism. By slight abusing of notations, we also denote $\Omega_\tau$ its restriction to $\mathcal{L}\exp_{\bar{\tau}}^{-1}(B^{d_T}(q,\delta))$, which is also a diffeomorphism onto its image $B^{d_T}(q,\delta)\setminus B_\tau$. We have $f(B^{d_T}(q,\delta))\subset B^{d_{can}}(f(q),2\delta)$ provided that $T$ is large enough. Let $\Omega_{\bar{\tau}}^\prime\subset\Omega_{\bar{\tau}}$ be the subset of initial vectors whose associated $\mathcal{L}$-geodesic has more than $\eta^{-\sigma}$ $\eta$-events with respect to $U_\eta$ and set $\Omega:=B^{d_T}(q,\delta)\setminus\mathcal{L}\exp_{\bar
\tau}(\Omega_{\bar{\tau}}^\prime)$. It remains to estimate the volume of $\mathcal{L}\exp_{\bar
\tau}(\Omega_{\bar{\tau}}^\prime)$.

For any initial vector $v \in \Omega_{\bar{\tau}}^\prime$, the corresponding $\mathcal{L}$-geodesic $\gamma(\tau) = \mathcal{L}\exp_{p,\tau}(v)$ intersects the pulled-back tubular neighborhood $\tilde{U}_\eta := f^{-1}(U_\eta)$ over a union of open intervals, which we denote by $I_v \subset [0, \bar{\tau}]$. By \cref{lem:L-geodesic-exit-time}, we know that $f(\gamma)$ cannot exit $B^{d_{can}}(f(p), \delta)$ too quickly, which implies that $I_v \subset [C^{-1}\bar{\tau}, \bar{\tau}]$ for some uniform constant $C > 0$.

Consider the measurable subset in the product space $\mathcal{T} = \{(v, \tau) \mid v \in \Omega_{\bar{\tau}}^\prime, \tau \in I_v\} \subset T_p X \times [0, \bar{\tau}]$. By applying Fubini's theorem to the exponential map $\mathcal{L}\exp$, we can estimate the time-integrated volume of $\tilde{U}_\eta$. First, utilizing the almost monotonicity formula from \cref{lem:almost monotonicity} with $\tau_1 = \tau\geq C^{-1}\bar{\tau}$ and $\tau_2 = \bar{\tau}$, and combining it with the uniform upper bound of the $\mathcal{L}$-distance \cref{lem:upper bound of L}, we derive
$$
\int_{\tau}^{\bar
\tau} l(\gamma(\tau), \tau) d\tau\leq\int_{\tau}^{\bar
\tau}\frac{C}{\tau}d\tau=C\log\frac{\bar{\tau}}{\tau}\leq C\log C,
$$
Consequently, we easily obtain the following lower bound for the Jacobian $\mathcal{J}(v, \tau)$:
\begin{equation}\label{eq:jacobian-bound}
    \mathcal{J}(v, \tau) \geq C^{-1} e^{-\frac{C}{\bar{\tau}}} \mathcal{J}(v, \bar{\tau}),\quad\forall\tau\in [C^{-1}\bar{\tau},\bar{\tau}].
\end{equation}
By integrating over the time intervals $I_v$ and applying Fubini's theorem to the exponential map $\mathcal{L}\exp_{\bar{\tau}}$, we can now bound the target volume by the time-integrated volume of the tubular neighborhood $\tilde{U}_\eta$:
\begin{align}\label{eq:fubini-volume}
    \min_{v \in \Omega_{\bar{\tau}}^\prime} |I_v| \operatorname{Vol}(\mathcal{L}\exp_{\bar{\tau}}(\Omega_{\bar{\tau}}^\prime), \tilde{\omega}(\bar{\tau})^n) 
    &= \min_{v \in \Omega_{\bar{\tau}}^\prime} |I_v| \int_{\Omega_{\bar{\tau}}^\prime} \mathcal{J}(v, \bar{\tau}) dv \notag \\
    &\leq \int_{\Omega_{\bar{\tau}}^\prime} \int_{I_v} \mathcal{J}(v, \bar{\tau}) d\tau dv \notag \\
    &\leq C e^{\frac{C}{\bar{\tau}}} \int_{\Omega_{\bar{\tau}}^\prime} \int_{I_v} \mathcal{J}(v, \tau) d\tau dv \notag \\
    &\leq C e^{\frac{C}{\bar{\tau}}} \int_{C^{-1}\bar{\tau}}^{\bar{\tau}} \operatorname{Vol}(\tilde{U}_\eta, \tilde{\omega}(\tau)^n) d\tau.
\end{align}
Next, we establish a lower bound for the time measure $|I_v|$. By the definition of $\Omega_{\bar{\tau}}^\prime$, each curve undergoes at least $\eta^{-\sigma}$ $\eta$-events. Since the $d_{can}$-distance between $\partial U_\eta$ and $\partial U_{\eta/2}$ is at least $c\eta$, the total length contributed by these events is bounded from below by $c\eta^{1-\sigma}$. Applying the Cauchy-Schwarz inequality and the upper bound of the $\mathcal{L}$-length, we estimate:
\begin{align}
    \eta^{1-\sigma} &\leq C \int_{I_v} |\partial_\tau \gamma|_{\tilde{g}(\tau)} d\tau \notag \\
    &\leq C \bar{\tau}^{-\frac{1}{4}} \left( \int_{I_v} \sqrt{\tau} |\partial_\tau \gamma|_{\tilde{g}(\tau)}^2 d\tau \right)^{\frac{1}{2}} |I_v|^{\frac{1}{2}} \notag \\
    &\leq C \bar{\tau}^{-\frac{1}{2}} |I_v|^{\frac{1}{2}},
\end{align}
which directly implies that $|I_v| \geq C^{-1} \bar{\tau} \eta^{2(1-\sigma)}$.

Substituting this lower bound back into \eqref{eq:fubini-volume}, and observing that the metrics $\tilde{\omega}(\tau)$ and $\omega(T)$ are uniformly equivalent up to a constant factor in this small time region, we obtain:
\begin{equation}
    \operatorname{Vol}(\mathcal{L}\exp_{\bar{\tau}}(\Omega_{\bar{\tau}}^\prime), \omega(T)^n) \leq C e^{\frac{C}{\bar{\tau}}} \eta^{-2(1-\sigma)} \operatorname{Vol}(\tilde{U}_\eta, \omega(T)^n).
\end{equation}

Now, the Minkowski content bound \eqref{eq:minkow dim} for $U_\eta$ guarantees that 
$$
\operatorname{Vol}(\tilde{U}_\eta, \omega(T)^n)
\leq C\operatorname{Vol}(X, \omega(T)^n)\operatorname{Vol}(U_\eta, \omega_{can}^n)\leq C \eta^{2-\rho}.
$$
Therefore, the volume of the bad set can be bounded by:
\begin{equation}
    \operatorname{Vol}(\mathcal{L}\exp_{\bar{\tau}}(\Omega_{\bar{\tau}}^\prime), \omega(T)^n) \leq C e^{\frac{C}{\bar{\tau}}} \eta^{2\sigma - \rho} .
\end{equation}

Since we initially fixed $\sigma > \frac{\rho}{2}$, the exponent $2\sigma - \rho$ is strictly positive. Furthermore, the volume non-collapsing estimate \cref{volume non-collapsing estiamte} gives $\operatorname{Vol}(B^{d_T}(q, \delta), \omega(T)^n) \geq c(\delta)$. 

Because our parameters are chosen such that $T^{-1} \ll \eta \ll \bar{\tau} \ll \delta$, we can choose $\eta$ sufficiently small to suppress the $e^{C/\bar{\tau}}$ term. This ensures that the volume of the bad set satisfies:
\begin{equation}
    \operatorname{Vol}(\mathcal{L}\exp_{\bar{\tau}}(\Omega_{\bar{\tau}}^\prime), \omega(T)^n) \leq \frac{1}{2} \operatorname{Vol}(B^{d_T}(q, \delta), \omega(T)^n).
\end{equation}

The proof is therefore completed.
\end{proof}

Having \cref{prop:almost-avoidance-D1} in hand, we now fix this $\eta$ and choosing $\eta_0^\prime\ll \eta$ to define
\begin{equation}
    U^\prime_{\eta^\prime}:=\{y\in Y|d_{can}(y,\overline{D\setminus T_\eta(D_1)}<\eta^\prime),\quad  \forall0<\eta^\prime<\eta_0^\prime.
\end{equation}
It was also shown in \cite{LTZ26} that using \eqref{eq:Lp integrability of quotient} we have the Minkowski dimension estimate of the second part of $D$:
\begin{equation}\label{eq:Mink dim bound for D2}
    \operatorname{Vol}(U_\eta,\omega_{can}^n)\leq C_\rho\eta^{2-\rho},\quad   \forall\rho>0.
\end{equation}
In practice, we will first choose an appropriate $\sigma$ and then choose $\rho$ small enough. Repeating the proof of \cref{prop:almost-avoidance-D1} gives the following result analogous to \cite[Proposition 5.6]{LTZ26}:
\begin{proposition}\label{prop:almost aviodable:final}
    Let $\Omega$ and $\eta$ be fixed as in \cref{prop:almost-avoidance-D1}. For parameters satisfying $T^{-1} \ll \eta^\prime \ll\eta\ll \bar{\tau} \ll \delta\ll\varepsilon$ with $\eta^\prime < \eta_0^\prime$ and constants $\sigma>\frac{\rho}{2}$, there exists a measurable subset $\Omega^\prime \subset \Omega$ with volume
\begin{equation}\label{eq:vol-omega-D2}
    \operatorname{Vol}(\Omega^\prime, \omega(T)^n) \geq \frac{1}{4} \operatorname{Vol}(B^{d_T}(q, \delta), \omega(T)^n),
\end{equation}
such that for any $q^\prime \in \Omega^\prime$, the unique minimizing $\mathcal{L}$-geodesic from $(p, 0)$ to $(q^\prime, \bar{\tau})$ has at most $(\eta^\prime)^{-\sigma}$ $\eta^\prime$-events with respect to the family $\{U^\prime_{\eta^\prime}\}$, in addition to having at most $\eta^{-\sigma}$ $\eta$-events with respect to $\{U_\eta\}$.
\end{proposition}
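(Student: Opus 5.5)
The plan is to repeat the argument of \cref{prop:almost-avoidance-D1} with $U_\eta$ replaced by the new family $\{U'_{\eta'}\}$ and with the ball $B^{d_T}(q,\delta)$ replaced by the already-constructed good set $\Omega$. Then intersect the two good sets.

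First I fix $\eta$ and $\Omega$ as in \cref{prop:almost-avoidance-D1}. I restrict the $\mathcal{L}$-exponential map $\mathcal{L}\exp_{\bar\tau}$ to $\mathcal{L}\exp_{\bar\tau}^{-1}(\Omega\setminus B_{\bar\tau})$. By \cref{cut locus has measure zero} this is a diffeomorphism onto a full-measure subset of $\Omega$, so minimizing $\mathcal{L}$-geodesics are unique for almost every $q'$. Let $\Omega''_{\bar\tau}$ be the set of initial vectors whose $\mathcal{L}$-geodesic has more than $(\eta')^{-\sigma}$ $\eta'$-events with respect to $U'_{\eta'}$. I set $\Omega':=\Omega\setminus\mathcal{L}\exp_{\bar\tau}(\Omega''_{\bar\tau})$. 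Every $q'\in\Omega'$ then automatically inherits the bound of at most $\eta^{-\sigma}$ $\eta$-events from the definition of $\Omega$. So it remains to show
\[
\operatorname{Vol}(\mathcal{L}\exp_{\bar\tau}(\Omega''_{\bar\tau}),\omega(T)^n)\le \tfrac14\operatorname{Vol}(B^{d_T}(q,\delta),\omega(T)^n).
\]
Combined with \eqref{eq:vol-omega-D1}, this gives \eqref{eq:vol-omega-D2}.

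For this estimate I copy the three steps of the previous proof.

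\textbf{Step 1.} By \cref{lem:L-geodesic-exit-time}, the times $I_v$ spent in $f^{-1}(U'_{\eta'})$ lie in $[C^{-1}\bar\tau,\bar\tau]$. This uses $U'_{\eta'_0}$ disjoint from the $2\delta$-balls around $f(p)$ and $f(q)$, which I arrange by shrinking $\eta_0'$.

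\textbf{Step 2.} On that window, the almost monotonicity \cref{lem:almost monotonicity} together with $l\le C/\tau$ (from \cref{lem:upper bound of L}) gives the Jacobian comparison \eqref{eq:jacobian-bound}.

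\textbf{Step 3.} Having at least $(\eta')^{-\sigma}$ events forces $|I_v|\ge C^{-1}\bar\tau(\eta')^{2(1-\sigma)}$, by Cauchy--Schwarz against the $\mathcal{L}$-length bound. Here I need the $d_{can}$-gap $c\eta'$ between $\partial U'_{\eta'}$ and $\partial U'_{\eta'/2}$, which holds by the triangle inequality. The Fubini argument then yields
\[
\operatorname{Vol}(\mathcal{L}\exp_{\bar\tau}(\Omega''_{\bar\tau}))\le Ce^{C/\bar\tau}(\eta')^{-2(1-\sigma)}\operatorname{Vol}(f^{-1}(U'_{\eta'}),\omega(T)^n).
\]

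The remaining input is the volume bound for $f^{-1}(U'_{\eta'})$, and I expect this to be the main obstacle. The set $\overline{D\setminus T_\eta(D_1)}$ is contained in $\mathcal{R}_{\varepsilon_1}$, away from $D_1$, and $\mathcal{R}_{\varepsilon_1}\subset Y^{reg}$. Near this set, $\omega_{can}^n/\omega_Y^n\in L^{p_0}_{loc}$ by \eqref{eq:Lp integrability of quotient}. The set is also an analytic set of real codimension at least $2$ in a smooth region. So its $\omega_Y$-tubular neighborhood has volume $O((\eta')^{2})$, and by \cref{Holder of d_can} the $d_{can}$-tube sits inside a $d_Y$-tube of comparable radius. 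Hölder's inequality then gives \eqref{eq:Mink dim bound for D2}, namely $\operatorname{Vol}(U'_{\eta'},\omega_{can}^n)\le C_\rho(\eta')^{2-\rho}$, with $\rho$ small after fixing $p_0$ and $\sigma$. I would cite this from \cite{LTZ26} and only check that it involves $Y$ alone and not the flow.

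To transfer from $\omega_{can}^n$ to $\omega(T)^n$, I use \cref{cor:uniform equivalence of volume forms} ($\omega(T)^n\le C\Omega$) together with the fact that $\omega_{can}^n$ is comparable to a smooth volume form times $e^{\varphi_\infty+\dot\varphi_\infty}$. The latter is uniformly bounded below by \cref{lem:boundedness of partial_t varphi}. This gives $\operatorname{Vol}(f^{-1}(U'_{\eta'}),\omega(T)^n)\le C(\eta')^{2-\rho}$.

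Hence the bad set has volume at most $Ce^{C/\bar\tau}(\eta')^{2\sigma-\rho}$. With $\sigma>\rho/2$ and $\eta'\ll\bar\tau$, this is at most $\frac14 c(\delta)$. By \cref{volume non-collapsing estiamte}, $c(\delta)\le\operatorname{Vol}(B^{d_T}(q,\delta))$, which concludes the proof.
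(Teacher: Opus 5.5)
Your proposal is correct and is essentially the paper's argument: the paper simply reruns the proof of \cref{prop:almost-avoidance-D1} for the family $\{U'_{\eta'}\}$, with \eqref{eq:Mink dim bound for D2} in place of \eqref{eq:minkow dim}, and removes the resulting bad set (of volume at most $\frac14\operatorname{Vol}(B^{d_T}(q,\delta),\omega(T)^n)$) from $\Omega$. Your transfer from $\omega_{can}^n$ to $\omega(T)^n$, via $\omega(T)^n\le C\omega_0^n$ together with $\omega_{KE}^n\ge C^{-1}\omega_0^n$ on $X\setminus E$, is if anything a cleaner justification than the one written in the paper; the only remaining bookkeeping is to discard the measure-zero cut locus $B_{\bar\tau}$ from $\Omega'$ explicitly, so that ``the unique minimizing $\mathcal{L}$-geodesic'' is well defined and both event counts refer to the same curve.
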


\section{End of proof of the key estimate}
Recall that we have fixed $\varepsilon>0$ and $p,q\in X\setminus\tilde{V}_\varepsilon$. Before proving \cref{prop:key estimate}, we must establish a local spatial gradient bound for the reduced length $L_T$. In general, a global gradient estimate for $L_T$ is difficult to achieve without a global Hamilton's matrix Harnack inequality under the Chern-Ricci flow (as done in \cite{Per02, KL08}). However, since we are only concerned with points strictly bounded away from the singular set, a local Lipschitz bound can be achieved via a backward exit-time argument similar to \cref{lem:L-geodesic-exit-time} combined with the ODE of the $\mathcal{L}$-geodesics.

\begin{lemma}\label{lem:local_gradient_L}
    Fix $p,q \in X \setminus \tilde{V}_\varepsilon$ and $\delta \ll \varepsilon$. For any $q^\prime \in B_{d_T}(q, \delta)$ which does not lies in the $\mathcal{L}$-cut locus $B_{\bar{\tau}}$ with respect to $p$. For sufficiently small $T^{-1},\bar{\tau} > 0$, the spatial gradient of the reduced length $L_T$ satisfies
    \begin{equation}
        |\nabla L_T(q^\prime, \bar{\tau})|_{\tilde{g}(\bar{\tau})} \le \frac{C(\varepsilon)}{\sqrt{\bar{\tau}}},
    \end{equation}
    where $C(\varepsilon)$ is a uniform constant depending on $\varepsilon$ but independent of $\bar{\tau}$ and $T$.
\end{lemma}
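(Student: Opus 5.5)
Since $q^\prime\notin B_{\bar{\tau}}$, the minimizing $\mathcal{L}$-geodesic $\gamma$ from $(p,0)_T$ to $(q^\prime,\bar{\tau})_T$ is unique, and \eqref{nabla L} gives $\nabla L_T(q^\prime,\bar{\tau})=2\sqrt{\bar{\tau}}X(\bar{\tau})$. So the claim is equivalent to a pointwise bound on the terminal velocity,
$$\sqrt{\bar{\tau}}\,|X(\bar{\tau})|_{\tilde g(\bar{\tau})}\le \frac{C(\varepsilon)}{\sqrt{\bar{\tau}}},$$
which in the variable $s=\sqrt{\tau}$ reads $|\hat X(\bar s)|\le C(\varepsilon)/\bar s$, because $\hat X=2sX$. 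The plan is to bound $\hat X$ on average first, and then propagate that bound to the endpoint with the ODE \eqref{normalized L geodesic equation}. The propagation only needs curvature control along the final stretch of $\gamma$, and there $\gamma$ stays in the region where the flow is uniformly smooth.

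\textbf{Step 1 (energy bound).} By \cref{lem:upper bound of L} and the diameter bound \cref{diameter estimates}, $L_T(q^\prime,\bar{\tau})\le C(\varepsilon)/\sqrt{\bar{\tau}}$. Combining this with $|S|\le C$ from \eqref{eq:chern scalar curvature bound} gives
$$\int_0^{\bar s}\tfrac12|\hat X|^2ds\le L_T+C\bar{\tau}^{3/2}\le \frac{C(\varepsilon)}{\bar s}.$$

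\textbf{Step 2 (backward exit time).} We mirror \cref{lem:L-geodesic-exit-time}, but near the endpoint instead of the start. Since $f(q^\prime)$ lies within $2\delta$ of $f(q)$ and $f(q)\notin V_\varepsilon$, the ball $B^{d_{can}}(f(q^\prime),\varepsilon/4)$ avoids $V_{\varepsilon/2}$. We then let $\tau^{\prime\prime}$ be the last time before $\bar{\tau}$ at which $f(\gamma)$ lies outside this ball. By \cref{parabolic Schwarz}, $d_{can}$-displacement controls $g(T)$-length, and on the compact region $X\setminus\tilde V_{\varepsilon/2}$ all the metrics $\tilde g(\tau)$ are uniformly equivalent by \cref{smooth convergence}. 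Together with Cauchy--Schwarz as in the proof of \cref{prop:almost-avoidance-D1}, this gives
$$\varepsilon/4\le C\int_{\tau^{\prime\prime}}^{\bar{\tau}}|X|\,d\tau\le C\,\bar{\tau}^{-1/4}L^{1/2}(\bar{\tau}-\tau^{\prime\prime})^{1/2},$$
hence $\bar{\tau}-\tau^{\prime\prime}\ge c(\varepsilon)\bar{\tau}$. So on the window $[(1-c)\bar{\tau},\bar{\tau}]$, the curve $\gamma$ stays in a compact subset of $X\setminus E$, where $|Rc|$, $|\nabla S|$ and the torsion are bounded by $C(\varepsilon)$, uniformly in $T$.

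\textbf{Step 3 (ODE propagation).} On this window, \eqref{eq:ds X hat} becomes a two-sided inequality, $\bigl|\tfrac{d}{ds}|\hat X|\bigr|\le C(\varepsilon)\bar s|\hat X|+C(\varepsilon)\bar s^2$. The window has $s$-length comparable to $\bar s$ and $\bar s\ll1$, so $|\hat X|$ varies by at most a bounded factor plus a negligible additive term across it. By Step 1 and pigeonhole, some $s_*$ in the window satisfies $|\hat X(s_*)|^2\le C(\varepsilon)\bar s^{-2}$. Gronwall then gives $|\hat X(\bar s)|\le C(\varepsilon)/\bar s$, which is the claim.

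The main obstacle is Step 2. To get the exit-time lower bound we must check that $\gamma$ does not reenter $\tilde V_{\varepsilon/2}$ late, and that the metric comparison between $\tilde g(\tau)$ and $d_{can}$ is uniform in $T$ there. For this I would rely on the smooth convergence \cref{smooth convergence} on the fixed compact set $X\setminus\tilde V_{\varepsilon/2}$, which pins down the constants independently of $T$ once $T$ is large.
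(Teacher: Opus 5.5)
Your proposal is correct and follows essentially the same route as the paper's proof. Both arguments use the energy bound $\int_0^{\bar s}|\hat X|^2\,ds\le C(\varepsilon)/\bar s$ from \cref{lem:upper bound of L}, a backward exit-time argument placing the final window $[(1-c)\bar{\tau},\bar{\tau}]$ in $X\setminus\tilde V_{\varepsilon/2}$, a mean-value choice of a good time $s_*$ in that window, and Gronwall for $\frac{d}{ds}|\hat X|\le C(\varepsilon)s|\hat X|+C(\varepsilon)s^2$ up to $\bar s$. In fact, your last-exit-time formulation of Step 2 (or alternatively the global bound $\tilde g(\tau)\ge C^{-1}f^*g_Y$ from \cref{parabolic Schwarz} together with $d_Y\ge d_{can}^{1/\alpha}$) makes explicit the continuity argument that the paper's distance computation uses only implicitly.
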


\begin{proof}
    By the first variation formula of the $\mathcal{L}$-length, the spatial gradient is precisely given by (see \eqref{nabla L})
    \begin{equation}
        |\nabla L_T(q^\prime, \bar{\tau})| = 2\sqrt{\bar{\tau}}|\dot{\gamma}(\bar{\tau})|_{\tilde{g}(\bar{\tau})} = |\hat{X}(\sqrt{\bar{\tau}})|,
    \end{equation}
    where we use the reparametrization $s = \sqrt{\tau}$ and set $\hat{X}(s) := 2s\dot{\gamma}(\tau)$.
    
    Since $q \in X \setminus \tilde{V}_\varepsilon$ and $\delta \ll \varepsilon$, the endpoint $q^\prime$ satisfies $d_{can}(E, f(q^\prime)) \ge \frac{3}{4}\varepsilon$. We first claim that there exists a small constant $c=c(\varepsilon) \in (0,1)$ such that $\gamma(\tau)$ remains strictly within the regular region for the final time interval $\tau \in [(1-c)\bar{\tau}, \bar{\tau}]$. 
    
By the Cauchy-Schwarz inequality, the spatial distance traveled by $\gamma$ with respect to the evolving metric in this final interval is bounded by
    \begin{equation}\label{eq:spatial_distance_backward}
    \begin{aligned}
        d &= \int_{(1-c)\bar{\tau}}^{\bar{\tau}} |\dot{\gamma}(\tau)|_{\omega(T)} d\tau \leq  (1+\Psi(T^{-1}|\varepsilon))\int_{(1-c)\bar{\tau}}^{\bar{\tau}} |\dot{\gamma}(\tau)|_{\tilde{g}(\tau)} d\tau  \\
        &= (1+\Psi(T^{-1}|\varepsilon))\int_{(1-c)\bar{\tau}}^{\bar{\tau}} \tau^{-\frac{1}{4}} \left( \tau^{\frac{1}{4}} |\dot{\gamma}(\tau)|_{\tilde{g}(\tau)} \right) d\tau \\
        &\le  2\left( \int_{(1-c)\bar{\tau}}^{\bar{\tau}} \tau^{-\frac{1}{2}} d\tau \right)^{\frac{1}{2}} \left( \int_{(1-c)\bar{\tau}}^{\bar{\tau}} \sqrt{\tau} |\dot{\gamma}(\tau)|^2_{\tilde{g}(\tau)} d\tau \right)^{\frac{1}{2}}\\
        &\le2\sqrt{2c}{\bar{\tau}}^{\frac{1}{4}}\frac{C(\varepsilon)}{\bar{\tau}^{\frac{1}{4}}}= \sqrt{2cC_1(\varepsilon)},
    \end{aligned}
    \end{equation}
  
    where we used the elementary inequality $1-\sqrt{1-c} \le c$ for $c \in (0,1)$ and we have chosen $T>T(\varepsilon)$ such that $1+\Psi(T^{-1}|\varepsilon)<2$. We can then choose $c$ sufficiently small (depending only on $\varepsilon$ and $C_1$) such that $d \le \frac{\varepsilon}{4}$. Therefore, for $s \in [s_1, \sqrt{\bar{\tau}}]$ where $s_1 := \sqrt{1-c}\sqrt{\bar{\tau}}$, the curve $\gamma$ is strictly contained in $X \setminus \tilde{V}_{\varepsilon/2}$. 
    
    Within this regular region, the geometry is uniformly bounded by Gill's smooth convergence result \cref{smooth convergence}, implying $|Rc| \le C(\varepsilon)$ and $|\nabla S| \le C(\varepsilon)$. The $\mathcal{L}$-geodesic equation $\nabla_{\hat{X}}\hat{X} = 2s^2\nabla S - 4s Rc(\hat{X}, \cdot)$ thus yields the differential inequality (see \eqref{eq:ds X hat})
    \begin{equation}\label{eq:ODE_velocity}
        \frac{d}{ds} |\hat{X}| \le C(\varepsilon)s |\hat{X}| + C(\varepsilon)s^2,\quad s\in[s_1,\bar{\tau}].
    \end{equation}
    
    Applying the Mean Value Theorem for integrals over the interval $[s_1, \sqrt{\bar{\tau}}]$, we have
    \begin{equation}
        \int_{s_1}^{\sqrt{\bar{\tau}}} |\hat{X}|^2 ds \le 2 \int_0^{\bar{\tau}} \sqrt{\tau}|\dot{\gamma}|^2 d\tau \le \frac{C_2}{\sqrt{\bar{\tau}}}.
    \end{equation}
    Since the length of the interval is $\sqrt{\bar{\tau}} - s_1 = (1-\sqrt{1-c})\sqrt{\bar{\tau}} \ge \frac{c}{2}\sqrt{\bar{\tau}}$, there must exist a time $s_0 \in [s_1, \sqrt{\bar{\tau}}]$ such that
    \begin{equation}
        |\hat{X}(s_0)|^2 \le \frac{1}{\frac{c}{2}\sqrt{\bar{\tau}}} \left( \frac{C_2}{\sqrt{\bar{\tau}}} \right) = \frac{2C_2}{c\bar{\tau}} \quad \implies \quad |\hat{X}(s_0)| \le \frac{C_3(\varepsilon)}{\sqrt{\bar{\tau}}}.
    \end{equation}
    
    Finally, we integrate the differential inequality \eqref{eq:ODE_velocity} from $s_0$ to $\sqrt{\bar{\tau}}$. Let $y(s) := |\hat{X}(s)|$. The inequality takes the form $y^\prime(s) \le As y(s) + Bs^2$ for $s \in [s_0, \sqrt{\bar{\tau}}]$, where $A, B > 0$ are constants depending only on $\varepsilon$. Applying the standard integral form of Gronwall's inequality yields:
    \begin{equation}
        y(\sqrt{\bar{\tau}}) \le y(s_0)\exp\left(\int_{s_0}^{\sqrt{\bar{\tau}}} Ar \, dr\right) + \int_{s_0}^{\sqrt{\bar{\tau}}} Bu^2 \exp\left(\int_u^{\sqrt{\bar{\tau}}} Ar \, dr\right) du.
    \end{equation}
  It is then clear that we can write
    \begin{equation}
    \begin{aligned}
        |\hat{X}(\sqrt{\bar{\tau}})| &\le K(\varepsilon) |\hat{X}(s_0)| + K(\varepsilon) B \int_0^{\sqrt{\bar{\tau}}} u^2 du \\
        &\le K(\varepsilon) \frac{C_2(\varepsilon)}{\sqrt{\bar{\tau}}} + K(\varepsilon) \frac{B}{3} \bar{\tau}^{\frac{3}{2}}.
    \end{aligned}
    \end{equation}
We can therefore conclude that the endpoint velocity satisfies the desired bound:
    \begin{equation}
        |\nabla L_T(q^\prime, \bar{\tau})| = |\hat{X}(\sqrt{\bar{\tau}})| \le \frac{C_4(\varepsilon)}{\sqrt{\bar{\tau}}}.
    \end{equation}
    This completes the proof.
\end{proof}

\begin{proof}[Proof of \cref{prop:key estimate}]
    Applying \cref{prop:almost aviodable:final}, for each $q^\prime\in\Omega^\prime$, we can find an $\mathcal{L}$-geodesic $\gamma$ connecting $(p,0)$ and $(q^\prime,\bar{\tau})$. We then subdivide the interval $[0,\bar{\tau}]$ into two parts: we say that $\tau\in I$ if $\gamma(\tau)\in\tilde{U}_{\frac{\eta}{2}}=f^{-1}(U_{\frac{\eta}{2}})$ and $I^\prime$ is defined by means of $U^\prime_{\eta^\prime}$ similarly. By \cref{lem:L-geodesic-exit-time} we have that $I\cup I^\prime\subset[C^{-1}\bar{\tau},\bar{\tau}]$. Set $J:=[0,\bar{\tau}]\setminus I\cup I^\prime$. 

Now, to estimate $d_{can}(f(p),f(q))$, we write
\begin{equation}\label{eq:estimate of d_can 1}
\begin{aligned}
    d_{can}(f(p),f(q))&\leq d_{can}(f(p),f(q^\prime))+\Psi(T^{-1},\delta|\varepsilon)\\
&\leq\int_J|\partial_\tau(f\circ\gamma)|_{g_{can}}d\tau+\int_{I\cup I^\prime}|\partial_\tau(f\circ\gamma)|_{g_{can}}d\tau+\Psi(T^{-1},\delta|\varepsilon)\\
&\leq\left(1+\Psi(T^{-1}|\bar{\tau},\eta,\eta^\prime,\varepsilon)\right)\int_J|\partial_\tau\gamma|_{\tilde{g}(\tau)}d\tau+\int_{I\cup I^\prime}|\partial_\tau(f\circ\gamma)|_{g_{can}}d\tau+\Psi(T^{-1},\delta|\varepsilon)\\
&\leq\left(1+\Psi(T^{-1}|\bar{\tau},\eta,\eta^\prime,\varepsilon)\right)\int_0^{\bar{\tau}}|\partial_\tau\gamma|_{\tilde{g}(\tau)}d\tau+\int_{I\cup I^\prime}|\partial_\tau(f\circ\gamma)|_{g_{can}}d\tau+\Psi(T^{-1},\delta|\varepsilon).
\end{aligned}
\end{equation}
Here, in the third inequality we have used the fact that $\gamma|_{J}$ lies outside $\tilde{U}_{\frac{\eta}{2}}$, where we have the smooth convergence $g(T)\to f^*g_{can}$ and hence $\tilde{g}(\tau)$ is very close to $g_{can}$. For the right-hand side of \eqref{eq:estimate of d_can 1}, we further using the Cauchy-Schwarz inequality to write
\begin{equation}\label{eq:deal with J}
\begin{aligned}
    \int_0^{\bar{\tau}}|\partial_\tau\gamma|_{\tilde{g}(\tau)}d\tau&\leq\sqrt{2}\bar{\tau}^{\frac{1}{4}}\left(\int_0^{\bar{\tau}}\sqrt{\tau}|\partial_\tau\gamma|_{\tilde{g}(\tau)}d\tau\right)^{\frac{1}{2}}\\
    &=\sqrt{2}\bar{\tau}^{\frac{1}{4}}\left(\int_0^{\bar{\tau}}\sqrt{\tau}\left(S(\tau)+|\partial_\tau\gamma|_{\tilde{g}(\tau)}\right)d\tau-\int_0^{\bar{\tau}}\sqrt{\tau}S(\tau)d\tau\right)^{\frac{1}{2}}\\
    &\leq\sqrt{2}\bar{\tau}^{\frac{1}{4}}\left(L_{T}(q^\prime,\bar{\tau})+C\bar{\tau}^{\frac{3}{2}}\right)^{\frac{1}{2}}\\
    &=\sqrt{2}\bar{\tau}^{\frac{1}{4}}\left(L_{T}(q,\bar{\tau})+\Psi(\delta|\varepsilon)\bar{\tau}^{-\frac{1}{2}}+C\bar{\tau}^{\frac{3}{2}}\right)^{\frac{1}{2}}\\
    &\leq\sqrt{2}\bar{\tau}^{\frac{1}{4}}(L_{T}(q,\bar{\tau}))^{\frac{1}{2}}+\Psi(\delta|\varepsilon)+C\bar{\tau},
    \end{aligned}
\end{equation}
where in the fourth line above we have used crucially \cref{lem:local_gradient_L}.
It remains to estimate $\int_{I\cup I^\prime}|\partial_\tau(f\circ\gamma)|_{g_{can}}d\tau$. By definition of $q^\prime\in\Omega$, the curve $f\circ\gamma$ makes at most $\eta^{-\sigma}$ excursions into $U_{\frac{\eta}{2}}$, which we denote by intervals $[\tau^-_i,\tau^+_i],1\leq i\leq\eta^{-\sigma}$. To relate $g_{can}$ and $\tilde{g}({\tau})$ in these intervals, we utilize the bi-H\"older equivalence \cref{Holder of d_can}. Since $d_{can}\leq Cd_Y^\alpha$, we may find a curve $\gamma_i:[\tau^-_i,\tau^+_i]\to Y$ connecting $f\circ\gamma(\tau_i^-)$ and $f\circ\gamma(\tau_i^+)$ such that
\begin{equation}
\begin{aligned}
|\gamma_i|_{g_{can}}&=\int_{\tau_i^-}^{\tau_i^+}|\partial_\tau\gamma_i|_{g_{can}}d\tau\approx d_{can}(f\circ\gamma(\tau_i^-),f\circ\gamma(\tau_i^+))\leq C\left(\int_{\tau_i^-}^{\tau_i^+}|\partial_\tau (f\circ\gamma)|_{g_Y}d\tau\right)^\alpha\\
&\leq C\left(\int_{\tau_i^-}^{\tau_i^+}|\partial_\tau \gamma|_{\tilde{g}(\tau)}d\tau\right)^\alpha,
\end{aligned}
\end{equation}
where in the last inequality we have used the parabolic Schwarz estimate \cref{parabolic Schwarz}. Therefore, if we replace $f\circ\gamma$ by $\gamma_i$ on each interval $[\tau^-_i,\tau^+_i],1\leq i\leq\eta^{-\sigma}$, we can further use the discrete H\"older inequality to write (by abuse of notations)
\begin{equation}\label{eq:deal with I}
    \int_I |\partial_\tau(f\circ\gamma)|_{g_{can}}d\tau\leq C\sum_{i=1}^{\eta^{-\sigma}}\left(\int_{\tau_i^-}^{\tau_i^+}|\partial_\tau \gamma|_{\tilde{g}(\tau)}d\tau\right)^\alpha \leq C \eta^{-(1-\alpha)\sigma}\left(\int_I|\partial_\tau \gamma|_{\tilde{g}(\tau)}d\tau\right)^\alpha.
\end{equation}
By the Cauchy-Schwarz inequality again, we have
\begin{equation}\label{eq:I integral}
    \begin{aligned}
        \int_I|\partial_\tau \gamma|_{\tilde{g}(\tau)}d\tau&\leq C\bar{\tau}^{-\frac{1}{4}}\left(\int_I\sqrt{\tau}|\partial_\tau\gamma|^2_{\tilde{g}(\tau)}d\tau\right)^{\frac{1}{2}}\cdot|I|^{\frac{1}{2}}\\
        &\leq C\bar{\tau}^{-\frac{1}{4}}\left(L_T(q^\prime,\bar{\tau})+C\bar{\tau}^{\frac{3}{2}}\right)^{\frac{1}{2}}\cdot|I|^{\frac{1}{2}}\\
        &\leq C\bar{\tau}^{-\frac{1}{4}}\left(\bar{\tau}^{-\frac{1}{2}}+\bar{\tau}^{\frac{3}{2}}\right)^{\frac{1}{2}}\cdot|I|^{\frac{1}{2}}.
    \end{aligned}
\end{equation}
For the estimate of $|I|$, we use similar ideas as in \cref{prop:almost-avoidance-D1}. Let $\Omega_p\subset T_pM$ be such that $\mathcal{L}\exp_{\bar{\tau}}:\Omega_p\to\Omega^\prime\subset B^{d_T}(q,\delta)$ is a diffeomorphism. The change of variable formula, the uniform equivalence of volume forms (\cref{cor:uniform equivalence of volume forms}) and the Minkowski content estimate \eqref{eq:minkow dim} yield that
\begin{equation}
    \begin{aligned}
     |I| \operatorname{Vol}(\Omega^\prime, \omega(T)^n)     &\leq   C(\varepsilon,\delta,\bar{\tau})|I| \operatorname{Vol}(\Omega^\prime, \tilde{\omega}(\bar{\tau})^n)     
\leq C |I| \int_{\Omega_p} \mathcal{J}(v, \bar{\tau}) dv \notag \\    
&\leq C\int_{\Omega_p} \int_{I} \mathcal{J}(v, \bar{\tau}) d\tau dv \notag \\    
&\leq C e^{\frac{C}{\bar{\tau}}} \int_{\Omega_p} \int_{I} \mathcal{J}(v, \tau) d\tau dv \notag \\    
&\leq C e^{\frac{C}{\bar{\tau}}} \int_{C^{-1}\bar{\tau}}^{\bar{\tau}} \operatorname{Vol}(\tilde{U}_\eta, \tilde{\omega}(\tau)^n) d\tau\\
&\leq C e^{\frac{C}{\bar{\tau}}} \bar{\tau}\eta^{2-\sigma}.
    \end{aligned}
\end{equation}
The volume non-collapsing estimate \cref{volume non-collapsing estiamte} combined with \cref{prop:almost aviodable:final} provide an estimate of the left-hand side:
\begin{equation}
     |I| \operatorname{Vol}(\Omega^\prime, \omega(T)^n)\geq|I|\cdot c\delta^{2n+\beta}.
\end{equation}
As a consequence, we derive the estimate of $|I|$:
\begin{equation}
    |I|\leq C e^{\frac{C}{\bar{\tau}}} \bar{\tau}\eta^{2-\sigma}\delta^{-2n-\beta}.
\end{equation}
Plugging into \eqref{eq:I integral}, we obtain
\begin{equation}\label{eq:deal with I 3}
      \int_I|\partial_\tau \gamma|_{\tilde{g}(\tau)}d\tau\leq  Ce^{\frac{C}{2\bar{\tau}}}\bar{\tau}^{\frac{1}{4}}\left(\bar{\tau}^{-\frac{1}{2}}+\bar{\tau}^{\frac{3}{2}}\right)^{\frac{1}{2}}\eta^{1-\frac{\sigma}{2}}\delta^{-n-\frac{\beta}{2}}.
\end{equation}
Plugging \eqref{eq:deal with I 3} into \eqref{eq:deal with I}, we finally get
\begin{equation}\label{eq:deal with I final}
     \int_I |\partial_\tau(f\circ\gamma)|_{g_{can}}d\tau\leq\Psi(\eta|\delta,\bar{\tau},\varepsilon) \eta^{-(1-\alpha)\sigma+\alpha(1-\frac{\sigma}{2})}=\Psi(\eta|\delta,\bar{\tau},\varepsilon) \eta^{\alpha(\frac{3}{4}-\frac{\sigma}{2})},
\end{equation}
if we choose $\sigma:=\frac{\alpha}{4(1-\alpha)}$.
Similarly for $I^\prime$,
\begin{equation}\label{eq:deal with I prime}
    \int_{I^\prime} |\partial_\tau(f\circ\gamma)|_{g_{can}}d\tau \leq \Psi(\eta^\prime|\delta,\bar{\tau},\eta,\varepsilon) (\eta^\prime)^{\alpha(\frac{3}{4}-\frac{\sigma}{2})}.
\end{equation}

Substituting \eqref{eq:deal with J}, \eqref{eq:deal with I final}, and \eqref{eq:deal with I prime} back into \eqref{eq:estimate of d_can 1}, we get
\begin{equation}\label{eq:estimate of d_can 2}
\begin{aligned}
    d_{can}(f(p),f(q)) \leq & \left(1+\Psi(T^{-1}|\bar{\tau},\eta,\eta^\prime,\varepsilon)\right) \sqrt{2}\bar{\tau}^{\frac{1}{4}} \left( L_{T}(q,\bar{\tau}) +\Psi(\delta|\bar{\tau},\varepsilon)+ C\bar{\tau}^{\frac{3}{2}} \right)^{\frac{1}{2}} \\
    & + \Psi(\eta|\delta,\bar{\tau},\varepsilon) \eta^{\alpha(\frac{3}{4}-\frac{\sigma}{2})} + \Psi(\eta^\prime|\delta,\eta,\bar{\tau},\varepsilon) (\eta^\prime)^{\alpha(\frac{3}{4}-\frac{\sigma}{2})} + \Psi(T^{-1},\delta|\varepsilon).
\end{aligned}
\end{equation}

Recall that from the uniform upper bound of the $\mathcal{L}$-distance \cref{lem:upper bound of L}, we have
\begin{equation}
    L_T(q,\bar{\tau}) \leq \frac{1}{2\sqrt{\bar{\tau}}} d_T(p,q)^2 + \Psi(T^{-1}|\bar{\tau},\varepsilon) + \Psi(\bar{\tau}|\varepsilon).
\end{equation}
Applying the fundamental subadditivity $\sqrt{A+B} \leq \sqrt{A} + \sqrt{B}$ for $A,B \geq 0$, the main term involving the $\mathcal{L}$-distance can be cleanly evaluated as:
\begin{equation}
\begin{aligned}
    \sqrt{2}\bar{\tau}^{\frac{1}{4}} \left( L_{T}(q,\bar{\tau}) \right)^{\frac{1}{2}} &\leq \sqrt{2}\bar{\tau}^{\frac{1}{4}} \left( \frac{d_T(p,q)^2}{2\sqrt{\bar{\tau}}} \right)^{\frac{1}{2}} + \Psi(T^{-1}, \bar{\tau}|\varepsilon) \\
    &= d_T(p,q) + \Psi(T^{-1}, \bar{\tau}|\varepsilon).
\end{aligned}
\end{equation}

Plugging this simplification back into \eqref{eq:estimate of d_can 2}, we arrive at
\begin{equation}
\begin{aligned}
    d_{can}(f(p),f(q)) \leq d_T(p,q)+ \Psi(\eta|\delta,\bar{\tau},\varepsilon)  + \Psi(\eta^\prime|\delta,\eta,\bar{\tau},\varepsilon)  + \Psi(\delta|\varepsilon) + \Psi(\bar{\tau}|\varepsilon) + \Psi(T^{-1}|\bar{\tau}, \eta, \eta^\prime,\varepsilon).
\end{aligned}
\end{equation}

Finally, by taking the parameters successively in the order $T^{-1} \ll \eta^\prime \ll \eta \ll \bar{\tau} \ll \delta \ll \varepsilon\ll 1$ (i.e., for fixed $\varepsilon$, first choosing $\delta$ small, then $\bar{\tau}$ small enough, then $\eta, \eta^\prime$, and finally $T$ sufficiently large) successfully yields the desired key estimate:
\begin{equation}
    d_{can}(f(p),f(q)) \leq d_T(p,q) + \Psi(T^{-1}|\varepsilon),
\end{equation}
which concludes the proof.

\end{proof}

\textbf{Data availability} Data sharing not applicable to this article as no datasets were generated or analyzed during the current study.

\textbf{Conflict of interest} The authors declare that they have no conflict of interest.

\end{document}